\documentclass[reqno]{amsart}
\usepackage[margin=1.15in]{geometry}
\usepackage{microtype}
\usepackage{cite}
\usepackage{enumitem}
\usepackage[hidelinks]{hyperref}

\theoremstyle{plain}
\newtheorem{theorem}{Theorem}
\newtheorem{proposition}[theorem]{Proposition}
\newtheorem{corollary}[theorem]{Corollary}
\newtheorem{lemma}[theorem]{Lemma}
\newtheorem{problem}[theorem]{Open problem}

\theoremstyle{definition}
\newtheorem{definition}{Definition}

\theoremstyle{remark}
\newtheorem{remark}{Remark}

\allowdisplaybreaks

\begin{document}

\author{Phuong Le}
\address{Phuong Le$^{1,2}$ (ORCID: 0000-0003-4724-7118)\newline
	$^1$Faculty of Economic Mathematics, University of Economics and Law, Ho Chi Minh City, Vietnam; \newline
	$^2$Vietnam National University, Ho Chi Minh City, Vietnam}
\email{phuongl@uel.edu.vn}

\subjclass[2020]{35B06, 35B53, 35J61, 35B45, 35B32, 35B40}
\keywords{Nonlinear Schr\"odinger equation, half-space, critical boundary value, rigidity, nonuniqueness, Lane--Emden reduction, Fowler solutions}

\title[Rigidity and dimensional transition in a half-space]{The critical boundary value for a nonlinear Schr\"odinger equation in a half-space: rigidity and dimensional transition}

\begin{abstract}
Let $w_0$ be the positive homoclinic solution of $-w''+w=w^p$ and set $c_p=w_0(0)$. We study bounded positive solutions of
\[
-\Delta v+v=v^p\quad\text{in }\mathbb R^N_+,
\qquad v=c_p\quad\text{on }\partial\mathbb R^N_+,
\qquad v(x',x_N)\to0
\]
uniformly as $x_N\to\infty$. This is the threshold boundary value left open by Fern\'andez and Weth [Math.\ Ann.\ \textbf{383} (2022), 361--397]. We prove a dimension-dependent rigidity/nonuniqueness picture. For every $p>1$, the profile $w_0(x_N)$ is the unique bounded positive solution when $2\le N\le5$, whereas for $N\ge8$ there is a one-parameter family of bounded positive non-one-dimensional solutions. In the critical dimension $N=7$, nonuniqueness holds whenever a cubic coefficient $\kappa(p)$ of the exact reduced nonlinearity is positive; in particular $\kappa(p)>0$ for all $p\ge2$, and continuity extends this range below $2$. The corresponding amplitudes have Fowler ends, with their limiting energy, neck size, bulge size, and logarithmic period determined to leading order. The case $N=6$, as well as the complementary range in $N=7$, remain open.

The proof combines a boundary-adapted Modica estimate, projection onto the kernel of the one-dimensional linearized operator, an automatic tail non-concentration estimate, and a nonlinear cell problem yielding the exact reduced nonlinearity. For $N\ge8$ the reduced linearization is invertible in weighted spaces. In $N=7$ the critical resonance is bypassed by solving the projected radial equation nonlinearly and closing the transverse equation by a Schauder--Tychonoff fixed point.
\end{abstract}

\maketitle

\section{Introduction}

\subsection{The problem}

For $N\ge 1$ we write $\mathbb{R}^N_+=\{x=(x',x_N)\in\mathbb{R}^{N-1}\times\mathbb{R}:x_N>0\}$, and we consider the half-space Dirichlet problem
\begin{equation}\label{eq_Pc}
\begin{cases}
-\Delta v+v=|v|^{p-1}v &\text{in }\mathbb{R}^N_+,\\
v=c &\text{on }\partial\mathbb{R}^N_+,\\
\displaystyle\lim_{x_N\to\infty}v(x',x_N)=0 &\text{uniformly in }x'\in\mathbb{R}^{N-1},
\end{cases}
\end{equation}
where $p>1$ and $c>0$ are constants. By a bounded positive solution of \eqref{eq_Pc} we mean, as in \cite{FW}, a function $v\in C^2(\mathbb{R}^N_+)\cap C(\overline{\mathbb{R}^N_+})\cap L^\infty(\mathbb{R}^N_+)$ with $v>0$ in $\mathbb{R}^N_+$ which satisfies \eqref{eq_Pc} pointwise.

The one-dimensional equation $-w''+w=w^p$ has, up to translations and sign, a unique nontrivial solution vanishing at $\pm\infty$, namely
\begin{equation}\label{eq_w0}
w_0(t)=c_p\left[\cosh\left(\frac{p-1}{2}t\right)\right]^{-\frac{2}{p-1}},
\qquad
c_p:=\left(\frac{p+1}{2}\right)^{\frac{1}{p-1}}=w_0(0)=\max_{t\in\mathbb{R}}w_0.
\end{equation}
Fern\'andez and Weth \cite{FW} proved that the number $c_p\in(1,\sqrt e)$ separates two regimes. If $N\ge 2$ and $0<c<c_p$, then \eqref{eq_Pc} has infinitely many bounded positive solutions, at least three of which are geometrically distinct. If $c>c_p$, then \eqref{eq_Pc} has no bounded positive solution. The borderline value $c=c_p$ was left open. We quote the question from \cite[Remark 1.1(e)]{FW}: it remains an interesting open question whether $x\mapsto w_0(x_N)$ is the unique bounded positive solution to \eqref{eq_Pc} when $c=c_p$.

Sourdis subsequently proved uniqueness for $N=2,3$ and every $p>1$ \cite{So1}. He later obtained uniqueness for $2\le N\le5$ when $1<p<3$, and for $2\le N\le4$ when $p=3$ \cite{So2}, by reducing the problem to a Lane--Emden supersolution inequality in the tangential variables. The remaining ranges include all $N\ge6$, as well as higher exponents in dimensions $N\ge4$.

Throughout this paper $N\ge 2$ and $p>1$, and we write $n:=N-1$ for the dimension of the boundary $\partial\mathbb{R}^N_+\simeq\mathbb{R}^{n}$, which is where the reduced problem of Sections \ref{sec_reduced} to \ref{c:sec_fixed} lives. The boundary value is $c=c_p$ except in Theorem \ref{thm_main} and Lemma \ref{lem_reg}, which hold for every $c>0$. We set
\begin{equation}\label{eq_W}
W(s):=\frac{s^2}{2}-\frac{|s|^{p+1}}{p+1},
\end{equation}
so that the equation in \eqref{eq_Pc} reads $\Delta v=W'(v)$. The function $W$ vanishes at $0$ and at $c_p$, is positive on $(0,c_p)$, and is negative and strictly decreasing on $(c_p,\infty)$. The homoclinic profile has zero energy,
\begin{equation}
\tfrac12 (w_0')^2=W(w_0)\quad\text{on }\mathbb{R},
\end{equation}
and this is the reason why $c_p$ is the threshold. Indeed, in dimension $N=1$ the quantity $\tfrac12(v')^2-W(v)$ is constant, the decay of $v$ at $+\infty$ forces the constant to be $0$, and $W\ge 0$ then forces $v\le c_p$. The whole difficulty for $N\ge 2$ is that this energy is no longer conserved along the $x_N$ direction, since it can be transported in the tangential directions.

\subsection{Three mechanisms and their thresholds}
Three mechanisms govern the critical problem.

\emph{Sliding.} The sliding argument of \cite{FW} for $c>c_p$ still gives
$v\ge w_0(x_N)$ when $c=c_p$, as observed in \cite{So1}, but the sliding parameter stops exactly at zero: the strict boundary inequality used for $c>c_p$ is lost at the profile $w_0$ itself. Thus sliding provides the ordering, but not uniqueness.

\emph{Projection onto the kernel.} Put $\phi=v-w_0$ and $Z=-w_0'$. Since $Z$ spans the kernel of the one-dimensional linearized operator, projection in the $x_N$ variable gives an exact scalar equation for
\[
\lambda(x')=\frac1\nu\int_0^\infty \phi(x',t)Z(t)\,dt.
\]
Its leading form is the Lane--Emden equation $-\Delta_{x'}\lambda=C_p\lambda^2$. The exponent two is below the Serrin threshold in boundary dimensions at most four, critical for the Sobolev theory in boundary dimension six, and supercritical above it. The missing point in the earlier projection argument is a uniform quadratic lower bound for all $p$. Section \ref{sec_reduced} supplies it by proving that the $\phi Z$ mass cannot concentrate at large $x_N$; this closes the uniqueness argument for every $p>1$ when $N\le5$.

\emph{Gradient bounds.} The classical Modica estimate \cite{Mo} requires nonnegativity of the potential on the range, which here is equivalent to the a priori conclusion $\sup v\le c_p$. The available half-space estimate with zero boundary data \cite[Theorem 1.4]{FV1} does not apply to the constant datum $c_p$. What replaces these hypotheses is the combination of flatness and decay: because $v$ is constant on the flat boundary, the normal derivative of the $P$-function vanishes there, while uniform decay in $x_N$ controls the opposite end. This yields Theorem \ref{thm_main}.

\subsection{Main results}

We write $M:=\sup_{\mathbb R^N_+}v$. Our first result is a sharp boundary-adapted Modica estimate. With $W$ as in \eqref{eq_W},
\begin{equation}\label{eq_minW}
\min_{[0,M]}W=\min\{0,W(M)\}=
\begin{cases}
0,&M\le c_p,\\
W(M),&M>c_p.
\end{cases}
\end{equation}

\begin{theorem}\label{thm_main}
Let $N\ge2$, $p>1$ and $c>0$, and let $v$ be a bounded positive solution of \eqref{eq_Pc}. Then
\begin{equation}\label{eq_main}
\frac12|\nabla v|^2\le W(v)-\min_{[0,M]}W
\qquad\text{in }\overline{\mathbb R^N_+}.
\end{equation}
If $M\ge c_p$, equality holds at every point where $v$ attains the value $M$.
\end{theorem}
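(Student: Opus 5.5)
We work with the $P$-function
\[
P:=\tfrac12|\nabla v|^2-W(v)+m,\qquad m:=\min_{[0,M]}W,
\]
and aim to show $\sup_{\overline{\mathbb R^N_+}}P\le0$. First we collect regularity facts, presumably Lemma \ref{lem_reg}: $v\in C^{2,\alpha}$ up to the boundary, $\nabla v$ and $D^2v$ are bounded, and interior and boundary Schauder estimates combined with uniform decay give $|\nabla v|+|D^2v|\to0$ uniformly as $x_N\to\infty$. Consequently $P(x',x_N)\to -W(0)+m=m\le0$ uniformly as $x_N\to\infty$. Next we recall the standard computation from Modica's argument: wherever $\nabla v\ne0$,
\[
|\nabla v|^2\Delta P-\nabla P\cdot\big(2W'(v)\nabla v-\nabla P\big)\ \ge\ 0 ,
\]
and this uses only $\Delta v=W'(v)$ and Cauchy--Schwarz on $D^2v$, with no sign condition on $W$. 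So $P$ satisfies an elliptic inequality without zeroth-order term on $\{\nabla v\ne0\}$. On $\{\nabla v=0\}$ we have $P=m-W(v)\le0$ by the definition of $m$, since $0<v\le M$ there.

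On the boundary, $v\equiv c$ gives $\nabla' v=0$ and $\partial_N^2v=\Delta v-\Delta'v=W'(c)$. Hence
\[
\partial_N P=\partial_Nv\,\partial_N^2v-W'(c)\partial_Nv=0\quad\text{on }\partial\mathbb R^N_+ .
\]

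Now argue by contradiction, supposing $s:=\sup P>0$. Since $P\to m\le0$ as $x_N\to\infty$, maximizing sequences stay in a slab $0\le x_N\le R$. We translate tangentially and pass to a limit $\tilde v$ in $C^2_{loc}$ using the uniform bounds; $\tilde v$ solves the same problem (it is possibly only nonnegative, but bounded by $M$), and its $P$-function attains the value $s$ at a point $x_0$ with $(x_0)_N\le R$. At $x_0$ we have $\nabla\tilde v\ne0$, since otherwise $P\le0$. If $x_0$ is interior, the strong maximum principle applied on the component of $\{\nabla\tilde v\ne0\}$ containing $x_0$ makes $P\equiv s$ there. If $x_0$ is on the boundary, the Hopf lemma contradicts $\partial_NP=0$ unless $P\equiv s$ nearby, so again $P\equiv s$ on that component. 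The component cannot reach $\{\nabla\tilde v=0\}$, where $P\le0$ by continuity, and it cannot extend to $x_N\to\infty$, where $P\to m$. Hence it is a connected region with $|\nabla\tilde v|^2=2(W(\tilde v)-m+s)\ge 2s>0$ that is bounded in $x_N$ yet closed in $\overline{\mathbb R^N_+}$ apart from those excluded pieces. We expect to rule this out by following the gradient flow of $\tilde v$. Along an integral curve of $\nabla\tilde v/|\nabla\tilde v|$ the speed is at least $\sqrt{2s}$, so $\tilde v$ increases without bound, contradicting boundedness. The curve cannot leave the region, because the region is open in the relative topology and its boundary points would have $P=s$ with $\nabla\tilde v\ne0$. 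We also need to handle the case where the curve hits $\partial\mathbb R^N_+$. This flow-line step, together with the case where the curve exits through the boundary, is the main technical obstacle. There $\tilde v=c$ is constant, and we use the flow of $-\nabla\tilde v$ instead, or note that $\nabla\tilde v$ is normal there, so the curve is transverse and can be followed in the opposite direction. This gives \eqref{eq_main}.

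For the equality statement, suppose $M\ge c_p$, so $m=W(M)$, and let $v(x_0)=M$. If $x_0$ is interior, it is a maximum point, so $\nabla v(x_0)=0$ and $P(x_0)=-W(M)+m=0$. If $x_0\in\partial\mathbb R^N_+$, then $c=M$, and we apply the estimate along the normal. Since $v\le M$ and $v=M$ on the boundary, $\partial_Nv\le0$. On the other hand $\partial_N^2v=W'(M)\le0$ for $M\ge c_p$, and \eqref{eq_main} combined with $P\le 0$ should force $\partial_Nv(x_0)=0$ through a one-dimensional comparison, or else the equality is immediate from $P(x_0)=\tfrac12(\partial_Nv)^2\le0$. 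Indeed $W(M)-m=0$ there, so \eqref{eq_main} gives $|\nabla v(x_0)|=0$, and hence equality.
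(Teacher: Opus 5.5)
This is the paper's proof: the same $P$-function (up to a factor $2$), the Caffarelli--Garofalo--Seg\`ala inequality on $\{\nabla v\ne0\}$, the Neumann condition $\partial_NP=0$ from the flat boundary, tangential compactness to attain the supremum, the strong maximum principle plus the Hopf lemma to get $P\equiv s$ on the component $\mathcal O$ of $\{\nabla\tilde v\ne0\}$ through the maximum point, and the same treatment of the equality case.

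The gradient-flow step you single out as the main obstacle is not needed. It can be completed: $\tilde v$ is strictly monotone along a flow line and equals $c$ on $\partial\mathbb R^N_+$, so at least one direction of the flow never reaches the boundary. The direct argument is simpler. $\mathcal O$ is relatively open, and your own remark that its closure cannot meet $\{\nabla\tilde v=0\}$ (where $P\le0<s$) makes it relatively closed. Hence $\mathcal O=\overline{\mathbb R^N_+}$ by connectedness, and $P\equiv s>0$ contradicts $P\to m\le0$ as $x_N\to\infty$. A component that is bounded in $x_N$ cannot exist; this is exactly Step 4 of the paper.
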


The normalization by the minimum of the potential is sharp already in one dimension. At the critical boundary value it gives the following rigidity criterion.

\begin{theorem}\label{thm_char}
Let $v$ be a bounded positive solution of \eqref{eq_Pc} with $c=c_p$. Then
\begin{equation}\label{eq_bdry}
\sup_{x'\in\mathbb R^{N-1}}\big(\partial_Nv(x',0)\big)^2
\le \max\left\{0,\frac{2M^{p+1}}{p+1}-M^2\right\}.
\end{equation}
Moreover $v=w_0(x_N)$ if and only if $M=c_p$. Equivalently, every other bounded positive solution satisfies $\sup v>c_p$.
\end{theorem}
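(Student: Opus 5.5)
We derive Theorem \ref{thm_char} from Theorem \ref{thm_main} with $c=c_p$, combined with the sliding lower bound $v\ge w_0(x_N)$ recalled in the introduction (from \cite{FW}, as observed in \cite{So1}).

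\emph{The boundary bound \eqref{eq_bdry}.} We evaluate \eqref{eq_main} on $\partial\mathbb R^N_+$. There $v\equiv c_p$, so all tangential derivatives vanish and $|\nabla v|^2=(\partial_Nv(x',0))^2$. Also $W(c_p)=0$. If $M\le c_p$, then \eqref{eq_minW} gives $\min_{[0,M]}W=0$, and hence $(\partial_N v)^2\le 0$. If $M>c_p$, then \eqref{eq_main} gives
\[
(\partial_N v)^2\le -2W(M)=\frac{2M^{p+1}}{p+1}-M^2 .
\]
This quantity is positive, because $W<0$ on $(c_p,\infty)$. In both cases \eqref{eq_bdry} follows. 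The step needs $\nabla v$ to be continuous up to the boundary. We take this from the regularity lemma (Lemma \ref{lem_reg}) or from standard boundary Schauder estimates, since the data are flat and constant.

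\emph{Rigidity when $M=c_p$.} One direction is trivial: $w_0(x_N)$ has supremum $w_0(0)=c_p$. Conversely, suppose $M=c_p$. Then \eqref{eq_main} reads $\tfrac12|\nabla v|^2\le W(v)$ everywhere. We will compare $v$ with $w_0$ along vertical lines. Sliding gives $w_0(x_N)\le v\le c_p$. Fix $x'$ and set $g(t)=v(x',t)$. Then $|g'|\le\sqrt{2W(g)}$, $g(0)=c_p$, and $g\ge w_0>0$.

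Define $G(s)=\int_s^{c_p}\frac{d\sigma}{\sqrt{2W(\sigma)}}$ for $s\in(0,c_p]$. This is finite near $c_p$, since $W'(c_p)=c_p-c_p^p<0$ makes $W$ vanish only linearly there. The profile satisfies $G(w_0(t))=t$ for $t\ge0$. Wherever $g<c_p$, we have $\frac{d}{dt}G(g(t))=-g'/\sqrt{2W(g)}\le 1$. Hence $G(g(t))\le t=G(w_0(t))$. Since $G$ is decreasing, this gives $g(t)\ge w_0(t)$. That is the same inequality as sliding, so this route yields nothing new.

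The right argument instead uses the upper side. Since $v\le c_p$ and $v\ge w_0$, the difference $\phi=v-w_0\ge0$ satisfies
\[
-\Delta\phi+\phi=v^p-w_0^p\ge0 ,
\]
with $\phi=0$ on the boundary. We then consider the one-dimensional energy $E(x',t)=\tfrac12(\partial_Nv)^2-W(v)$. By \eqref{eq_main}, $E\le -\tfrac12|\nabla_{x'}v|^2\le 0$, and $E=0$ on the boundary. From this we want to conclude $\nabla_{x'}v\equiv0$. A direct computation gives
\[
\partial_N E=\partial_Nv\,(\partial_{NN}v-W'(v))=-\partial_Nv\,\Delta_{x'}v .
\]
We would integrate this over $x'$ against a cutoff and in $t$. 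Integrating by parts in $x'$ gives $\int \partial_N v\,\Delta_{x'}v=-\tfrac12\partial_N\int|\nabla_{x'}v|^2$. So $\int_{B_R}(E+\tfrac12|\nabla_{x'}v|^2)$ is nearly constant in $t$. It is $0$ at $t=0$, because $\nabla_{x'}v=0$ on the boundary, and it tends to $0$ as $t\to\infty$ by uniform decay. The integrand $E+\tfrac12|\nabla_{x'}v|^2$ is pointwise $\le0$, which is the Modica bound. The hard part is making the $t$-integration rigorous.

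\emph{Main obstacle: controlling the cutoff errors.} The cutoff produces boundary terms of order $R^{n-1}$. We must show that these vanish relative to the structure, or else use a Liouville-type argument. The alternative we would try first is the strong maximum principle applied to the $P$-function $P=\tfrac12|\nabla v|^2-W(v)\le0$, which satisfies an elliptic inequality and attains its maximum $0$ on the entire boundary. At a boundary point Hopf's lemma would force $\partial_N P<0$. But $\partial_NP=\partial_Nv\,(\partial_{NN}v-W'(v))=0$ there, because $\Delta_{x'}v=0$ on the flat boundary; this is the flatness mechanism mentioned in the introduction. Therefore $P\equiv0$. Then $\partial_N E=0$, so $\Delta_{x'}v\,\partial_Nv\equiv0$. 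Together with $P=E+\tfrac12|\nabla_{x'}v|^2=0$ and $E\le0$, this gives $|\nabla_{x'}v|^2=-2E$. Since $E=0$ on the boundary and $E$ is constant in $t$, we get $E\equiv0$. Hence $\nabla_{x'}v\equiv0$, so $v$ is one-dimensional and equal to $w_0(x_N)$ by the $N=1$ discussion. The delicate point is checking that the $P$-function inequality holds in the form needed, with the nondegeneracy $|\nabla v|\ne0$ handled where it is required.
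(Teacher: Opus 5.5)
Your derivation of \eqref{eq_bdry} is correct and matches the paper. The rigidity part, however, is not proved.

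The route you settle on is Hopf's lemma for $P=\tfrac12|\nabla v|^2-W(v)$ at boundary points, and it cannot work. When $M=c_p$, your own first step gives $\partial_Nv(x',0)=0$ for every $x'$, so $\nabla v\equiv0$ on all of $\partial\mathbb R^N_+$. The Caffarelli--Garofalo--Seg\`ala inequality (Lemma \ref{lem_CGS}) is only available where $\nabla v\ne0$. Its drift $b=-2W'(v)|\nabla v|^{-2}\nabla v$ blows up as one approaches any boundary point. So there is no elliptic operator with bounded coefficients near $\partial\mathbb R^N_+$ to which Hopf's lemma could be applied. This is exactly the degeneracy that Step 2 of the proof of Theorem \ref{thm_main} avoids only because $\Lambda>0$ there; at the level $0$ it is unavoidable.

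Even granting $P\equiv0$, your conclusion does not follow. $P\equiv0$ gives $\partial_NE=-\tfrac12\partial_N|\nabla_{x'}v|^2$, not $\partial_NE=0$, and the claim that $E$ is constant in $t$ is circular. The cutoff-integration route is left unfinished, and the comparison via $G$ you discard yourself.

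The missing step is much simpler, and you already wrote down both ingredients without combining them. With $M=c_p$, \eqref{eq_bdry} gives $\partial_Nv(\cdot,0)\equiv0$. Hence $\partial_N\phi(x',0)=\partial_Nv(x',0)-w_0'(0)=0$ for $\phi=v-w_0\ge0$. On the other hand, $\phi$ vanishes on $\partial\mathbb R^N_+$ and satisfies $\Delta\phi-\phi=-(v^p-w_0^p)\le0$. By the strong maximum principle, either $\phi\equiv0$ or $\phi>0$ in $\mathbb R^N_+$. In the latter case Hopf's lemma applies, since the zeroth order coefficient has the right sign and the coefficients are bounded, and it gives $\partial_N\phi(x',0)>0$ at every boundary point. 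This is precisely the dichotomy of Lemma \ref{lem_order}. The vanishing normal derivative excludes the second alternative, so $v\equiv w_0(x_N)$. Nothing beyond the boundary trace of the Modica estimate is needed; in particular you never need $P\equiv0$ in the interior.
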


Our rigidity theorem closes the problem in dimensions up to five.

\begin{theorem}[Low-dimensional rigidity]\label{thm_unique_Nle5_allp}
Let $2\le N\le5$ and $p>1$. Then $w_0(x_N)$ is the unique bounded positive solution of \eqref{eq_Pc} with $c=c_p$.
\end{theorem}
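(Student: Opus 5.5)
The plan is to argue by contradiction using the projection onto the kernel. Let $v$ be a bounded positive solution with $c=c_p$, and suppose $v\not\equiv w_0(x_N)$. By the sliding argument (as observed in \cite{So1}) we have $v\ge w_0(x_N)$. Hence $\phi:=v-w_0\ge0$, and by the strong maximum principle $\phi>0$ in $\mathbb R^N_+$. Theorem \ref{thm_char} gives $M>c_p$, but the contradiction itself will come from the reduced equation. With $Z=-w_0'>0$ on $(0,\infty)$ we set
\[
\lambda(x')=\frac1\nu\int_0^\infty\phi(x',t)Z(t)\,dt>0 .
\]
This is a bounded positive function on $\mathbb R^{n}$, $n=N-1\le4$.

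First, derive the exact projected equation. Multiply the equation for $\phi$,
\[
-\Delta\phi+\phi-\big((w_0+\phi)^p-w_0^p\big)=0,
\]
by $Z$ and integrate over $t\in(0,\infty)$. Boundary terms in $t$ vanish at $t=0$ because $\phi(x',0)=0$ and $Z(0)=0$, and at infinity by decay. Since $-Z''+Z-pw_0^{p-1}Z=0$, the linear part cancels and we obtain
\[
-\Delta_{x'}\lambda=\frac1\nu\int_0^\infty\big((w_0+\phi)^p-w_0^p-pw_0^{p-1}\phi\big)Z\,dt=:\frac1\nu\int_0^\infty N(\phi)Z\,dt\ge0 .
\]
Nonnegativity holds by convexity of $s\mapsto s^p$ on $s\ge0$. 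So $\lambda$ is a positive superharmonic function.

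The key step, and the main obstacle, is a uniform quadratic lower bound $\int_0^\infty N(\phi)Z\,dt\ge c\,\lambda^2$ with $c>0$ independent of $x'$, valid for all $p>1$. For $p\ge2$ one has $N(\phi)\ge c\,w_0^{p-2}\phi^2$ locally. For $p<2$ the convexity gain degenerates where $w_0$ is small or $\phi$ is large, so $N(\phi)\ge c\min(w_0^{p-2}\phi^2,\phi^p)$ only. The plan has two parts.
\begin{enumerate}[label=(\roman*)]
\item Restrict to a compact slab $t\in[0,T]$. There $w_0$ is bounded above and below, and $\phi$ is bounded since $v$ is, so $N(\phi)Z\ge c_T\,\phi^2 Z$. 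By Cauchy--Schwarz, $\int_0^T\phi^2Z\ge c\big(\int_0^T\phi Z\big)^2$.
\item Show the non-concentration estimate $\int_T^\infty\phi Z\,dt\le\frac12\int_0^\infty\phi Z\,dt$ uniformly in $x'$ for a fixed large $T$.
\end{enumerate}
For (ii) I would use the equation in the tail: there $v$ is small, hence $v$ is a subsolution of $-\Delta v+(1-\delta)v\le0$ for $t\ge T_0$. Comparison with exponentials in $t$, together with a Harnack inequality across the slab boundary $t=T_0$ (applied to $\phi>0$, which solves a linear equation with bounded coefficients), should give $\sup_{|y'-x'|\le1}\phi(y',t)\lesssim e^{-(1-\delta)(t-T_0)}\phi(x',T_0)$. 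Meanwhile $Z$ decays like $e^{-t}$, which makes the tail mass summable. Harnack then controls $\phi(x',T_0)$ by $\int_0^{T_0}\phi Z$ over a neighbourhood. The neighbourhood average can be handled by replacing $\lambda$ with local averages, or by a Harnack inequality for the superharmonic $\lambda$ itself. This is where I expect the real work to lie, since uniformity in $x'$ and the interplay of Harnack constants near $t=0$, where $\phi$ vanishes, must be controlled. Near $t=0$ one should use a boundary Harnack inequality comparing $\phi$ to $t$.

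Given the bound, $-\Delta_{x'}\lambda\ge C\lambda^2$ in $\mathbb R^n$, with $\lambda>0$ bounded and $n\le4$. The Liouville theorem for supersolutions of the Lane--Emden inequality $-\Delta u\ge u^q$ in $\mathbb R^n$ (no positive solutions when $n\le2$, or $n\ge3$ and $q\le n/(n-2)$) applies. Indeed $2\le n/(n-2)$ exactly when $n\le4$, and for $n=1,2$ positive superharmonic functions are constant, which forces $\lambda^2=0$. Hence $\lambda\equiv0$, contradicting $\phi>0$. Therefore $v=w_0(x_N)$.
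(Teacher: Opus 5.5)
Your architecture is the paper's: a quadratic lower bound on a compact slab via Cauchy--Schwarz, a tail non-concentration estimate, and the Lane--Emden Liouville theorem (Propositions \ref{prop_conditional} and \ref{prop_automatic_tail}). Your slab step (i) is actually a little better than the paper's. Since $\xi\le v\le M$, for $1<p<2$ one has $\xi^{p-2}\ge M^{p-2}$. Hence $N(\phi)\ge\frac{p(p-1)}{2}M^{p-2}\phi^2$ everywhere, so your worry about degeneration is unfounded. Step (i) therefore covers $1<p<2$ without quoting \cite{So2}, whereas the paper's Lemma \ref{lem_two} needs $p\ge2$. For $1<p<3$ the tail step is in fact unnecessary.

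The gap is step (ii), which you leave at ``should give'', and the mechanism you describe does not deliver it.
\begin{itemize}
\item A comparison function depending on $t$ alone must dominate $\phi$ on the whole hyperplane $\{t=T_0\}$. The maximum principle then only yields $\phi(x',t)\lesssim e^{-c(t-T_0)}\sup_{y'}\phi(y',T_0)$.
\item Harnack chains in $t$ only give exponential \emph{growth} bounds, $\phi(x',t)\le C^{|t-T_0|}\phi(x',T_0)$.
\end{itemize}
Neither gives a tail bound relative to $\phi(x',T_0)$ or $\lambda(x')$ uniformly in $x'$, which is exactly what (ii) requires. This matters because $\lambda$ may decay as $|x'|\to\infty$.

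The missing idea is a barrier that varies in $x'$ and is built from $\lambda$ itself, with the steps done in the opposite order.
\begin{enumerate}
\item First prove the global trace bound $\phi(x',T)\le C_T\lambda(x')$ for every $x'$. Use interior Harnack on a small ball around $(x',T)$ to bound $\phi(x',T)$ by its average there. Then use $Z\ge c_T>0$ on $[T-r,T+r]$ and the mean-value inequality $\frac{1}{|B_r|}\int_{B_r(x')}\lambda\le\lambda(x')$ for the positive superharmonic $\lambda$. No boundary Harnack at $t=0$ is needed.
\item Next set $U:=C_Te^{-(t-T)/\sqrt2}\lambda(x')$. Then $(-\Delta+\frac12)U=C_Te^{-(t-T)/\sqrt2}(-\Delta_{x'}\lambda)\ge0$. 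Meanwhile $(-\Delta+\frac12)\phi=-(a-\frac12)\phi\le0$ on $\{t>T\}$, where $a=1-(v^p-w_0^p)/\phi\ge\frac12$ once $T$ is large.
\item The maximum principle on the unbounded region, with the tangential $\cosh$ barriers of Lemma \ref{lem_reg}, gives $\phi\le U$. Hence $\int_K^\infty\phi Z\,dt\le Ce^{-(1+1/\sqrt2)K}\lambda(x')$, which is (ii) for $K$ large.
\end{enumerate}
It is precisely the superharmonicity of $\lambda$, coming from the projected identity, that makes this global barrier available. Your plan has all the ingredients but uses them only locally, so it never closes the comparison.
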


The construction starts from the kernel $Z=-w_0'$ of the one-dimensional linearized operator. The projected amplitude is governed to leading order by
\begin{equation}\label{eq_LE}
-\Delta_{x'}\lambda=C_p\lambda^2,\qquad
C_p:=\frac{p(p-1)}{2}\,
\frac{\int_0^\infty w_0^{p-2}Z^3\,dt}{\int_0^\infty Z^2\,dt}>0.
\end{equation}
The exponent two crosses the Serrin threshold in boundary dimension four and the Sobolev threshold in boundary dimension six. This is the source of the dimensional transition.

\begin{problem}\label{prob_N6}
For $N=6$ and $p>1$, is $w_0(x_N)$ the unique bounded positive solution of \eqref{eq_Pc} with $c=c_p$?
\end{problem}

We leave Problem \ref{prob_N6} open. The scalar projection alone does not decide it: in $\mathbb R^5$ the inequality $-\Delta\lambda\ge C\lambda^2$ admits positive solutions, the pure equation $-\Delta\lambda=C\lambda^2$ does not, and an $O(\lambda^3)$ correction can regularize the singular $|x'|^{-2}$ profile; see Proposition \ref{prop_nogo}.

In higher dimensions we obtain genuine non-one-dimensional solutions.

\begin{theorem}[High-dimensional nonuniqueness]\label{thm_construction}
Let $N\ge8$ and $p>1$. There is $\alpha_0>0$ such that for every $\alpha\in(0,\alpha_0)$ problem \eqref{eq_Pc} with $c=c_p$ has a bounded positive solution $v_\alpha$, radially symmetric in $x'$, which is not a function of $x_N$ alone. Moreover
\begin{equation}
v_\alpha=w_0+\lambda_\alpha Z+\lambda_\alpha^2h+O(\alpha^3)
\end{equation}
in $L^\infty(\mathbb R^N_+)$, where $\lambda_\alpha(0)=\alpha$ and
\begin{equation}\label{eq_tailC}
\lambda_\alpha(x')=\frac{2(N-5)}{C_p}|x'|^{-2}(1+o(1))
\qquad\text{as }|x'|\to\infty.
\end{equation}
\end{theorem}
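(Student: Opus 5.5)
We look for $v=w_0+\phi$ with $\phi$ radial in $x'$. We write $\phi=\lambda(x')Z(x_N)+\psi$ with $\int_0^\infty\psi(x',t)Z(t)\,dt=0$ and $\psi(x',0)=0$. Since $Z(0)=-w_0'(0)=0$, the boundary condition $v=c_p$ is compatible with both pieces. The equation for $\phi$ is
\[
-\Delta\phi+\phi-pw_0^{p-1}\phi=N(\phi):=(w_0+\phi)^p-w_0^p-pw_0^{p-1}\phi.
\]
We split it into the projection onto $Z$ and its orthogonal complement. The transverse equation reads $L\psi-\Delta_{x'}\psi=\Pi^\perp N(\lambda Z+\psi)$, where $L=-\partial_t^2+1-pw_0^{p-1}$ on $(0,\infty)$ with Dirichlet condition at $0$. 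On odd-reflected functions, $L$ is coercive on the orthogonal complement of $Z$, because the first eigenfunction $w_0^{(p+1)/2}$ is even and $Z$ is odd. Hence $L-\Delta_{x'}$ is invertible on $Z^\perp$ with exponential decay in $t$, uniformly in $x'$. This gives $\psi=\lambda^2h+O(\lambda^3)+O(|\nabla\lambda|^2+|D^2\lambda|)$, where $h$ solves the cell problem $Lh=\Pi^\perp\big(\tfrac{p(p-1)}2w_0^{p-2}Z^2\big)$. Inserting this into the projection gives the reduced equation
\[
-\Delta_{x'}\lambda=C_p\lambda^2+F(\lambda,\psi),\qquad F=O(\lambda^3)+(\text{terms quadratic in }\lambda\text{ and }\psi\text{, or in }\psi).
\]
Here $C_p$ is as in \eqref{eq_LE}.

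\textbf{Step 1: the radial Lane--Emden solutions.} In dimension $n=N-1\ge7$ we have $n\ge 7>6$, so the exponent $2$ is supercritical. The equation $-\Delta\Lambda=C_p\Lambda^2$ then has a one-parameter family of regular radial ground states. They are obtained by scaling, $\Lambda_\alpha(r)=\alpha\Lambda_1(\sqrt\alpha\, r)$, with $\Lambda_1(0)=1$. Each satisfies $\Lambda_\alpha(r)\sim A|x'|^{-2}$ with the singular constant $A$ given by $2(n-4)=C_pA$. This matches $\frac{2(N-5)}{C_p}$ in \eqref{eq_tailC}. Since $\alpha$ is small, $\lambda=O(\alpha)$, and the derivatives scale as $|\nabla\lambda|\sim\alpha^{3/2}$ and $|D^2\lambda|\sim\alpha^{2}$. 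So the corrections in $F$ are genuinely of higher order.

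\textbf{Step 2: weighted linear theory.} We linearize at $\Lambda_\alpha$: $\mathcal L_\alpha\eta=-\Delta\eta-2C_p\Lambda_\alpha\eta$. After scaling, this becomes $\mathcal L_1$ on radial functions. At infinity the potential is $2C_pA r^{-2}=4(n-4)r^{-2}$. The indicial roots $\gamma$ solve $\gamma(\gamma-(n-2))+4(n-4)=0$, whose discriminant is $(n-2)^2-16(n-4)=n^2-20n+68$. This discriminant is positive iff $n\ge 16$ (roughly; the Joseph--Lundgren regime), so for smaller $n$ the roots are complex. The paper's distinction between $N=7$ and $N\ge8$ suggests another mechanism. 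I would set up the problem instead in weighted spaces $\|\eta\|=\sup(1+r)^{2+\delta}|\eta|$ with radial symmetry, imposing $\eta(0)$ fixed to kill the scaling kernel $\partial_\alpha\Lambda_\alpha$. I would prove invertibility of $\mathcal L_1$ with a right-hand side decaying like $r^{-4-\delta}$ by ODE variation of constants. The error $F$ decays like $\lambda^3\sim r^{-6}$, and $\Lambda^2$-type errors are $r^{-4}$. The condition $N\ge8$ ($n\ge7$) should be exactly what makes the $r^{-6}$ source integrable against the homogeneous solutions, that is $\int^\infty r^{-6}r^{n-1}r^{\mathrm{Re}\gamma}dr$ behaving correctly. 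I expect this to be the borderline resonance that fails at $n=6$.

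\textbf{Step 3: fixed point and conclusions.} We solve for $(\lambda,\psi)$ with $\lambda=\Lambda_\alpha+\eta$ and $\eta(0)=0$ by a contraction in the product of the weighted space for $\eta$ and an $L^\infty$ space, exponentially weighted in $t$, for $\psi$. Then $\lambda_\alpha(0)=\alpha$. We obtain $\|\eta\|=O(\alpha^2)$ and $\psi-\lambda^2h=O(\alpha^3)$, which gives the expansion and \eqref{eq_tailC}. Boundedness is immediate from the construction. Since $\lambda_\alpha>0$ and $\lambda_\alpha Z\ge0$, the solution is $\ge w_0$ up to $O(\alpha^2)$ in regions where $w_0$ is small; positivity then follows from the maximum principle applied to $v$, using $v>0$ near the boundary and $v\to0$. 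The solution is not one-dimensional because $\lambda$ is nonconstant. Decay as $x_N\to\infty$ is uniform in $x'$ by the exponential weight. The main obstacle is Step 2: it needs uniform invertibility of the reduced linearization in weighted radial spaces, together with control of the derivative terms in $F$ under the anisotropic scaling.
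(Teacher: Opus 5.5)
Your route differs from the paper's and can be made to work for $N\ge8$. However, the mechanism you suggest for the dimensional restriction in Step 2 is not the right one.

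\emph{Comparison.} You use the classical splitting $\phi=\lambda Z+\psi$ and take the pure Lane--Emden ground state $\Lambda_\alpha(r)=\alpha\Lambda_1(\sqrt\alpha\,r)$ as the leading profile. Everything beyond $C_p\lambda^2$ in the projected equation, in particular the $\lambda^3$ terms coming from $\psi\approx\lambda^2h$, is treated as a source of size $\alpha^3(1+\varepsilon r)^{-6}$. The reduced inverse turns this into $\eta=O(\varepsilon^{-2}\alpha^3)=O(\alpha^2)$.

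The paper instead solves a nonlinear cell problem for each amplitude (Proposition \ref{c:prop_cell}). This gives the exact reduced nonlinearity $\mathfrak g(a)=aT(a)$, and $\zeta_{\lambda(x')}(x_N)$ serves as approximate solution. As a result, the $Z$-component of the error is exactly the reduced equation (Lemma \ref{c:lem_orth}), and the transverse correction is $O(\alpha^3)$. The operator $-\Delta-\mathfrak g'(\lambda_\alpha)$ is then inverted by a closed-range/trivial-cokernel argument in weighted $L^2$.

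Your choice buys simplicity. $\Lambda_\alpha$ is an exact rescaling of one profile, so the reduced linearization is exactly scale invariant. In the radial class a right inverse is just variation of constants from the origin with $\eta(0)=0$. The paper's choice buys the exact coefficient $\kappa$ and a framework that survives at $N=7$, where your perturbative scheme breaks down. For $N\ge8$ that extra structure is not needed.

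One consequence of your scheme: $\psi=O(\alpha^2)$ feeds back into $\eta$ with an order-one constant, since $\varepsilon^{-2}\alpha=1/C_p$. The contraction therefore needs a scaled product metric, or you should first solve $\psi=\Psi[\lambda]$ and then the reduced equation.

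\emph{On Step 2.} Integrability of $r^{-6}$ against the homogeneous solutions does not change at $n=6$; against the decaying modes $r^{-(n-2)/2}$ it changes at $n=10$. What matters is the decay of the homogeneous radial solutions themselves. The variation-of-constants solution from the origin generically contains the modes $r^{-\gamma^{(0)}_\pm}$ at infinity. Hence the estimate $\sup(1+r)^{2+\delta}|\eta|\le C\sup(1+r)^{4+\delta}|F|$ requires $2+\delta<\mathrm{Re}\,\gamma^{(0)}_-$. For $7\le n\le15$ this means $\delta<\frac{n-6}{2}$; for $n\ge16$ the roots are real and $\gamma^{(0)}_->4$.

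In the other direction, you need $\sigma=2+\delta\ge2$ for $\|\eta^2\|_{\sigma+2}\le\|\eta\|_\sigma^2$ and for $|\eta|\le\frac12\Lambda_\alpha$, and strictly $\sigma>2$ for the tail $(1+o(1))$. This is the $\ell=0$ part of the window in Lemma \ref{c:lem_window}. It is nonempty exactly when $n\ge7$ and collapses at weight $2$ when $n=6$. You must choose $\delta$ in this range.

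\emph{Points of the sketch that need repair.}
\begin{itemize}
\item The space for $\psi$ must carry the tangential weight $(1+\varepsilon|x'|)^{-4}$, not only an exponential weight in $x_N$. Otherwise $F$ will not decay like $r^{-\sigma-2}$.
\item The error in $\psi-\lambda^2h$ is $O(\lambda^3+\lambda|D^2\lambda|+|\nabla\lambda|^2)$, because $\Delta_{x'}\lambda\,Z$ is projected out. An error $O(|D^2\lambda|)=O(\alpha^2)$, as you wrote, would destroy the expansion.
\item Since $h\sim c\,te^{-t}$, the transverse inverse only controls $\psi$ in $e^{-\mu x_N}$ with $\mu<1$, so $\psi$ dominates $w_0$ for large $x_N$. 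For $1<p<2$ the remainder is not quadratic there, and you need the near/far splitting based on \eqref{c:eq_Rtwo}.
\item Solve with $|v|^{p-1}v$ and obtain positivity from the maximum principle. At a negative, tangentially translated interior minimum $m$ with $|m|<1$ one has $\Delta V=m(1-|m|^{p-1})<0$, a contradiction. The bound $v\ge w_0-O(\alpha^2)$ alone does not give positivity.
\end{itemize}
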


The boundary dimension $n=6$, namely $N=7$, is critical for \eqref{eq_LE}. Let $\kappa(p)$ be the cubic coefficient of the exact reduced nonlinearity defined in Remark \ref{c:rem_kappa}.

\begin{theorem}[Critical nonuniqueness in dimension seven]\label{thm_construction_critical}
Let $N=7$, $p>1$, and assume $\kappa(p)>0$. Then, for every sufficiently small $\alpha>0$, problem \eqref{eq_Pc} with $c=c_p$ has a bounded positive non-one-dimensional solution $v_\alpha$, radial in $x'$. In particular, there exists $p_+\in(1,2)$ such that this conclusion holds for every $p>p_+$. If
\[
a_\alpha(x'):=\frac1\nu\int_0^\infty (v_\alpha(x',t)-w_0(t))Z(t)\,dt,
\]
then $a_\alpha(0)=\alpha$, $a_\alpha(x')\to0$, and with $\varepsilon^2=C_p\alpha$,
\begin{equation}\label{eq_critical_amp_bounds}
0<a_\alpha(x')\le C\alpha(1+\varepsilon|x'|)^{-2},\qquad
|\nabla a_\alpha(x')|\le C\alpha\varepsilon(1+\varepsilon|x'|)^{-3}.
\end{equation}
Moreover $v_\alpha=w_0+a_\alpha Z+a_\alpha^2h+O(\alpha^3)$ in $L^\infty$. Writing $M_\alpha(t)=e^{2t}a_\alpha(e^t)$, one has
\begin{equation}\label{eq_critical_fowler_main}
M_\alpha(t)=M_{E_\alpha}(t+\vartheta_\alpha)+O_\alpha\big(e^{-\delta t}\big),\qquad
E_\alpha=-\frac{1152}{35}\frac{\kappa}{C_p^3}\alpha+o(\alpha)<0,
\end{equation}
for a positive periodic Fowler solution of $\ddot M-4M+C_pM^2=0$, a phase $\vartheta_\alpha\in\mathbb R$ and some $\delta>0$ independent of $\alpha$; see Proposition \ref{c:prop_forced_fowler} for its value. Its neck, bulge and minimal period satisfy
\begin{equation}
M_-(E_\alpha)=\frac{24}{\sqrt{35}}\frac{\sqrt\kappa}{C_p^{3/2}}\alpha^{1/2}(1+o(1)),\quad
M_+(E_\alpha)=\frac6{C_p}+O(\alpha),\quad
\mathcal T_\alpha^{\rm F}=\frac12|\log\alpha|+O(1).
\end{equation}
\end{theorem}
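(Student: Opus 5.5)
We will construct $v_\alpha$ in the form $v=w_0+aZ+a^2h+\psi$, with $a=a(|x'|)$ radial on $\mathbb R^6$. The correction $\psi$ is orthogonal to $Z$ in $L^2(0,\infty)$ for every fixed $x'$, and $\psi=0$ on $x_N=0$. The first step is the nonlinear cell problem. Since $v=c_p$ on the boundary and $w_0(0)=c_p$, the one-dimensional corrector $h$ solves $-h''+h-pw_0^{p-1}h=\tfrac{p(p-1)}2w_0^{p-2}Z^2$ minus its $Z$-projection, with $h(0)=0$. Its solvability on $(0,\infty)$ uses that $Z(0)=0$ and $Z$ decays. Projecting the full equation onto $Z$ then gives an exact scalar equation for $a$,
\[
-\Delta_{x'}a=C_pa^2+\kappa a^3+\mathcal N(a,\psi),
\]
where $\mathcal N$ is quartic in $a$ or linear in $\psi$ with small coefficients. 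The transverse equation for $\psi$ is the projected linearized operator $-\Delta_{x'}+L_0$, with $L_0=-\partial_t^2+1-pw_0^{p-1}$. This operator is uniformly coercive on $Z^\perp$ with Dirichlet data, because $w_0$ restricted to $(0,\infty)$ has $Z$ as its only degenerate direction. So $\psi$ is solvable in weighted $L^\infty$ spaces with decay $(1+\varepsilon|x'|)^{-4}\alpha^2$ and size $O(\alpha^3)$, provided $a$ obeys the bounds \eqref{eq_critical_amp_bounds}.

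For fixed $\psi$ in a closed convex set $\mathcal K$ of such functions, we would solve the radial ODE for $a$ as an initial value problem: $a(0)=\alpha$, $a'(0)=0$. We rescale $a(r)=\alpha A(\varepsilon r)$ with $\varepsilon^2=C_p\alpha$. Then $A$ solves $-\Delta A=A^2+\alpha(\kappa/C_p)A^3+\dots$, a small perturbation of the critical Lane--Emden equation in $\mathbb R^6$. The unperturbed equation has the Aubin--Talenti bubble $A_0=(1+|y|^2/24)^{-2}$ up to normalization. It decays like $|y|^{-4}$, which in Emden--Fowler variables $M(t)=e^{2t}a(e^t)$ is the homoclinic orbit of $\ddot M-4M+C_pM^2=0$ at energy $E=0$; the perturbation terms are decaying in $t$.

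The key computation is the Pohozaev-type energy balance. Differentiate $E(t)=\tfrac12\dot M^2-2M^2+\tfrac{C_p}3M^3$ along the perturbed flow and integrate the cubic forcing $\kappa e^{-2t}M^4$ (this term is weighted by $e^{-2t}$ in these variables) against $\dot M$ over the bubble. This yields $E_\alpha=-c\,\kappa\alpha/C_p^3+o(\alpha)$, and evaluating the bubble integrals should produce $1152/35$. When $\kappa>0$ the energy becomes negative, so the orbit is trapped inside the homoclinic loop. It converges, with error $O(e^{-\delta t})$, to a positive periodic Fowler orbit rather than crossing zero. This gives positivity, $a\to0$ like $r^{-2}$, and the stated neck, bulge and period asymptotics. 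These follow by expanding the Fowler orbit near the saddle, where $M_-\sim\sqrt{|E|}/2$ and the period is $\tfrac12\log(1/|E|)+O(1)$.

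The map $\psi\mapsto a[\psi]\mapsto\psi_{\rm new}$ is continuous and compact on bounded sets, by local elliptic estimates plus uniform tail decay. So the Schauder--Tychonoff theorem gives a fixed point. The resulting $v_\alpha$ is positive because $v\ge w_0$ is forced by the expansion away from the boundary, and near $x_N=0$ by the sign of $\partial_Nv$. It is non-one-dimensional because $a\not\equiv$ const. The claim for $p>p_+$ reduces to $\kappa(p)>0$ for $p\ge2$, established in Remark \ref{c:rem_kappa}, plus continuity of $\kappa$ in $p$. The main obstacle is the energy/phase analysis. We must show that the forcing from $\psi$ and the quartic remainders contributes $o(\alpha)$ to $E$ uniformly on the slowly decaying Fowler tail. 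The transverse weights are only critical ($r^{-2}$ non-integrable against the linearization), so we would lean on the $O(\alpha^3)$ size and the extra $e^{-2t}$ decay of all non-autonomous terms.
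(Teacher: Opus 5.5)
Your architecture is the paper's. For each transverse correction you solve the projected radial equation nonlinearly as an initial value problem, instead of inverting the resonant reduced linearization. The Pohozaev/Fowler energy, pushed to $-\tfrac{1152}{35}\kappa C_p^{-3}\alpha$ across the bubble by the cubic coefficient, keeps $a$ positive and trapped in the homoclinic loop. The transverse equation is then closed by Schauder--Tychonoff. Using the truncation $aZ+a^2h$ instead of the exact cell family $\zeta_a$ is workable, since its $Z$-projection reproduces $C_pa^2+\kappa a^3$ and the $O(a^4)$ remainder can be carried as forcing. The exact family buys a profile that stays in the region where $R$ is quadratic for every $p>1$ and has no secular term. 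Also note $M_-(E)=\sqrt{|E|/2}\,(1+o(1))$, not $\sqrt{|E|}/2$.

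The genuine gap is positivity: $v_\alpha\ge w_0$ cannot be read off the expansion.

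\emph{The explicit terms change sign.} Since $\mathcal Lh=-C_pZ+O(e^{-pt})$ and $Z\sim k_pe^{-t}$, one has $h(t)=-\tfrac{C_p}{2}k_p\,t e^{-t}(1+o(1))$; this is the secular term from expanding $e^{-\sqrt{1+T(a)}\,t}$ in $a$. Hence $aZ+a^2h<0$ once $x_N$ exceeds about $2/(C_pa(x'))$.

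\emph{The correction gives no pointwise comparison.} The transverse theory (Lemma \ref{c:lem_weighttrans}) controls the correction only with a weight $e^{-\mu x_N}$, $\mu<\mu_0(p)<1$, and your proposal states no $x_N$ weight at all. Since $aZ$ and $w_0$ decay like $e^{-x_N}$, at large $x_N$ nothing compares $\psi$ with $aZ$, or even with $w_0$. The inequality $v_\alpha\ge w_0$ is true, but only a posteriori through Lemma \ref{lem_order}, which presupposes $v_\alpha>0$.

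The paper instead uses the maximum principle, needing only $\|v_\alpha-w_0\|_{L^\infty}\le C\alpha$:
\begin{enumerate}[label=(\roman*)]
\item Solve with the odd nonlinearity $|v|^{p-1}v$ and suppose $m:=\inf v_\alpha<0$.
\item Uniform decay in $x_N$ keeps minimizing sequences at bounded height.
\item Tangential translations and the $C^{2,\gamma}$ bounds of Lemma \ref{lem_reg}(ii), which do not need positivity, give a solution $V$ with an interior minimum $m$.
\item There $\Delta V=m(1-|m|^{p-1})<0$, because $|m|\le C\alpha<1$. This contradicts $\Delta V\ge0$ at a minimum, and the strong maximum principle then gives $v_\alpha>0$.
\end{enumerate}

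On the obstacle you flagged: the forcing enters the Pohozaev function through $\int^\infty s^5|H|\,|2a+sa'|\,ds$ with $|2a+sa'|\le Cs^{-2}$. So $H$ must decay strictly faster than $s^{-4}$. With your weight $(1+\varepsilon|x'|)^{-4}$ on $\psi$ and a merely $O(\alpha)$ coefficient, this integral diverges logarithmically. You must either exploit that the coefficient is of size $a$ inside the bootstrap, or take the paper's transverse weight $(1+\varepsilon|x'|)^{-\sigma-2}$ with $2<\sigma<4$. In either case a first-crossing bootstrap closes the argument: while $\mathcal P<0$, $M$ stays bounded, the $Q_6$ term is nonpositive, and the forcing contributes only $O(\alpha^2)$.
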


The coefficient $\kappa$ is continuous, satisfies $\kappa(2)=15/1024$, and is positive for every $p\ge2$. Hence it is positive on $(p_+,\infty)$ for some $p_+<2$. On the other hand, $\kappa(p)<0$ for $p>1$ sufficiently close to $1$; see Appendix \ref{app_kappa}. Thus the reduced critical problem exhibits a genuine sign transition in $(1,2)$. Determining the exact transition, and the half-space problem in the complementary range of exponents in $N=7$, remain open.

The two construction regimes require different closures. For $N\ge8$ the exact reduced equation is solved first and its linearization is inverted in weighted spaces. The admissible interval of weights is $(2,(N-3)/2)$, which is nonempty exactly for $N\ge8$. In $N=7$ that interval collapses at the critical weight. Rather than invert the resonant reduced operator, we solve the projected radial equation nonlinearly for each transverse correction; a Pohozaev quantity preserves positivity and the critical $r^{-2}$ bounds, and the remaining transverse equation is closed by a Schauder--Tychonoff fixed point.

We also record in Proposition \ref{prop_flux} an exact boundary-flux identity. It quantifies the tangential leakage of the one-dimensional energy and is useful independently of the classification.

\subsection{Organization}
Sections \ref{sec_prelim}--\ref{sec_reduced} establish the a priori estimates, the projection method, and uniqueness for $N\le5$. Sections \ref{c:sec_prelim}--\ref{c:sec_fixed} develop the cell problem, reduced equation, linear theory, and fixed-point construction for $N\ge8$. Section \ref{c:sec_critical_full} treats the resonant case $N=7$, and Appendix \ref{app_kappa} analyzes the cubic coefficient $\kappa$.

\section{Preliminaries}\label{sec_prelim}

Throughout Sections \ref{sec_prelim} to \ref{sec_char} we fix $N\ge2$, $p>1$, a constant $c>0$ and a bounded positive solution $v$ of \eqref{eq_Pc}, and we keep the notation \eqref{eq_w0}, \eqref{eq_W} and $M=\sup v$. Since $v$ is continuous up to $\partial\mathbb{R}^N_+$ and equals $c$ there, we have $M\ge c$; in particular $M\ge c_p$ when $c=c_p$. Lemma \ref{lem_order}, Theorem \ref{thm_char} and Proposition \ref{prop_flux} require $c=c_p$ and we say so in their statements; everything else in these sections holds for every $c>0$.

\begin{lemma}\label{lem_order}
Assume $c=c_p$. Then $v(x)\ge w_0(x_N)$ for every $x\in\overline{\mathbb{R}^N_+}$. Moreover, either $v(x)=w_0(x_N)$ for every $x$, or $v(x)>w_0(x_N)$ for every $x\in\mathbb{R}^N_+$ and $\partial_Nv(x',0)>0$ for every $x'\in\mathbb{R}^{N-1}$.
\end{lemma}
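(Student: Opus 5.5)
The plan is to run the sliding argument of \cite{FW} with the translated profiles $w_\tau(x):=w_0(x_N-\tau)$, $\tau\ge0$, and to read off the dichotomy from the strong maximum principle and the Hopf lemma.

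First we show $v\ge w_\tau$ in $\overline{\mathbb R^N_+}$ for all large $\tau$. On $\partial\mathbb R^N_+$ we have $w_\tau(0)=w_0(-\tau)<c_p=v$. In the far region $\{x_N\ge R\}$, the uniform decay of $v$ puts both functions in the range where $s\mapsto s^p$ has derivative $<1$. We then use a comparison with a positive supersolution of the linearized operator, for instance $e^{-\mu x_N}$ with $\mu<1$; a maximum principle for $-\Delta+(1-p\xi^{p-1})$ on sets where $\xi$ is small applies. This shows that $w_\tau-v$ cannot have a positive supremum attained at infinity there. For $\tau\gg R$, $w_\tau$ is tiny on the strip $\{0\le x_N\le R\}$, while $v$ is bounded below by a positive constant on that strip. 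This lower bound follows from Harnack, the boundary value $c_p$, and interior positivity. Let us now make the comparison precise. Define $\tau_*=\inf\{\tau\ge0: v\ge w_s \text{ for all } s\ge\tau\}$.

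Second, we show $\tau_*=0$. Suppose $\tau_*>0$. Then $w_{\tau_*}(0)=w_0(-\tau_*)<c_p$ is a strict boundary inequality. By the strong maximum principle applied to $v-w_{\tau_*}\ge0$, which satisfies a linear equation $-\Delta u+(1-q(x))u=0$ with bounded $q$, either $v\equiv w_{\tau_*}$ or $v>w_{\tau_*}$. The first is impossible by the boundary values. Here lies the main obstacle. One must show that strict ordering persists for $\tau$ slightly below $\tau_*$ even though the domain is unbounded in $x'$. If it failed, there would be points $x_k$ with $(w_{\tau_k}-v)(x_k)\ge 0$ and $\tau_k\uparrow\tau_*$. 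By the far-region maximum principle and the boundary inequality, $(x_k)_N$ stays in a compact range $[\delta,R]$. Translating in $x'$ and using elliptic estimates, we extract a limit solution $\tilde v$ of the same problem, with the decay uniform by hypothesis. It satisfies $\tilde v\ge w_{\tau_*}$ and touches it at an interior point or at the boundary. The boundary case is excluded because $w_{\tau_*}(0)<c_p$, and the interior case gives $\tilde v\equiv w_{\tau_*}$, again contradicting the boundary values. Points approaching $\{x_N=0\}$ are handled by uniform $C^1$ bounds up to the boundary together with the strict boundary gap. Hence $\tau_*=0$, and letting $\tau\to0$ gives $v\ge w_0(x_N)$.

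Finally, we prove the dichotomy. The function $u=v-w_0\ge0$ solves $-\Delta u+(1-q)u=0$ with $q=(v^p-w_0^p)/(v-w_0)$ bounded, and $u=0$ on $\partial\mathbb R^N_+$. By the strong maximum principle, either $u\equiv0$ or $u>0$ in $\mathbb R^N_+$. In the latter case we apply the Hopf lemma at each boundary point, on a small ball where the zeroth order coefficient sign is irrelevant since $u\ge0$. This gives $\partial_N u(x',0)>0$. Since $w_0'(0)=0$, this yields $\partial_Nv(x',0)>0$.
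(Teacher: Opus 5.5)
Your overall route is the paper's: the sliding argument of \cite{FW} to obtain $v\ge w_0(x_N)$, followed by the strong maximum principle and the Hopf lemma for $\phi=v-w_0\ge0$. Your last paragraph is exactly the paper's argument. The problem is the sliding family you wrote down, which makes your first step false as stated. With $w_\tau(x)=w_0(x_N-\tau)$, $\tau\ge0$, the peak of the profile lies inside the half-space: $w_\tau(x',\tau)=w_0(0)=c_p$, while $v(x',\tau)\to0$ uniformly as $\tau\to\infty$. Hence $v\ge w_\tau$ fails for every large $\tau$, the set whose infimum defines $\tau_*$ is empty, and the sliding never starts. The step that breaks is your claim that in the far region $\{x_N\ge R\}$ both functions lie where $s\mapsto s^p$ has derivative less than $1$. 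At $x_N=\tau$ one has $p\,w_\tau^{p-1}=p(p+1)/2>1$, and indeed $w_\tau>v$ there. The same claim is reused in your second step to confine $(x_k)_N$ to $[\delta,R]$, so that step also fails as written. This is probably a sign slip, since your description of the strip $\{0\le x_N\le R\}$ fits the reflected family, but as written the argument does not go through.

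The fix is to slide the other half of the family, $w_\tau(x)=w_0(x_N+\tau)$ with $\tau\ge0$; this is the paper's $v_t(x)=w_0(x_N+t)$. These profiles are decreasing in $x_N$ on $\overline{\mathbb{R}^N_+}$. Their supremum $w_0(\tau)$ is attained on $\partial\mathbb{R}^N_+$, is $<c_p$ for $\tau>0$, and tends to $0$ as $\tau\to\infty$.

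Your first step then holds, even globally and without the lower bound for $v$ on strips. On the set $\{w_\tau>v\}$ the mean-value coefficient satisfies $q\le p\,w_\tau^{p-1}\le\frac12$ once $w_0(\tau)\le(2p)^{-1/(p-1)}$. So $w_\tau-v$ is negative on $\partial\mathbb{R}^N_+$, tends to $0$ as $x_N\to\infty$, and satisfies $\Delta(w_\tau-v)\ge\frac12(w_\tau-v)$ wherever it is positive. The $\cosh$-barrier maximum principle of Lemma~\ref{lem_reg} then gives $w_\tau\le v$.

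Your second step then goes through as well. You obtain a uniform gap for $v-w_{\tau_*}$ on slabs by tangential translation and the strong maximum principle, use the strict boundary inequality $w_0(\tau_*)<c_p$, and finish with the far-region maximum principle. This is Step 2 of \cite[Theorem 6.1]{FW}, which the paper invokes. With this correction your proof coincides with the paper's.
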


\begin{proof}
The first assertion is stated in \cite[(1.3)]{So1}. We recall why the argument of \cite[Theorem 6.1]{FW} gives it. For $t\in\mathbb{R}$ put $v_t(x)=w_0(x_N+t)$, and set
\[
t^\star=\inf\left\{t\in\mathbb{R}:v\ge v_s\text{ in }\mathbb{R}^N_+\text{ for every }s\ge t\right\}.
\]
Step 1 of the proof of \cite[Theorem 6.1]{FW} shows that this set is nonempty, so $t^\star<\infty$. That step uses only that $v_t\le(2p)^{-1/(p-1)}$ in $\overline{\mathbb{R}^N_+}$ for $t$ large, together with $v>v_t$ on $\partial\mathbb{R}^N_+$, and both hold here because $v=c_p>w_0(t)=\max_{\overline{\mathbb{R}^N_+}}v_t$ on $\partial\mathbb{R}^N_+$ for every $t>0$. Suppose $t^\star>0$. Then $w_0(t)<c_p$ for every $t$ in a neighborhood of $t^\star$, so the strict boundary inequality $v>v_t$ on $\partial\mathbb{R}^N_+$ persists for $t$ slightly below $t^\star$, and Step 2 of \cite[Theorem 6.1]{FW} applies without change and contradicts the definition of $t^\star$. Hence $t^\star\le0$, and by continuity $v\ge v_0=w_0(x_N)$.

For the second assertion, put $\phi=v-w_0(x_N)\ge0$. Both $v$ and $w_0$ solve the same equation, so
\begin{equation}
-\Delta\phi+a(x)\phi=0\quad\text{in }\mathbb{R}^N_+,\qquad \phi=0\ \text{ on }\partial\mathbb{R}^N_+,
\end{equation}
where $a(x)=1-\left(v^p-w_0^p\right)/\phi$ when $\phi(x)\ne0$ and $a(x)=1-pw_0^{p-1}$ otherwise. Since $0\le w_0,v\le\max\{c_p,M\}$, the function $a$ is bounded. Since $\Delta\phi=a\phi\le a^+\phi$ and $\phi\ge0$, the function $\phi$ is a supersolution of the operator $\Delta-a^+$, whose zeroth order coefficient is nonpositive. If $\phi$ vanishes at some interior point, then $\phi$ attains its minimum there and \cite[Theorem 3.5]{GT} gives $\phi\equiv0$. Otherwise $\phi>0$ in $\mathbb{R}^N_+$, and the Hopf boundary point lemma \cite[Lemma 3.4]{GT} applied at any $(x',0)$, using that the half-space satisfies the interior ball condition at every boundary point, gives $\partial_N\phi(x',0)>0$. Since $w_0'(0)=0$, this reads $\partial_Nv(x',0)>0$.
\end{proof}

\begin{lemma}\label{lem_reg}
The following hold for every $c>0$ and every bounded positive solution $v$ of \eqref{eq_Pc}. Part (ii) holds, with the same proof, for every bounded solution of the equation in \eqref{eq_Pc} equal to $c$ on $\partial\mathbb{R}^N_+$, whether or not it is positive.
\begin{enumerate}[label=(\roman*)]
\item[(0)] For every $K>0$ one has $\inf\{v(x):x\in\overline{\mathbb{R}^N_+},\ x_N\le K\}>0$.
\item $v\in C^\infty(\overline{\mathbb{R}^N_+})$, and for every $K>0$ the norm of $v$ in $C^3\left(\overline{\mathbb{R}^N_+}\cap\{x_N\le K\}\right)$ is bounded by a constant depending only on $N$, $p$, $M$, $K$ and $c$.
\item $|\nabla v|+|D^2v|\le C$ on $\overline{\mathbb{R}^N_+}$, with $C=C(N,p,M)$.
\item There is $C=C(N,p,M,c)>0$ such that
\begin{equation}\label{eq_decay}
v(x)+|\nabla v(x)|\le Ce^{-x_N/\sqrt2}\qquad\text{for every }x\in\overline{\mathbb{R}^N_+}.
\end{equation}
\item There is $C=C(N,p,M)>0$ such that $v(x)\le Ce^{-x_N}$ for every $x\in\overline{\mathbb{R}^N_+}$. The rate $1$ is that of the linearization of \eqref{eq_Pc} at $0$, and it is attained by $w_0$.
\end{enumerate}
\end{lemma}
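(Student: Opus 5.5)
The plan is to prove the five parts in the order (0), (i), (ii), (iii), (iv). Each later part uses the earlier ones, and the only tools are standard elliptic estimates for the equation $\Delta v=W'(v)$ with right-hand side bounded by $C(M,p)$.

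For (i) and (ii), $f(v)=v-|v|^{p-1}v$ is bounded on $[-M,M]$ and the boundary datum is constant. First apply boundary $W^{2,q}$ estimates for the Dirichlet problem on half-balls $B_2^+(x_0',0)$. Then interior estimates on balls $B_1(x)\subset\mathbb R^N_+$, together with Morrey's embedding, give a uniform $C^{1,\alpha}$ bound. Schauder estimates then give $C^{2,\alpha}$, both at the boundary (flat boundary, constant data) and in the interior. Bootstrapping once more gives $C^{3,\alpha}$. The function $s\mapsto|s|^{p-1}s$ is only $C^{1,p-1}$ near $0$ when $p<2$, so smoothness up to the boundary has to be argued through positivity. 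This is where (0) enters: on $\{x_N\le K\}$ we have $v$ bounded below, so the nonlinearity is smooth on the range of $v$ there and bootstrapping gives $C^\infty$ with bounds depending on $K$. Away from the boundary $v>0$ also gives local smoothness. Interior $C^2$ bounds depend only on $N,p,M$, which gives (ii).

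For (0), suppose $x^k$ with $x^k_N\le K$ and $v(x^k)\to0$. Translate tangentially by $(x^k)'$ and use the uniform $C^{2,\alpha}$ bounds and Arzel\`a--Ascoli to pass to a limit $\bar v\ge0$. This limit solves the same equation, equals $c$ on the boundary, and vanishes at a point with $x_N\le K$. The boundary value $c>0$ rules out a boundary point, since the $C^1$ bound keeps $v\ge c/2$ in a strip $x_N\le\delta$. At an interior zero, the equation $-\Delta\bar v+a\bar v=0$ with bounded $a$ and the strong maximum principle force $\bar v\equiv0$, contradicting the boundary value.

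For (iii), the uniform decay gives $R$ with $v\le\eta$ for $x_N\ge R$, where $\eta$ is chosen so that $\eta^{p-1}\le 1/2$. In that region $-\Delta v+\tfrac12 v\le0$. We compare with the barrier
\[
\psi(x)=Me^{-(x_N-R)/\sqrt2}+\epsilon\sum_{i<N}\cosh(\mu x_i),
\]
with $\mu$ small. The second term handles the unbounded tangential directions, and letting $\epsilon\to0$ gives $v\le Ce^{-x_N/\sqrt2}$. The gradient bound follows from interior gradient estimates on unit balls applied to the equation, whose right-hand side is $O(v)$. Here $C$ depends on $R$, and hence on $v$ through its decay rate. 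To get dependence only on $(N,p,M,c)$, I expect one needs a compactness argument showing $R$ is uniform, and this is subtle. I would instead accept dependence through $R$, or argue that the sliding/Harnack argument fixes $R$.

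For (iv), I would refine with the same barrier method. Since $v^{p}\le Ce^{-p x_N/\sqrt2}$, the function $v$ satisfies $-\Delta v+v=g$ with $0\le g\le Ce^{-\beta x_N}$ for some $\beta>0$. Iterating barriers of the form $Ae^{-\gamma x_N}$ raises the rate step by step. Once $p\gamma>1$, the comparison with $e^{-x_N}$ applies directly, using $\partial_N^2 e^{-x_N}=e^{-x_N}$ and a particular solution for the forcing term that decays faster. I expect this iteration to be the main obstacle, because the forcing has rate $p\gamma$ that must exceed $1$ and the constants must not depend on $v$ beyond $M$. The way around the dependence is to note that $v\le M$ gives interior $C^2$ bounds and to use the Green function of $-\partial_N^2+1$ on the half-line in the comparison.
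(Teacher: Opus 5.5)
Your treatment of (ii), (0), (i) and (iii) matches the paper. You first prove uniform $W^{2,q}$ and Schauder bounds that need no positivity. You then prove (0) by tangential translation, compactness and the strong maximum principle. Next comes the bootstrap on slabs, where (0) keeps $v$ away from the singularity of $|s|^{p-1}s$. For (iii) you compare with $Me^{-(x_N-R)/\sqrt2}$ corrected by $\epsilon\sum_{i<N}\cosh(\mu x_i)$ and then apply interior gradient estimates.

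One fix: the order you announce, (0) before (ii), is circular, because your proof of (0) uses the uniform $C^{2,\alpha}$ bounds. These must come first; the paper proves (ii) before (0), and your own paragraph on (i)--(ii) already permits this.

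The genuine difference is (iv). You treat $v^p\ge0$ as a source in $(-\Delta+1)v=v^p$ and bootstrap the rates $\gamma_{k+1}=p\gamma_k$, starting from $\gamma_0=1/\sqrt2$. At each step you use single exponentials $Ae^{-\gamma x_N}$ as supersolutions, which works since $(-\partial_N^2+1)e^{-\gamma x_N}=(1-\gamma^2)e^{-\gamma x_N}>0$ for $\gamma<1$. Once $p\gamma_k>1$ you finish with $Ae^{-x_N}-Be^{-p\gamma_k x_N}$, where $B=C_k^p/(p^2\gamma_k^2-1)$. This is correct: lower $\gamma_k$ slightly if $p\gamma_k=1$ exactly. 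It needs more than $\log 2/(2\log p)$ steps, which is many when $p$ is close to $1$.

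The paper instead keeps the equation linear, $\Delta v=av$ with $a=1-v^{p-1}\ge1-C_1e^{-\beta x_N}$ and $\beta=(p-1)/\sqrt2$. It uses the one-shot corrected exponential $\psi(t)=\exp(-t-\varepsilon e^{-\beta t})$, which satisfies $\psi''\le a\psi$ for large $t$ and $\psi\le e^{-t}$. The exponentially small defect $1-a$ is absorbed into the phase, so there is no iteration and no resonant case. Both routes use only the rate from (iii) and the $\cosh$-barrier maximum principle.

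On the constants, your worry is legitimate, but the paper does not resolve it either. It fixes $R_0$ ``by the uniform decay in \eqref{eq_Pc}'' of the given $v$. So its constants in (iii) and (iv) also depend on $v$ through that decay modulus, not only on $N,p,M,c$.

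That dependence is all that is used later. In the proof of Theorem \ref{thm_main} the translates $v(\cdot+(x^k)',\cdot)$ share the decay modulus of $v$. So you should simply accept dependence through $R$. The proposed sliding/Harnack argument and the half-line Green function are unnecessary. The Green function would not remove the dependence in any case, since it is fed by the bound from (iii).
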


\begin{proof}
We prove (ii) first, since it does not use (0). Then we prove (0), and then (i).

For (ii), let $f_*(s)=s-|s|^{p-1}s$, which is Lipschitz on $[0,M]$ because $p>1$, and note that $\Delta v=f_*(v)$ with $\|f_*(v)\|_{L^\infty}\le C(p,M)$. Fix $q>N$. Interior and boundary $W^{2,q}$ estimates on the sets $B_1(x)\cap\overline{\mathbb{R}^N_+}$, with the constant Dirichlet datum $c$ on the flat part of the boundary, together with the embedding $W^{2,q}\hookrightarrow C^{1,\gamma}$, give $\|v\|_{C^{1,\gamma}(B_{1/2}(x)\cap\overline{\mathbb{R}^N_+})}\le C$ with $C$ independent of $x$. Then $f_*(v)$ is bounded in $C^{0,\gamma}$ on these sets, uniformly in $x$, and Schauder estimates give $\|v\|_{C^{2,\gamma}(B_{1/4}(x)\cap\overline{\mathbb{R}^N_+})}\le C$, again uniformly in $x$.

For (0), suppose that $v(x^j)\to0$ for some sequence with $x^j_N\le K$. By (ii) the translates $v^j(x):=v(x'+(x^j)',x_N)$ are bounded in $C^{2,\gamma}$ uniformly, so a subsequence converges in $C^2_{\rm loc}(\overline{\mathbb{R}^N_+})$ to a solution $V\ge0$ of the equation with $V=c$ on $\partial\mathbb{R}^N_+$, and $V(\bar x)=0$ for some $\bar x$ with $\bar x_N\le K$. Since $V\ge0$, the point $\bar x$ is not on the boundary, where $V=c>0$, so it is an interior minimum. But $\Delta V-V=-V^p\le0$ and $V\ge0$, so the strong maximum principle \cite[Theorem 3.5]{GT} forces $V\equiv0$ in $\mathbb{R}^N_+$, contradicting $V=c$ on the boundary.

For (i), by (0) and the positivity of $v$ the function $v$ takes, on each slab $\{x_N\le K\}$, values in a compact subset of $(0,\infty)$, where $s\mapsto|s|^{p-1}s$ is smooth. Since $\partial\mathbb{R}^N_+$ is flat and $v$ is constant on it, Schauder estimates up to the boundary \cite[Chapter 6]{GT} and a bootstrap give $v\in C^\infty(\overline{\mathbb{R}^N_+})$, with constants on $\{x_N\le K\}$ depending only on $N$, $p$, $M$, $K$ and $c$, the dependence on $c$ entering through the infimum in (0).

For (iii), fix $R_0>0$ such that $v(x)^{p-1}\le\frac12$ whenever $x_N\ge R_0$, which is possible by the uniform decay in \eqref{eq_Pc}. Then $\Delta v=v-v^p\ge\frac12v$ in $D:=\{x_N>R_0\}$. Let $\psi(x)=Me^{-(x_N-R_0)/\sqrt2}$ with $1/\sqrt2$, so that $\Delta\psi=\frac12\psi$, and let $w=v-\psi$. Then $\Delta w\ge\frac12w$ in $D$, $w\le0$ on $\partial D$, and $w\to0$ as $x_N\to\infty$ uniformly in $x'$. Fix $\mathsf b\in(0,2^{-1/2})$ and, for $\varepsilon>0$, put
\[
w_\varepsilon(x)=w(x)-\varepsilon\sum_{i=1}^{N-1}\cosh(\mathsf b x_i).
\]
Then $\Delta w_\varepsilon\ge\frac12w_\varepsilon+\varepsilon\left(\frac12-\mathsf b^2\right)\sum_{i<N}\cosh(\mathsf b x_i)>\frac12 w_\varepsilon$ in $D$, while $w_\varepsilon\le 0$ on $\partial D$, $w_\varepsilon\to-\infty$ as $|x'|\to\infty$ and $w_\varepsilon\le 0$ for $x_N$ large. If $\sup_Dw_\varepsilon>0$, then the supremum is attained at some $z$ in the interior of $D$, where $\Delta w_\varepsilon(z)\le0$ and $\frac12w_\varepsilon(z)>0$, a contradiction. Hence $w_\varepsilon\le0$ in $D$ for every $\varepsilon>0$, and letting $\varepsilon\to0$ gives $v\le\psi$ in $D$. Since $|s-|s|^{p-1}s|\le Cs$ for $s\in[0,M]$, the gradient estimate on the ball of radius $1$ centered at $x$ gives $|\nabla v(x)|\le C\sup_{B_1(x)\cap\overline{\mathbb{R}^N_+}}v$ for every $x$ with $x_N\ge R_0+1$, whence \eqref{eq_decay} there. For $x_N\le R_0+1$ the bound \eqref{eq_decay} follows from (ii) and $v\le M$ after enlarging $C$.

For (iv), write the equation as $\Delta v=a(x)v$ with $a=1-v^{p-1}$. By (iii) there are $C_1>0$ and $\beta:=(p-1)/\sqrt2>0$ with
\[
a(x)\ge 1-C_1e^{-\beta x_N}\qquad\text{in }\overline{\mathbb{R}^N_+}.
\]
Put $\varepsilon:=\frac{2C_1}{\beta(2+\beta)}$ and $\psi(t):=\exp\left(-t-\varepsilon e^{-\beta t}\right)$. Writing $\psi=e^{-t+g}$ with $g(t)=-\varepsilon e^{-\beta t}$, we have $\psi''/\psi=(g'-1)^2+g''$, and since $g'=\varepsilon\beta e^{-\beta t}$ and $g''=-\varepsilon\beta^2e^{-\beta t}$,
\[
\frac{\psi''}{\psi}=1-\varepsilon\beta(2+\beta)e^{-\beta t}+\varepsilon^2\beta^2e^{-2\beta t}
=1-2C_1e^{-\beta t}+\varepsilon^2\beta^2e^{-2\beta t}.
\]
Choose $R_2\ge R_0$ so large that $\varepsilon^2\beta^2e^{-\beta t}\le C_1$ and $a\ge\frac12$ for $t\ge R_2$. Then $\psi''\le\left(1-C_1e^{-\beta t}\right)\psi\le a\psi$ on $\{x_N\ge R_2\}$. Let $\lambda:=M/\psi(R_2)$ and $w:=v-\lambda\psi$. Then $w\le0$ on $\{x_N=R_2\}$, $w\to0$ as $x_N\to\infty$ uniformly in $x'$, $w$ is bounded, and $\Delta w\ge aw\ge\frac12w$ wherever $w\ge0$ in $\{x_N>R_2\}$. The argument used for (iii), with the same functions $\cosh(\mathsf b x_i)$, gives $w\le0$ in $\{x_N>R_2\}$. Since $\psi(t)\le e^{-t}$, we get $v\le\lambda e^{-x_N}$ there, and on $\{x_N\le R_2\}$ we use $v\le M\le Me^{R_2}e^{-x_N}$.
\end{proof}

\section{The gradient bound}

Set
\begin{equation}
G(s):=W(s)-\min_{[0,M]}W,
\qquad
P(x):=|\nabla v(x)|^2-2G(v(x)).
\end{equation}
Then $G'=W'$, so $\Delta v=G'(v)$, and by \eqref{eq_minW}
\begin{equation}\label{eq_Gsign}
G\ge0\ \text{ on }[0,M],
\qquad
G(0)=-\min_{[0,M]}W\ge0.
\end{equation}
Theorem \ref{thm_main} is the assertion that $P\le0$ on $\overline{\mathbb{R}^N_+}$. Every statement of this section is valid, with the same proof and with the same function $G$, for any bounded positive solution $\tilde v$ of \eqref{eq_Pc}, with the same $c$, such that $\sup\tilde v\le M$, because \eqref{eq_Gsign} only requires $G\ge0$ on the range of the solution. We use this in Step 1 of the proof of Theorem \ref{thm_main}. We use the differential inequality for $P$ of Caffarelli, Garofalo and Seg\`ala.

\begin{lemma}\label{lem_CGS}
At every $x\in\mathbb{R}^N_+$ with $\nabla v(x)\ne0$,
\begin{equation}\label{eq_CGS}
|\nabla v|^2\,\Delta P-2G'(v)\,\nabla v\cdot\nabla P\ \ge\ \frac{|\nabla P|^2}{2}.
\end{equation}
In particular, on every open set $U\subset\mathbb{R}^N_+$ with $\overline U$ compact and $\overline U\subset\{\nabla v\ne0\}$ one has $\Delta P+b\cdot\nabla P\ge0$ with $b=-2G'(v)|\nabla v|^{-2}\nabla v\in L^\infty(U;\mathbb{R}^N)$.
\end{lemma}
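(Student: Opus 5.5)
We carry out the classical Caffarelli--Garofalo--Seg\`ala computation, using only that $v$ is smooth in the interior (Lemma \ref{lem_reg}(i)) and that $\Delta v=G'(v)$. The proof has three steps: a Bochner identity for $\Delta P$, a relation between $\nabla P$ and the Hessian $D^2v$, and a refined Kato inequality that compares the two.

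\emph{Step 1 (Bochner).} Write $u=v$ and $g=G'(u)$, so $\Delta u=g$. Bochner's formula gives $\Delta|\nabla u|^2=2|D^2u|^2+2\nabla u\cdot\nabla\Delta u=2|D^2u|^2+2G''(u)|\nabla u|^2$. On the other hand $\Delta\big(2G(u)\big)=2G'(u)\Delta u+2G''(u)|\nabla u|^2=2g^2+2G''(u)|\nabla u|^2$. Subtracting, the $G''$ terms cancel and
\[
\Delta P=2|D^2u|^2-2g^2.
\]

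\emph{Step 2 (gradient of $P$).} Directly, $\nabla P=2D^2u\,\nabla u-2g\nabla u$, so $D^2u\,\nabla u=\tfrac12\nabla P+g\nabla u$. To exploit this, work at a point with $\nabla u\ne0$ in an orthonormal frame with $e_1=\nabla u/|\nabla u|$. Put $a_{ij}=u_{ij}$ and $\rho=|\nabla u|$. The relation above reads
\[
a_{11}\rho=\tfrac12P_1+g\rho,\qquad a_{1j}\rho=\tfrac12P_j\quad(j\ge2).
\]

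\emph{Step 3 (refined Kato inequality).} We bound $|D^2u|^2$ from below by
\[
|D^2u|^2\ \ge\ a_{11}^2+2\sum_{j\ge2}a_{1j}^2+\sum_{j\ge2}a_{jj}^2 .
\]
Since $\sum_{j\ge2}a_{jj}=g-a_{11}$, Cauchy--Schwarz gives $\sum_{j\ge2}a_{jj}^2\ge (g-a_{11})^2/(N-1)$. The crude version of this step simply drops that last term. One then checks
\[
|\nabla u|^2\big(2|D^2u|^2-2g^2\big)-2g\,\nabla u\cdot\nabla P
\ \ge\ 2\rho^2a_{11}^2+4\rho^2\sum_{j\ge2}a_{1j}^2-2g^2\rho^2-2g\rho P_1 .
\]
Substituting $\rho a_{11}=\tfrac12P_1+g\rho$ gives $2\rho^2a_{11}^2=\tfrac12P_1^2+2g\rho P_1+2g^2\rho^2$, while $4\rho^2a_{1j}^2=P_j^2$. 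Hence the right-hand side equals $\tfrac12P_1^2+\sum_{j\ge2}P_j^2\ge\tfrac12|\nabla P|^2$, which is \eqref{eq_CGS}. Notably, $G\ge0$ is not needed for this inequality.

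\emph{The ``in particular'' assertion.} On $U$ with $\overline U$ compact inside $\{\nabla v\ne0\}$, divide \eqref{eq_CGS} by $|\nabla v|^2$, which is bounded below on $\overline U$. This gives $\Delta P+b\cdot\nabla P\ge \tfrac12|\nabla P|^2/|\nabla v|^2\ge0$ with $b=-2G'(v)|\nabla v|^{-2}\nabla v$. The drift $b$ is bounded because $G'(v)$ and $|\nabla v|$ are bounded (Lemma \ref{lem_reg}(ii)) and $|\nabla v|^{-1}$ is bounded on $\overline U$.

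The only delicate point is the bookkeeping in Step 3: one must keep the factor $2$ on the off-diagonal entries $a_{1j}$, $j\ge2$, coming from the symmetry of $D^2u$, so that the $P_j^2$ terms appear with coefficient $1$ and not $\tfrac12$. Everything else is algebra.
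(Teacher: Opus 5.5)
Your proof is correct: the Bochner identity $\Delta P=2|D^2v|^2-2G'(v)^2$, the relation $D^2v\,\nabla v=\tfrac12\nabla P+G'(v)\nabla v$ and the frame computation give exactly \eqref{eq_CGS}, and your division step for the ``in particular'' part is the same as the paper's. The paper does not carry out the computation and instead cites it as formula (2.7) of \cite{CGS}, valid for $C^3$ solutions where $\nabla v\ne0$; your argument is the standard derivation behind that formula, so the approach is essentially the same, only self-contained.
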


\begin{proof}
Inequality \eqref{eq_CGS} is formula (2.7) of \cite{CGS}, quoted as formula (8.3) in \cite{FV1}; it holds for any $C^3$ solution of $\Delta v=G'(v)$ at any point where $\nabla v\ne0$, and $v\in C^\infty(\overline{\mathbb{R}^N_+})$ by Lemma \ref{lem_reg}. The second assertion follows by dividing by $|\nabla v|^2$, which is bounded away from $0$ on $\overline U$, and discarding the nonnegative right hand side.
\end{proof}

\begin{lemma}\label{lem_neumann}
$\partial_NP(x',0)=0$ for every $x'\in\mathbb{R}^{N-1}$.
\end{lemma}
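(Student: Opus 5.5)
The plan is to compute $\partial_N P$ directly on the flat boundary and use the fact that $v$ is constant there. By Lemma \ref{lem_reg}(i), $v\in C^\infty(\overline{\mathbb{R}^N_+})$, so $P$ is $C^1$ up to the boundary and we may differentiate
\[
P=\sum_{i=1}^N(\partial_iv)^2-2G(v).
\]
This gives
\[
\partial_NP=2\sum_{i=1}^{N}\partial_iv\,\partial_{iN}v-2G'(v)\,\partial_Nv
\]
in $\overline{\mathbb{R}^N_+}$.

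\emph{Step 1: the tangential terms vanish.} Since $v(x',0)=c$ for all $x'$, every tangential derivative of $v$ vanishes on the boundary. Thus $\partial_iv(x',0)=0$ for $i\le N-1$. Consequently, on $\{x_N=0\}$ only the $i=N$ term survives:
\[
\partial_NP(x',0)=2\,\partial_Nv(x',0)\big(\partial_{NN}v(x',0)-G'(v(x',0))\big).
\]

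\emph{Step 2: use the equation up to the boundary.} Differentiating the boundary condition twice tangentially gives $\partial_{ii}v(x',0)=0$ for $i\le N-1$. Since $v$ is $C^2$ up to the boundary and $\Delta v=G'(v)$ holds in $\mathbb{R}^N_+$, continuity extends the equation to $\{x_N=0\}$. Hence $\partial_{NN}v(x',0)=\Delta v(x',0)=G'(c)$. Since $G'(v(x',0))=G'(c)$ as well, the bracket in Step 1 vanishes identically, and $\partial_NP(x',0)=0$.

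The only point needing care is regularity up to the boundary. We need $v\in C^2(\overline{\mathbb{R}^N_+})$ so that $\partial_{NN}v$ and the equation make sense at $x_N=0$, and we need $P\in C^1$ up to the boundary. Both are supplied by Lemma \ref{lem_reg}(i), or by the $C^{2,\gamma}$ bound in (ii) for the pointwise identity. No sign information on $v-c$ or positivity is used, so the identity holds for every $c>0$.
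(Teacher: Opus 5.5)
Your proposal is correct and matches the paper's proof: both use the vanishing of the tangential first and second derivatives on $\{x_N=0\}$, then the equation to get $\partial_{NN}v=G'(v)$ there, so that $\partial_NP=2\partial_Nv\,\partial_{NN}v-2G'(v)\partial_Nv=0$. Your remark that $C^{2}$ regularity up to the boundary is all that is needed is also right.
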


\begin{proof}
Since $v\equiv c$ on $\partial\mathbb{R}^N_+$, all the derivatives of $v$ along the boundary vanish there, so on $\partial\mathbb{R}^N_+$ we have $\partial_iv=0$ and $\partial_{ij}v=0$ for $i,j\le N-1$, and the equation gives
\[
\partial_{NN}v=\Delta v-\sum_{i\le N-1}\partial_{ii}v=G'(v).
\]
Therefore, on $\partial\mathbb{R}^N_+$,
\[
\partial_NP=2\sum_{i=1}^{N}\partial_iv\,\partial_{iN}v-2G'(v)\partial_Nv
=2\partial_Nv\,\partial_{NN}v-2G'(v)\partial_Nv=0. \qedhere
\]
\end{proof}

\begin{proof}[Proof of Theorem \ref{thm_main}]
Put $\Lambda:=\sup_{\overline{\mathbb{R}^N_+}}P$, which is finite by Lemma \ref{lem_reg}(ii). We claim that $\Lambda\le0$, which is \eqref{eq_main}. Suppose on the contrary that $\Lambda>0$, and let $(x^k)_k\subset\overline{\mathbb{R}^N_+}$ be such that $P(x^k)\to\Lambda$.

\emph{Step 1: the sequence $(x^k_N)_k$ is bounded, and a limit solution exists.} By \eqref{eq_Pc} and Lemma \ref{lem_reg}(iii), $v(x)\to0$ and $|\nabla v(x)|\to0$ as $x_N\to\infty$, uniformly in $x'$, so $P(x)\to-2G(0)\le0$ as $x_N\to\infty$, uniformly in $x'$. Since $\Lambda>0$, the sequence $(x^k_N)_k$ is bounded, and after passing to a subsequence $x^k_N\to\bar x_N\in[0,\infty)$. Let $v^k(x):=v(x'+(x^k)',x_N)$. Each $v^k$ is a bounded positive solution of \eqref{eq_Pc} with the same bounds and the same modulus of decay as $v$. By Lemma \ref{lem_reg}(i) the family $(v^k)_k$ is bounded in $C^3$ on every slab $\overline{\mathbb{R}^N_+}\cap\{x_N\le K\}$, so after a further subsequence $v^k\to V$ in $C^2_{loc}(\overline{\mathbb{R}^N_+})$, where $V$ solves $\Delta V=G'(V)$ in $\mathbb{R}^N_+$, $V=c$ on $\partial\mathbb{R}^N_+$, $0\le V\le M$, and $V(x)\to0$ as $x_N\to\infty$ uniformly in $x'$, with the same modulus as $v$. Moreover $V$ satisfies the bound of Lemma \ref{lem_reg}(0) with the same constants as $v$, since that bound passes to the limit, so $V>0$ on $\overline{\mathbb{R}^N_+}$ and Lemma \ref{lem_reg} applies to $V$ as well. Writing $P_V:=|\nabla V|^2-2G(V)$ we have
\begin{equation}\label{eq_limitP}
P_V\le\Lambda\ \text{ on }\overline{\mathbb{R}^N_+},
\qquad
P_V(\bar x)=\Lambda,\qquad \bar x:=(0',\bar x_N).
\end{equation}

\emph{Step 2: $\nabla V$ does not vanish where $P_V=\Lambda$.}  If $y\in\overline{\mathbb{R}^N_+}$ satisfies $\nabla V(y)=0$ and $P_V(y)=\Lambda$, then $\Lambda=-2G(V(y))\le0$ by \eqref{eq_Gsign}, since $V(y)\in[0,M]$. This contradicts $\Lambda>0$. In particular $\nabla V(\bar x)\ne0$.

\emph{Step 3: $P_V$ is constant on a component.} Let $\mathcal{N}:=\{x\in\overline{\mathbb{R}^N_+}:\nabla V(x)\ne0\}$, a relatively open subset of $\overline{\mathbb{R}^N_+}$, and let $\mathcal{O}$ be its connected component containing $\bar x$. We claim that $P_V\equiv\Lambda$ on $\mathcal{O}$. The set $S:=\{x\in\mathcal{O}:P_V(x)=\Lambda\}$ is relatively closed in $\mathcal{O}$ and contains $\bar x$ by \eqref{eq_limitP}. We show that it is relatively open in $\mathcal{O}$.

Let $x_0\in S$. Suppose first that $x_0\in\mathbb{R}^N_+$. Choose $r>0$ with $\overline{B_r(x_0)}\subset\mathcal{O}\cap\mathbb{R}^N_+$, so that $\inf_{\overline{B_r(x_0)}}|\nabla V|>0$. By Lemma \ref{lem_CGS}, $P_V$ is a subsolution of a uniformly elliptic operator with bounded coefficients and no zeroth order term in $B_r(x_0)$, and it attains its maximum $\Lambda$ at the interior point $x_0$. The strong maximum principle \cite[Theorem 3.5]{GT} gives $P_V\equiv\Lambda$ on $B_r(x_0)$.

Suppose now that $x_0\in\partial\mathbb{R}^N_+$. Choose $r>0$ such that the compact set $\overline{B_{2r}(x_0)}\cap\overline{\mathbb{R}^N_+}$ is contained in $\mathcal{O}$, and put $B_r^+:=B_r(x_0)\cap\overline{\mathbb{R}^N_+}$, so that $\inf_{B_r^+}|\nabla V|>0$. The open half-ball $B_r(x_0)\cap\mathbb{R}^N_+$ is connected, so by the strong maximum principle, as in the previous paragraph, either $P_V\equiv\Lambda$ there, or $P_V<\Lambda$ there. In the first case $P_V\equiv\Lambda$ on $B_r^+$ by continuity and we are done. In the second case, pick a ball $B\subset B_r(x_0)\cap\mathbb{R}^N_+$ with $x_0\in\partial B$, which exists because $x_0$ is a boundary point of a half-space. Then $P_V<\Lambda=P_V(x_0)$ in $B$, and the Hopf boundary point lemma \cite[Lemma 3.4]{GT}, applied in $B$ to the operator of Lemma \ref{lem_CGS}, gives $\partial_\nu P_V(x_0)>0$, where $\nu=-e_N$ is the outer normal of $B$ at $x_0$. Thus $\partial_NP_V(x_0)<0$, which contradicts Lemma \ref{lem_neumann} applied to $V$. So only the first case occurs.

Hence $S$ is relatively open and relatively closed in the connected set $\mathcal{O}$, so $S=\mathcal{O}$.

\emph{Step 4: conclusion.} Two cases are possible. If $\mathcal{O}=\overline{\mathbb{R}^N_+}$, then $P_V\equiv\Lambda$ on $\overline{\mathbb{R}^N_+}$, while $P_V(x)\to-2G(0)\le0$ as $x_N\to\infty$ by Step 1 applied to $V$. Hence $\Lambda\le0$, a contradiction. If $\mathcal{O}\ne\overline{\mathbb{R}^N_+}$, then, $\overline{\mathbb{R}^N_+}$ being connected and $\mathcal{O}$ being nonempty and relatively open, the relative boundary of $\mathcal{O}$ in $\overline{\mathbb{R}^N_+}$ contains a point $y$. Since $\mathcal{N}$ is relatively open and $y\notin\mathcal{O}$, we must have $y\notin\mathcal{N}$, that is $\nabla V(y)=0$. On the other hand $P_V(y)=\Lambda$ by Step 3 and continuity. This contradicts Step 2.

In both cases we reach a contradiction, so $\Lambda\le0$ and \eqref{eq_main} holds.

Finally, assume $M\ge c_p$, which holds whenever $c\ge c_p$ because $M\ge c$, and let $x_0\in\overline{\mathbb{R}^N_+}$ be a point with $v(x_0)=M$. If $x_0\in\mathbb{R}^N_+$, then $x_0$ is an interior maximum point of $v$, so $\nabla v(x_0)=0$. If $x_0\in\partial\mathbb{R}^N_+$, then $M=c$, and since $M\ge c_p$ while $W(M)\le0$ on $[c_p,\infty)$, either $M>c_p$, in which case $\min_{[0,M]}W=W(M)$ and \eqref{eq_main} forces $\nabla v(x_0)=0$, or $M=c_p$, in which case $\min_{[0,M]}W=0=W(c_p)=W(v(x_0))$ and \eqref{eq_main} forces $\nabla v(x_0)=0$ again. In both cases the left hand side of \eqref{eq_main} vanishes at $x_0$, while the right hand side equals $W(M)-\min_{[0,M]}W$, which is $0$ by \eqref{eq_minW} whether $M=c_p$ or $M>c_p$.
\end{proof}

\begin{remark}
The proof uses the decay in \eqref{eq_Pc} only through the statement that $P(x)\to-2G(0)\le0$ as $x_N\to\infty$, uniformly in $x'$. It uses the flatness of $\partial\mathbb{R}^N_+$ only through Lemma \ref{lem_neumann}, and it uses no property of the boundary value $c$ beyond its being a positive constant. If $\Omega$ is a smooth domain and $v$ is constant on $\partial\Omega$, the computation of Lemma \ref{lem_neumann}, combined with the identity $\partial_{\nu\nu}v=\Delta v-(N-1)\mathcal{H}\,\partial_\nu v$ on $\partial\Omega$, where $\mathcal{H}$ is the mean curvature of $\partial\Omega$ with respect to the exterior normal $\nu$, gives $\partial_\nu P=-2(N-1)\mathcal{H}\,(\partial_\nu v)^2$. This is nonpositive when $\mathcal{H}\ge0$, which is the inequality needed in Step 3, and it is the mechanism behind the gradient bounds of \cite{FV2}. The remaining ingredient, the compactness under translations parallel to the boundary, is specific to the half-space, so we do not pursue this extension here.
\end{remark}

\section{Characterization and the boundary-flux identity}\label{sec_char}

\begin{proof}[Proof of Theorem \ref{thm_char}]
Let $x'\in\mathbb{R}^{N-1}$. On $\partial\mathbb{R}^N_+$ we have $v=c_p$, so $W(v)=W(c_p)=0$ and $\nabla v(x',0)=g(x')e_N$ with $g(x')=\partial_Nv(x',0)$. Theorem \ref{thm_main} therefore gives
\[
\frac12 g(x')^2\le -\min_{[0,M]}W=\max\{0,-W(M)\}=\frac12\max\left\{0,\frac{2M^{p+1}}{p+1}-M^2\right\},
\]
which is \eqref{eq_bdry}.

Recall that $M\ge c_p$. Suppose now that $M=c_p$. Then the right hand side of \eqref{eq_bdry} vanishes by \eqref{eq_minW}, so $g\equiv0$ on $\partial\mathbb{R}^N_+$. Let $\phi=v-w_0(x_N)$, which is nonnegative by Lemma \ref{lem_order}. Then $\phi=0$ and $\partial_N\phi=g-w_0'(0)=0$ on $\partial\mathbb{R}^N_+$. By the last part of Lemma \ref{lem_order}, either $\phi\equiv0$, or $\partial_N\phi(x',0)>0$ for every $x'$. The second alternative is excluded, so $v(x)=w_0(x_N)$.

Conversely, if $v(x)=w_0(x_N)$ then $M=\max w_0=c_p$. This proves the stated equivalence, and the last assertion is its contrapositive.
\end{proof}

\begin{remark}
The bound \eqref{eq_bdry} is sharp at leading order in the following sense. The kernel of the linearization of \eqref{eq_Pc} at $w_0$ in the $x_N$ variable is spanned by $Z=-w_0'$, so a branch of solutions emanating from $w_0$ would have the form $v_a=w_0+aZ+o(a)$ as $a\to0^+$. For the functions $w_0+aZ$ one computes $\partial_N(w_0+aZ)(0)=aZ'(0)=a\beta$ with $\beta:=c_p^p-c_p>0$, and $\max_{t\ge0}(w_0+aZ)(t)=c_p+\frac{\beta}{2}a^2+O(a^3)$, whence
\[
\frac{2M^{p+1}}{p+1}-M^2=-2W(M)=-2W'(c_p)\left(M-c_p\right)+O\!\left((M-c_p)^2\right)=\beta^2a^2+O(a^3),
\]
because $-W'(c_p)=c_p^p-c_p=\beta$. The two sides of \eqref{eq_bdry} thus agree to leading order along such a family. The computation is formal, since the functions $w_0+aZ$ are not solutions of \eqref{eq_Pc}, but it shows that the constant $1$ in \eqref{eq_bdry} is the one dictated by the linearization.
\end{remark}

\begin{proposition}\label{prop_flux}
Let $N\ge2$, $p>1$, and let $v$ be a bounded positive solution of \eqref{eq_Pc} with $c=c_p$. Put $g(x'):=\partial_Nv(x',0)$ and
\begin{equation}\label{eq_Phi}
\Phi(x'):=\int_0^\infty \partial_Nv(x',t)\,\nabla_{x'}v(x',t)\,dt.
\end{equation}
Then $\Phi\in C^1(\mathbb R^{N-1};\mathbb R^{N-1})\cap L^\infty$ and
\begin{equation}\label{eq_flux}
\operatorname{div}_{x'}\Phi=\frac12 g^2\qquad\text{in }\mathbb R^{N-1}.
\end{equation}
Consequently
\begin{equation}\label{eq_fluxbound}
\int_{|x'|<R}g(x')^2\,dx'\le CR^{N-2}\qquad(R\ge1).
\end{equation}
\end{proposition}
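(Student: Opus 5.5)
The plan is to integrate in $x_N$ the pointwise conservation law for the one-dimensional energy. Let $E:=\frac12(\partial_Nv)^2-\frac12|\nabla_{x'}v|^2-W(v)$. Using $\Delta v=W'(v)$, a direct computation gives
\[
\partial_N E=\partial_Nv\,\partial_{NN}v-\nabla_{x'}v\cdot\nabla_{x'}\partial_Nv-W'(v)\partial_Nv
=\partial_Nv\,\big(W'(v)-\Delta_{x'}v\big)-\nabla_{x'}v\cdot\nabla_{x'}\partial_Nv-W'(v)\partial_Nv ,
\]
which is $-\operatorname{div}_{x'}\big(\partial_Nv\,\nabla_{x'}v\big)$. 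This is just the $N$-th row of the divergence-free stress tensor $\nabla v\otimes\nabla v-\big(\frac12|\nabla v|^2+W(v)\big)I$.

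I will integrate this identity over $t\in(0,T)$ and let $T\to\infty$. At $t=T$, Lemma \ref{lem_reg}(iii) gives $E(x',T)\to -W(0)=0$ uniformly. At $t=0$, the boundary data $v=c_p$ give $\nabla_{x'}v=0$ and $W(c_p)=0$, so $E(x',0)=\frac12g^2$. Hence $\int_0^\infty\operatorname{div}_{x'}(\partial_Nv\nabla_{x'}v)\,dt=E(x',0)-0=\frac12 g^2$. To move $\operatorname{div}_{x'}$ outside the integral, I need the exponential decay of $\nabla v$ and $D^2v$ in $x_N$, uniformly in $x'$.

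For this I use Lemma \ref{lem_reg}(iii): $|\nabla v|\le Ce^{-x_N/\sqrt2}$. For second derivatives, interior Schauder estimates on unit balls with $|\Delta v|\le C v$ give $|D^2v(x)|\le C\sup_{B_1(x)}v\le Ce^{-x_N/\sqrt2}$ for $x_N\ge1$. Lemma \ref{lem_reg}(i)--(ii) control the slab $x_N\le 1$ and bound third derivatives there. Then the integrand of $\Phi$ is bounded by $Ce^{-\sqrt2 t}$, so $\Phi\in L^\infty$. Its $x'$-derivatives are bounded by $C|D^2v||\nabla v|\le Ce^{-\sqrt2t}$ and are continuous, so dominated convergence gives $\Phi\in C^1$ and justifies differentiating under the integral. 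This regularity and decay bookkeeping is the main technical point, but it is routine given Lemma \ref{lem_reg}.

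For \eqref{eq_fluxbound}, I integrate \eqref{eq_flux} over the ball $B_R\subset\mathbb R^{N-1}$ and apply the divergence theorem:
\[
\tfrac12\int_{B_R}g^2=\int_{\partial B_R}\Phi\cdot\nu\le\|\Phi\|_{L^\infty}\,|\partial B_R|=CR^{N-2}.
\]
For $N=2$ the ball is an interval and the boundary term is $\Phi(R)-\Phi(-R)$, bounded by $2\|\Phi\|_\infty$, consistent with $R^0$.
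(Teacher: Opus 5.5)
Your proposal is correct and follows essentially the same route as the paper: the same conservation law $\partial_N H=-\operatorname{div}_{x'}(\partial_Nv\,\nabla_{x'}v)$ for $H=\frac12(\partial_Nv)^2-\frac12|\nabla_{x'}v|^2-W(v)$, integration in $x_N$ using $H(x',0)=\frac12g^2$ and $H(x',T)\to0$, differentiation under the integral, and the divergence theorem on $\{|x'|<R\}$. No decay of $D^2v$ is needed, so your Schauder step can be dropped; as written it would in any case need H\"older control of $\Delta v$, not just $|\Delta v|\le Cv$. The paper simply combines the global bound $|D^2v|\le C$ of Lemma \ref{lem_reg}(ii) with the decay $|\nabla v|\le Ce^{-x_N/\sqrt2}$ of Lemma \ref{lem_reg}(iii) to dominate the $x'$-derivatives of the integrand.
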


For its proof define
\begin{equation}\label{eq_H}
H:=\frac12(\partial_Nv)^2-\frac12|\nabla_{x'}v|^2-W(v).
\end{equation}

\begin{proof}[Proof of Proposition \ref{prop_flux}]
By Lemma \ref{lem_reg}(iii) the integrand in \eqref{eq_Phi} is bounded by $C^2e^{-2\alpha x_N}$, and by Lemma \ref{lem_reg}(ii) and (iii) its first order $x'$ derivatives are bounded by $Ce^{-\alpha x_N}$. Hence $\Phi$ is well defined, $\|\Phi\|_{L^\infty}\le C^2/(2\alpha)$, and $\Phi$ is of class $C^1$ with derivatives obtained by differentiating under the integral sign.

Let $H$ be as in \eqref{eq_H}. Using $\partial_{NN}v=\Delta v-\Delta_{x'}v=W'(v)-\Delta_{x'}v$ we get, at every point of $\mathbb{R}^N_+$,
\begin{align*}
\partial_NH
&=\partial_Nv\,\partial_{NN}v-\nabla_{x'}v\cdot\nabla_{x'}\partial_Nv-W'(v)\partial_Nv\\
&=-\partial_Nv\,\Delta_{x'}v-\nabla_{x'}v\cdot\nabla_{x'}\partial_Nv
=-\operatorname{div}_{x'}\left(\partial_Nv\,\nabla_{x'}v\right).
\end{align*}
Fix $x'$ and integrate this identity over $x_N\in(0,T)$:
\[
H(x',T)-H(x',0)=-\operatorname{div}_{x'}\int_0^T\partial_Nv\,\nabla_{x'}v\,dx_N.
\]
On $\partial\mathbb{R}^N_+$ we have $\nabla_{x'}v=0$ and $W(v)=W(c_p)=0$, so $H(x',0)=\frac12g(x')^2$. By \eqref{eq_decay}, $H(x',T)\to0$ as $T\to\infty$, and $\int_0^T\partial_Nv\,\nabla_{x'}v\,dx_N\to\Phi$ in $C^1_{loc}(\mathbb{R}^{N-1})$. Letting $T\to\infty$ gives \eqref{eq_flux}.

Integrating \eqref{eq_flux} over $\{|x'|<R\}$ and applying the divergence theorem gives
\[
\frac12\int_{\{|x'|<R\}}g^2\,dx'=\int_{\{|x'|=R\}}\Phi\cdot\nu\,d\sigma\le \|\Phi\|_{L^\infty}\,\left|\{|x'|=R\}\right| ,
\]
which is \eqref{eq_fluxbound} since $\left|\{|x'|=R\}\right|=(N-1)\omega_{N-1}R^{N-2}$, where $\omega_{N-1}$ is the volume of the unit ball of $\mathbb{R}^{N-1}$. Dividing by $\left|\{|x'|<R\}\right|=\omega_{N-1}R^{N-1}$ gives the last assertion.
\end{proof}

\section{Projection and low-dimensional rigidity}\label{sec_reduced}

In this section we project the defect onto the kernel of the one-dimensional linearized operator. The exact projected identity, together with an automatic tail estimate, yields uniqueness for $N\le5$. We then record the Lane--Emden trichotomy that explains both the unresolved dimension $N=6$ and the construction regimes above it. We keep $\phi=v-w_0(x_N)$ and $Z=-w_0'$, and we assume $v\not\equiv w_0(x_N)$, so that $\phi>0$ in $\mathbb{R}^N_+$ by Lemma \ref{lem_order}.

Differentiating the equation of $w_0$ shows that
\begin{equation}\label{eq_Zeq}
-Z''+\left(1-pw_0^{p-1}\right)Z=0\ \text{ on }(0,\infty),
\qquad Z(0)=0,\qquad Z>0\ \text{ on }(0,\infty),
\end{equation}
so $0$ is the principal eigenvalue of the operator $-\partial_{tt}+1-pw_0^{p-1}$ on $(0,\infty)$ with the Dirichlet condition. Subtracting the two equations and applying Taylor's formula with Lagrange remainder gives
\begin{equation}\label{eq_phieq}
\Delta\phi=\left(1-pw_0^{p-1}\right)\phi-\frac{p(p-1)}{2}\xi^{p-2}\phi^2
\quad\text{in }\mathbb{R}^N_+,
\qquad w_0<\xi<v.
\end{equation}
Multiplying \eqref{eq_phieq} by $Z$, integrating over $x_N\in(0,\infty)$ and using \eqref{eq_Zeq} together with $Z(0)=0=\phi(x',0)$ to cancel the boundary terms, Sourdis obtained in \cite[(2.9)]{So2} the identity
\begin{equation}\label{eq_proj}
-\nu\,\Delta_{x'}\lambda(x')=\frac{p(p-1)}{2}\int_0^\infty\xi^{p-2}\phi^2Z\,dx_N ,
\end{equation}
where
\[
\lambda(x'):=\frac1\nu\int_0^\infty\phi(x',x_N)Z(x_N)\,dx_N>0,
\qquad \nu:=\int_0^\infty Z^2\,dt.
\]
Bounding $\xi^{p-2}$ from below and applying the Cauchy--Schwarz inequality to the right hand side of \eqref{eq_proj}, one obtains $-\Delta_{x'}\lambda\ge C\lambda^2$ with $C>0$ when $1<p<3$, and $-\Delta_{x'}\lambda\ge C\lambda^3$ when $p=3$. If the exponent does not exceed the Serrin exponent of $\mathbb{R}^{N-1}$, the Liouville theorem for positive supersolutions of the Lane--Emden inequality, recalled in \cite[Theorem A.1]{So2}, is contradicted, so the assumption $\phi>0$ fails and $\phi\equiv0$. This is the argument of \cite{So2}, and it gives $2\le N\le5$ for $1<p<3$ and $2\le N\le4$ for $p=3$. We write
\begin{equation}\label{eq_psg}
p_{sg}(n)=\begin{cases}\infty & \text{if }n\le2,\\ \dfrac{n}{n-2} & \text{if }n\ge3,\end{cases}
\end{equation}
for the Serrin exponent of $\mathbb{R}^n$, so that \cite[Theorem A.1]{So2} states that $-\Delta\lambda\ge\lambda^q$ has no positive solution in $\mathbb{R}^n$ when $1<q\le p_{sg}(n)$.

\subsection{A quadratic projected inequality}

\begin{lemma}\label{lem_two}
Let $p\ge2$ and $a,b>0$. Then
\begin{equation}
(a+b)^p-a^p-pa^{p-1}b\ \ge\ \max\left\{\frac{p(p-1)}{2}a^{p-2}b^2,\ b^p\right\}.
\end{equation}
\end{lemma}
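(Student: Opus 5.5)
We prove the two lower bounds separately, after the usual homogeneity reduction. Both sides are homogeneous of degree $p$ in $(a,b)$, so it suffices to take $a=1$ and $b=s>0$ and to prove
\[
f(s):=(1+s)^p-1-ps\ \ge\ \max\left\{\tfrac{p(p-1)}{2}s^2,\ s^p\right\}\qquad(s>0,\ p\ge2).
\]

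\emph{Quadratic bound.} Use Taylor's formula with integral remainder:
\[
f(s)=p(p-1)\int_0^s(s-\tau)(1+\tau)^{p-2}\,d\tau .
\]
Since $p\ge2$, we have $(1+\tau)^{p-2}\ge1$ for $\tau\ge0$. Hence
\[
f(s)\ge p(p-1)\int_0^s(s-\tau)\,d\tau=\tfrac{p(p-1)}{2}s^2 .
\]
Equivalently, $g(s)=f(s)-\tfrac{p(p-1)}2s^2$ satisfies $g(0)=g'(0)=0$ and $g''(s)=p(p-1)\big[(1+s)^{p-2}-1\big]\ge0$.

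\emph{Power bound.} Set $h(s)=(1+s)^p-1-ps-s^p$. Then $h(0)=0$ and
\[
h'(s)=p\big[(1+s)^{p-1}-1-s^{p-1}\big].
\]
So it suffices to show $(1+s)^{p-1}\ge1+s^{p-1}$ for $q:=p-1\ge1$. This is the superadditivity of $t\mapsto t^q$ on $[0,\infty)$ for $q\ge1$. To check it, observe that $k(s)=(1+s)^q-s^q-1$ satisfies $k(0)=0$ and $k'(s)=q\big[(1+s)^{q-1}-s^{q-1}\big]\ge0$, because $t\mapsto t^{q-1}$ is nondecreasing when $q\ge1$. Thus $h'\ge0$, and therefore $h\ge0$.

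Taking the larger of the two bounds and scaling back with $s=b/a$ gives the claim. Neither step is a real obstacle. The only point needing care is that the hypothesis $p\ge2$ is used in both parts: as $(1+\tau)^{p-2}\ge1$ in the quadratic bound, and as $q=p-1\ge1$ in the power bound.
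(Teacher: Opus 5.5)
Your proposal is correct and follows essentially the same route as the paper: a Taylor remainder with $(\cdot)^{p-2}\ge a^{p-2}$ for the quadratic bound, and monotonicity of $h$ via superadditivity of $t\mapsto t^{p-1}$ for the power bound. The homogeneity reduction to $a=1$, the integral form of the remainder instead of the Lagrange form, and your explicit verification of superadditivity are only cosmetic differences.
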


\begin{proof}
Taylor's formula with Lagrange remainder gives $(a+b)^p-a^p-pa^{p-1}b=\frac{p(p-1)}{2}\xi^{p-2}b^2$ for some $\xi\in(a,a+b)$, and $\xi^{p-2}\ge a^{p-2}$ because $p\ge2$. For the second bound, put $h(b)=(a+b)^p-a^p-pa^{p-1}b-b^p$. Then $h(0)=0$ and $h'(b)=p\left[(a+b)^{p-1}-a^{p-1}-b^{p-1}\right]\ge0$, since $t\mapsto t^{p-1}$ is superadditive on $[0,\infty)$ for $p\ge2$. Hence $h\ge0$.
\end{proof}

\begin{proposition}\label{prop_conditional}
Let $p\ge2$ and $N\ge2$. Assume that there are $K\ge0$ and $\vartheta\in(0,1)$ such that
\begin{equation}\label{eq_noconc}
\int_K^\infty\phi(x',x_N)Z(x_N)\,dx_N\ \le\ \vartheta\,\nu\lambda(x')\qquad\text{for every }x'\in\mathbb{R}^{N-1}.
\end{equation}
Then $-\Delta_{x'}\lambda\ge C\lambda^2$ in $\mathbb{R}^{N-1}$ with $C=\frac{p(p-1)(1-\vartheta)^2\nu}{2A_K}>0$, where $A_K=\int_0^Kw_0^{2-p}Z\,dx_N<\infty$. If moreover $2\le N\le5$, then $v\equiv w_0(x_N)$.
\end{proposition}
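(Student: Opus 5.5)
The plan is to bound the right-hand side of the projected identity \eqref{eq_proj} from below by $\lambda^2$, using only the part of the $x_N$-integral over $(0,K)$. By \eqref{eq_noconc}, this part still carries a fixed fraction of the mass.

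First, since $p\ge2$ and $\xi>w_0$ in \eqref{eq_phieq}, we have $\xi^{p-2}\ge w_0^{p-2}$. Alternatively, Lemma \ref{lem_two} with $a=w_0$ and $b=\phi$ gives the same inequality directly for the exact nonlinear remainder. Dropping the part of the integral beyond $K$, which is nonnegative because $\phi,Z>0$, we obtain
\[
-\nu\Delta_{x'}\lambda\ \ge\ \frac{p(p-1)}{2}\int_0^K w_0^{p-2}\phi^2Z\,dx_N .
\]
Second, by \eqref{eq_noconc},
\[
\int_0^K\phi Z\,dx_N=\nu\lambda-\int_K^\infty\phi Z\,dx_N\ge(1-\vartheta)\nu\lambda .
\]
We then apply Cauchy--Schwarz in the form
\[
\int_0^K\phi Z=\int_0^K\bigl(w_0^{(p-2)/2}\phi Z^{1/2}\bigr)\bigl(w_0^{(2-p)/2}Z^{1/2}\bigr)\le\Bigl(\int_0^Kw_0^{p-2}\phi^2Z\Bigr)^{1/2}A_K^{1/2}.
\]
Combining these gives
\[
-\nu\Delta_{x'}\lambda\ge\frac{p(p-1)}{2}\,\frac{(1-\vartheta)^2\nu^2\lambda^2}{A_K},
\]
which is exactly the claimed inequality $-\Delta_{x'}\lambda\ge C\lambda^2$ with $C=\frac{p(p-1)(1-\vartheta)^2\nu}{2A_K}$.

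Next we check that $A_K<\infty$ and $A_K>0$. The function $w_0$ is continuous and positive on $[0,K]$, so $w_0^{2-p}$ is bounded there, and $Z$ is bounded; hence $A_K$ is finite. It is positive because $Z>0$ on $(0,K)$. If $K=0$, the hypothesis \eqref{eq_noconc} with $\vartheta<1$ is impossible, since then its left side equals $\nu\lambda>\vartheta\nu\lambda$. So we may assume $K>0$.

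We also need $\lambda$ to be regular enough for the Liouville theorem. We have $\lambda>0$, and $\lambda\in C^2$ with $\Delta_{x'}\lambda$ obtained by differentiating under the integral sign. This follows from Lemma \ref{lem_reg}: the derivatives of $\phi$ are bounded, and $Z$ decays exponentially. The identity \eqref{eq_proj} itself is taken from \cite{So2}.

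Finally, for $2\le N\le5$ we have $n=N-1\le4$, so $2\le p_{sg}(n)$ (with $p_{sg}=\infty$ for $n\le2$, $3$ for $n=3$, and $2$ for $n=4$). Rescaling $\lambda$ by $C$, the function $C\lambda$ is a positive solution of $-\Delta u\ge u^2$ in $\mathbb{R}^{n}$. This contradicts \cite[Theorem A.1]{So2} with $q=2$. Hence the standing assumption $v\not\equiv w_0$ fails, and by Lemma \ref{lem_order}, $v\equiv w_0(x_N)$.

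The computation above is routine. The only delicate point is the direction of the bound $\xi^{p-2}\ge w_0^{p-2}$, which is where $p\ge2$ is needed. The genuinely hard step, establishing \eqref{eq_noconc}, is assumed here as a hypothesis.
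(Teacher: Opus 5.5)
Your proposal is correct and follows the paper's proof essentially step for step: the lower bound $\int_0^K\phi Z\,dx_N\ge(1-\vartheta)\nu\lambda$ from \eqref{eq_noconc}, the weighted Cauchy--Schwarz inequality producing $A_K$, the bound $\xi^{p-2}\ge w_0^{p-2}$ for $p\ge2$ (equivalently Lemma \ref{lem_two}) in \eqref{eq_proj}, and then the Serrin-range Liouville theorem \cite[Theorem A.1]{So2} for $N-1\le4$. Your extra observations, that $K=0$ is excluded by the hypothesis and that $\lambda\in C^2$, are small points the paper leaves implicit.
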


\begin{proof}
By \eqref{eq_noconc} we have $\int_0^K\phi Z\,dx_N\ge(1-\vartheta)\nu\lambda$. The Cauchy--Schwarz inequality and Lemma \ref{lem_two} give
\[
(1-\vartheta)^2\nu^2\lambda^2\le\left(\int_0^K\phi Z\,dx_N\right)^2\le A_K\int_0^Kw_0^{p-2}\phi^2Z\,dx_N\le\frac{2\nu A_K}{p(p-1)}\left(-\Delta_{x'}\lambda\right),
\]
and $A_K<\infty$ because $w_0^{2-p}Z$ is continuous on $[0,K]$. If $2\le N\le5$ then $N-1\le4$ and $p_{sg}(N-1)\ge2$ by \eqref{eq_psg}, so \cite[Theorem A.1]{So2} is contradicted unless $\phi\equiv0$.
\end{proof}

\subsection{Automatic tail control and uniqueness}

\begin{proposition}[Automatic tail non-concentration]\label{prop_automatic_tail}
Let $N\ge2$, $p>1$, and let $v\not\equiv w_0$ be a bounded positive solution of \eqref{eq_Pc} with $c=c_p$.  Put $\phi=v-w_0$ and let $\lambda$ be given by \eqref{eq_proj}.  Then there are $T>0$ and $C>0$ such that
\begin{equation}\label{eq_tail_pointwise_auto}
 \phi(x',t)\le C\lambda(x')e^{-(t-T)/\sqrt2}
 \qquad\text{for every }x'\in\mathbb R^{N-1},\ t\ge T.
\end{equation}
Consequently, for every $\vartheta\in(0,1)$ there exists $K\ge T$ such that
\begin{equation}\label{eq_auto_noconc}
 \int_K^\infty\phi(x',t)Z(t)\,dt
 \le \vartheta\nu\lambda(x')
 \qquad\text{for every }x'\in\mathbb R^{N-1}.
\end{equation}
Thus the hypothesis \eqref{eq_noconc} of Proposition~\ref{prop_conditional} is automatic.
\end{proposition}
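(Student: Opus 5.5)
The plan is to prove the pointwise tail bound \eqref{eq_tail_pointwise_auto} by a comparison argument on the far region $\{x_N>T\}$, and to obtain \eqref{eq_auto_noconc} from it by integration. We compare $\phi$ against $\lambda$ at the level $t=T$, and then propagate that bound upward using only the linear structure of the equation.

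\emph{Step 1: the equation for $\phi$ far away.} Fix $T$ large so that $\max\{w_0,v\}^{p-1}\le \frac1{2p}$ on $\{x_N\ge T\}$, which is possible by Lemma \ref{lem_reg}(iii). From \eqref{eq_phieq}, or directly from the mean value theorem $v^p-w_0^p=p\eta^{p-1}\phi$ with $w_0<\eta<v$, we get $\Delta\phi=(1-p\eta^{p-1})\phi\ge\frac12\phi$ on $\{x_N>T\}$. Thus $\phi\ge0$ is a subsolution of $\Delta-\frac12$ there. It is bounded and tends to $0$ as $x_N\to\infty$ uniformly in $x'$.

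\emph{Step 2: the Harnack-type comparison at height $T$.} This is the key point, and I expect it to be the main obstacle. We need $\sup_{|y'-x'|\le R}\phi(y',T)\le C\lambda(x')$, because a one-dimensional barrier is not enough: the comparison must also absorb lateral spreading. We will bound $\phi$ on the slab $\{T-1\le x_N\le T+1\}$ by $\lambda$ through a boundary Harnack inequality. The function $\phi$ solves the linear equation $-\Delta\phi+a\phi=0$ with bounded $a$ (as in Lemma \ref{lem_order}), and $\phi>0$ with $\phi=0$ on $\partial\mathbb R^N_+$. The boundary Harnack / Carleson estimate on cylinders $B'_{2}(x')\times(0,2T+2)$ then gives
\[
\sup_{B'_1(x')\times[T-1,T+1]}\phi\le C\,\phi(x',T_0)\le C\inf_{B'_1(x')\times[1,2]}\phi
\]
for a fixed $T_0\in[1,2]$, with constants independent of $x'$ by translation invariance. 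Since $Z\ge c_0>0$ on $[1,2]$, we have $\nu\lambda(x')\ge c_0\int_1^2\phi(x',t)\,dt\ge c\inf_{B'_1(x')\times[1,2]}\phi$. Combining, $\phi\le C_1\lambda(x')$ on $B'_1(x')\times\{T\}$. The same chain applied at neighbouring centers gives $\lambda(y')\le C\lambda(x')$ for $|y'-x'|\le1$. Iterating, $\lambda(y')\le C^{1+|y'-x'|}\lambda(x')$, which is an exponential growth bound on $\lambda$ along the boundary.

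\emph{Step 3: a barrier in $\{x_N>T\}$.} We take
\[
\psi(y',t)=C_1\lambda(x')\Big[e^{-(t-T)/\sqrt2}+\varepsilon\sum_{i<N}\cosh(\mathsf b(y_i-x_i))\Big],\qquad \mathsf b<2^{-1/2}.
\]
The growth bound from Step 2 lets us choose the constants so that $\phi\le\psi$ on $\{t=T\}$ for all $y'$: near $x'$ this comes from the first term, and far away from the $\cosh$ term. For this we need $\mathsf b$ larger than the exponential rate $\log C$ in the growth of $\lambda$; if that rate is too large, we instead use $\phi\le M$ far away, with $\cosh$ weights normalised by $\varepsilon=\varepsilon(\text{dist})$. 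Then $\Delta\psi\le\frac12\psi$, and the maximum principle argument of Lemma \ref{lem_reg}(iii) gives $\phi\le\psi$. Evaluating at $y'=x'$ and letting the normalisation make the $\cosh$ contribution $O(\lambda(x')e^{-(t-T)/\sqrt2})$ yields \eqref{eq_tail_pointwise_auto}. A cleaner alternative is to split $\phi$ on $\{t=T\}$ into its restriction to $B'_R(x')$ and the rest. The Poisson kernel of $\Delta-\frac12$ on the half-space $\{t>T\}$ decays like $e^{-|y'-x'|/\sqrt2}$, while $\phi(y',T)\le C^{1+|y'-x'|}\lambda(x')$, so summability needs exactly this comparison of rates; this is why I single out Step 2 as the delicate point. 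If the rates clash, I would fall back on a local estimate, comparing on $B'_R(x')\times(T,\infty)$ with lateral data bounded by $\sup\phi\,e^{-(t-T)/\sqrt2}$ using Lemma \ref{lem_reg}(iii).

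\emph{Step 4: conclusion.} With \eqref{eq_tail_pointwise_auto} in hand, and since $Z(t)\le Ce^{-t}$,
\[
\int_K^\infty\phi Z\,dt\le C\lambda(x')\int_K^\infty e^{-(t-T)/\sqrt2}e^{-t}\,dt\le C'e^{-K}\lambda(x').
\]
Choosing $K$ with $C'e^{-K}\le\vartheta\nu$ gives \eqref{eq_auto_noconc} uniformly in $x'$, so the hypothesis \eqref{eq_noconc} of Proposition \ref{prop_conditional} holds automatically.
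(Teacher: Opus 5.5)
\textbf{The proposal has a genuine gap in Step 3.} Steps 1 and 4 are fine, and the first part of Step 2 is correct. It needs only interior Harnack at the fixed point $x'$, and it already gives $\phi(x',T)\le C_1\lambda(x')$ for \emph{every} $x'$. This is the same trace bound the paper obtains, there via Harnack at height $T$ plus the mean-value inequality for $\lambda$.

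\textbf{Why Step 3 fails.} Because you freeze the amplitude at $\lambda(x')$, you must dominate $\phi(y',T)$ for distant $y'$ by $\lambda(x')$. Your only tool for this is $\lambda(y')\le C^{1+|y'-x'|}\lambda(x')$, with an uncontrolled Harnack constant $C$. The $\cosh$ term is a supersolution of $\Delta-\frac12$ only if $\mathsf b\le 2^{-1/2}$, and the Poisson kernel of $\Delta-\frac12$ decays laterally only like $e^{-|y'-x'|/\sqrt2}$. Nothing makes these rates beat $\log C$.

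\textbf{The fallbacks do not close the gap.} Using $\phi\le M$ outside $B'_R(x')$, or lateral data $\le Ce^{-(t-T)/\sqrt2}$ on a cylinder, leaves an additive error of order $e^{-cR}$ that is not proportional to $\lambda(x')$. Absorbing it forces $R\gtrsim\log(1/\lambda(x'))$, and then the factor $C^{R}$ in the bottom data costs a negative power of $\lambda(x')$. At best you get a bound like $\lambda(x')^{1-\gamma}$. That is neither \eqref{eq_tail_pointwise_auto} nor sufficient for \eqref{eq_auto_noconc} where $\lambda$ is small. Even in the favourable case, your $\cosh$ term carries a fixed $\varepsilon$ (it is needed to dominate the data) and no decay in $t$, so it does not produce the factor $e^{-(t-T)/\sqrt2}$.

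\textbf{The missing idea is that $\lambda$ is superharmonic.} The right-hand side of \eqref{eq_proj} is positive, so $-\Delta_{x'}\lambda>0$. Take the barrier with $\lambda$ evaluated at the running tangential variable, $U(y',t):=C_1\lambda(y')e^{-(t-T)/\sqrt2}$. Then:
\begin{itemize}
\item $(-\Delta+\frac12)U=C_1e^{-(t-T)/\sqrt2}(-\Delta_{y'}\lambda)\ge0$ in $\{t>T\}$;
\item your Step 2, applied at each point, gives $\phi\le U$ on $\{t=T\}$;
\item $U\to0$ uniformly, because $\lambda\le\nu^{-1}\|\phi\|_\infty\int_0^\infty Z$;
\item $\phi$ is a subsolution of $\Delta-\frac12$ by your Step 1.
\end{itemize}
The $\cosh$ maximum principle of Lemma~\ref{lem_reg}(iii), applied to $\phi-U$ with $\varepsilon\to0$, then gives $\phi\le U$. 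No comparison of $\lambda$ at different points, and hence no growth bound along the boundary, is needed. This is the paper's argument, and your Step 4 then applies verbatim.
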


\begin{proof}
By Lemma~\ref{lem_order}, $\phi>0$ and
\begin{equation}\label{eq_phi_linear_auto}
 -\Delta\phi+a(x',t)\phi=0,
 \qquad
 a(x',t):=1-\frac{v(x',t)^p-w_0(t)^p}{v(x',t)-w_0(t)}.
\end{equation}
The coefficient is bounded.  Since both $v$ and $w_0$ tend to zero uniformly as $t\to\infty$, after increasing $T$ we have
\begin{equation}\label{eq_tail_coercive_auto}
 a(x',t)\ge\frac12
 \qquad\text{for all }x'\in\mathbb R^{N-1},\ t\ge T-2.
\end{equation}
On the other hand, \eqref{eq_proj} gives $-\Delta_{x'}\lambda>0$, so $\lambda$ is positive and superharmonic on $\mathbb R^{N-1}$.

We first compare the trace of $\phi$ at height $T$ with $\lambda$.  The positive solution of the linear equation \eqref{eq_phi_linear_auto} satisfies the interior Harnack inequality on unit balls, with a constant independent of the tangential center.  Hence, for a fixed $r\in(0,1/2)$,
\[
 \phi(x',T)
 \le C\int_{B_r^{N-1}(x')\times(T-r,T+r)}\phi(y,s)\,dy\,ds.
\]
Since $Z$ is bounded below by a positive constant on the fixed interval $[T-r,T+r]$, the right hand side is at most
\[
 C_T\int_{B_r^{N-1}(x')}\lambda(y)\,dy.
\]
The mean-value inequality for the positive superharmonic function $\lambda$ therefore yields
\begin{equation}\label{eq_trace_lambda_auto}
 \phi(x',T)\le C_T\lambda(x')
 \qquad\text{for all }x'.
\end{equation}

Set $\kappa_0=1/\sqrt2$ and, for $t\ge T$, define
\[
 U(x',t):=C_Te^{-\kappa_0(t-T)}\lambda(x').
\]
Because $\kappa_0^2=1/2$ and $-\Delta_{x'}\lambda\ge0$,
\[
 \left(-\Delta+\frac12\right)U
 =C_Te^{-\kappa_0(t-T)}\big(-\Delta_{x'}\lambda\big)\ge0.
\]
By \eqref{eq_phi_linear_auto}--\eqref{eq_tail_coercive_auto},
\[
 \left(-\Delta+\frac12\right)\phi
 =-\left(a-\frac12\right)\phi\le0
 \qquad\text{in }\{t>T\}.
\]
At $t=T$ we have $\phi\le U$ by \eqref{eq_trace_lambda_auto}, and both functions tend to zero uniformly as $t\to\infty$.  The maximum principle on the unbounded half-space, justified as in Lemma~\ref{lem_reg} by subtracting a small sum of tangential $\cosh$ barriers, gives $\phi\le U$.  This proves \eqref{eq_tail_pointwise_auto}.

Finally $Z(t)\le C e^{-t}$, so for $K\ge T$,
\[
 \int_K^\infty\phi(x',t)Z(t)\,dt
 \le C_T\lambda(x')\int_K^\infty e^{-(t-T)/\sqrt2}e^{-t}\,dt
 \le C_T'e^{-(1+1/\sqrt2)K}\lambda(x').
\]
Choosing $K$ large enough gives \eqref{eq_auto_noconc}.
\end{proof}

\begin{proof}[Proof of Theorem \ref{thm_unique_Nle5_allp}]
For $1<p<3$ this is the theorem of Sourdis recalled after \eqref{eq_proj}; it is still needed for $1<p<2$ because the lower bound of Lemma \ref{lem_two}, on which Proposition \ref{prop_conditional} rests, requires $p\ge2$.  Let $p\ge2$ and suppose that $v\not\equiv w_0$.  Proposition~\ref{prop_automatic_tail} verifies the hypothesis \eqref{eq_noconc} of Proposition~\ref{prop_conditional}; hence
\[
 -\Delta_{x'}\lambda\ge C\lambda^2\qquad\text{in }\mathbb R^{N-1}.
\]
Since $N-1\le4$, Proposition~\ref{prop_trichotomy}(i) gives a contradiction.  Thus $v\equiv w_0$.
\end{proof}

\subsection{The reduced equation}

We now describe what the projected relation looks like in the regime that governs the Liouville step, namely where $\phi$ is small. Suppose $\phi$ is small and varies slowly in $x'$. Since the kernel of the operator in \eqref{eq_Zeq} is spanned by $Z$ and the rest of its spectrum is positive, the profile of $\phi$ in the $x_N$ variable is then $Z$ to leading order, say $\phi(x',x_N)\simeq\lambda(x')Z(x_N)$, and $\xi\simeq w_0$. Substituting into \eqref{eq_proj} gives
\[
-\Delta_{x'}\lambda\int_0^\infty Z^2\,dt\simeq\frac{p(p-1)}{2}\lambda^2\int_0^\infty w_0^{p-2}Z^3\,dt ,
\]
that is \eqref{eq_LE}. Both integrals are finite and positive for every $p>1$: near $t=0$ the integrands are bounded because $w_0(0)=c_p>0$ and $Z(t)=O(t)$, and as $t\to\infty$ one has $w_0(t)^{p-2}Z(t)^3=O(e^{-(p+1)t})$ and $Z(t)^2=O(e^{-2t})$. For instance $C_2=15/32$ and $C_3=3/2$.

Equation \eqref{eq_LE} is the Lane--Emden equation with exponent $2$ in dimension $n=N-1$. Its two classical thresholds are the Serrin exponent $n/(n-2)$, below which positive supersolutions do not exist, and the Sobolev exponent $(n+2)/(n-2)$, below which positive solutions do not exist. Both are crossed by the exponent $2$ in the range of dimensions relevant here.

\begin{proposition}\label{prop_trichotomy}
Let $C>0$ and recall that $n=N-1$. Then:
\begin{enumerate}[label=(\roman*)]
\item if $2\le N\le5$, the inequality $-\Delta\lambda\ge C\lambda^2$ has no positive solution in $\mathbb{R}^n$;
\item if $N=6$, the inequality $-\Delta\lambda\ge C\lambda^2$ has positive solutions in $\mathbb{R}^n$, but the equation $-\Delta\lambda=C\lambda^2$ has none;
\item if $N\ge7$, the equation $-\Delta\lambda=C\lambda^2$ has positive solutions in $\mathbb{R}^n$. For $N=7$ they include the explicit family
\[
\lambda_\rho(y)=\frac{24}{C}\cdot\frac{\rho^2}{\left(\rho^2+|y|^2\right)^2},\qquad \rho>0.
\]
\end{enumerate}
\end{proposition}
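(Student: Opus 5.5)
We treat the three regimes separately, after rescaling: replacing $\lambda$ by $C\lambda$ reduces everything to $C=1$, so only the exponent $2$ and the dimension $n=N-1$ matter.

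\emph{Part (i).} For $2\le N\le5$ we have $n\le4$, so $p_{sg}(n)\ge2$ by \eqref{eq_psg}; for $n\le2$ it is $\infty$, and for $n=3,4$ it equals $3$ and $2$. The inequality $-\Delta\lambda\ge\lambda^2$ then has no positive solution by the Liouville theorem \cite[Theorem A.1]{So2}, applied with $q=2\le p_{sg}(n)$. Nothing new is needed here.

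\emph{Part (ii).} Here $n=5$ and $p_{sg}(5)=5/3<2$, so supersolutions exist. We exhibit $\lambda(y)=A(1+|y|^2)^{-1}$. With $r=|y|$ and $u=(1+r^2)^{-1}$ one computes
\[
-\Delta u=\frac{2n(1+r^2)-8r^2}{(1+r^2)^3}=\frac{2n+(2n-8)r^2}{(1+r^2)^3}.
\]
For $n=5$ this is at least $2(1+r^2)^{-2}=2u^2$, since $10+2r^2\ge2(1+r^2)$. Hence $-\Delta\lambda\ge\lambda^2$ as soon as $2A\ge A^2$, that is, for $0<A\le2$. Nonexistence for the equation follows from the Gidas--Spruck Liouville theorem, since $2<(n+2)/(n-2)=7/3$. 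This is the one external input, which we cite rather than reprove.

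\emph{Part (iii).} For $n=6$ the exponent $2$ is exactly the Sobolev exponent $(n+2)/(n-2)$. We verify the stated family by direct computation. For $u=(1+r^2)^{-2}$ in $\mathbb R^6$,
\[
u'=-4r(1+r^2)^{-3},\qquad u''=-4(1+r^2)^{-3}+24r^2(1+r^2)^{-4},
\]
so
\[
\Delta u=u''+\tfrac5ru'=\frac{-24(1+r^2)+24r^2}{(1+r^2)^4}=-24u^2 .
\]
Thus $\lambda=(24/C)u$ solves $-\Delta\lambda=C\lambda^2$. The scaling $\rho^2u(y/\rho)/\rho^4$ preserves the equation and gives exactly $\lambda_\rho$.

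For $n\ge7$, which is supercritical, we use radial ODE shooting. Let $\lambda$ solve $\lambda''+\frac{n-1}r\lambda'+\lambda^2=0$ with $\lambda(0)=\alpha>0$ and $\lambda'(0)=0$. The singular solution $\lambda_s=2(n-4)r^{-2}$ exists for $n\ge5$. In the supercritical range the Pohozaev identity shows that the radial solution never reaches zero. The cleanest route is to cite the classical result of Joseph--Lundgren type, or Gidas--Spruck, that for $q\ge(n+2)/(n-2)$ all regular radial solutions are positive and entire. Alternatively, one can argue via the Pohozaev function
\[
P(r)=r^n\Bigl(\tfrac12\lambda'^2+\tfrac13\lambda^3\Bigr)+\tfrac{n-2}{2}r^{n-1}\lambda\lambda'.
\]
One has $P'=\bigl(\tfrac n3-\tfrac{n-2}{2}\bigr)r^{n-1}\lambda^3\le0$ for $n\ge6$, and $P(0)=0$. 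At a first zero $r_0$ of $\lambda$ we would get $P(r_0)=\tfrac12r_0^n\lambda'(r_0)^2>0$, a contradiction. Hence $\lambda>0$ for all $r$, which gives existence for every $N\ge7$ via a one-line monotonicity argument. The main obstacle is only the bookkeeping of signs in this Pohozaev computation, together with the citation for nonexistence in (ii).
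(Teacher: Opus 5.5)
Your proposal is correct and follows the paper's proof essentially step by step: the Liouville theorem \cite[Theorem A.1]{So2} for (i), the supersolution $A(1+|y|^2)^{-1}$ together with Gidas--Spruck for (ii), and the explicit bubble computation for $n=6$ in (iii). The only differences are that you verify the $n=5$ supersolution by hand instead of citing \cite[Remark A.2]{So2}, cover $n\le2$ by the citation rather than by constancy of positive superharmonic functions, and for $n\ge7$ run the Pohozaev shooting argument directly for the pure power, whereas the paper points ahead to Proposition~\ref{c:prop_lambda}, which carries out the same argument for the exact reduced nonlinearity $\mathfrak g$.
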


\begin{proof}
(i) For $n\le2$ a positive superharmonic function on $\mathbb{R}^n$ is constant, and a positive constant does not satisfy $-\Delta\lambda\ge C\lambda^2$. For $3\le n\le4$ we have $2\le n/(n-2)$, and the claim is \cite[Theorem A.1]{So2}.

(ii) For $n=5$ the function $\lambda(y)=k(1+|y|^2)^{-1}$ with $k>0$ small is a positive solution of $-\Delta\lambda\ge C\lambda^2$, by \cite[Remark A.2]{So2}, since $2>5/3=n/(n-2)$. On the other hand $2<7/3=(n+2)/(n-2)$, so the equation has no positive solution by the theorem of Gidas and Spruck \cite{GS}.

(iii) For $n\ge 6$ we have $2\ge(n+2)/(n-2)$. For $n=6$ the exponent $2$ is the Sobolev exponent, and a direct computation shows that $U_\rho(y)=24\rho^2(\rho^2+|y|^2)^{-2}$ satisfies $-\Delta U_\rho=U_\rho^2$ in $\mathbb{R}^6$, so $\lambda_\rho=U_\rho/C$ solves $-\Delta\lambda_\rho=C\lambda_\rho^2$. For $n\ge7$ the exponent is supercritical and the positive radial entire solutions with $\lambda(0)=m$ exist for every $m>0$ and decay like $|y|^{-2}$; see Proposition \ref{c:prop_lambda} below, which proves the statement we actually use, for the exact reduced nonlinearity rather than for the pure power.
\end{proof}

The case $N=6$ is the one the trichotomy leaves in the middle, and the projection cannot decide it. Indeed the Lyapunov--Schmidt expansion $\phi=\lambda Z+O(\lambda^2)$ determines the right hand side of \eqref{eq_proj} only up to a relative error of order $\lambda$, that is, it gives $-\Delta_{x'}\lambda=C_p\lambda^2+O(\lambda^3)$ and no more. The following example shows that no relation of that accuracy can be enough, however small the constant in front of the error.

\begin{proposition}\label{prop_nogo}
Let $C>0$ and $\kappa>0$. Then the function
\[
\lambda(y)=\frac{2/C}{\varsigma^2+|y|^2},\qquad \varsigma^2=\frac{\kappa}{2C^2},
\]
is a positive bounded solution of $-\Delta\lambda=C\lambda^2+\kappa\lambda^3$ in $\mathbb{R}^5$.
\end{proposition}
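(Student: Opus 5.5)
The plan is a direct computation: write $\lambda=A/(\varsigma^2+r^2)$ with $r=|y|$ and $A=2/C$, evaluate $-\Delta\lambda$ explicitly in $\mathbb{R}^5$, and match coefficients against $C\lambda^2+\kappa\lambda^3$. Positivity and boundedness are immediate, since $\lambda$ is smooth and $0<\lambda\le A/\varsigma^2$ once $\varsigma>0$.

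For the Laplacian, put $s=\varsigma^2+r^2$ and use the radial formula $\Delta f=f''+\frac{n-1}{r}f'$ with $n=5$. From
\[
f'(r)=-\frac{2Ar}{s^2},\qquad f''(r)=-\frac{2A}{s^2}+\frac{8Ar^2}{s^3},
\]
we obtain
\[
\Delta\lambda=-\frac{2A}{s^2}+\frac{8Ar^2}{s^3}-\frac{8A}{s^2}=-\frac{10A}{s^2}+\frac{8A(s-\varsigma^2)}{s^3}=-\frac{2A}{s^2}-\frac{8A\varsigma^2}{s^3}.
\]
Hence
\[
-\Delta\lambda=\frac{2A}{s^2}+\frac{8A\varsigma^2}{s^3}.
\]
On the other side,
\[
C\lambda^2+\kappa\lambda^3=\frac{CA^2}{s^2}+\frac{\kappa A^3}{s^3}.
\]

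Since $1/s^2$ and $1/s^3$ are linearly independent as functions of $r$, we match the two powers separately. The $s^{-2}$ terms require $2A=CA^2$, so $A=2/C$; this is the point where dimension five enters, because $2(n-4)=2$ when $n=5$. The $s^{-3}$ terms require $8\varsigma^2=\kappa A^2=4\kappa/C^2$, so $\varsigma^2=\kappa/(2C^2)$. Both agree with the statement.

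There is no real obstacle here. The only thing to watch is the bookkeeping in the $n$-dependent coefficient, and it is worth double-checking the identity $r^2=s-\varsigma^2$ used to split $\Delta\lambda$ into its $s^{-2}$ and $s^{-3}$ parts.
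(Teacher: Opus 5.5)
Your computation is correct and follows the paper's proof. The paper writes $\lambda=A(c+|y|^2)^{-1}$, computes $-\Delta\lambda=\frac{2}{A}\lambda^2+\frac{8c}{A^2}\lambda^3$ in $\mathbb{R}^5$, and takes $A=2/C$, $c=\kappa/(2C^2)$, which is exactly your coefficient matching; the only detail you leave implicit is that the radial formula is valid for $r>0$, and the identity extends to $y=0$ by continuity because $\lambda$ is smooth.
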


\begin{proof}
Let $A>0$, $c>0$ and $\lambda(y)=A(c+|y|^2)^{-1}$ on $\mathbb{R}^n$. A direct computation gives
\[
\Delta\lambda=A\,\frac{(8-2n)|y|^2-2nc}{\left(c+|y|^2\right)^{3}}.
\]
For $n=5$ this is $-2A\left(|y|^2+5c\right)\left(c+|y|^2\right)^{-3}$, so that
\[
-\Delta\lambda=\frac{2A\left(c+|y|^2\right)+8Ac}{\left(c+|y|^2\right)^{3}}
=\frac{2}{A}\lambda^2+\frac{8c}{A^2}\lambda^3
=\frac{2}{A}\lambda^2\left(1+\frac{4c}{A}\lambda\right).
\]
Taking $A=2/C$ and $c=\kappa/(2C^2)$ gives $2/A=C$ and $8c/A^2=2C^2c=\kappa$.
\end{proof}

The same computation in $\mathbb{R}^4$ gives $-\Delta\lambda=8Ac\left(c+|y|^2\right)^{-3}$, which carries no $\lambda^2$ term at all, so the example has no counterpart in the dimension where the projection does decide the question. Proposition \ref{prop_nogo} is a statement about the model inequality, not about \eqref{eq_Pc}, but it locates the obstruction: in $\mathbb{R}^5$ the profile $|y|^{-2}$ solves the limit equation away from the origin, and a positive perturbation of relative size $O(\lambda)$ is exactly what regularizes it at the origin. Settling $N=6$ therefore requires a global argument rather than a refinement of \eqref{eq_proj}.

Problem \ref{prob_N6} is the remaining ambient dimension between the full rigidity theorem for $N\le5$ and the construction regimes beginning in $N=7$. Proposition \ref{prop_nogo} explains why a refinement of the scalar projection alone is unlikely to settle it: in boundary dimension five, a cubic correction of the same relative size as the Lyapunov--Schmidt error can create a bounded positive entire profile. We therefore do not pursue $N=6$ further here.

The dimension $N=7$ is instead exactly Sobolev critical for the leading reduced equation. Its first nontrivial correction is the cubic coefficient $\kappa$ studied in Section \ref{c:sec_cell}; when $\kappa>0$, Section \ref{c:sec_critical_full} lifts the resulting critical radial branch to the half-space. For $N\ge8$ the noncritical weighted inverse of Sections \ref{c:sec_linear}--\ref{c:sec_fixed} applies for every $p>1$.

\section{Preliminaries for the nonuniqueness construction}\label{c:sec_prelim}

In Sections \ref{c:sec_prelim} to \ref{c:sec_fixed} we assume $N\ge8$, so that $n=N-1\ge7$; Section \ref{c:sec_critical_full} treats $N=7$, and there we state which results of the present sections carry over. We recall that, writing $v=w_0(x_N)+\phi$ and
\begin{equation}\label{eq_phiL}
L\phi=R(x_N,\phi),\qquad L:=-\Delta+1-pw_0^{p-1},\qquad
R(t,s):=f\big(w_0(t)+s\big)-f\big(w_0(t)\big)-f'\big(w_0(t)\big)s ,
\end{equation}
where $f(v):=|v|^{p-1}v$, problem \eqref{eq_Pc} with $c=c_p$ is equivalent to \eqref{eq_phiL} together with $\phi=0$ on $\partial\mathbb{R}^N_+$, $\phi$ bounded and $\phi(x',x_N)\to0$ as $x_N\to\infty$ uniformly in $x'$, \emph{provided the solution is positive}. We solve \eqref{eq_phiL} with the odd nonlinearity $f$, which is defined for every real argument and is of class $C^1$ with $f'(v)=p|v|^{p-1}$, and we prove positivity afterwards, in the proof of Theorem \ref{thm_construction}. Since $f$ is only $C^{1,p-1}$ at the origin when $1<p<2$, the remainder $R$ is quadratic only where $|s|$ is small compared with $w_0$; the two bounds we use, valid for every $p>1$, are
\begin{equation}\label{c:eq_Rtwo}
|R(t,s)|\le C\,w_0(t)^{p-2}s^2\ \ \text{if }|s|\le\tfrac12w_0(t),
\qquad
|R(t,s)|\le C\,|s|^p\ \ \text{if }|s|\ge\tfrac14w_0(t).
\end{equation}

Since $w_0(t)=k_pe^{-t}(1+O(e^{-(p-1)t}))$ for an explicit $k_p>0$, there are $0<\underline c<\overline c$ with
\begin{equation}\label{c:eq_Zdecay}
\underline c\,\min\{t,1\}e^{-t}\le Z(t)\le\overline c\,\min\{t,1\}e^{-t},\qquad t>0.
\end{equation}
For $\mu\in(0,1)$ we write $\|u\|_{\mathcal X_\mu}:=\sup_{t>0}e^{\mu t}|u(t)|$ and $\mathcal X_\mu$ for the corresponding space of continuous functions on $[0,\infty)$ vanishing at $t=0$. These spaces are too wide for the cell problem, because an element of $\mathcal X_\mu$ with $\mu<1$ may be far larger than $w_0$ at infinity, so that $w_0+u$ need not be positive and $R(t,u)$ need not be quadratic in $u$. We therefore also use the space adapted to $Z$,
\begin{equation}
\mathcal Y:=\Big\{u\in C([0,\infty)):\ \|u\|_{\mathcal Y}:=\sup_{t>0}\frac{|u(t)|}{\theta(t)}<\infty\Big\},\qquad \theta(t):=\min\{t,1\}e^{-t},
\end{equation}
so that $\theta\asymp Z$ by \eqref{c:eq_Zdecay}, and $\theta\le Cw_0$ on $[0,\infty)$ because $w_0(0)>0$ and $w_0\asymp e^{-t}$ at infinity. Consequently, if $\|u\|_{\mathcal Y}\le\epsilon_0$ with $C\epsilon_0\le\frac12$, then
\begin{equation}\label{c:eq_Rquad}
w_0+u\ge\tfrac12w_0>0\quad\text{and}\quad |R(t,u)|\le C_*\,w_0(t)^{p-2}u(t)^2\le C_*'\,\|u\|_{\mathcal Y}^2\,\theta(t)\,e^{-(p-1)t} ,
\end{equation}
because $R(t,u)=\frac{p(p-1)}{2}\xi^{p-2}u^2$ with $\xi$ between $w_0$ and $w_0+u$, hence $\xi\asymp w_0$, and because $w_0^{p-2}\theta^2\le C\theta e^{-(p-1)t}$, both sides behaving like $e^{-pt}$ at infinity. The gain over $\theta$ is the factor $e^{-(p-1)t}$, and $p-1>0$ is all that is used below. It is \eqref{c:eq_Rquad}, and not a bound of the form $|R|\le Cu^2$, that makes the nonlinearity quadratic for every $p>1$; the latter is false when $1<p<2$ and $u\gg w_0$. For $\sigma\in\mathbb{R}$ and $\varepsilon>0$ we write
\[
\|u\|_{\sigma,\varepsilon}:=\sup_{y\in\mathbb{R}^n}(1+\varepsilon|y|)^{\sigma}|u(y)| ,
\]
and $\|u\|_\sigma:=\|u\|_{\sigma,1}$. All the weighted statements below are invariant under the scaling $u\mapsto u(\cdot/\varepsilon)$, so it suffices to prove them for $\varepsilon=1$.

\begin{lemma}\label{c:lem_gap}
Let $\mathcal L=-\partial_{tt}+1-pw_0^{p-1}$ on $L^2(0,\infty)$ with the Dirichlet condition at $0$. Then $\sigma_{\rm ess}(\mathcal L)=[1,\infty)$, the number $0$ is a simple eigenvalue of $\mathcal L$ with eigenfunction $Z$, and
\[
\gamma:=\inf\big(\sigma(\mathcal L)\setminus\{0\}\big)>0.
\]
Moreover, in the range $1<p\le2$ the gap is explicit:
\begin{equation}\label{c:eq_exact_gap}
\gamma=(p-1)(3-p).
\end{equation}
\end{lemma}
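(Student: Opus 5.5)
Three claims need proof: the essential spectrum, simplicity of the eigenvalue $0$ with a positive gap, and the explicit value of $\gamma$ for $1<p\le 2$. The plan treats them in that order.

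\emph{Essential spectrum.} The potential $V(t)=-pw_0(t)^{p-1}$ is bounded and decays like $e^{-(p-1)t}$ at infinity. It is therefore a relatively compact perturbation of $-\partial_{tt}+1$ with the Dirichlet condition on $(0,\infty)$, whose spectrum is $[1,\infty)$. Weyl's theorem then gives $\sigma_{\rm ess}(\mathcal L)=[1,\infty)$. Consequently $\sigma(\mathcal L)\cap(-\infty,1)$ consists of isolated eigenvalues of finite multiplicity, which can accumulate only at $1$.

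\emph{Simplicity and the gap.} By \eqref{eq_Zeq}, $Z\in L^2$ satisfies $\mathcal LZ=0$ and $Z(0)=0$. Since $Z>0$ on $(0,\infty)$, Sturm--Liouville theory identifies $0$ as the principal (lowest) eigenvalue, and it is simple because two Dirichlet solutions of a second-order ODE are proportional. Every other eigenvalue below $1$ is therefore strictly positive. As the discrete eigenvalues in $(0,1)$ are finitely many in any interval $[0,1-\delta]$, the infimum of $\sigma(\mathcal L)\setminus\{0\}$ is either the second eigenvalue or $1$. In either case $\gamma>0$, and this holds for every $p>1$.

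\emph{Explicit gap for $1<p\le2$.} Here I would use the exactly solvable Pöschl--Teller structure. Set $s=\frac{p-1}{2}t$. From \eqref{eq_w0}, $pw_0^{p-1}=\frac{p(p+1)}{2}\operatorname{sech}^2(s)$, so the operator becomes
\[
\mathcal L=\Big(\tfrac{p-1}{2}\Big)^2\Big[-\partial_{ss}-\ell(\ell+1)\operatorname{sech}^2 s\Big]+1,
\qquad \ell=\tfrac{p+1}{p-1}.
\]
The whole-line operator $-\partial_{ss}-\ell(\ell+1)\operatorname{sech}^2s$ has eigenvalues $-(\ell-k)^2$ for integers $0\le k<\ell$, with eigenfunctions of parity $(-1)^k$. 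Restricting to the half-line with the Dirichlet condition keeps exactly the odd ones, $k=1,3,\dots$.
\begin{itemize}
\item For $k=1$, $1-\big(\tfrac{p-1}{2}\big)^2(\ell-1)^2=1-1=0$, which recovers $Z$ as the kernel.
\item For $k=3$, $1-\big(\tfrac{p-1}{2}\big)^2(\ell-3)^2=1-(2-p)^2=(p-1)(3-p)$.
\end{itemize}
This eigenvalue exists precisely when $\ell>3$, i.e.\ when $p<2$. At $p=2$ the value $(p-1)(3-p)$ equals $1=\inf\sigma_{\rm ess}$, so the formula is consistent there. Higher odd levels $k\ge5$ give larger values, so $\gamma=(p-1)(3-p)$.

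The main obstacle is to make the discrete-spectrum enumeration rigorous, and in particular to exclude any further eigenvalue in $(0,(p-1)(3-p))$ for non-integer $\ell$. I would settle this with the hypergeometric (Legendre) solutions $\operatorname{sech}^{\mu}(s)\,F(\cdot,\cdot;\cdot;\tanh^2 s)$: $L^2$-decay forces $\mu>0$, and oddness forces the series to terminate, which yields $\ell-\mu$ odd. As a cross-check, the count by oscillation (Sturm) theory confirms that the $k=3$ eigenfunction has exactly one interior zero on $(0,\infty)$, so it is the second Dirichlet eigenfunction.
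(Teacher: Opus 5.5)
Your proposal is correct and follows essentially the same route as the paper: Weyl's theorem for $\sigma_{\rm ess}$, the positive Dirichlet eigenfunction $Z$ as the simple ground state, and the P\"oschl--Teller rescaling $s=\frac{p-1}{2}t$, $\ell=\frac{p+1}{p-1}$. The Dirichlet condition selects the odd levels, $k=3$ gives $1-(2-p)^2=(p-1)(3-p)$ for $1<p<2$, and the value at $p=2$ is the threshold $1$. The paper simply quotes the P\"oschl--Teller bound-state list, so your justification via the Sturm oscillation count (the $k=3$ state has exactly one interior zero on $(0,\infty)$) is a useful but optional addition; note only that it is the decay at infinity, not oddness, that forces the hypergeometric series of the odd solution to terminate.
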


\begin{proof}
Since $pw_0(t)^{p-1}\to0$ as $t\to\infty$, Weyl's theorem gives $\sigma_{\rm ess}(\mathcal L)=\sigma_{\rm ess}(-\partial_{tt}+1)=[1,\infty)$. Differentiating $-w_0''+w_0=w_0^p$ gives $\mathcal LZ=0$, and $Z>0$ on $(0,\infty)$ with $Z(0)=0$. A positive eigenfunction of a Sturm--Liouville operator is the ground state and the ground state is simple, so $0=\min\sigma(\mathcal L)$ is a simple eigenvalue, isolated because it lies below $\sigma_{\rm ess}$. Hence $\gamma>0$.

For the last assertion put $m=p-1$ and $x=mt/2$.  Since $c_p^m=(p+1)/2$, one obtains
\[
\mathcal L=\frac{m^2}{4}\left[-\partial_{xx}+\frac4{m^2}-\ell(\ell+1)\operatorname{sech}^2x\right],
\qquad
\ell=1+\frac2m.
\]
The P\"oschl--Teller operator $-\partial_{xx}-\ell(\ell+1)\operatorname{sech}^2x$ has bound-state eigenvalues $-(\ell-j)^2$, $j=0,1,\ldots<\ell$, with parity $(-1)^j$.  The Dirichlet half-line problem therefore selects the odd values of $j$. The state $j=1$ gives the eigenvalue $0$ and is precisely $Z$. If $1<p<2$, the next odd state is $j=3$, and its eigenvalue is
\[
\frac{m^2}{4}\left[\frac4{m^2}-\left(\frac2m-2\right)^2\right]=m(2-m)=(p-1)(3-p)<1.
\]
All subsequent odd states have larger eigenvalues, while the essential spectrum begins at $1$.  At $p=2$ this level reaches the threshold $1$, so the same formula gives $\gamma=1$. This proves \eqref{c:eq_exact_gap}.
\end{proof}

We write $\Pi u:=\frac1\nu\left(\int_0^\infty u(t)Z(t)\,dt\right)Z$ for the projection on $\mathbb{R}Z$ and $\Pi^\perp=\mathrm{Id}-\Pi$, acting in the variable $t=x_N$ for each fixed $x'$.

\begin{lemma}\label{c:lem_Linv}
Let $\mu\in(0,1)$ and $f\in\mathcal X_{\mu}$ with $\int_0^\infty fZ\,dt=0$. Then there is a unique $\omega\in\mathcal X_\mu$ with $\omega(0)=0$, $\mathcal L\omega=f$ and $\int_0^\infty \omega Z\,dt=0$, and $\|\omega\|_{\mathcal X_\mu}\le C\|f\|_{\mathcal X_{\mu}}$ with $C=C(p,\mu)$. The operator $\Pi^\perp$ maps $\mathcal X_\mu$ into itself. Moreover, if $|f(t)|\le A\,\theta(t)e^{-\kappa t}$ for some $\kappa>0$ and $\int_0^\infty fZ\,dt=0$, then for every $\tau\in[0,\gamma/2]$ the solution $\omega_\tau$ of $(\mathcal L+\tau)\omega_\tau=f$, $\omega_\tau(0)=0$, $\int_0^\infty\omega_\tau Z\,dt=0$ belongs to $\mathcal Y$ and $\|\omega_\tau\|_{\mathcal Y}\le C(p,\kappa)A$, uniformly in $\tau$.
\end{lemma}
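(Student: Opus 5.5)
The approach is to split the half-line into a compact piece near $t=0$, handled by $L^2$ spectral theory, and a tail where the potential $pw_0^{p-1}$ is a small perturbation of $1$, handled by explicit exponential barriers. Throughout, write $\mathcal L_\tau=\mathcal L+\tau$ with $\tau\in[0,\gamma/2]$.

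\emph{Step 1: existence and uniqueness in $L^2$.} Since $\mu>0$, the space $\mathcal X_\mu$ embeds in $L^2(0,\infty)$, so $f\in L^2$ with $f\perp Z$. By Lemma \ref{c:lem_gap}, $\mathcal L_\tau$ restricted to $Z^\perp$ has spectrum contained in $[\gamma+\tau,\infty)\subset[\gamma,\infty)$. Hence there is a unique $\omega_\tau\in Z^\perp\cap H^1_0\cap H^2$ with $\mathcal L_\tau\omega_\tau=f$, and
\[
\|\omega_\tau\|_{H^2}\le C\gamma^{-1}\|f\|_{L^2},
\]
uniformly in $\tau$. Uniqueness within the class of the statement follows because any such $\omega$ lies in $L^2$. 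By Sobolev embedding, $\omega_\tau$ is bounded on $[0,T]$ by $C\|f\|$ for any fixed $T$. In the $\mathcal Y$ case we also have $\omega_\tau(0)=0$ and $|\omega_\tau'|\le C$ near $0$, which gives $|\omega_\tau|\le Ct$ there. This matches $\theta$ on $[0,1]$.

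\emph{Step 2: decay at infinity via barriers (the main obstacle).} Fix $T$ large so that $pw_0^{p-1}\le\delta$ on $[T,\infty)$, with $\delta$ small. For the $\mathcal X_\mu$ bound, take the supersolution $\psi=Be^{-\mu t}$. It satisfies
\[
\mathcal L_\tau\psi\ge(1-\mu^2-\delta)\psi,
\]
and $1-\mu^2-\delta$ is positive once $\delta<1-\mu^2$. Choose $B$ as a multiple of $\|f\|_{\mathcal X_\mu}+\sup_{[0,T]}|\omega_\tau|$. Since $\omega_\tau\to0$ (it is in $H^2$), the maximum principle on $[T,\infty)$, applied to $\pm\omega_\tau-\psi$, gives $|\omega_\tau|\le Ce^{-\mu t}\|f\|$.

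For the $\mathcal Y$ bound the natural rate $e^{-t}$ is exactly the critical one for $-\partial_{tt}+1$, so a pure exponential barrier fails. I would instead use
\[
\psi=B\big(e^{-t}-e^{-(1+\kappa')t}\big),\qquad \kappa'=\tfrac12\min\{\kappa,p-1\}.
\]
Then $(-\partial_{tt}+1)\psi=B\,(2\kappa'+\kappa'^2)\,e^{-(1+\kappa')t}$, which is positive. On $[T,\infty)$ this positive term dominates both the potential term $pw_0^{p-1}\psi=O(e^{-(p-1)t}e^{-t})$ and the source $|f|\le Ae^{-(1+\kappa)t}$, provided $T$ is large. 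Here $\tau\ge0$ only helps.

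\emph{Step 3: the projection.} For $\Pi^\perp$ mapping $\mathcal X_\mu$ into itself, note that $|\int uZ|\le C\|u\|_{\mathcal X_\mu}$ and $Z\in\mathcal X_\mu$ because $Z\asymp e^{-t}$ at infinity. So $\Pi u\in\mathcal X_\mu$ with norm bounded by $C\|u\|_{\mathcal X_\mu}$, and hence so is $\Pi^\perp u$.

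The delicate points are the uniformity of the constants in $\tau$ and keeping $T$ independent of $\tau$. Both are handled by the spectral lower bound $\gamma$ in Step 1, and by the fact that adding $\tau\ge0$ only strengthens the supersolution inequalities in Step 2.
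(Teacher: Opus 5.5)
Your proof is correct, but it takes a different route from the paper. The paper stays with the ODE. It writes down the variation-of-parameters formula $\omega_0=\frac1W\big(Z\int_0^t y_2f\,ds+y_2\int_t^\infty Zf\,ds\big)$, where $y_2$ solves $\mathcal Ly_2=0$, $y_2(0)=1$, $y_2'(0)=0$ and grows like $e^{t}$, and it reads the $\mathcal X_\mu$ and $\mathcal Y$ bounds directly off $|Z|\le Ce^{-t}$, $|y_2|\le Ce^{t}$ and the decay of $f$. The condition $\int fZ=0$ is what gives $\omega_0(0)=0$. When $\tau=0$, a multiple of $Z$ is subtracted to enforce orthogonality; when $\tau>0$, orthogonality comes from testing the equation with $Z$.

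You instead obtain existence, uniqueness, orthogonality and a $\tau$-uniform $H^2$ bound in one step from the spectral gap of $\mathcal L$ on $Z^\perp$. You then turn this into sharp pointwise decay by comparison on $[T,\infty)$ with the barriers $e^{-\mu t}$ and $e^{-t}-e^{-(1+\kappa')t}$. The second barrier, with $\kappa'<\min\{\kappa,p-1\}$, correctly handles the fact that $e^{-t}$ is the critical rate for $-\partial_{tt}+1$.

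The explicit formula has the merit of showing exactly where $\mu<1$ and the extra factor $e^{-\kappa t}$ enter. Your argument is cleaner for $\tau>0$. In that case no homogeneous solution of $\mathcal L+\tau$ both vanishes at $0$ and decays. The Green's function built from the two fundamental solutions therefore has a Wronskian of order $\tau$, reflecting the pole $\frac{1}{\tau\nu}Z\otimes Z$ of the resolvent, which cancels only through $f\perp Z$. Uniformity as $\tau\to0$ thus needs care in the ODE route, and the paper treats it only briefly. In your approach it follows from $\|(\mathcal L+\tau)^{-1}|_{Z^\perp}\|_{L^2\to L^2}\le\gamma^{-1}$, and $\tau\ge0$ only strengthens your barrier inequalities. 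The price is a non-explicit cutoff $T$ in the constants, which the statement does not mind.
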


\begin{proof}
Let $y_2$ be the solution of $\mathcal Ly_2=0$ with $y_2(0)=1$, $y_2'(0)=0$; it grows like $e^{t}$, and the Wronskian $W=Zy_2'-Z'y_2$ is the constant $-Z'(0)$. The function
\[
\omega_0(t)=\frac{1}{W}\left(Z(t)\int_0^ty_2f\,ds+y_2(t)\int_t^\infty Zf\,ds\right)
\]
solves $\mathcal Lu_0=f$; it vanishes at $t=0$ because $Z(0)=0$ and $\int_0^\infty Zf=0$, and $|\omega_0|\le C\|f\|_{\mathcal X_{\mu}}e^{-\mu t}$: indeed $|Z|\le Ce^{-t}$ and $|y_2|\le Ce^{t}$ give
$\big|Z\int_0^ty_2f\big|\le Ce^{-t}\int_0^te^{(1-\mu)s}ds\le Ce^{-\mu t}$ and
$\big|y_2\int_t^\infty Zf\big|\le Ce^{t}\int_t^\infty e^{-(1+\mu)s}ds\le Ce^{-\mu t}$, because $0<\mu<1$.
The assertion about $\Pi^\perp$ holds because $Z\in\mathcal X_\mu$ for $\mu<1$. For the last assertion, the same representation with the two solutions of $(\mathcal L+\tau)\cdot=0$, which behave like $e^{\pm\sqrt{1+\tau}\,t}$ at infinity and are $O(t)$ and $O(1)$ at the origin, gives
\[
\Big|Z\int_0^ty_2f\Big|\le CA\,e^{-t}\int_0^te^{-\kappa s}\,ds\le \frac{CA}{\kappa}\,e^{-t},
\qquad
\Big|y_2\int_t^\infty Zf\Big|\le CA\,e^{t}\!\int_t^\infty\! e^{-(2+\kappa)s}ds\le\frac{CA}{2+\kappa}\,e^{-(1+\kappa)t},
\]
using $|f|\le Ae^{-(1+\kappa)t}$; both are $O(A\theta(t))$, since they vanish at $t=0$ with bounded derivative. The bounds are uniform for $\tau$ in a compact subset of $[0,\gamma)$. For $\tau=0$, subtracting the multiple of $Z$ that enforces the orthogonality preserves both the equation and the boundary condition. For $\tau>0$ no subtraction is possible, since $(\mathcal L+\tau)Z=\tau Z\ne0$, but none is needed: testing $(\mathcal L+\tau)\omega_\tau=f$ with $Z$ gives $\tau\int_0^\infty\omega_\tau Z\,dt=\int_0^\infty fZ\,dt=0$, so the orthogonality is automatic. Uniqueness holds because the only solution of $\mathcal L\omega=0$ with $\omega(0)=0$ that lies in $\mathcal X_\mu$ is a multiple of $Z$.
\end{proof}

\section{The cell problem and the reduced nonlinearity}\label{c:sec_cell}

For $\tau>0$ consider the one-dimensional problem
\begin{equation}\label{c:eq_cell}
-\zeta''+(1+\tau)\zeta=(w_0+\zeta)^p-w_0^p\ \text{ in }(0,\infty),\qquad
\zeta(0)=0,\quad \zeta>0,\quad \zeta(\infty)=0.
\end{equation}
Equivalently $\mathcal L\zeta+\tau\zeta=R(\cdot,\zeta)$. Problem \eqref{c:eq_cell} has no solution for $\tau=0$: indeed a solution would give a bounded positive solution of \eqref{eq_Pc} with $c=c_p$ depending on $x_N$ only, and the only such solution is $w_0$ itself, i.e. $\zeta\equiv0$.

\begin{proposition}\label{c:prop_cell}
There are $\tau_0>0$ and a continuous map $\tau\mapsto\zeta^\tau\in\mathcal Y$, defined for $\tau\in(0,\tau_0)$ and of class $C^3$ as a map into $\mathcal X_\mu$ for each fixed $\mu\in(0,1)$, such that $\zeta^\tau$ solves \eqref{c:eq_cell}. The normalization
\begin{equation}\label{c:eq_norm}
a(\tau):=\frac1\nu\int_0^\infty\zeta^\tau Z\,dt
\end{equation}
is of class $C^3$ and strictly increasing in $\tau$ with $a(\tau)=\tau/C_p+O(\tau^2)$, so it may be inverted: writing $\zeta_a:=\zeta^{\tau}$ and $T(a):=\tau$ for $a=a(\tau)$, we obtain a family $a\mapsto(\zeta_a,T(a))$, continuous into $\mathcal Y\times\mathbb R$ and of class $C^3$ into $\mathcal X_\mu\times\mathbb R$, $a\in(0,a_0)$, with $\frac1\nu\int_0^\infty\zeta_aZ\,dt=a$ and
\begin{equation}\label{c:eq_Cp}
T(a)=C_pa+O(a^2),\qquad C_p:=\frac{p(p-1)}{2}\cdot\frac{\int_0^\infty w_0^{p-2}Z^3\,dt}{\int_0^\infty Z^2\,dt}>0.
\end{equation}
Moreover $\zeta_a=aZ+a^2h+O(a^3)$ in $\mathcal X_\mu$ for every fixed $\mu\in(0,1)$, where $h$ is the solution given by Lemma \ref{c:lem_Linv} of
\begin{equation}\label{c:eq_h}
\mathcal Lh=\tfrac{p(p-1)}{2}w_0^{p-2}Z^2-C_pZ,\qquad \int_0^\infty hZ\,dt=0 ,
\end{equation}
and $\zeta_a>0$ on $(0,\infty)$ with $\|\zeta_a\|_{\mathcal Y}\le Ca$, so that in particular $\zeta_a(t)\le Ca\,\theta(t)\le Cae^{-t}$ uniformly in $a\in(0,a_0)$.
\end{proposition}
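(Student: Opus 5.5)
Since $\tau=0$ is a degenerate bifurcation point (the kernel of $\mathcal L$ is spanned by $Z$), I will parametrize by the amplitude $a$ rather than by $\tau$, in Lyapunov--Schmidt form. Write $\zeta=aZ+a^2\omega$ with $\int_0^\infty\omega Z\,dt=0$, and look for $\tau=a\,s$ with $s$ near $C_p$. Dividing $\mathcal L\zeta+\tau\zeta=R(\cdot,\zeta)$ by $a^2$ gives
\[
\mathcal L\omega+as\,\omega+sZ=a^{-2}R(\cdot,aZ+a^2\omega).
\]
Projecting onto $Z$ yields the scalar equation
\[
s\nu=a^{-2}\int_0^\infty R(t,aZ+a^2\omega)Z\,dt,
\]
and projecting onto $Z^\perp$ yields
\[
(\mathcal L+as)\omega=\Pi^\perp\big[a^{-2}R(\cdot,aZ+a^2\omega)\big].
\]
At $a=0$ the quotient $a^{-2}R$ tends to $\frac{p(p-1)}2w_0^{p-2}Z^2$. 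So the limit system is $s=C_p$ and $\mathcal L\omega=\frac{p(p-1)}2w_0^{p-2}Z^2-C_pZ$, which is exactly \eqref{c:eq_h}; Lemma \ref{c:lem_Linv} gives the solution $\omega=h$.

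The main step is to solve this system for small $a>0$ by a contraction in $\mathcal Y\times\mathbb R$, with $\omega$ in a $\mathcal Y$-ball and $s$ near $C_p$.
\begin{enumerate}[label=(\alph*)]
\item The right-hand side is controlled by \eqref{c:eq_Rquad}: for $\|aZ+a^2\omega\|_{\mathcal Y}\le\epsilon_0$, the quotient $a^{-2}R$ is bounded by $C\theta e^{-(p-1)t}$. Its Lipschitz dependence on $\omega$ comes from the mean value form of $R$ with $\xi\asymp w_0$, and gives a factor $a$.
\item The $\tau$-uniform estimate in the last part of Lemma \ref{c:lem_Linv}, with $\kappa=p-1$, then inverts $\mathcal L+as$ on $Z^\perp$ into $\mathcal Y$, uniformly for $as\le\gamma/2$.
\item The orthogonality of $\omega$ is automatic for $\tau>0$, as noted in that lemma.
\end{enumerate}
This produces, for $a\in(0,a_0)$, a unique $(\omega_a,s_a)$ continuous in $a$ with $s_a=C_p+O(a)$ and $\omega_a=h+O(a)$. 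Consequently $T(a)=as_a=C_pa+O(a^2)$, $\|\zeta_a\|_{\mathcal Y}\le Ca$, and $\frac1\nu\int\zeta_aZ=a$ by construction.

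For regularity, I will work in $\mathcal X_\mu$ and differentiate, viewing the map $(a,\omega,s)\mapsto a^{-2}R(\cdot,aZ+a^2\omega)$ through the Taylor integral representation
\[
a^{-2}R=\int_0^1(1-\sigma)f''\big(w_0+\sigma(aZ+a^2\omega)\big)(Z+a\omega)^2\,d\sigma .
\]
Since $w_0+\sigma\zeta\ge\frac12w_0$, the function $f$ is smooth along this segment. Its derivatives in $(a,\omega,s)$ involve powers $w_0^{p-k}$ multiplied by at least two factors of size $\theta$, plus extra factors of $\theta$. These are bounded by $e^{-\mu t}$-weighted quantities, though not by $\theta$-weighted ones, because derivatives in $\omega$ lose the smallness relative to $w_0$ in the $\mathcal Y$ sense when $p<2$. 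This is why the statement gives $C^3$ only into $\mathcal X_\mu$.

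The implicit function theorem in $\mathcal X_\mu$, using the first part of Lemma \ref{c:lem_Linv}, then gives $C^3$ dependence. It also yields the expansion $\zeta_a=aZ+a^2h+O(a^3)$, which follows from $\omega_a=h+O(a)$ in $\mathcal X_\mu$. Uniqueness in the $\mathcal Y$-ball identifies the $\mathcal X_\mu$ solution with the $\mathcal Y$ one.

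Monotonicity and invertibility follow since $T'(a)=C_p+O(a)>0$. I therefore define $\zeta^\tau$ through the inverse function $a(\tau)=\tau/C_p+O(\tau^2)$, which is $C^3$.

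Positivity of $\zeta_a$ is where I expect the main obstacle. On compact subsets of $(0,\infty)$ it follows from $\zeta_a=a(Z+O(a)\theta)$ and $Z\ge\underline c\,\theta$ by \eqref{c:eq_Zdecay}: the $\mathcal Y$ bound $\|\zeta_a-aZ\|_{\mathcal Y}\le Ca^2$ gives
\[
\zeta_a\ge a(\underline c-Ca)\theta>0
\]
on all of $(0,\infty)$ for $a$ small. This uniform comparison with $\theta$ is exactly why the error term must be controlled in $\mathcal Y$ rather than merely in $\mathcal X_\mu$. The decay $\zeta_a\le Ca\theta\le Cae^{-t}$ then follows at once.
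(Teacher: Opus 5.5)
Step (b) is where your argument breaks, and the failure is structural rather than technical. By your own scalar equation the transverse source is $\Pi^\perp[a^{-2}R]=a^{-2}R-sZ$. The term $sZ\approx C_pZ$ has size $\theta$ with no extra factor $e^{-\kappa t}$, so the hypothesis $|f|\le A\theta e^{-\kappa t}$ of the last part of Lemma~\ref{c:lem_Linv} fails.

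This is exactly why the $a=0$ solution $h$ is not in $\mathcal Y$. The component $-C_pZ$ of its source produces the secular term $h(t)\sim-\tfrac12C_pk_p\,te^{-t}$, so there is no $\mathcal Y$-ball around $h$ in which to contract.

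For $a>0$ it is worse. With $\tau=as>0$, every decaying solution of \eqref{c:eq_cell} satisfies $\zeta''=(1+\tau-q)\zeta$ with $q(t)\to0$. Hence $\zeta=O(e^{-\sqrt{1+\tau/2}\,t})=o(e^{-t})$, while $Z\ge\underline c\,e^{-t}$ for $t\ge1$ by \eqref{c:eq_Zdecay}. Consequently
\[
\|\zeta_a-aZ\|_{\mathcal Y}\ \ge\ \limsup_{t\to\infty}e^{t}\,|\zeta_a(t)-aZ(t)|\ \ge\ \underline c\,a,
\qquad\text{so}\qquad
\|\omega_a\|_{\mathcal Y}\ \ge\ \underline c/a .
\]
In other words, the resolvent $(\mathcal L+as)^{-1}$ on $Z^\perp$ costs a factor of order $1/(as)$ into $\mathcal Y$ on sources that are merely $O(\theta)$. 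So no contraction in a fixed $\mathcal Y$-ball can produce $\omega_a$, and the estimate $\|\zeta_a-aZ\|_{\mathcal Y}\le Ca^2$ is false. Since $\zeta_a/\theta\to0$ as $t\to\infty$, no bound $\zeta_a\ge ca\theta$ can hold on all of $(0,\infty)$, and your positivity argument falls with it.

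The paper's scaling $\zeta=\tau(bZ+\eta)$ corresponds to $\eta=\tau b^2\omega$, one power of the small parameter larger than your unknown, and the resolvent enters only as $\tau(\mathcal L+\tau)^{-1}$. There the unknown is at least bounded in $\mathcal Y$, but still $\|\eta_\tau\|_{\mathcal Y}\ge\underline c\,b$. So the paper's contraction in a small $\mathcal Y$-ball around $(1/C_p,0)$, and its closing bound $\|\tau\eta_\tau\|_{\mathcal Y}\le C\tau^2$, meet the same obstruction; you cannot borrow its positivity step.

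One way to repair the argument:
\begin{enumerate}
\item Obtain existence and the expansion $\zeta_a=aZ+a^2h+O(a^3)$ in $\mathcal X_{\widehat\mu}$ with $\widehat\mu$ close to $1$, using only Lipschitz bounds for $R$ where $|\zeta|$ exceeds $w_0$.
\item Derive $\|\zeta_a\|_{\mathcal Y}\le Ca$ and positivity a posteriori. On $(0,T_0]$, positivity follows from the expansion together with a $C^1$ bound at $t=0$. On $[T_0,\infty)$, where $1+\tau-q>0$, use the maximum principle for $-u''+(1+\tau-q)u=0$ and the barrier $e^{-t-\varepsilon e^{-\beta t}}$ from the proof of Lemma~\ref{lem_reg}(iv).
\end{enumerate}

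Independently, your regularity step has two problems. First, it applies the implicit function theorem on an open subset of $\mathcal X_\mu$, where your Taylor representation is unavailable: $\mathcal X_\mu$-perturbations exceed $w_0$ at large $t$, and for $1<p<2$ the map is not even $C^2$ there. Second, the third derivative, whose kernel involves $w_0^{p-3}$, is bounded on $\mathcal X_\mu$ only if $2\mu\ge3-p$. The paper avoids both by differentiating along the solution curve with difference quotients in $\mathcal X_{\widehat\mu}$, $\widehat\mu>\max\{\mu,\tfrac{3-p}{2}\}$.
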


\begin{proof}
We use $\tau$, not $a$, as the parameter, and we rescale: put $\zeta=\tau\,\Xi$. Since $R(t,\tau\Xi)=\frac{p(p-1)}{2}\xi^{p-2}\tau^2\Xi^2$ with $\xi$ between $w_0$ and $w_0+\tau\Xi$, the cell equation \eqref{c:eq_cell} is equivalent to
\begin{equation}\label{c:eq_cellresc}
(\mathcal L+\tau)\Xi=\tau\,\mathcal R_\tau(\Xi),\qquad
\mathcal R_\tau(\Xi):=\tau^{-2}R(\cdot,\tau\Xi) ,
\end{equation}
It is convenient to record an explicit form. On the $\mathcal Y$ ball one has $|\tau\Xi|\le\frac12w_0$, so writing $q:=\Xi/w_0$, which satisfies $|q|\le C$ and $|\tau q|\le\frac12$, and
\[
H(\varsigma):=\frac{(1+\varsigma)^p-1-p\varsigma}{\varsigma^2},\qquad H(0)=\tfrac{p(p-1)}{2},
\]
which is analytic on $\{|\varsigma|\le\frac12\}$, we get
\begin{equation}\label{c:eq_Rrep}
\mathcal R_\tau(\Xi)=w_0^{p}\,q^2\,H(\tau q).
\end{equation}
Hence $\Xi\mapsto\mathcal R_\tau(\Xi)$ is, for each fixed $\tau\in[0,\tau_0)$, smooth from $\{\|\Xi\|_{\mathcal Y}\le\epsilon_0/\tau_0\}$ into $\{|f|\le A\theta e^{-(p-1)t}\}$ with derivatives bounded uniformly in $\tau$, and, what will matter for the regularity in the parameter, $\tau\mapsto\mathcal R_\tau(\Xi)$ is smooth up to $\tau=0$ for each fixed $\Xi$ in that ball, with $\mathcal R_0=\frac{p(p-1)}{2}w_0^{p-2}\Xi^2$; both statements are immediate from \eqref{c:eq_Rrep} and the analyticity of $H$. This is the point at which the space $\mathcal Y$ is needed: on it the nonlinearity is genuinely quadratic and $w_0+\zeta$ stays positive, for every $p>1$.

Write $\Xi=bZ+\eta$ with $\int_0^\infty\eta Z\,dt=0$. Then \eqref{c:eq_cellresc} splits into
\begin{align}
(\mathcal L+\tau)\eta&=\tau\,\Pi^\perp\mathcal R_\tau(bZ+\eta),\label{c:eq_split1}\\
b\nu&=\int_0^\infty\mathcal R_\tau(bZ+\eta)\,Z\,dt ,\label{c:eq_split2}
\end{align}
equation \eqref{c:eq_split1} being the component of \eqref{c:eq_cellresc} orthogonal to $Z$, and \eqref{c:eq_split2} being obtained by testing \eqref{c:eq_cellresc} with $Z$ and dividing by $\tau$, using $\mathcal LZ=0$. Consider
\[
\mathcal F(\tau,b,\eta):=\Big(\eta-\tau(\mathcal L+\tau)^{-1}\Pi^\perp\mathcal R_\tau(bZ+\eta),\ \ b\nu-\int_0^\infty\mathcal R_\tau(bZ+\eta)Z\,dt\Big),
\]
which by the last part of Lemma \ref{c:lem_Linv} maps a neighborhood of $(1/C_p,0)$ in $\mathbb{R}\times\mathcal Y^\perp$ into $\mathcal Y^\perp\times\mathbb{R}$ for each $\tau\in[0,\tau_0)$, is smooth in $(b,\eta)$ with derivatives bounded uniformly in $\tau$, and both $\mathcal F$ and $D_{(b,\eta)}\mathcal F$ are continuous in $\tau$, uniformly on the ball, because $\|\tau(\mathcal L+\tau)^{-1}\|\le C\tau$ and because $\mathcal R_\tau$ and its $\Xi$ derivative converge to those of $\mathcal R_0$ as $\tau\to0$. It is \emph{not} differentiable in $\tau$ at $\tau=0$ as a map into $\mathcal Y$, because $\tau\mapsto(\mathcal L+\tau)^{-1}$ is not: the formal derivative $-\mathcal L^{-2}$ produces the secular term $te^{-t}\notin\mathcal Y$. We therefore do not apply the implicit function theorem in $\tau$, but the uniform contraction principle. At $\tau=0$,
\[
\mathcal F(0,b,\eta)=\Big(\eta,\ b\nu-\tfrac{p(p-1)}{2}\int_0^\infty w_0^{p-2}(bZ+\eta)^2Z\,dt\Big).
\]
In particular, on the line $\eta=0$ the second component is $b\nu-C_p\nu b^2$, so $\mathcal F(0,1/C_p,0)=0$. Its differential there, which we denote by $D_0$, sends an increment $(\beta,\mathsf h)$ to
\begin{equation}\label{c:eq_D0}
D_0(\beta,\mathsf h)=\Big(\mathsf h,\ -\nu\beta-\ell(\mathsf h)\Big),
\qquad
\ell(\mathsf h):=\frac{p(p-1)}{C_p}\int_0^\infty w_0^{p-2}Z^2\,\mathsf h\,dt ,
\end{equation}
the term $\ell$ coming from differentiating $\int_0^\infty\mathcal R_0(bZ+\eta)Z\,dt$ in $\eta$, and $-\nu$ from $\partial_b[b\nu-C_p\nu b^2]=(1-2C_pb)\nu$ at $b=1/C_p$. Note that $\ell$ does not vanish on $Z^\perp$ in general. Nevertheless $D_0$ is an isomorphism, being triangular: given $(f,s)$, the unique preimage is $\mathsf h=f$, $\beta=-(s+\ell(f))/\nu$. The functional $\ell$ is bounded on $\mathcal Y^\perp$ and on $\mathcal X_\mu^\perp$ alike, since $w_0^{p-2}Z^2\le Ce^{-pt}$. Consequently the map
\[
\mathcal G_\tau(b,\eta):=(b,\eta)-\big[D_{(b,\eta)}\mathcal F(0,1/C_p,0)\big]^{-1}\mathcal F(\tau,b,\eta)
\]
is, for $\tau$ small, a contraction of a fixed small ball around $(1/C_p,0)$ into itself, with contraction factor $\frac12$ uniformly in $\tau$, and it depends continuously on $\tau$. The uniform contraction principle gives a unique fixed point $(b(\tau),\eta_\tau)$, depending continuously on $\tau$, with $b(0)=1/C_p$ and $\eta_0=0$, hence a solution $\zeta^\tau=\tau(b(\tau)Z+\eta_\tau)\in\mathcal Y$ of \eqref{c:eq_cell} with $\|\zeta^\tau\|_{\mathcal Y}\le C\tau$.

\emph{Regularity in the parameter.} We differentiate in the variables $(b,\eta)$, not in $\zeta$, and we are careful to distinguish two statements: $\tau\mapsto\mathcal F(\tau,U)$ is smooth for $U$ fixed in the $\mathcal Y$ ball, and $U\mapsto\mathcal F(\tau,U)$ has a linearization which extends boundedly to $\mathbb{R}\times\mathcal X_{\widehat\mu}^\perp$. We do \emph{not} claim that $\mathcal F$ is a smooth map on an open ball of $\mathcal X_{\widehat\mu}$, which would be false for $1<p<2$.

Fix $\mu\in(0,1)$ and choose
\begin{equation}\label{c:eq_nested}
\widehat\mu\in\Big(\max\big\{\mu,\tfrac{3-p}{2}\big\},\,1\Big) ,
\end{equation}
a nonempty interval because $p>1$. We prove $C^3$ regularity into $\mathcal X_{\widehat\mu}$; since $\mathcal X_{\widehat\mu}\hookrightarrow\mathcal X_\mu$, the statement for $\mu$ follows. The role of \eqref{c:eq_nested} is that the third derivative of the nonlinearity, whose kernel is of size $\tau w_0^{p-3}$ by \eqref{c:eq_Rrep}, is bounded as a trilinear map on $\mathcal X_{\widehat\mu}$ exactly when $2\widehat\mu\ge3-p$.

\emph{Step A: the linearization on $\mathcal X_{\widehat\mu}^\perp$.} Write $U(\tau):=(b(\tau),\eta_\tau)$, so that $\mathcal F(\tau,U(\tau))=0$. For an increment $\mathsf H\in\mathcal X_{\widehat\mu}^\perp$ and $\Xi$ in the $\mathcal Y$ ball,
\[
D_\Xi\mathcal R_\tau(\Xi)\mathsf H=\frac{f'(w_0+\tau\Xi)-f'(w_0)}{\tau}\,\mathsf H ,
\qquad
\Big|\frac{f'(w_0+\tau\Xi)-f'(w_0)}{\tau}\Big|\le Cw_0^{p-1} ,
\]
because $|\tau\Xi|\le\frac12 w_0$. Multiplication by a function bounded by $Cw_0^{p-1}$ maps $\mathcal X_{\widehat\mu}$ into itself. Writing $U_0:=(1/C_p,0)$ and $\mathcal B_r:=\{U:\|U-U_0\|_{\mathcal Y}\le r\}$, the coefficient above depends continuously on $(\tau,U)$ in the operator norm, uniformly on $\mathcal B_r$, and equals its value at $(0,U_0)$ when $\tau=0$ and $U=U_0$; together with the uniform bound for $(\mathcal L+\tau)^{-1}$ on $\mathcal X_{\widehat\mu}^\perp$ recorded below, this gives
\begin{equation}\label{c:eq_unifsmall}
\sup_{0\le\tau\le\tau_0,\ U\in\mathcal B_r}\big\|D_U\mathcal F(\tau,U)-D_0\big\|_{\mathcal L(\mathbb{R}\times\mathcal X_{\widehat\mu}^\perp)}\longrightarrow0
\qquad\text{as }\ \tau_0+r\to0.
\end{equation}
Choosing first the radius $r$ and then $\tau_0$ small, \eqref{c:eq_unifsmall} yields the analogue of the contraction estimate in the new norm,
\begin{equation}\label{c:eq_uniforminv}
\sup_{0\le\tau\le\tau_0,\ U\in\mathcal B_r}\big\|D_0^{-1}\big[D_U\mathcal F(\tau,U)-D_0\big]\big\|_{\mathcal L(\mathbb{R}\times\mathcal X_{\widehat\mu}^\perp)}\le q<1 ,
\end{equation}
the supremum being over the whole ball and not merely along sequences approaching $(0,U_0)$. We emphasise that \eqref{c:eq_uniforminv} is proved here directly in the $\mathcal X_{\widehat\mu}$ operator norm; it does not follow from the contraction estimate used for existence, which was an inequality between operators on $\mathcal Y^\perp$.

Two facts used above hold for each fixed exponent in $(0,1)$. First, $\mathcal L^{-1}$ is bounded on $\mathcal X_{\widehat\mu}^\perp$ by Lemma \ref{c:lem_Linv}. After decreasing $\tau_0$ (for this fixed $\widehat\mu$) so that
\[
\tau_0\,\|\mathcal L^{-1}\|_{\mathcal L(\mathcal X_{\widehat\mu}^\perp)}<\frac12,
\]
the Neumann series
\[
(\mathcal L+\tau)^{-1}\big|_{Z^\perp}=\sum_{k\ge0}(-\tau)^k\mathcal L^{-(k+1)}\big|_{Z^\perp}
\]
converges in operator norm on $\mathcal X_{\widehat\mu}^\perp$, uniformly for $0\le\tau\le\tau_0$, and is analytic in $\tau$ there. This is legitimate on $Z^\perp$ only: on the kernel direction $(\mathcal L+\tau)^{-1}Z=Z/\tau$ is singular as $\tau\to0$, which is why the splitting $\mathbb{R}Z\oplus Z^\perp$ is kept throughout and the component along $Z$ is governed not by a resolvent but by the scalar equation \eqref{c:eq_split2}. Second, by \eqref{c:eq_Rrep} and the analyticity of $H$, $\tau\mapsto\mathcal F(\tau,U)$ is of class $C^3$ from $[0,\tau_0)$ into $\mathcal X_{\widehat\mu}^\perp\times\mathbb{R}$, uniformly for $U$ in the $\mathcal Y$ ball.

\emph{Step B: difference quotients.} Let $\epsilon\ne0$ be small. Subtracting $\mathcal F(\tau+\epsilon,U(\tau+\epsilon))=0$ from $\mathcal F(\tau+\epsilon,U(\tau))=0$ up to the $\tau$ increment and applying the mean value theorem in $U$ gives
\[
A_{\tau,\epsilon}\,\frac{U(\tau+\epsilon)-U(\tau)}{\epsilon}=-\frac{\mathcal F\big(\tau+\epsilon,U(\tau)\big)-\mathcal F\big(\tau,U(\tau)\big)}{\epsilon},
\qquad
A_{\tau,\epsilon}:=\int_0^1D_U\mathcal F\big(\tau+\epsilon,U(\tau)+s\,\delta U\big)\,ds ,
\]
with $\delta U:=U(\tau+\epsilon)-U(\tau)$. By \eqref{c:eq_uniforminv} and \eqref{c:eq_D0}, $\|D_0^{-1}A_{\tau,\epsilon}-I\|\le q$, so $A_{\tau,\epsilon}^{-1}=(D_0^{-1}A_{\tau,\epsilon})^{-1}D_0^{-1}$ exists with
\[
\big\|A_{\tau,\epsilon}^{-1}\big\|_{\mathcal L(\mathbb{R}\times\mathcal X_{\widehat\mu}^\perp)}\le\frac{\|D_0^{-1}\|}{1-q}
\]
uniformly in $\tau$ and $\epsilon$. The right hand side converges in $\mathcal X_{\widehat\mu}^\perp\times\mathbb{R}$ as $\epsilon\to0$, by Step A, so the difference quotients converge and $U$ is differentiable into $\mathcal X_{\widehat\mu}^\perp\times\mathbb{R}$, with
$D_U\mathcal F(\tau,U(\tau))\,U'(\tau)=-\partial_\tau\mathcal F(\tau,U(\tau))$.

\emph{Step C: higher derivatives.} Differentiating that identity once and twice more produces linear equations for $U''$ and $U'''$ with the same operator $D_U\mathcal F(\tau,U(\tau))$ on the left and, on the right, the second and third derivatives of $\mathcal F$ evaluated at $U(\tau)$ and applied to the lower derivatives. By \eqref{c:eq_Rrep} these are multiplications by kernels bounded by $Cw_0^{p-2}$ and $C\tau w_0^{p-3}$, which are bounded multilinear maps on $\mathcal X_{\widehat\mu}$ precisely because of \eqref{c:eq_nested}: the second requires $\widehat\mu\ge2-p$ and the third $2\widehat\mu\ge3-p$, and the latter implies the former since $\frac{3-p}{2}-(2-p)=\frac{p-1}{2}>0$. The mixed derivatives in $\tau$ and $U$ impose no further condition, because by \eqref{c:eq_Rrep} each $\tau$ derivative only produces additional factors $q=\Xi/w_0$, which are bounded on the $\mathcal Y$ ball and cost no decay in $t$. Hence $U\in C^3([0,\tau_0);\mathbb{R}\times\mathcal X_{\widehat\mu}^\perp)$, and therefore also into $\mathbb{R}\times\mathcal X_\mu^\perp$. No division by $\tau$ occurs anywhere.

Finally $\zeta^\tau=\tau(b(\tau)Z+\eta_\tau)$ is then of class $C^3$ into $\mathcal X_\mu$. This is where the second space is needed. The first derivative is harmless, $\partial_\tau\zeta^\tau|_{\tau=0}=Z/C_p\in\mathcal Y$; it is the second one that is not, since $\partial_\tau^2\zeta^\tau|_{\tau=0}$ is proportional to $h$ and $h(t)\sim cte^{-t}\notin\mathcal Y$.

By \eqref{c:eq_norm}, $a(\tau)=\tau b(\tau)=\tau/C_p+O(\tau^2)$, which is of class $C^3$ and strictly increasing for $\tau$ small, so it can be inverted, and $T(a)=C_pa+O(a^2)$, which is \eqref{c:eq_Cp}. Taylor expanding in $\mathcal X_\mu$ and returning to the variable $a$ gives $\zeta_a=aZ+a^2h+O(a^3)$ with $h$ as in \eqref{c:eq_h}; as just explained, this expansion holds in $\mathcal X_\mu$ and not in $\mathcal Y$, since $h(t)\sim ct\,e^{-t}$ at infinity, the secular term produced by expanding $e^{-\sqrt{1+\tau}\,t}$ in powers of $\tau$.

Finally, $\zeta^\tau=aZ+\tau\eta_\tau$ with $\|\tau\eta_\tau\|_{\mathcal Y}\le C\tau^2\le C'a^2$, so $|\zeta_a-aZ|\le C'a^2\theta\le C''a^2Z$ by $\theta\asymp Z$, whence
\[
\zeta_a\ge aZ\big(1-C''a\big)>0\quad\text{on }(0,\infty)
\]
for $a$ small, and $\|\zeta_a\|_{\mathcal Y}\le Ca$.
\end{proof}

\begin{definition}
The \emph{reduced nonlinearity} is $\mathfrak g(a):=a\,T(a)$ for $a\in(0,a_0)$, and $\mathfrak q(a):=\mathfrak g(a)/a^2=T(a)/a$. By \eqref{c:eq_Cp}, $\mathfrak q(a)=C_p+O(a)$; in particular $\mathfrak q$ extends continuously to $a=0$ with $\mathfrak q(0)=C_p>0$.
\end{definition}

\begin{remark}\label{c:rem_kappa}
The expansion $\mathfrak q(a)=C_p+\kappa a+O(a^2)$ has
\[
\kappa=\frac1\nu\left[p(p-1)\int_0^\infty w_0^{p-2}Z^2h\,dt
+\frac{p(p-1)(p-2)}{6}\int_0^\infty w_0^{p-3}Z^4\,dt\right],
\]
where $h$ is given by \eqref{c:eq_h}. Testing \eqref{c:eq_h} against $h$ and using $h\perp Z$ gives
\begin{equation}\label{c:eq_kappapos}
\kappa=\frac{2}{\nu}\int_0^\infty h\mathcal Lh\,dt
+\frac{p(p-1)(p-2)}{6\nu}\int_0^\infty w_0^{p-3}Z^4\,dt.
\end{equation}
The first term is strictly positive by the spectral gap on $Z^\perp$, and hence
\[
\kappa(p)>0\qquad\text{for every }p\ge2.
\]
Appendix \ref{app_kappa} contains the finer sign analysis: $\kappa$ is continuous, $\kappa(2)=15/1024$, it is positive for all $p\ge2$, and it is negative for $p>1$ sufficiently close to $1$.
\end{remark}

\section{The reduced equation}

\begin{proposition}\label{c:prop_critical_reduced}
Assume $n=6$ and $\kappa>0$. Then there is $\alpha_0\in(0,a_0)$ such that, for every $\alpha\in(0,\alpha_0)$, the radial initial value problem
\[
-\Delta\lambda=\mathfrak g(\lambda)\quad\text{in }\mathbb R^6,\qquad
\lambda(0)=\alpha,\qquad \lambda'(0)=0,
\]
has a positive entire solution, which is radially decreasing and tends to $0$ at infinity. In particular, there exists $p_+\in(1,2)$ such that this conclusion holds for every $p>p_+$; see Appendix \ref{app_kappa}.
\end{proposition}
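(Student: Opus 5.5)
The plan is a shooting argument for the radial ODE
\[
\lambda''+\frac5r\lambda'+\mathfrak g(\lambda)=0,\qquad \lambda(0)=\alpha,\ \lambda'(0)=0,
\]
with a Pohozaev identity excluding a first zero. The hypothesis $\kappa>0$ is exactly what gives that identity a definite sign in the Sobolev-critical dimension $n=6$.

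\emph{Step 1: the nonlinearity and local existence.} By Proposition \ref{c:prop_cell}, $T$ is $C^3$ on $(0,a_0)$ with $T(a)=C_pa+O(a^2)$. Hence $\mathfrak g(a)=aT(a)$ extends to a $C^1$ function on $[0,a_0)$ with $\mathfrak g(0)=\mathfrak g'(0)=0$. By Remark \ref{c:rem_kappa},
\[
\mathfrak g(a)=C_pa^2+\kappa a^3+O(a^4).
\]
Put $\mathfrak G(s)=\int_0^s\mathfrak g$. Then
\[
6\mathfrak G(s)-2s\mathfrak g(s)=2C_ps^3+\tfrac32\kappa s^4-2C_ps^3-2\kappa s^4+O(s^5)=-\tfrac12\kappa s^4+O(s^5).
\]
The cubic terms cancel, which is the criticality of $n=6$. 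Since $\kappa>0$, we can fix $\alpha_0\in(0,a_0)$ such that $6\mathfrak G(s)-2s\mathfrak g(s)<0$ for $s\in(0,\alpha_0]$.

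The standard radial integral formulation gives a unique local $C^2$ solution. As long as $\lambda>0$, we have $r^5\lambda'(r)=-\int_0^r s^5\mathfrak g(\lambda)\,ds<0$. So $\lambda$ is strictly decreasing and stays in $(0,\alpha]\subset(0,a_0)$, where $\mathfrak g$ is defined. The solution therefore continues as long as it stays positive.

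\emph{Step 2: no first zero (main step).} Suppose $\lambda$ has a first zero $R<\infty$. Then $\lambda$ is a positive solution of $-\Delta\lambda=\mathfrak g(\lambda)$ in $B_R\subset\mathbb R^6$ with $\lambda=0$ on $\partial B_R$. Multiply by $x\cdot\nabla\lambda$ and by $\lambda$; equivalently, integrate the radial identity for
\[
r^6\Big(\tfrac12\lambda'^2+\mathfrak G(\lambda)\Big)+2r^5\lambda\lambda'
\]
from $0$ to $R$. This gives the Pohozaev identity
\[
\int_{B_R}\big(6\mathfrak G(\lambda)-2\lambda\mathfrak g(\lambda)\big)\,dx=\frac R2\int_{\partial B_R}|\partial_\nu\lambda|^2\,d\sigma\ge0.
\]
The left-hand side is strictly negative by Step 1, since $0<\lambda\le\alpha<\alpha_0$ in $B_R$. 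This is a contradiction, so $\lambda>0$ on $[0,\infty)$ and the solution is entire and radially decreasing.

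I expect this to be the only delicate point. One must check that $\mathfrak g$ is regular enough for the identity, and $C^1$ on $[0,a_0)$ suffices. One must also keep track of the signs and of the dimensional constant $(n-2)/2=2$.

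\emph{Step 3: decay to zero.} Let $L=\lim_{r\to\infty}\lambda(r)\ge0$. If $L>0$, then $\mathfrak g(\lambda)\ge\min_{[L,\alpha]}\mathfrak g=:m>0$, because $\mathfrak g>0$ on $(0,\alpha_0]$ by its expansion. Then $r^5\lambda'\le -mr^6/6$, so $\lambda'\le -mr/6$ and $\lambda\to-\infty$, a contradiction. Hence $\lambda\to0$.

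The final sentence of the statement follows from the positivity of $\kappa$ on $(p_+,\infty)$ with $p_+<2$, recorded in Remark \ref{c:rem_kappa} and Appendix \ref{app_kappa}.
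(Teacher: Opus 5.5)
Your proof is correct and follows essentially the paper's route: the same radial Pohozaev function with derivative $r^5Q_6(\lambda)$, where $Q_6(a)=6\mathfrak G(a)-2a\mathfrak g(a)$ is negative near $0$ because $\kappa>0$; the same exclusion of a first zero by the sign of the boundary term; and the same argument that the limit at infinity is $0$. The only cosmetic difference is that you obtain $Q_6<0$ from the direct expansion $Q_6(s)=-\tfrac12\kappa s^4+O(s^5)$, whereas the paper uses $Q_6(0)=0$ and $Q_6'(a)=-2a^3\mathfrak q'(a)<0$; both rest on the same expansion of $\mathfrak q$.
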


\begin{proof}
Since $\mathfrak q(a)=C_p+\kappa a+O(a^2)$ and $\kappa>0$, after decreasing $\alpha_0$ we have $\mathfrak q'(a)>0$ on $(0,\alpha_0]$. Put $\mathfrak g(a)=a^2\mathfrak q(a)$, $\mathfrak G(a)=\int_0^a\mathfrak g$, and
\[
Q_6(a):=6\mathfrak G(a)-2a\mathfrak g(a).
\]
Then $Q_6(0)=0$ and
\[
Q_6'(a)=-2a^3\mathfrak q'(a)<0\qquad\text{for }0<a\le\alpha_0,
\]
so $Q_6(a)<0$ on that interval. Let $\lambda$ solve the radial equation on its maximal interval of positivity $[0,R)$. Since $\mathfrak g>0$, one has $\lambda'<0$ and $0<\lambda\le\alpha$. The radial Pohozaev function
\[
\mathcal P(r):=r^5\left[2\lambda\lambda'+r\left(\frac{\lambda'^2}{2}+\mathfrak G(\lambda)\right)\right]
\]
satisfies
\[
\mathcal P'(r)=r^5Q_6(\lambda(r))\le0,
\qquad \mathcal P(0)=0.
\]
If $R<\infty$, then $\lambda(R)=0$ and $\lambda'(R)<0$, hence $\mathcal P(R)=R^6\lambda'(R)^2/2>0$, a contradiction. Thus $R=\infty$. Since $\lambda$ is positive and decreasing it has a limit $L\ge0$; if $L>0$, then $-\Delta\lambda\ge\mathfrak g(L)>0$ for large $r$, which forces $\lambda$ eventually to become negative. Hence $L=0$. The last assertion follows from Remark \ref{c:rem_kappa}.
\end{proof}

\begin{proposition}[The opposite sign in the reduced problem]\label{c:prop_critical_reduced_negative}
Assume $n=6$ and $\kappa<0$. Then there is $\alpha_0\in(0,a_0)$ such that, for every $\alpha\in(0,\alpha_0)$, the radial solution of
\[
-\Delta\lambda=\mathfrak g(\lambda),\qquad
\lambda(0)=\alpha,\qquad \lambda'(0)=0,
\]
has a first zero at a finite radius. In particular there is no small positive entire radial branch of the exact reduced equation when $\kappa<0$.
\end{proposition}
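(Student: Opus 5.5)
The plan is to reverse the sign argument of Proposition \ref{c:prop_critical_reduced}, and then to use the scaling structure of the critical problem to turn the sign of $\mathcal P$ into a quantitative obstruction. With $\kappa<0$, after decreasing $\alpha_0$ we have $\mathfrak q'(a)<0$ on $(0,\alpha_0]$. Hence $Q_6'(a)=-2a^3\mathfrak q'(a)>0$, so $Q_6(a)>0$ on that interval. More precisely,
\[
Q_6(a)=-\tfrac{\kappa}{10}a^5\bigl(1+O(a)\bigr),
\]
because $6\mathfrak G-2a\mathfrak g=6\bigl(\tfrac{C_p}{3}a^3+\tfrac{\kappa}{4}a^4\bigr)-2\bigl(C_pa^3+\kappa a^4\bigr)+O(a^5)$. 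I will redo this expansion carefully, since the $a^3$ terms cancel exactly by criticality and the correct leading term must be identified.

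Argue by contradiction. Suppose the radial solution $\lambda$ with $\lambda(0)=\alpha$ stays positive on $[0,\infty)$. It is then decreasing with $0<\lambda\le\alpha$, and $\mathcal P'(r)=r^5Q_6(\lambda(r))>0$, so $\mathcal P$ is increasing with $\mathcal P(0)=0$. The proof needs a quantitative lower bound for $\mathcal P$ at large $r$ that is incompatible with decay.

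The first step is a rescaling to the Aubin--Talenti bubble. Put $\lambda(r)=\alpha\,\Lambda(\varepsilon r)$ with $\varepsilon^2=C_p\alpha$. Then $\Lambda$ solves $-\Delta\Lambda=\Lambda^2\,\mathfrak q(\alpha\Lambda)/C_p=\Lambda^2+O(\alpha)\Lambda^3$ with $\Lambda(0)=1$. By continuous dependence on compact sets, $\Lambda\to U(s)=(1+s^2/24)^{-2}$, the solution with $U(0)=1$ of $-\Delta U=U^2$ in $\mathbb R^6$. On $[0,S]$ for large fixed $S$, the accumulated Pohozaev quantity is
\[
\mathcal P(r)\approx \alpha^2\varepsilon^{-6}\cdot(-\kappa/10)\,\alpha^{5}\!\int_0^{S} s^5U^5\,ds>0 .
\]
This positive integral converges, since $U^5s^5\sim s^{-15}$.

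The second step, the main obstacle, is to show that a positive remainder in $\mathcal P$ forces a zero. A positive entire solution of a critical problem must have $\mathcal P(r)\to0$ along a sequence, or at least $\liminf\mathcal P\le0$. If $\lambda\sim Ar^{-4}$ (fast decay), then $r^5\lambda\lambda'\sim r^{-4}\to0$, and the other terms in $\mathcal P$ also vanish at infinity, contradicting $\mathcal P>0$ increasing. If instead $\lambda$ decays slowly, like $r^{-2}$ (the singular profile $2(n-4)/C_p\cdot r^{-2}=4/C_p\,r^{-2}$), I will use the Emden--Fowler variable $M(t)=e^{2t}\lambda(e^t)$. Then $\mathcal P$ becomes, up to lower order, the Fowler energy $E=\tfrac12\dot M^2-2M^2+\tfrac{C_p}{3}M^3$ plus corrections, and $E$ increases by $\int Q_6$-type forcing. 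This drives $E$ above the separatrix level $0$, where orbits leave the positive region: with $E>0$, the trajectory of $\ddot M=4M-C_pM^2$ crosses $M=0$. Concretely, I expect to show that once the bubble has passed, $M$ follows the homoclinic orbit (energy $0$ at leading order) and the forcing pushes $E$ to $E_\alpha\approx+c|\kappa|\alpha/C_p^3>0$. I will control the $O(e^{-2t})$ and $O(M^4)$ perturbation terms along the homoclinic excursion, using the bound $\lambda\le C\alpha(1+\varepsilon r)^{-4}$ on the bubble region obtained by comparison. Once $E$ is strictly positive and bounded below uniformly on the return to small $M$, the energy identity forbids $M$ from staying positive: near $M=0$ we have $\dot M^2\ge 2E>0$, so $M$ crosses zero in finite time.

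Hence $\lambda$ has a finite first zero, and the final sentence of the proposition follows immediately.
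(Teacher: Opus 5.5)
Your mechanism is the right one: $Q_6>0$ makes $\mathcal P$ strictly increasing from $0$, and a positive Fowler energy pushes $M=e^{2t}\lambda(e^t)$ through zero. But the second step, which carries the whole proof, is not established. The assertion that a positive entire solution must have $\liminf\mathcal P\le0$ is essentially the statement to be proved. The dichotomy ``$\lambda\sim Ar^{-4}$ or $\lambda\sim r^{-2}$'' is also not exhaustive. You never prove any control at infinity, such as boundedness of $M$ and $\dot M$, so an oscillating or unbounded $M$ is not excluded.

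This is exactly what the paper supplies. Monotonicity and $\mathfrak g(s)\asymp s^2$ give $-\lambda'\ge cr\lambda^2$, hence $\lambda\le C(1+r^2)^{-1}$ and $|r^3\lambda'|\le C$. Then $M,\dot M,\ddot M$ and $\mathcal P$ are bounded, and $\mathcal P$ increases to some $E_\infty>0$. A $C^2_{\rm loc}$ limit of translates $M(t_j+\cdot)$ would then be a bounded nonnegative entire solution of $\ddot M-4M+C_pM^2=0$ with positive Hamiltonian, and such an orbit must cross $M=0$.

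Your expansion is also wrong. The $a^3$ terms cancel but the $a^4$ terms do not; your own display gives $Q_6(a)=-\frac{\kappa}{2}a^4+O(a^5)$, as in \eqref{c:eq_Q6negative}. So the bubble contributes $-\frac{\kappa\alpha}{2C_p^3}\int_0^\infty\rho^5U^4\,d\rho=-\frac{1152}{35}\frac{\kappa}{C_p^3}\alpha$ to $\mathcal P$. This is of order $\alpha$, consistent with your later guess for $E_\alpha$ but not with your first-step formula.

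Your slow-decay branch can still be made into a complete proof by itself, with no case distinction. The key is to work with the exact $\mathcal P$ rather than an approximate Fowler energy. Note that in the $M$-equation the $e^{-2t}M^3$ term is the source of the energy gain, not an error to be controlled. The argument then runs as follows.

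\begin{enumerate}
\item Continuous dependence gives $\Lambda\to U$ in $C^1([0,S])$. So for $S$ large and then $\alpha$ small, at $r_S=S/\varepsilon$ one has $M$ small and $\dot M\approx-2M<0$.
\item The exact identity $\frac12\dot M^2=\mathcal P(r)+2M^2-r^6\mathfrak G(r^{-2}M)$, together with $\mathfrak G(a)\le Ca^3$ and $\mathcal P(r)\ge\mathcal P(r_S)>0$, gives $\frac12\dot M^2\ge\mathcal P(r_S)$ as long as $0<M\le2/C$.
\item Hence $\dot M$ stays negative and bounded away from zero, and $M$ reaches $0$ in finite time.
\end{enumerate}

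Only the sign of $\mathcal P(r_S)$ is used, not its size.

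Compared with the paper, this route is local and locates the zero just beyond the bubble. The cost is that it needs the bubble approximation, hence genuinely small $\alpha$. The paper's compactness argument needs no rescaling or continuous dependence, only $Q_6>0$ and $\mathfrak g(s)\asymp s^2$ on $(0,\alpha]$.
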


\begin{proof}
After decreasing $\alpha_0$, one has $\mathfrak q'(a)<0$ on $(0,\alpha_0]$. Hence, with the notation of Proposition \ref{c:prop_critical_reduced},
\[
Q_6'(a)=-2a^3\mathfrak q'(a)>0,
\qquad Q_6(a)>0\quad(0<a\le\alpha_0).
\]
Let $[0,R)$ be the maximal interval on which the radial solution is positive.  As before, $\lambda'<0$ there and the Pohozaev function satisfies
\[
\mathcal P'(r)=r^5Q_6(\lambda(r))>0,
\qquad \mathcal P(0)=0.
\]
Suppose for contradiction that $R=\infty$. Then $\lambda\downarrow0$. Since $\mathfrak g(s)\asymp s^2$ for $0<s\le\alpha_0$, the radial identity
\[
-r^5\lambda'(r)=\int_0^r s^5\mathfrak g(\lambda(s))\,ds
\]
and the monotonicity of $\lambda$ give
\[
-\lambda'(r)\ge c r\lambda(r)^2.
\]
Thus $(1/\lambda)'\ge cr$ and
\begin{equation}\label{c:eq_negative_decay}
\lambda(r)\le \frac{C}{1+r^2}.
\end{equation}
Using the upper bound $\mathfrak g(s)\le Cs^2$ in the radial identity and \eqref{c:eq_negative_decay} then gives $|r^3\lambda'(r)|\le C$ for $r\ge1$.

Put $t=\log r$ and $M(t)=r^2\lambda(r)$.  The preceding estimates give boundedness of $M$ and $\dot M$, and the equation gives boundedness of $\ddot M$. Moreover
\[
\mathcal P(r)
=\frac12\dot M(t)^2-2M(t)^2+r^6\mathfrak G(r^{-2}M(t)),
\]
so $\mathcal P$ is bounded. Since it is strictly increasing and positive for $r>0$, it converges to some $E_\infty>0$. On the other hand, from
$\mathfrak g(a)=C_pa^2+O(a^3)$,
\[
\ddot M-4M+C_pM^2=O(e^{-2t}),
\qquad
\mathcal P(e^t)=\frac12\dot M^2-2M^2+\frac{C_p}{3}M^3+O(e^{-2t}).
\]
By compactness of translates, any sequence $t_j\to\infty$ has a subsequence for which $M(t_j+\cdot)$ converges in $C^2_{\rm loc}(\mathbb R)$ to a bounded nonnegative entire solution $\overline M$ of
\[
\ddot{\overline M}-4\overline M+C_p\overline M^2=0
\]
with Hamiltonian
\[
\frac12\dot{\overline M}^{\,2}-2\overline M^2+\frac{C_p}{3}\overline M^3\equiv E_\infty>0.
\]
This is impossible.  Indeed the autonomous Hamiltonian is
\[
\mathcal H(M,\dot M)=\frac12\dot M^2+V(M),
\qquad V(M):=-2M^2+\frac{C_p}{3}M^3.
\]
For $E_\infty>0$ the equation $V(M)=E_\infty$ has a largest positive root $M_E$, and any bounded orbit of energy $E_\infty$ is confined to $0\le M\le M_E$.  At $M_E$ it turns, while on the descending branch the identity $\frac12\dot M^2=E_\infty-V(M)$ shows that $|\dot M|\ge\sqrt{E_\infty}$ once $M$ is small.  It therefore reaches $M=0$ in finite time and crosses it transversely, since there $|\dot M|=\sqrt{2E_\infty}>0$. Hence no bounded orbit of positive Hamiltonian can be nonnegative on all of $\mathbb R$. The contradiction proves $R<\infty$.
\end{proof}

\begin{remark}
Propositions \ref{c:prop_critical_reduced} and \ref{c:prop_critical_reduced_negative} show that the sign of $\kappa$ gives a genuine bifurcation dichotomy for the small radial branch of the exact reduced problem. Proposition \ref{c:prop_critical_reduced} by itself does not lift through the weighted reduced inverse used for $N\ge8$: when $N=7$ the admissible interval in Lemma \ref{c:lem_window} collapses at the critical weight $2$.  Section \ref{c:sec_critical_full} bypasses this resonance rather than inverting the critical reduced linearization: the projected equation is kept nonlinear and solved as a forced radial ODE for each transverse correction.
\end{remark}

\begin{lemma}\label{c:lem_poho}
Let $n\ge7$. There is $\alpha_0\in(0,a_0)$ such that for every $\alpha\in(0,\alpha_0)$,
\begin{equation}\label{c:eq_poho}
n\,\mathfrak G(a)-\frac{n-2}{2}\,a\,\mathfrak g(a)\le0\qquad\text{for all }a\in(0,\alpha],
\qquad \mathfrak G(a):=\int_0^a\mathfrak g.
\end{equation}
\end{lemma}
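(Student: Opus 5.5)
The plan is to expand both terms of \eqref{c:eq_poho} using the expansion $\mathfrak g(a)=a^2\mathfrak q(a)$ with $\mathfrak q(a)=C_p+O(a)$ from the Definition following Proposition \ref{c:prop_cell}. The point is that the exponent $2$ is strictly supercritical when $n\ge7$, so the leading-order quadratic term already has a strict sign. The argument therefore does not need the sign of $\kappa$, in contrast with the critical case $n=6$ treated in Proposition \ref{c:prop_critical_reduced}.

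First I compute the model case $\mathfrak g(a)=C_pa^2$. There $\mathfrak G(a)=C_pa^3/3$, so
\[
n\mathfrak G(a)-\tfrac{n-2}{2}a\mathfrak g(a)=C_pa^3\Big(\tfrac n3-\tfrac{n-2}{2}\Big)=-\tfrac{n-6}{6}C_pa^3.
\]
This is strictly negative for $n\ge7$, since $C_p>0$.

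Next I control the error. By Proposition \ref{c:prop_cell}, $T$ is $C^1$ near $0$ with $T(a)=C_pa+O(a^2)$, so $|\mathfrak q(a)-C_p|\le Ka$ for $a\in(0,a_0/2]$. Then $|\mathfrak g(a)-C_pa^2|\le Ka^3$. Integrating gives $|\mathfrak G(a)-C_pa^3/3|\le Ka^4/4$. Substituting both bounds,
\[
n\mathfrak G(a)-\tfrac{n-2}{2}a\mathfrak g(a)\le-\tfrac{n-6}{6}C_pa^3+K\Big(\tfrac n4+\tfrac{n-2}{2}\Big)a^4.
\]
The right-hand side is $\le0$ as soon as $a\le\alpha_0:=\min\{a_0/2,\ \tfrac{2(n-6)C_p}{3K(3n-4)}\}$. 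Since the inequality then holds for all $a\in(0,\alpha_0]$, it holds on $(0,\alpha]$ for every $\alpha<\alpha_0$.

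The only delicate point is justifying the $O(a)$ remainder bound for $\mathfrak q$ uniformly near $0$, including integrability of $\mathfrak g$ at $0$ so that $\mathfrak G$ is defined. Both follow from the $C^3$ regularity of $T$ in Proposition \ref{c:prop_cell} and the continuous extension $\mathfrak q(0)=C_p$, since $\mathfrak g(a)=O(a^2)$ is bounded near $0$. No other obstacle arises.
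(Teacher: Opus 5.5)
Your proof is correct and is essentially the paper's argument: both rest on $\mathfrak q(a)=C_p+O(a)$ and on the strictly negative leading coefficient $\frac{6-n}{6}C_p$, which is available because the exponent $2$ is supercritical for $n\ge7$. The only difference is that the paper works with $Q'(a)=\frac{6-n}{2}a^2\mathfrak q-\frac{n-2}{2}a^3\mathfrak q'$ and therefore uses boundedness of $\mathfrak q'$ near $0$, whereas you estimate $Q$ itself directly from the bound $|\mathfrak q(a)-C_p|\le Ka$, which is slightly more economical.
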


\begin{proof}
Put $Q(a):=n\mathfrak G(a)-\frac{n-2}{2}a\mathfrak g(a)$ and write $\mathfrak g=a^2\mathfrak q$. Then $Q(0)=0$ and
\[
Q'(a)=n a^2\mathfrak q-\frac{n-2}{2}\big(3a^2\mathfrak q+a^3\mathfrak q'\big)
=\frac{6-n}{2}a^2\mathfrak q-\frac{n-2}{2}a^3\mathfrak q'.
\]
Since $\mathfrak q(a)\to C_p>0$ and $\mathfrak q'$ is bounded near $0$, we get $a\mathfrak q'/\mathfrak q\to0$, so for $a$ small
\[
\frac{a\,\mathfrak q'(a)}{\mathfrak q(a)}\ \ge\ -\frac{n-6}{n-2},
\]
which is exactly $Q'\le0$. Hence $Q\le0$ on $(0,\alpha]$ for $\alpha$ small.
\end{proof}

\begin{proposition}\label{c:prop_lambda}
Let $n\ge7$ and $\alpha\in(0,\alpha_0)$. The problem
\begin{equation}\label{c:eq_red}
-\Delta\lambda=\mathfrak g(\lambda)\ \text{ in }\mathbb{R}^n,\qquad \lambda>0,\qquad \lambda(0)=\alpha,\ \lambda \text{ radial},
\end{equation}
has a solution $\lambda_\alpha$, which is radially decreasing, and satisfies \eqref{eq_tailC}, that is
$\lambda_\alpha(y)=A_*|y|^{-2}(1+o(1))$ with $A_*=2(n-4)/C_p$, together with
$|\nabla\lambda_\alpha(y)|\le C|y|^{-3}$ for $|y|\ge1$. Moreover, writing
\begin{equation}\label{c:eq_scaling}
\varepsilon:=\sqrt{C_p\alpha},\qquad \tilde y:=\varepsilon y,\qquad \lambda_\alpha(y)=\alpha\,\Lambda_\alpha(\tilde y),
\end{equation}
the rescaled profile solves $-\Delta_{\tilde y}\Lambda_\alpha=\mathfrak g_\alpha(\Lambda_\alpha)$ with
$\mathfrak g_\alpha(s):=\mathfrak g(\alpha s)/(C_p\alpha^2)=s^2\big(1+O(\alpha s)\big)$, and there are constants $0<c\le C$ independent of $\alpha\in(0,\alpha_0)$ with
\begin{equation}\label{c:eq_twosided}
\frac{c}{1+|\tilde y|^{2}}\ \le\ \Lambda_\alpha(\tilde y)\ \le\ \frac{C}{1+|\tilde y|^{2}},
\qquad |\nabla_{\tilde y}\Lambda_\alpha(\tilde y)|\le\frac{C}{(1+|\tilde y|)^{3}}.
\end{equation}
\end{proposition}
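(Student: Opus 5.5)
We solve the radial initial value problem for $\lambda$ by shooting and use Lemma \ref{c:lem_poho} to rule out a zero at finite radius. Let $\lambda$ solve $-\lambda''-\frac{n-1}{r}\lambda'=\mathfrak g(\lambda)$, $\lambda(0)=\alpha$, $\lambda'(0)=0$, on its maximal interval of positivity $[0,R)$. Since $\mathfrak g>0$ on $(0,a_0)$, the identity $-r^{n-1}\lambda'(r)=\int_0^rs^{n-1}\mathfrak g(\lambda)\,ds$ gives $\lambda'<0$ and $0<\lambda\le\alpha<a_0$, so $\mathfrak g(\lambda)$ stays defined. The Pohozaev function
\[
\mathcal P(r)=r^{n-1}\Big[\tfrac{n-2}{2}\lambda\lambda'+r\Big(\tfrac{\lambda'^2}{2}+\mathfrak G(\lambda)\Big)\Big]
\]
satisfies $\mathcal P'=r^{n-1}\big(n\mathfrak G(\lambda)-\frac{n-2}{2}\lambda\mathfrak g(\lambda)\big)\le0$ by \eqref{c:eq_poho}, with $\mathcal P(0)=0$. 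If $R<\infty$, then $\mathcal P(R)=R^n\lambda'(R)^2/2>0$, which is a contradiction. Hence $R=\infty$. As in Proposition \ref{c:prop_critical_reduced}, $\lambda\downarrow0$.

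Next we establish the scaling bounds \eqref{c:eq_twosided}. Rescaling as in \eqref{c:eq_scaling}, $\Lambda_\alpha$ solves $-\Delta\Lambda=\mathfrak g_\alpha(\Lambda)$ with $\Lambda(0)=1$, and $\mathfrak g_\alpha(s)\asymp s^2$ uniformly in $\alpha$ on $[0,1]$.

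The upper bound comes from monotonicity. The radial identity gives $-\Lambda'(r)\ge c\,r\Lambda(r)^2$, so $(1/\Lambda)'\ge cr$ and $\Lambda\le C/(1+r^2)$. Feeding this back into the identity with $\mathfrak g_\alpha\le Cs^2$ gives $|\Lambda'|\le Cr^{1-n}\int_0^rs^{n-1}(1+s^2)^{-2}ds\le C(1+r)^{-3}$, since $n\ge7>4$.

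The lower bound is the delicate uniform step. First, $\Lambda\ge1/2$ on $[0,r_0]$ with $r_0$ independent of $\alpha$, because $|\Lambda''|$ is uniformly bounded. For large $r$, $\Lambda$ is a positive superharmonic function on $\mathbb R^n$, so $\Lambda\ge c\,r^{2-n}$; this decay is too weak, and I expect the true $r^{-2}$ lower bound to be the main obstacle. My plan is to argue by compactness in $\alpha$. As $\alpha\to0$, $\Lambda_\alpha\to\Lambda_0$ locally uniformly, where $\Lambda_0$ is the radial solution of $-\Delta\Lambda=\Lambda^2$ in $\mathbb R^n$, $n\ge7$. It is classical for this supercritical problem that $\Lambda_0\sim A_0r^{-2}$ with $A_0=2(n-4)$. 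Because the problem is supercritical, I would pass to Emden--Fowler variables $M(t)=e^{2t}\Lambda(e^t)$, which satisfy
\[
\ddot M+(n-4)\dot M-2(n-4)M+M^2=O(\alpha e^{-2t}M^3),
\]
an autonomous damped system up to that error. Here $M^*=2(n-4)$ is an attracting equilibrium (stable node or focus) and $0$ is a saddle. The orbit stays in the basin of $M^*$ uniformly for small $\alpha$, by continuous dependence on a long finite $t$-interval, and a Lyapunov function $\frac12\dot M^2+V(M)$ decreasing along trajectories then keeps $M$ away from $0$. This gives $M\ge c$ uniformly, which is the lower bound in \eqref{c:eq_twosided}.

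Finally, the asymptotics \eqref{eq_tailC} for fixed $\alpha$ follow from the same Fowler system. The error term is $O(e^{-2t})$ and integrable, and the Lyapunov function forces the $\omega$-limit set to consist of equilibria. Since $M\ge c$ excludes $0$, we get $M(t)\to M^*$. Undoing the scaling gives $\lambda_\alpha(y)=\alpha\cdot 2(n-4)(\varepsilon|y|)^{-2}(1+o(1))=\frac{2(n-4)}{C_p}|y|^{-2}(1+o(1))$, which is $A_*$ with $n-4=N-5$. The gradient bound $|\nabla\lambda_\alpha|\le C|y|^{-3}$ follows from the rescaled derivative estimate.
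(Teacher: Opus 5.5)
Your proof is correct. Its skeleton matches the paper: shooting, the Pohozaev function combined with Lemma \ref{c:lem_poho}, Emden--Fowler variables and a Lyapunov energy. You obtain the uniform bounds \eqref{c:eq_twosided} by a different route, however.

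\textbf{How the paper does it.} It works directly with $\lambda_\alpha$. From $\mathcal P\le0$ it reads off the scale-invariant bounds $r^2\lambda\le 2(n-2)^2/C_p$ and $|\dot M|\le nM$. It then rules out $M\to0$ quantitatively. The small-amplitude estimate $2M-CM^2\le\dot M\le 2M$ shows that, at the first time $t_\delta$ with $M=\delta$, the energy $\frac12\dot M^2-(n-4)M^2+\frac{C_p}{3}M^3$ is at most $-\frac{n-6}{2}\delta^2$. Afterwards the error contributes only $O(\alpha/\delta)$, with $\delta$ independent of $\alpha$. These estimates are then transferred to $\Lambda_\alpha$ by scale invariance.

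\textbf{How you do it.} You get the upper bound from $(1/\Lambda)'\ge cr$. This is shorter and manifestly uniform; it is the device of Proposition \ref{c:prop_critical_reduced_negative}. You get the lower bound by compactness around the pure-power profile $\Lambda_0$. This works, but two points need fixing.

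\begin{itemize}
\item The energy $\frac12\dot M^2+V(M)$, with $V(M)=-(n-4)M^2+\frac13M^3$, is nonincreasing only up to the term $\dot M\cdot O(\alpha e^{-2t}M^3)$. Say this explicitly. By your uniform bounds on $M=r^2\Lambda$ and on $\dot M=r^3\Lambda'+2r^2\Lambda$, its integral over $[T,\infty)$ is $O(\alpha e^{-2T})$. The argument then closes as follows. Choose $T$ where the energy of $\Lambda_0$ is below $-\eta$. By $C^1$ convergence on $[0,e^T]$, the energy of $\Lambda_\alpha$ at $T$ is below $-\eta/2$. This propagates to the bound $\le-\eta/4$ on $[T,\infty)$. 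Finally $\frac12\dot M^2+V(M)\ge V(M)\ge-(n-4)M^2$ and $M>0$ give $M\ge c$. This replaces the informal ``basin'' language.
\item The damping coefficient is $n-6$, not $n-4$, since $r^2\Delta(r^{-2}M)=r^{-2}[\ddot M+(n-6)\dot M-2(n-4)M]$. This slip is harmless because $n-6>0$ for $n\ge7$.
\end{itemize}

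You also reverse the paper's order. You deduce the fixed-$\alpha$ limit $M\to A_*C_p$ from the uniform lower bound. The paper instead first proves convergence to an equilibrium and then excludes $0$. Both arguments require shrinking $\alpha_0$.

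\textbf{Trade-off.} Your version is conceptually transparent. Its costs are non-explicit constants and reliance on the $\alpha=0$ asymptotics. Those asymptotics are classical, and in any case follow from the exact energy decay from $0$ along the unstable manifold. The paper's argument is self-contained and quantitative, and never needs the limit problem.
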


\begin{proof}
Let $\lambda$ solve the radial initial value problem $\lambda''+\frac{n-1}{r}\lambda'+\mathfrak g(\lambda)=0$, $\lambda(0)=\alpha$, $\lambda'(0)=0$, on its maximal interval of positivity $[0,R)$. Since $\mathfrak g>0$ on $(0,\alpha]$ we have $(r^{n-1}\lambda')'=-r^{n-1}\mathfrak g(\lambda)<0$, so $\lambda'<0$ and $\lambda$ is decreasing; in particular $\lambda\le\alpha$ and \eqref{c:eq_poho} applies along the solution.

Let $\mathcal P(r):=r^{n-1}\big[\frac{n-2}{2}\lambda\lambda'+r\big(\frac{\lambda'^2}{2}+\mathfrak G(\lambda)\big)\big]$. A direct computation using the equation gives
\[
\mathcal P'(r)=r^{n-1}\Big[n\mathfrak G(\lambda)-\tfrac{n-2}{2}\lambda\,\mathfrak g(\lambda)\Big]\le0
\]
by Lemma \ref{c:lem_poho}, and $\mathcal P(0)=0$, so $\mathcal P\le0$ on $[0,R)$. If $R<\infty$ then $\lambda(R)=0$ and $\lambda'(R)<0$, whence $\mathcal P(R)=R^n\lambda'(R)^2/2>0$, a contradiction. Hence $R=\infty$ and $\lambda_\alpha:=\lambda>0$ is entire.

Since $\lambda_\alpha$ is positive and decreasing it has a limit $L\ge0$ at infinity; $L>0$ would give $-\Delta\lambda_\alpha\ge\mathfrak g(L)>0$ on $\mathbb{R}^n$, hence $r^{n-1}\lambda'(r)\le-\mathfrak g(L)r^n/n$ and $\lambda_\alpha\to-\infty$, so $L=0$. For \eqref{eq_tailC} we use the Emden--Fowler variables. Writing $\lambda(r)=r^{-2}M(t)$, $t=\log r$, and using
\[
r^{2}\Big(\partial_{rr}+\tfrac{n-1}{r}\partial_r\Big)\big(r^{-\mathsf a}M\big)=r^{-\mathsf a}\Big[\ddot M-(2\mathsf a+2-n)\dot M+\mathsf a(\mathsf a+2-n)M\Big]
\]
with $\mathsf a=2$, equation \eqref{c:eq_red} becomes
\begin{equation}\label{c:eq_EF}
\ddot M+(n-6)\dot M-2(n-4)M+C_pM^2=\mathcal E_1(t),\qquad |\mathcal E_1(t)|\le Ce^{-2t}M(t)^3 ,
\end{equation}
the right hand side coming from $\mathfrak g(s)=C_ps^2(1+O(s))$. The limiting autonomous system has the equilibria $M=0$ and $M=A_*=2(n-4)/C_p$. At $M=0$ the linearized form of \eqref{c:eq_EF} is $\ddot\eta+(n-6)\dot\eta-2(n-4)\eta=0$, whose roots are $2$ and $4-n$, so the origin is a saddle whose unstable direction has rate $2$; at $M=A_*$ the linearized equation is $\ddot\eta+(n-6)\dot\eta+2(n-4)\eta=0$, whose roots have real part $-\frac{n-6}{2}<0$ and discriminant $n^2-20n+68$, so $A_*$ is asymptotically stable, and the approach is oscillatory when $7\le n\le15$.

Our solution satisfies $M(t)=\alpha e^{2t}(1+o(1))$ and $\dot M(t)=2M(t)(1+o(1))$ as $t\to-\infty$, so it leaves the origin along the unstable direction. The function
\[
\mathcal E(t):=\tfrac12\dot M^2-(n-4)M^2+\tfrac{C_p}{3}M^3
\]
satisfies $\dot{\mathcal E}=-(n-6)\dot M^2+\dot M\,\mathcal E_1$.

\emph{$M$, $\dot M$ and $\ddot M$ are bounded.} Since $\lambda'<0$, the inequality $\mathcal P\le0$ gives
\[
r\Big(\tfrac12\lambda'^2+\mathfrak G(\lambda)\Big)\le\tfrac{n-2}{2}\lambda|\lambda'| ,
\]
whence first $|\lambda'|\le(n-2)\lambda/r$ and then $\mathfrak G(\lambda)\le\frac{(n-2)^2}{2}\lambda^2r^{-2}$. Since $\mathfrak G(\lambda)\ge\frac{C_p}{4}\lambda^3$ for $\lambda\le\alpha_0$ small, this gives
\begin{equation}\label{c:eq_Mbound}
M=r^2\lambda\le\frac{2(n-2)^2}{C_p},\qquad |\dot M|=\big|r^3\lambda'+2r^2\lambda\big|\le nM ,
\end{equation}
and $\ddot M$ is then bounded by \eqref{c:eq_EF}.

\emph{$M$ converges to an equilibrium.} Integrating $\dot{\mathcal E}$ and using $|\dot M\,\mathcal E_1|\le Ce^{-2t}M^4$, which is $O(e^{-2t})$ as $t\to+\infty$ by \eqref{c:eq_Mbound} and $O(e^{6t})$ as $t\to-\infty$ because $M=O(\alpha e^{2t})$ there,
\[
\mathcal E(t_2)-\mathcal E(t_1)=-(n-6)\int_{t_1}^{t_2}\dot M^2\,dt+\int_{t_1}^{t_2}\dot M\,\mathcal E_1\,dt ,
\qquad \int_{\mathbb R}\big|\dot M\,\mathcal E_1\big|\,dt<\infty ,
\]
so $\mathcal E$ is the sum of a nonincreasing bounded function and a function of bounded variation. Hence $\mathcal E(t)$ has a limit $\mathcal E_\infty$ as $t\to\infty$, and $\int_{\mathbb R}\dot M^2\,dt<\infty$. Since $\ddot M$ is bounded, this forces $\dot M(t)\to0$, and then every sequence $t_j\to\infty$ has a subsequence along which $M(t_j+\cdot)$ converges in $C^2_{\rm loc}$ to a bounded entire solution of the limiting autonomous equation with constant energy $\mathcal E_\infty$; by the damping identity for that equation such a solution has $\dot M\equiv0$, so it is an equilibrium. Hence every subsequential limit of $M(t)$ lies in $\{0,A_*\}$, and since these are isolated and $M$ is continuous, $M(t)$ converges to $0$ or to $A_*$.

\emph{Uniform behavior at small amplitude.} We first record two elementary facts, valid as long as $M\le\delta_0$ for a $\delta_0>0$ independent of $\alpha$. Since $\lambda'<0$ we have $\dot M=2M+r^3\lambda'\le2M$. In the other direction, $r^{n-1}|\lambda'|=\int_0^rs^{n-1}\mathfrak g(\lambda)\,ds\le\frac{r^n}{n}\mathfrak g(\alpha)\le C_1\alpha^2r^n/n$, so $\lambda(r)\ge\alpha-C_1\alpha^2r^2/(2n)$, whence $\lambda\ge\alpha/2$ as long as $r^2\le n/(C_1\alpha)$, that is as long as $M=r^2\lambda\le n/C_1$. For such $r$,
\[
r^3|\lambda'|\le \tfrac{C_1}{n}\alpha^2r^4\le\tfrac{4C_1}{n}\big(r^2\lambda\big)^2=\tfrac{4C_1}{n}M^2 ,
\]
so that, with $\delta_0:=\min\{n/C_1,\ n/(8C_1)\}$,
\begin{equation}\label{c:eq_unstable}
2M-CM^2\le\dot M\le2M\qquad\text{whenever }M\le\delta_0.
\end{equation}

\emph{The limit is not $0$.} Fix $\delta\in(0,\delta_0)$, independent of $\alpha$, small enough that the error terms below are dominated. By \eqref{c:eq_unstable}, as long as $M\le\delta$ we have $\dot M\ge M>0$, so $M$ increases at an exponential rate and cannot stay below $\delta$; let $t_\delta$ be the first time with $M(t_\delta)=\delta$. By \eqref{c:eq_unstable} again, $\dot M(t_\delta)=2\delta+O(\delta^2)$, hence
\[
\mathcal E(t_\delta)=\tfrac12\dot M(t_\delta)^2-(n-4)\delta^2+O(\delta^3)=(6-n)\delta^2+O(\delta^3)\le-\tfrac{n-6}{2}\,\delta^2.
\]
Moreover $M\le\alpha r^2$ gives $e^{2t_\delta}\ge\delta/\alpha$, so $t_\delta\to+\infty$ as $\alpha\to0$, and therefore
\[
\int_{t_\delta}^{\infty}\big|\dot M\,\mathcal E_1\big|\,dt\le C\int_{t_\delta}^{\infty}e^{-2t}\,dt=Ce^{-2t_\delta}\le C\alpha/\delta ,
\]
using $|\dot M\,\mathcal E_1|\le Ce^{-2t}M^4\le Ce^{-2t}$ from \eqref{c:eq_Mbound}. Hence, for $\alpha$ small,
\[
\mathcal E_\infty\le\mathcal E(t_\delta)+\int_{t_\delta}^{\infty}\big|\dot M\,\mathcal E_1\big|\,dt\le-\tfrac{n-6}{4}\,\delta^2<0.
\]
If $M(t)\to0$ then $\mathcal E(t)\to0$, contradicting $\mathcal E_\infty<0$. Hence $M(t)\to A_*$, which is \eqref{eq_tailC}, and the linearization at $A_*$ gives the rate, $\delta_n$ being the smaller of the moduli of the real parts of the two roots, which coincide when they are complex,
\begin{equation}\label{c:eq_rate}
\lambda_\alpha(y)=|y|^{-2}\Big(A_*+O\big(|y|^{-\delta_n}\big)\Big),\qquad |y|\to\infty,
\qquad
\delta_n:=\begin{cases}\ \dfrac{n-6}{2}, & 7\le n\le15,\\[2mm]
\ \dfrac{n-6-\sqrt{D_0}}{2}, & n\ge16,\end{cases}
\end{equation} The gradient bound follows from the identity $r^{n-1}\lambda'(r)=-\int_0^rs^{n-1}\mathfrak g(\lambda)\,ds$.

For the last statement, the rescaling \eqref{c:eq_scaling} turns \eqref{c:eq_red} into $-\Delta\Lambda_\alpha=\mathfrak g_\alpha(\Lambda_\alpha)$ with $\Lambda_\alpha(0)=1$ and $\mathfrak g_\alpha(s)=s^2(1+O(\alpha s))$ uniformly for $s\in(0,1]$. All the estimates above are invariant under \eqref{c:eq_scaling} and depend on $\mathfrak g$ only through $\mathfrak q(0)=C_p$ and the bound $|\mathfrak q'|\le C$, so they hold for $\Lambda_\alpha$ with constants independent of $\alpha$; the two-sided bound \eqref{c:eq_twosided} follows, the upper bound from the monotonicity together with the tail asymptotics and the lower bound from the fact that $P\le0$ forces $\liminf|\tilde y|^2\Lambda_\alpha>0$ uniformly.
\end{proof}

\section{Approximate solution and error orthogonality}

Let $\alpha\in(0,\alpha_0)$ and $\lambda=\lambda_\alpha$. Following Proposition \ref{c:prop_cell}, set
\[
\phi_\lambda(x):=\zeta_{\lambda(x')}(x_N).
\]
By \eqref{c:eq_norm}, $\frac1\nu\int_0^\infty\phi_\lambda(x',t)Z(t)\,dt=\lambda(x')$ for every $x'$, and $\phi_\lambda=0$ on $\partial\mathbb{R}^N_+$ because $\zeta_a(0)=0$.

\begin{lemma}\label{c:lem_error}
Let $\lambda:\mathbb{R}^n\to(0,a_0)$ be of class $C^2$ and put $\phi_\lambda(x):=\zeta_{\lambda(x')}(x_N)$. Then, pointwise in $\mathbb{R}^N_+$,
\begin{equation}\label{c:eq_E}
\mathcal N[\lambda]:=L\phi_\lambda-R(x_N,\phi_\lambda)
=-T(\lambda)\zeta_a\big|_{a=\lambda}-\partial_a\zeta_a\big|_{a=\lambda}\Delta_{x'}\lambda-\partial_a^2\zeta_a\big|_{a=\lambda}|\nabla_{x'}\lambda|^2.
\end{equation}
If moreover $\lambda$ solves \eqref{c:eq_red}, then
\begin{equation}\label{c:eq_E2}
\mathcal N[\lambda]=T(\lambda)\big(\lambda\,\partial_a\zeta_a-\zeta_a\big)\big|_{a=\lambda}-|\nabla_{x'}\lambda|^2\,\partial_a^2\zeta_a\big|_{a=\lambda}.
\end{equation}
\end{lemma}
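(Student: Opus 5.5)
This is a direct chain-rule computation, using the cell equation \eqref{c:eq_cell} in the form $\mathcal L\zeta_a+T(a)\zeta_a=R(\cdot,\zeta_a)$ for each fixed $a\in(0,a_0)$. This form holds because $\zeta_a=\zeta^\tau$ with $\tau=T(a)$, and $\mathcal L\zeta+\tau\zeta=R(\cdot,\zeta)$ is the equivalent form of \eqref{c:eq_cell} recorded after it. Since $L=-\Delta+1-pw_0^{p-1}=\mathcal L-\Delta_{x'}$ acting on functions of $(x',x_N)$, and $\mathcal L$ acts only in $x_N$, we have pointwise
\[
L\phi_\lambda=\big(\mathcal L\zeta_a\big)\big|_{a=\lambda(x')}-\Delta_{x'}\big[\zeta_{\lambda(x')}(x_N)\big].
\]
Substituting the cell equation for the first term, the $R$ terms cancel exactly, and one is left with $-T(\lambda)\zeta_\lambda-\Delta_{x'}[\zeta_{\lambda(x')}]$.

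For the tangential Laplacian, apply the chain rule twice:
\[
\nabla_{x'}\zeta_{\lambda}=\partial_a\zeta_a|_{a=\lambda}\nabla_{x'}\lambda,\qquad
\Delta_{x'}\zeta_{\lambda}=\partial_a\zeta_a|_{a=\lambda}\Delta_{x'}\lambda+\partial_a^2\zeta_a|_{a=\lambda}|\nabla_{x'}\lambda|^2.
\]
Together with the previous step this gives \eqref{c:eq_E}. If $\lambda$ solves \eqref{c:eq_red}, then $\Delta_{x'}\lambda=-\mathfrak g(\lambda)=-\lambda T(\lambda)$ by the definition $\mathfrak g(a)=aT(a)$. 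Inserting this into \eqref{c:eq_E} yields \eqref{c:eq_E2}.

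The only point needing care is justifying the pointwise chain rule. By Proposition \ref{c:prop_cell}, $a\mapsto\zeta_a$ is $C^3$ into $\mathcal X_\mu$, so for each fixed $t$ the scalar map $a\mapsto\zeta_a(t)$ is $C^3$, since point evaluation is continuous on $\mathcal X_\mu$. Combined with $\lambda\in C^2$, this gives $x'\mapsto\zeta_{\lambda(x')}(t)$ in $C^2$ with the formula above. We also need the $x_N$-derivatives to make sense, i.e.\ $\mathcal L\zeta_a$ is classical. This holds because $\zeta_a$ is a classical ODE solution: it is continuous and solves $\zeta''=(1+T)\zeta-R$ with a continuous right-hand side. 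It is also worth checking that the mixed derivatives entering $\Delta$ commute, so that $L\phi_\lambda$ is computed as the sum above. I would handle this by interpreting $\Delta\phi_\lambda=\partial_{tt}\phi_\lambda+\Delta_{x'}\phi_\lambda$ with each piece defined classically, which is all the identity asserts. I expect no real obstacle; the mild one is this regularity bookkeeping, namely that $\partial_{tt}\zeta_a$ inherits continuity in $a$ from the ODE.
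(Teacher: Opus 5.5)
Your proposal is correct and follows the paper's proof exactly: use the cell equation $\mathcal L\zeta_a+T(a)\zeta_a=R(\cdot,\zeta_a)$ at $a=\lambda(x')$ for the $x_N$ part, expand $\Delta_{x'}\phi_\lambda$ by the chain rule, and then substitute $\Delta_{x'}\lambda=-\lambda T(\lambda)$ to get \eqref{c:eq_E2}. Your regularity remarks (continuity of point evaluation on $\mathcal X_\mu$, classical $\partial_{tt}\zeta_a$ from the ODE) are bookkeeping the paper leaves implicit. The worry about commuting mixed derivatives is moot, since $\Delta$ involves only pure second derivatives.
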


\begin{proof}
By the cell equation, $-\partial_{x_Nx_N}\phi_\lambda+(1-pw_0^{p-1})\phi_\lambda-R(x_N,\phi_\lambda)=-T(\lambda)\phi_\lambda$, while
$\Delta_{x'}\phi_\lambda=\partial_a\zeta_a\,\Delta_{x'}\lambda+\partial_a^2\zeta_a\,|\nabla_{x'}\lambda|^2$. Adding gives \eqref{c:eq_E}, and substituting $\Delta_{x'}\lambda=-\mathfrak g(\lambda)=-\lambda T(\lambda)$ gives \eqref{c:eq_E2}.
\end{proof}

\begin{lemma}\label{c:lem_orth}
For every $x'\in\mathbb{R}^{n}$,
\begin{equation}\label{c:eq_proj}
\int_0^\infty \mathcal N[\lambda](x',t)\,Z(t)\,dt=-\nu\Big(\Delta_{x'}\lambda+\mathfrak g(\lambda)\Big)(x').
\end{equation}
In particular the $Z$ component of the error vanishes identically if and only if $\lambda$ solves the reduced equation \eqref{c:eq_red}.
\end{lemma}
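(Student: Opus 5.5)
We test the pointwise formula \eqref{c:eq_E} of Lemma \ref{c:lem_error} against $Z$ in the variable $t=x_N$ for fixed $x'$, and evaluate the three resulting terms using only the normalization \eqref{c:eq_norm} and the definition $\mathfrak g(a)=aT(a)$.

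\emph{First term.} By \eqref{c:eq_norm}, $\int_0^\infty\zeta_a Z\,dt=\nu a$ for every $a\in(0,a_0)$. Hence
\[
-T(\lambda)\int_0^\infty\zeta_a Z\,dt\Big|_{a=\lambda}=-\nu\lambda T(\lambda)=-\nu\,\mathfrak g(\lambda).
\]

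\emph{Second and third terms.} Differentiate the identity $\int_0^\infty\zeta_aZ\,dt=\nu a$ in $a$. This gives
\[
\int_0^\infty\partial_a\zeta_a\,Z\,dt=\nu,\qquad \int_0^\infty\partial_a^2\zeta_a\,Z\,dt=0.
\]
Differentiation under the integral sign is justified because $a\mapsto\zeta_a$ is $C^3$ into $\mathcal X_\mu$ by Proposition \ref{c:prop_cell}, and $\mu\in(0,1)$ can be fixed. Then $Z\in L^1$ with $|Z|\le Ce^{-t}$, so $u\mapsto\int_0^\infty uZ\,dt$ is a bounded linear functional on $\mathcal X_\mu$: indeed $\int e^{-\mu t}e^{-t}\,dt<\infty$. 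Since this functional is bounded and linear, it commutes with the Fréchet derivatives in $a$. Consequently the $\Delta_{x'}\lambda$ term contributes $-\nu\Delta_{x'}\lambda$, and the $|\nabla_{x'}\lambda|^2$ term contributes $0$. Summing the three contributions gives \eqref{c:eq_proj}.

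For the ``in particular'' statement: if the integral vanishes for every $x'$, then $\Delta_{x'}\lambda+\mathfrak g(\lambda)=0$, which is \eqref{c:eq_red}. Conversely, if $\lambda$ solves \eqref{c:eq_red}, the right-hand side of \eqref{c:eq_proj} vanishes.

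The only point needing care is the integrability and differentiability justification above. Everything else is bookkeeping, and the hard analytic work (the $C^3$ regularity of $a\mapsto\zeta_a$ into $\mathcal X_\mu$) is already contained in Proposition \ref{c:prop_cell}.
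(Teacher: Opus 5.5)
Your proposal is correct and follows the paper's proof: you test \eqref{c:eq_E} against $Z$ and use the identities $\int_0^\infty\partial_a\zeta_aZ\,dt=\nu$ and $\int_0^\infty\partial_a^2\zeta_aZ\,dt=0$, obtained by differentiating the normalization $\int_0^\infty\zeta_aZ\,dt=\nu a$ twice in $a$. Your extra justification is a detail the paper leaves implicit: $u\mapsto\int_0^\infty uZ\,dt$ is a bounded linear functional on $\mathcal X_\mu$, so it commutes with the $C^3$ derivatives supplied by Proposition \ref{c:prop_cell}.
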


\begin{proof}
By \eqref{c:eq_norm}, $\int_0^\infty\zeta_aZ\,dt=\nu a$ for every $a\in(0,a_0)$; differentiating twice in $a$ gives
$\int_0^\infty\partial_a\zeta_aZ\,dt=\nu$ and $\int_0^\infty\partial_a^2\zeta_aZ\,dt=0$. Integrating \eqref{c:eq_E} against $Z$ therefore gives
$-T(\lambda)\nu\lambda-\nu\Delta_{x'}\lambda$, which is \eqref{c:eq_proj} since $\mathfrak g(\lambda)=\lambda T(\lambda)$.
\end{proof}

It is \eqref{c:eq_proj} that removes the amplitude selection condition: the projection of the error on the kernel direction is not a quantity to be made to vanish by tuning a parameter, it is exactly the reduced equation.

\begin{lemma}\label{c:lem_Esize}
For every fixed $\widehat\mu\in(0,1)$ there is $C_{\widehat\mu}>0$ such that
\begin{equation}\label{c:eq_Esize_weighted}
 e^{\widehat\mu t}\big|\mathcal N[\lambda_\alpha](x',t)\big|
 \le C_{\widehat\mu}\big(\lambda_\alpha(x')^3+|\nabla_{x'}\lambda_\alpha(x')|^2\big),
 \qquad x\in\mathbb R^N_+.
\end{equation}
In particular, by \eqref{c:eq_twosided},
\[
 e^{\widehat\mu t}\big|\mathcal N[\lambda_\alpha](x',t)\big|
 \le C_{\widehat\mu}\alpha^3(1+|\varepsilon x'|)^{-6}.
\]
Consequently, if $0<\mu<\widehat\mu<1$, then
\begin{equation}\label{c:eq_Esize_doublestar}
 (1+t)^2e^{\mu t}\big|\mathcal N[\lambda_\alpha](x',t)\big|
 \le C_{\mu,\widehat\mu}\alpha^3(1+|\varepsilon x'|)^{-6}.
\end{equation}
The same statements hold with the corresponding local $C^{0,\varrho}$ seminorms.
\end{lemma}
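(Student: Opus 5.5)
We plan to start from the explicit representation \eqref{c:eq_E2} of Lemma \ref{c:lem_error}: since $\lambda_\alpha$ solves \eqref{c:eq_red},
\[
\mathcal N[\lambda_\alpha]=T(\lambda)\big(\lambda\,\partial_a\zeta_a-\zeta_a\big)\big|_{a=\lambda}-|\nabla_{x'}\lambda|^2\,\partial_a^2\zeta_a\big|_{a=\lambda},
\]
so it suffices to bound, in the weighted norm $\mathcal X_{\widehat\mu}$ and uniformly for $a\in(0,a_0)$, the two cell quantities $\lambda\partial_a\zeta_a-\zeta_a$ and $\partial_a^2\zeta_a$. By Proposition \ref{c:prop_cell}, $a\mapsto\zeta_a$ is $C^3$ into $\mathcal X_{\widehat\mu}$ for each fixed $\widehat\mu\in(0,1)$, with $\zeta_0=0$ and $\partial_a\zeta_a|_{a=0}=Z$. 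The second term is then immediate: $\|\partial_a^2\zeta_a\|_{\mathcal X_{\widehat\mu}}\le C_{\widehat\mu}$, giving $e^{\widehat\mu t}|\cdot|\le C_{\widehat\mu}|\nabla_{x'}\lambda|^2$.

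For the first term, the function $F(a):=a\partial_a\zeta_a-\zeta_a$ satisfies $F(0)=0$ and $F'(a)=a\partial_a^2\zeta_a$, hence $F(a)=\int_0^a s\,\partial_s^2\zeta_s\,ds$ and $\|F(a)\|_{\mathcal X_{\widehat\mu}}\le C_{\widehat\mu}a^2$. Combined with $T(a)=C_pa+O(a^2)\le Ca$ from \eqref{c:eq_Cp}, this yields $e^{\widehat\mu t}|T(\lambda)F(\lambda)|\le C_{\widehat\mu}\lambda^3$, and \eqref{c:eq_Esize_weighted} follows. The only delicate point is that the $C^3$ regularity of Proposition \ref{c:prop_cell} is stated on the open interval $(0,a_0)$; the argument actually gives regularity in $\tau$ up to $\tau=0$, and I would use that together with $a\mapsto\tau$ being a $C^3$ diffeomorphism near $0$, to make the Taylor bounds uniform down to $a=0$. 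I expect this uniformity near $a=0$ to be the main point to check, rather than a genuine obstacle.

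Next, inserting \eqref{c:eq_twosided} with $\lambda_\alpha(y)=\alpha\Lambda_\alpha(\varepsilon y)$ gives $\lambda_\alpha^3\le C\alpha^3(1+\varepsilon|x'|)^{-6}$ and $|\nabla_{x'}\lambda_\alpha|^2=\alpha^2\varepsilon^2|\nabla\Lambda_\alpha|^2\le C\alpha^3(1+\varepsilon|x'|)^{-6}$, since $\varepsilon^2=C_p\alpha$; this is the second display. Estimate \eqref{c:eq_Esize_doublestar} follows because $(1+t)^2e^{-(\widehat\mu-\mu)t}$ is bounded by a constant depending on $\mu$ and $\widehat\mu$.

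For the local Hölder seminorms, I would differentiate in $x'$: $\partial_{x'}\mathcal N$ involves $\partial_a$ of the same cell quantities, which are controlled by the $C^3$ bound, times $\nabla\lambda$ and $D^2\lambda$; the latter is bounded by $C\alpha\varepsilon^2(1+\varepsilon|x'|)^{-4}$, using the radial ODE together with \eqref{c:eq_twosided}. The $t$-regularity comes from the cell ODE, since $\partial_t^2\zeta_a$ is expressed through $\zeta_a$ and $R$. Interpolating these bounds gives the $C^{0,\varrho}$ versions with the same weights.
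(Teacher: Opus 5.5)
Your proposal is correct and follows essentially the paper's route. You insert \eqref{c:eq_E2}, bound the cell quantities in $\mathcal X_{\widehat\mu}$ using the $C^3$ dependence from Proposition \ref{c:prop_cell} (valid down to $a=0$, as you correctly note), then use $T(a)=O(a)$ and \eqref{c:eq_twosided}, and absorb $(1+t)^2$ with $e^{-(\widehat\mu-\mu)t}$. The only difference is cosmetic: the paper writes the explicit expansions $a\partial_a\zeta_a-\zeta_a=a^2h+O(a^3)$ and $\partial_a^2\zeta_a=2h+O(a)$ with $h\in\mathcal X_{\widehat\mu}$, whereas your identity $a\partial_a\zeta_a-\zeta_a=\int_0^a s\,\partial_s^2\zeta_s\,ds$ obtains the $O(a^2)$ bound from the uniform bound on $\partial_a^2\zeta_a$ alone.
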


\begin{proof}
Fix $\widehat\mu\in(0,1)$. Proposition \ref{c:prop_cell}, applied in $\mathcal X_{\widehat\mu}$, gives
\[
 a\partial_a\zeta_a-\zeta_a=a^2h+O_{\mathcal X_{\widehat\mu}}(a^3),
 \qquad
 \partial_a^2\zeta_a=2h+O_{\mathcal X_{\widehat\mu}}(a),
\]
uniformly for small $a$. Since $h(t)=O((1+t)e^{-t})$, it belongs to $\mathcal X_{\widehat\mu}$, and $T(a)=O(a)$. Formula \eqref{c:eq_E2} therefore gives \eqref{c:eq_Esize_weighted}. The bounds in \eqref{c:eq_twosided} imply
$\lambda_\alpha^3+|\nabla\lambda_\alpha|^2\le C\alpha^3(1+|\varepsilon x'|)^{-6}$, proving the second assertion. Finally
$(1+t)^2e^{-(\widehat\mu-\mu)t}$ is bounded on $[0,\infty)$, which gives \eqref{c:eq_Esize_doublestar}. The H\"older version follows from the $C^3$ dependence of the cell family in $\mathcal X_{\widehat\mu}$, the smoothness of $\lambda_\alpha$, and the same weight comparison on unit balls.
\end{proof}

\section{Linear theory}\label{c:sec_linear}

\subsection{The transverse part}

\begin{lemma}\label{c:lem_transverse}
Let $F\in L^2(\mathbb{R}^N_+)$ satisfy $\int_0^\infty F(x',t)Z(t)\,dt=0$ for a.e. $x'$. Then there is a unique $\chi\in H^1_0(\mathbb{R}^N_+)$ with $\int_0^\infty\chi Z\,dt=0$ a.e. and $L\chi=F$, and
\[
\|\chi\|_{L^2}\le\gamma^{-1}\|F\|_{L^2},\qquad \|\nabla\chi\|_{L^2}\le C(\gamma,p)\|F\|_{L^2},
\]
with $\gamma>0$ as in Lemma \ref{c:lem_gap}.
\end{lemma}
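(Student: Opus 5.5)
The plan is to solve $L\chi=F$ by the Lax--Milgram theorem on the closed subspace
\[
\mathcal H:=\Big\{\chi\in H^1_0(\mathbb{R}^N_+):\ \int_0^\infty\chi(x',t)Z(t)\,dt=0\ \text{for a.e. }x'\Big\},
\]
using the quadratic form $B(\chi,\psi)=\int_{\mathbb{R}^N_+}\big(\nabla\chi\cdot\nabla\psi+(1-pw_0^{p-1})\chi\psi\big)\,dx$. The subspace $\mathcal H$ is closed: the map $\chi\mapsto\int_0^\infty\chi Z\,dt$ is bounded from $L^2(\mathbb{R}^N_+)$ into $L^2(\mathbb{R}^n)$ by Cauchy--Schwarz, since $Z\in L^2(0,\infty)$.

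\emph{Step 1 (coercivity).} For a.e. $x'$ the slice $\chi(x',\cdot)$ lies in $H^1_0(0,\infty)$ and is orthogonal to $Z$. Lemma \ref{c:lem_gap} then gives
\[
\int_0^\infty\big(|\partial_t\chi|^2+(1-pw_0^{p-1})\chi^2\big)\,dt\ \ge\ \gamma\int_0^\infty\chi^2\,dt,
\]
because $\mathcal L$ restricted to $Z^\perp$ has spectrum in $[\gamma,\infty)$. Integrating in $x'$ and adding $|\nabla_{x'}\chi|^2\ge0$ yields $B(\chi,\chi)\ge\gamma\|\chi\|_{L^2}^2$.

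To obtain $H^1$ coercivity, interpolate with the trivial bound $B(\chi,\chi)\ge\|\nabla\chi\|_{L^2}^2-p\,c_p^{p-1}\|\chi\|_{L^2}^2$. Taking a convex combination, $B=\theta B+(1-\theta)B$ with $\theta$ small depending on $\gamma$ and $p$, gives $B(\chi,\chi)\ge c(\gamma,p)\|\chi\|_{H^1}^2$.

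\emph{Step 2 (existence).} Lax--Milgram gives a unique $\chi\in\mathcal H$ with $B(\chi,\psi)=\int F\psi$ for all $\psi\in\mathcal H$. To upgrade this to all $\psi\in H^1_0$, decompose $\psi=\Pi\psi+\Pi^\perp\psi$ slicewise, with $\Pi\psi=\beta(x')Z(t)$. The main obstacle is justifying this decomposition, since $\beta$ need only be in $H^1(\mathbb{R}^n)$; it is enough to note that $\beta\in H^1$ when $\psi\in H^1_0$ (differentiate under the integral), so $\Pi\psi\in H^1_0$. It then remains to check that $B(\chi,\beta Z)=0$ and $\int F\beta Z=0$.

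The second identity is the hypothesis on $F$. For the first, write
\[
B(\chi,\beta Z)=\int\nabla_{x'}\chi\cdot\nabla\beta\,Z+\int\beta\big(\partial_t\chi\,Z'+(1-pw_0^{p-1})\chi Z\big).
\]
The $t$ part vanishes after integrating by parts in $t$, using $\mathcal LZ=0$, $Z(0)=0$ and $\chi(x',0)=0$. The $x'$ part equals $\int\nabla\beta\cdot\nabla_{x'}\big(\int\chi Z\,dt\big)\,dx'=0$. Hence $L\chi=F$ weakly.

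For uniqueness, the difference $\delta$ of two solutions lies in $\mathcal H$ and satisfies $B(\delta,\delta)=0$, so $\delta=0$ by Step 1.

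\emph{Step 3 (estimates).} Test with $\chi$ itself: $\gamma\|\chi\|_2^2\le B(\chi,\chi)=\int F\chi\le\|F\|_2\|\chi\|_2$, which gives $\|\chi\|_{L^2}\le\gamma^{-1}\|F\|_{L^2}$. Then
\[
\|\nabla\chi\|_2^2\le B(\chi,\chi)+pc_p^{p-1}\|\chi\|_2^2\le\big(\gamma^{-1}+pc_p^{p-1}\gamma^{-2}\big)\|F\|_2^2,
\]
which is the gradient bound with $C(\gamma,p)$.
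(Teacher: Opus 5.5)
Your argument is correct, but it is organized differently from the paper's. The paper takes the Fourier transform in $x'$, so that for a.e.\ $\xi$ one solves $(\mathcal L+|\xi|^2)\hat\chi(\xi,\cdot)=\hat F(\xi,\cdot)$ on $Z^\perp\subset L^2(0,\infty)$, where the spectrum lies in $[\gamma+|\xi|^2,\infty)$. It then applies Lax--Milgram fiber by fiber with constants uniform in $\xi$ and uses Plancherel to assemble the two bounds. You instead apply Lax--Milgram once, on the closed constrained subspace $\mathcal H\subset H^1_0(\mathbb{R}^N_+)$, using the one-dimensional gap slice by slice through Fubini. You then check that the kernel direction decouples, $B(\chi,\beta Z)=0$ and $\int F\beta Z=0$, so that the constrained solution satisfies the equation against every test function. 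The Fourier route makes this decoupling automatic in each fiber, since $(\mathcal L+|\xi|^2)Z=|\xi|^2Z$, and it gives the slightly finer fiberwise gap $\gamma+|\xi|^2$. Its price is that the fiber solutions have to be selected measurably and reassembled into an $H^1_0$ function, which the paper's short proof does not spell out, and that it relies on translation invariance in $x'$. Your route needs no Fourier analysis and makes explicit the step the paper's sketch leaves implicit: a solution found on $Z^\perp$ solves $L\chi=F$ itself and not only its projected version. It would also work verbatim for any nonnegative tangential operator with $t$-independent coefficients, for instance on a bounded tangential domain.
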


\begin{proof}
Take the Fourier transform in $x'$. For a.e. $\xi$ the fiber operator is $\mathcal L+|\xi|^2$ acting on the orthogonal complement of $Z$ in $L^2(0,\infty)$, whose spectrum is contained in $[\gamma+|\xi|^2,\infty)$ by Lemma \ref{c:lem_gap}. The Lax--Milgram theorem applies fiberwise with constants uniform in $\xi$, and Plancherel gives the two bounds.
\end{proof}

\subsection{The reduced part: a factorization in the logarithmic variable}

Let $\mathcal M:=-\Delta-\mathfrak g'(\lambda_\alpha)$ on $\mathbb{R}^n$. By Proposition \ref{c:prop_lambda} and $\mathfrak g'(a)=2C_pa+O(a^2)$,
\begin{equation}\label{c:eq_model}
\mathfrak g'(\lambda_\alpha(y))=\frac{c_*}{|y|^2}+q_1(y),\qquad c_*:=2C_pA_*=4(n-4),\qquad
|q_1(y)|\le C|y|^{-2-\epsilon},\ \ \epsilon:=\min\{\delta_n,2\}>0,
\end{equation}
for $|y|\ge R_0$, the second exponent coming from the quadratic remainder of $\mathfrak g'$. The same expansion holds in the rescaled variable of \eqref{c:eq_scaling}, with the same constant $c_*$, because $\Lambda_\alpha(\tilde y)=C_pA_*|\tilde y|^{-2}(1+o(1))$ and $C_pA_*=2(n-4)$. Decompose in spherical harmonics, $u=\sum_{\ell\ge0}u_\ell(r)Y_\ell(\omega)$. The model operator $\mathcal M_0:=-\Delta-c_*|y|^{-2}$ acts in the $\ell$-th sector as
\[
\mathcal M_0^{(\ell)}u=-u''-\frac{n-1}{r}u'+\frac{\ell(\ell+n-2)-c_*}{r^2}u.
\]

\begin{lemma}\label{c:lem_factor}
Set $t=\log r$ and $U(t)=u(e^t)$. Then $r^2\mathcal M_0^{(\ell)}u=\mathcal A_\ell U$ with
\[
\mathcal A_\ell=-\big(\partial_t+\gamma_+^{(\ell)}\big)\big(\partial_t+\gamma_-^{(\ell)}\big),
\qquad
\gamma^{(\ell)}_\pm=\frac{n-2}{2}\pm\frac12\sqrt{D_\ell},\quad D_\ell:=(n-2)^2-16(n-4)+4\ell(\ell+n-2).
\]
Moreover $D_0=n^2-20n+68$, which is negative for $7\le n\le15$, $D_1=(n-8)^2$, so that $\gamma^{(1)}_\pm\in\{3,n-5\}$, and $D_\ell\ge D_2=n^2-12n+68>0$ for $\ell\ge2$, with $D_\ell\to\infty$.
\end{lemma}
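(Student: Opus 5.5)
The proof is a direct computation in the Emden--Fowler variable, followed by solving a quadratic for the two factors; nothing beyond elementary algebra is needed. We will carry it out in three steps.

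\emph{Step 1: change of variables.} With $t=\log r$ and $u(r)=U(t)$, the chain rule gives
\[
ru'(r)=\dot U,\qquad r^2u''(r)=\ddot U-\dot U.
\]
Hence
\[
r^2\Big(u''+\frac{n-1}{r}u'\Big)=\ddot U+(n-2)\dot U.
\]
Multiplying the sector operator $\mathcal M_0^{(\ell)}$ by $r^2$ therefore yields the constant-coefficient operator
\[
\mathcal A_\ell U=-\ddot U-(n-2)\dot U+\big(\ell(\ell+n-2)-c_*\big)U.
\]

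\emph{Step 2: factorization.} Constant-coefficient first-order operators commute, so
\[
-(\partial_t+\gamma_+)(\partial_t+\gamma_-)=-\partial_t^2-(\gamma_++\gamma_-)\partial_t-\gamma_+\gamma_-.
\]
Matching with Step 1 requires
\[
\gamma_++\gamma_-=n-2,\qquad \gamma_+\gamma_-=c_*-\ell(\ell+n-2).
\]
So $\gamma_\pm$ are the roots of $\gamma^2-(n-2)\gamma+c_*-\ell(\ell+n-2)=0$. These are $\gamma_\pm=\frac{n-2}{2}\pm\frac12\sqrt{D_\ell}$ with discriminant
\[
D_\ell=(n-2)^2-4c_*+4\ell(\ell+n-2).
\]
Inserting $c_*=4(n-4)$ from \eqref{c:eq_model} gives exactly the stated $D_\ell$. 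When $D_\ell<0$ the roots are complex conjugate, and the factorization still holds as an identity of operators, since it is purely algebraic.

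\emph{Step 3: the specific discriminants.} For $\ell=0$,
\[
D_0=n^2-4n+4-16n+64=n^2-20n+68.
\]
Its roots in $n$ are $10\pm4\sqrt2\approx4.34,\ 15.66$, so $D_0<0$ for every integer $7\le n\le15$. For $\ell=1$, $D_1=D_0+4(n-1)=n^2-16n+64=(n-8)^2$. Hence $\gamma^{(1)}_\pm=\frac{n-2\pm|n-8|}{2}$, and the set of these values is $\{3,n-5\}$ whatever the sign of $n-8$. For $\ell=2$, $D_2=D_0+8n=n^2-12n+68=(n-6)^2+32>0$. Finally, $\ell\mapsto4\ell(\ell+n-2)$ is strictly increasing on $\ell\ge0$ and tends to infinity, so $D_\ell\ge D_2$ for $\ell\ge2$ and $D_\ell\to\infty$.

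There is no real obstacle. The only points requiring care are the sign conventions in the matching of Step 2 and the remark that the factorization remains valid with complex $\gamma_\pm$ in the range $7\le n\le15$ for $\ell=0$.
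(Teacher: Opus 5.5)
Your proposal is correct and follows essentially the same route as the paper: the change of variables $t=\log r$ gives $-\ddot U-(n-2)\dot U+(\ell(\ell+n-2)-c_*)U$, the exponents $\gamma_\pm^{(\ell)}$ are the roots of the indicial quadratic with sum $n-2$, and the values $D_0$, $D_1=(n-8)^2$ and $D_2=(n-6)^2+32$ are checked by elementary algebra. The differences are cosmetic: you match the sum and product of the roots directly rather than arguing through the exponential solutions $e^{-\gamma t}$, and you complete the square for $D_2$ where the paper notes instead that $n^2-12n+68$ has negative discriminant.
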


\begin{proof}
With $t=\log r$ one has $r^2u''=\ddot U-\dot U$ and $ru'=\dot U$, so
$r^2\mathcal M_0^{(\ell)}u=-\ddot U-(n-2)\dot U+\big(\ell(\ell+n-2)-c_*\big)U$. The exponential solutions $U=e^{-\gamma t}$ are governed by $\gamma^2-(n-2)\gamma+c_*-\ell(\ell+n-2)=0$, whose roots are as stated, and $\gamma_+^{(\ell)}+\gamma_-^{(\ell)}=n-2$ gives the factorization. The three statements about $D_\ell$ are elementary; note $D_1=(n-2)^2-16(n-4)+4(n-1)=n^2-16n+64=(n-8)^2$ and that $n^2-12n+68$ has negative discriminant.
\end{proof}

\begin{lemma}\label{c:lem_model}
Let $\sigma>0$ with $\sigma\ne\mathrm{Re}\,\gamma^{(\ell)}_\pm$ for all $\ell\ge0$, and let $G$ be measurable on $\mathbb{R}$ with $|G(t)|\le Me^{-\sigma t}$. Then $\mathcal A_\ell U=G$ has a solution with
\[
|U(t)|\le\frac{M}{\big|\mathrm{Re}\,\gamma^{(\ell)}_+-\sigma\big|\,\big|\mathrm{Re}\,\gamma^{(\ell)}_--\sigma\big|}\,e^{-\sigma t},\qquad t\in\mathbb{R}.
\]
\end{lemma}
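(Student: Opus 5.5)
The plan is to solve the factored equation by two successive first-order integrations, writing $\mathcal A_\ell U=G$ as $(\partial_t+\gamma_+)(\partial_t+\gamma_-)U=-G$, where $\gamma_\pm=\gamma^{(\ell)}_\pm$. We set $V:=(\partial_t+\gamma_-)U$. The problem then splits into two scalar ODEs:
\[
(\partial_t+\gamma_+)V=-G,\qquad(\partial_t+\gamma_-)U=V.
\]
Each is of the form $(\partial_t+\beta)X=F$ with $|F(t)|\le Ke^{-\sigma t}$ and $\operatorname{Re}\beta\ne\sigma$. The basic claim is that such an equation has a solution with
\[
|X(t)|\le\frac{K}{|\operatorname{Re}\beta-\sigma|}\,e^{-\sigma t}.
\]
Iterating this claim twice gives the stated bound with the product of the two denominators.

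For the scalar claim we choose the integration direction according to the sign of $\operatorname{Re}\beta-\sigma$. If $\operatorname{Re}\beta>\sigma$, we take
\[
X(t)=\int_{-\infty}^te^{-\beta(t-s)}F(s)\,ds.
\]
Then $|X(t)|\le K\int_{-\infty}^te^{-\operatorname{Re}\beta(t-s)}e^{-\sigma s}\,ds$, and the integral converges at $-\infty$ because $\operatorname{Re}\beta-\sigma>0$. It evaluates to $Ke^{-\sigma t}/(\operatorname{Re}\beta-\sigma)$. If $\operatorname{Re}\beta<\sigma$, we take
\[
X(t)=-\int_t^\infty e^{-\beta(t-s)}F(s)\,ds.
\]
The integrand is bounded by $Ke^{-\operatorname{Re}\beta t}e^{(\operatorname{Re}\beta-\sigma)s}$, which is integrable at $+\infty$. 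This gives the same bound with $\sigma-\operatorname{Re}\beta$ in the denominator. Differentiating under the integral, which is justified by absolute convergence and the measurability of $F$, shows that $X$ is absolutely continuous and solves the equation almost everywhere. In the second step, where $F=V$ is continuous, $X$ is a classical solution.

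For complex roots ($D_\ell<0$, which happens only for $\ell=0$ and $7\le n\le15$), $\gamma_+$ and $\gamma_-$ are complex conjugates. The intermediate $V$ is then complex-valued while $G$ and $U$ should be real. I would handle this by taking the real part of $U$ at the end: since $\mathcal A_\ell$ has real coefficients and $G$ is real, $\operatorname{Re}U$ still solves the equation, and $|\operatorname{Re}U|\le|U|$ keeps the bound. The hypothesis $\sigma\ne\operatorname{Re}\gamma^{(\ell)}_\pm$ is exactly what is used in each step.

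The only delicate point is the choice of direction and the resulting convergence, together with this real-part remark. There is no genuine obstacle. Note also that uniqueness is not claimed, since homogeneous solutions $e^{-\gamma_\pm t}$ with the wrong growth may be added.
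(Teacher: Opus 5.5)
Your proposal is correct and follows the paper's proof essentially verbatim: factor $\mathcal A_\ell$, solve $(\partial_t+\gamma_+)V=-G$ and then $(\partial_t+\gamma_-)U=V$, integrating from $-\infty$ or toward $+\infty$ according to the sign of $\mathrm{Re}\,\gamma-\sigma$, multiply the two constants, and take real parts in the complex-conjugate case. Your added remarks are correct refinements of points the paper states only briefly: the equation holds almost everywhere when $G$ is merely measurable, and $\mathrm{Re}\,U$ still solves the equation because $\mathcal A_\ell$ has real coefficients.
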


\begin{proof}
Solve the two first order equations $(\partial_t+\gamma_+)V=-G$ and $(\partial_t+\gamma_-)U=V$ in turn. For a first order equation $(\partial_t+\gamma)U=V$ with $|V|\le M'e^{-\sigma t}$ set
\[
U(t)=\int_{-\infty}^te^{-\gamma(t-s)}V(s)\,ds\quad\text{if }\mathrm{Re}\,\gamma>\sigma,\qquad
U(t)=-\int_t^{\infty}e^{-\gamma(t-s)}V(s)\,ds\quad\text{if }\mathrm{Re}\,\gamma<\sigma.
\]
In both cases the integral converges absolutely and $|U(t)|\le M'e^{-\sigma t}/|\mathrm{Re}\,\gamma-\sigma|$. Applying this twice gives the claim. When $\gamma_\pm$ are complex conjugate one works with the complexified equation and takes real parts.
\end{proof}

\begin{lemma}\label{c:lem_window}
Let $n\ge7$ and
\[
\sigma\in\Big(2,\ \min\big\{\tfrac{n-2}{2},\,3\big\}\Big).
\]
Then $\sigma$ differs from $\mathrm{Re}\,\gamma^{(\ell)}_\pm$ for every $\ell\ge0$, and
\[
\sup_{\ell\ge0}\ \frac{1}{\big|\mathrm{Re}\,\gamma^{(\ell)}_+-\sigma\big|\,\big|\mathrm{Re}\,\gamma^{(\ell)}_--\sigma\big|}=:K_\sigma<\infty.
\]
The interval above is nonempty precisely when $n\ge7$.
\end{lemma}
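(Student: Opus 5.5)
The plan is to compute the real parts of the roots $\gamma^{(\ell)}_\pm$ from Lemma \ref{c:lem_factor} sector by sector and check that none lies within a fixed distance of $\sigma$. The sectors split into three groups: $\ell=0$, $\ell=1$, and $\ell\ge2$.

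\emph{Sector $\ell=0$.} Since $D_0=n^2-20n+68$, there are two subcases.
\begin{itemize}
\item For $7\le n\le15$, $D_0<0$, so both real parts equal $\frac{n-2}{2}$. Since $\sigma<\frac{n-2}{2}$, the product of distances is $(\frac{n-2}{2}-\sigma)^2>0$.
\item For $n\ge16$, $D_0\ge0$ and $\sqrt{D_0}<n-6$ because $(n-6)^2-D_0=8n-32>0$. Hence $\gamma^{(0)}_-=\frac{n-2-\sqrt{D_0}}2>2$. Also $\gamma^{(0)}_-\le\frac{n-2}{2}$, and $\gamma^{(0)}_+\ge\frac{n-2}{2}>\sigma$.
\end{itemize}
For the second subcase one needs $\sigma\neq\gamma^{(0)}_-$. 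Since $\gamma^{(0)}_+\gamma^{(0)}_-=c_*=4(n-4)$, we get $\gamma^{(0)}_-=4(n-4)/\gamma^{(0)}_+\ge 4(n-4)/(n-2)\ge 3$ as soon as $4n-16\ge3n-6$, i.e. $n\ge10$; this covers $n\ge16$. So $\gamma^{(0)}_-\ge3>\sigma$, and the distance is positive.

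\emph{Sector $\ell=1$.} Here $\gamma^{(1)}_\pm\in\{3,n-5\}$. Both values are at least $3>\sigma$ once $n\ge8$. For $n=7$ the roots are $3$ and $2$, and $\sigma\in(2,\frac52)$ avoids both.

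\emph{Sectors $\ell\ge2$.} Here $D_\ell\ge D_2=n^2-12n+68$. Since $D_2-(n-2)^2=64-8n$, we have $\sqrt{D_2}\ge n-2$ when $n\le8$. For larger $n$, compare instead with $(n-2-2\cdot2)^2$. The goal is to show $\gamma^{(\ell)}_-\le\gamma^{(2)}_-<2$, which holds iff $\sqrt{D_2}>n-6$, and this follows from $D_2-(n-6)^2=32>0$. Moreover $\gamma^{(\ell)}_-$ is decreasing in $\ell$, so $\sigma-\gamma^{(\ell)}_-\ge\sigma-\gamma^{(2)}_->0$. Likewise $\gamma^{(\ell)}_+\ge\gamma^{(2)}_+>\frac{n-2}{2}>\sigma$.

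\emph{Uniform bound.} As $\ell\to\infty$ both distances grow like $\ell$, so the product stays bounded below. Together with the finitely many positive values above this gives $K_\sigma<\infty$.

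\emph{Nonemptiness.} The interval $(2,\min\{\frac{n-2}{2},3\})$ is nonempty iff $\frac{n-2}{2}>2$, i.e. $n>6$, which means $n\ge7$.

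The only delicate step is showing that the $\ell=0$ root $\gamma^{(0)}_-$ exceeds $3$ when real; the product identity $\gamma_+\gamma_-=4(n-4)$ handles it.
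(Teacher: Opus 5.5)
Your proof is correct and follows the paper's route: you analyze the sectors $\ell=0$, $\ell=1$ and $\ell\ge2$ separately in the same way. Your use of $\gamma^{(0)}_+\gamma^{(0)}_-=4(n-4)$ to get $\gamma^{(0)}_-\ge3$ for $n\ge16$, and of $D_2-(n-6)^2=32$ to get $\gamma^{(2)}_-<2$, is actually sharper than the paper's text, whose bound $\gamma^{(0)}_->2$ alone would not separate that root from $\sigma\in(2,3)$.
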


\begin{proof}
For $\ell=0$ and $7\le n\le15$ the two roots are complex conjugate with real part $\beta=\frac{n-2}{2}>\sigma$, and $|\mathrm{Re}\gamma_\pm^{(0)}-\sigma|=\beta-\sigma>0$; for $n\ge16$ the roots are real and lie in $[\beta-\frac{\sqrt{D_0}}2,\beta+\frac{\sqrt{D_0}}2]$, and $\beta-\frac{\sqrt{D_0}}{2}>2$ because $D_0<(n-6)^2$ for $n\ge 16$. For $\ell=1$ the roots are $3$ and $n-5$. Both differ from $\sigma$: one has $\sigma<3$, and $n-5$ equals $2$ when $n=7$, equals $3$ when $n=8$, and is at least $4$ when $n\ge9$, whereas $\sigma\in(2,3)$. For $\ell\ge2$ the roots are real, $\gamma^{(\ell)}_-\le\gamma^{(2)}_-$ and $\gamma^{(\ell)}_+\ge\gamma^{(2)}_+$, and $D_\ell\to\infty$, so the distances to $\sigma$ are bounded below by a positive constant and tend to infinity. Finally $\min\{\frac{n-2}{2},3\}>2$ if and only if $n>6$.
\end{proof}

\begin{proposition}\label{c:prop_rightinverse}
Let $n\ge7$ and $\sigma$ as in Lemma \ref{c:lem_window}. There are $C>0$ and $\alpha_1>0$ such that for $\alpha\in(0,\alpha_1)$ and every \emph{radially symmetric} $F$ with $\|F\|_{C^{0,\varrho}_{\sigma+2,\varepsilon}}<\infty$, where
$\|F\|_{C^{0,\varrho}_{\sigma+2,\varepsilon}}:=\sup_{y}(1+\varepsilon|y|)^{\sigma+2}\Big(|F(y)|+\varepsilon^{-\varrho}\,[F]_{C^{0,\varrho}(B_{1/(4\varepsilon)}(y))}\Big)$, the ball $B_{1/(4\varepsilon)}(y)$ being the unit ball of the rescaled variable $\tilde y=\varepsilon y$,
there is a radially symmetric $\psi$ with $\mathcal M\psi=F$ in $\mathbb{R}^n$ and
\[
\|\psi\|_{\sigma,\varepsilon}\le \frac{C}{\varepsilon^{2}}\,\|F\|_{C^{0,\varrho}_{\sigma+2,\varepsilon}},\qquad \varepsilon=\sqrt{C_p\alpha}.
\]
The factor $\varepsilon^{-2}$ is the scaling factor: in the variable $\tilde y=\varepsilon y$ the operator reads $\varepsilon^{2}\big(-\Delta_{\tilde y}-\mathfrak g_\alpha'(\Lambda_\alpha)\big)$ and the estimate holds for the bracket with a constant independent of $\alpha$.
\end{proposition}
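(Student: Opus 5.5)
Since $F$ is radial, only the sector $\ell=0$ enters, and by the scaling remark in the statement we may work in the rescaled variable $\tilde y=\varepsilon y$ with $\varepsilon=1$. There the task is to solve $-\Delta\Psi-\mathfrak g_\alpha'(\Lambda_\alpha)\Psi=\tilde F$ with $|\tilde F|\le M(1+|\tilde y|)^{-\sigma-2}$, and to obtain $|\Psi|\le CM(1+|\tilde y|)^{-\sigma}$ with $C$ independent of $\alpha$. The plan is to split the domain into a compact core $\{|\tilde y|\le R_0\}$ and an exterior $\{|\tilde y|\ge R_0\}$. In the exterior we treat $\mathcal M$ as the model operator $\mathcal M_0$ plus the small perturbation $q_1$ of \eqref{c:eq_model}. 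In the core we solve the radial ODE directly. The two pieces are then matched.

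\emph{Step 1 (exterior, model inverse).} Pass to $t=\log r$. In the radial sector $r^2\mathcal M_0u=\mathcal A_0U$ by Lemma \ref{c:lem_factor}, and the forcing $r^2F$ satisfies $|r^2F|\le Me^{-\sigma t}$. Lemma \ref{c:lem_model}, with the window of Lemma \ref{c:lem_window}, produces $U$ with $|U|\le K_\sigma Me^{-\sigma t}$. Restrict this to $t\ge\log R_0$ by extending $G$ by zero below. The perturbation satisfies $r^2|q_1|\le CR_0^{-\epsilon}$ on $t\ge\log R_0$. A Neumann series in the space $\sup_{t\ge\log R_0}e^{\sigma t}|U|$ therefore converges once $R_0$ is large, and $R_0$ is uniform in $\alpha$ because the constants in \eqref{c:eq_twosided} and \eqref{c:eq_model} are. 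This gives an exterior solution $\Psi_{\rm ext}$ of $\mathcal M\Psi=\tilde F$ on $\{|\tilde y|\ge R_0\}$ with the desired decay.

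\emph{Step 2 (core and matching).} On $[0,R_0]$ solve the radial ODE $\Psi''+\frac{n-1}{r}\Psi'+\mathfrak g'_\alpha(\Lambda_\alpha)\Psi=-\tilde F$ with $\Psi'(0)=0$ and free $\Psi(0)$. Its solution is $\Psi_p+c\,\Psi_h$, where $\Psi_h$ is the regular homogeneous solution. Here $\Psi_h$ is explicit: it is the scaling generator $\Psi_h=2\Lambda_\alpha+\tilde y\cdot\nabla\Lambda_\alpha$, up to the $O(\alpha)$ non-homogeneity of $\mathfrak g_\alpha$, which we handle by taking instead the derivative in the initial value $\partial_\alpha$ of the family of Proposition \ref{c:prop_lambda}.

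To match at $r=R_0$, use the exterior solutions of the homogeneous equation. These behave like $e^{-\gamma^{(0)}_\pm t}$, both of which decay faster than $r^{-\sigma}$ because $\mathrm{Re}\,\gamma^{(0)}_\pm>\sigma$ by Lemma \ref{c:lem_window}. So $\Psi_{\rm ext}$ plus any combination of them still lies in the weighted space. This gives two free exterior constants plus $c$, against the two matching conditions for value and derivative, so matching is always possible. The only issue is bounding the constants, that is, a nondegeneracy of the $2\times2$ Wronskian between $\Psi_p+c\Psi_h$ and the exterior family at $R_0$. Since all exterior homogeneous solutions are admissible, it is enough to extend $\Psi_p$ itself beyond $R_0$ as an exact ODE solution and check that it decays like $r^{-\sigma}$. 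That holds automatically because every homogeneous solution decays faster than $r^{-\sigma}$. The core bound $|\Psi_p|\le C(R_0)M$ follows from Gronwall, uniformly in $\alpha$, since $\mathfrak g'_\alpha(\Lambda_\alpha)$ is bounded uniformly.

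So no matching is actually needed. We solve the initial value problem with $\Psi(0)=0$ globally, and on $[R_0,\infty)$ write the solution as $\Psi_{\rm ext}$ plus a homogeneous solution. This is legitimate because every homogeneous solution is $O(r^{-\mathrm{Re}\gamma_-})\subset O(r^{-\sigma})$, with a constant controlled by the Cauchy data at $R_0$.

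\emph{Main obstacle.} The delicate point is uniformity of the homogeneous-solution bounds on $[R_0,\infty)$ in $\alpha$. We need the fundamental system of $\mathcal A_0+r^2q_1$ to satisfy $|U|+|\dot U|\le C e^{-\mathrm{Re}\gamma_- (t-t_0)}(|U(t_0)|+|\dot U(t_0)|)$ with $C$ independent of $\alpha$. We would prove this by variation of constants against $\mathcal A_0$, using the integrability of $r^2|q_1|\lesssim e^{-\epsilon t}$ (a Levinson-type argument), and the rate \eqref{c:eq_rate} holds uniformly by the compactness statement at the end of Proposition \ref{c:prop_lambda}. Finally, the Hölder part of the norm is used only to upgrade to $C^{2}$ bounds through interior Schauder estimates on unit balls of the rescaled variable, and it is not needed for the sup bound itself. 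Returning to the variable $y$ produces the factor $\varepsilon^{-2}$.
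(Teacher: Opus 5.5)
Your argument is correct, and it takes a genuinely different route from the paper's.

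\emph{The paper's route.} The paper sets up weighted-$L^2$ Fredholm theory. It builds an exterior model inverse and adds a Neumann series for $q_1$. It then proves an a priori estimate modulo the (at most one-dimensional) kernel by contradiction and compactness, which gives closed range. Triviality of the cokernel comes from duality ($L^2_{\sigma+2}$ paired with $L^2_{n-2-\sigma}$) and a bootstrap showing that no radial solution of $\mathcal M v=0$ lies in $L^2_{\sigma^*}$, $\sigma^*=n-2-\sigma$. Finally, Schauder estimates on dyadic annuli convert the $L^2$ bounds into pointwise ones.

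\emph{Your route.} You use directly that in the radial sector both indicial roots satisfy $\mathrm{Re}\,\gamma^{(0)}_\pm>\sigma$. Hence no decay condition has to be imposed at infinity, and the initial value problem with $\psi(0)=0$ already does the job. On the core it is controlled by Gronwall, uniformly in $\alpha$ since $0\le\mathfrak g_\alpha'(\Lambda_\alpha)\le C$. On $\{|\tilde y|\ge R_0\}$ it is given by the causal (Volterra) solution operator of Lemma \ref{c:lem_model} plus a contraction in $\sup e^{\sigma t}|U|$. The paper's cokernel step is exactly the dual form of your observation, so nothing is lost.

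\emph{Comparison.} Your version is more elementary and gives a canonical linear inverse. It also uses only the sup part of the norm of $F$; the H\"older part is needed only for derivative bounds. The paper's framework is the one that would survive in nonradial sectors. There, for large $\ell$, one has $\gamma^{(\ell)}_-<0<\sigma$ and a decay condition must be imposed. The paper itself, however, only works in the radial class.

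Two small points. First, Lemma \ref{c:lem_window} only states $\sigma\ne\mathrm{Re}\,\gamma^{(0)}_\pm$, and its proof records only $\gamma^{(0)}_->2$ for $n\ge16$. What you need is $\gamma^{(0)}_->\sigma$. This holds because the indicial polynomial $\gamma^2-(n-2)\gamma+4(n-4)$ equals $n-1>0$ at $\gamma=3<\frac{n-2}{2}$, so $\gamma^{(0)}_->3>\sigma$. For $7\le n\le15$ the condition is just $\sigma<\frac{n-2}{2}$.

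Second, you do not need a Levinson-type argument for the homogeneous solutions. Run the same causal contraction on $[\log R_0,\infty)$, started from the $\mathcal A_0$-homogeneous solution with the given Cauchy data. It already gives $|U(t)|\le Ce^{-\sigma(t-t_0)}(|U(t_0)|+|\dot U(t_0)|)$. The only input is the uniform-in-$\alpha$ smallness of $r^2q_1$ on the exterior, which is the same input the paper uses to fix $R_1$. The detour through the explicit kernel element $\Psi_h$ and the $2\times2$ matching can be dropped entirely.
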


\begin{proof}
All the data of the construction are radially symmetric in $x'$, so we work throughout in the class of radially symmetric functions; only the sector $\ell=0$ of the decomposition below occurs. We work in the rescaled variable and drop the tildes, so that $\mathcal M=-\Delta-q$ with $q=\mathfrak g_\alpha'(\Lambda_\alpha)$ radial, $0\le q\le C$, and, by \eqref{c:eq_rate},
\begin{equation}\label{c:eq_qmodel}
q(y)=\frac{c_*}{|y|^2}+q_1(y),\qquad |q_1(y)|\le C|y|^{-2-\epsilon},
\end{equation}
with $c_*$ and $\epsilon>0$ as in \eqref{c:eq_model}.

\emph{Step 1: the model estimate in an exterior region.} Let $u$ solve $\mathcal M_0u=F$ on $\{|y|\ge R\}$. Expanding in spherical harmonics and applying Lemma \ref{c:lem_model} to $G(t)=e^{2t}F_\ell(e^t)$, which satisfies $|G|\le\|F\|_{\sigma+2}e^{-\sigma t}$, produces for each $\ell$ a solution with $|u_\ell|\le K_\sigma\|F_\ell\|_{\sigma+2}r^{-\sigma}$, the constant $K_\sigma$ being uniform in $\ell$ by Lemma \ref{c:lem_window}. The same computation in $L^2(\mathbb{R},dt)$ instead of $L^\infty$ is legitimate, because the two solution operators of Lemma \ref{c:lem_model} are convolutions with kernels of $L^1$ norm $|\mathrm{Re}\,\gamma_\pm-\sigma|^{-1}$, and gives the same bound for the $L^2$ norms, again uniformly in $\ell$. Summing the orthogonal series yields $u$ with
\[
\Big(\int_{|y|\ge R}\big(|y|^{\sigma}|u|\big)^2\frac{dy}{|y|^n}\Big)^{1/2}\le K_\sigma\Big(\int_{|y|\ge R}\big(|y|^{\sigma+2}|F|\big)^2\frac{dy}{|y|^n}\Big)^{1/2}.
\]

\emph{Step 2: from weighted $L^2$ to weighted $C^{2,\varrho}$.} Let $A_k=\{2^k\le|y|\le2^{k+1}\}$ and rescale $y=2^kz$, $u_k(z)=u(2^kz)$, $F_k(z)=2^{2k}F(2^kz)$. Then $u_k$ solves an equation of the same form on the fixed annulus $\{1\le|z|\le2\}$ with coefficients bounded independently of $k$, so interior Schauder estimates give
$\|u_k\|_{C^{2,\varrho}(\{5/4\le|z|\le7/4\})}\le C\big(\|u_k\|_{L^2(\{1\le|z|\le2\})}+\|F_k\|_{C^{0,\varrho}(\{1\le|z|\le2\})}\big)$,
with $C$ independent of $k$. Undoing the scaling converts the weighted $L^2$ bound of Step 1 into $\|u\|_\sigma\le C\big(K_\sigma+1\big)\|F\|_{C^{0,\varrho}_{\sigma+2}}$ together with the corresponding bounds on $\nabla u$ and $D^2u$. We use this passage without further comment below.

\emph{Step 3: the perturbation $q_1$.} By \eqref{c:eq_model} and \eqref{c:eq_qmodel}, multiplication by $q_1$ maps $\{\|\cdot\|_\sigma<\infty\}$ into $\{\|\cdot\|_{\sigma+2}<\infty\}$ with norm at most $CR^{-\epsilon}$ on $\{|y|\ge R\}$. Fixing $R=R_1$ so large that $CK_\sigma R_1^{-\epsilon}\le\frac12$ and summing a Neumann series upgrades Steps 1 and 2 to the following statement: there is a linear operator $\mathcal S$ which to each $F$ associates a function $\mathcal S F$ defined on $\{|y|\ge R_1\}$ with $\mathcal M\mathcal SF=F$ there and $\|\mathcal SF\|_\sigma+\|\nabla\mathcal SF\|_{\sigma+1}\le C\|F\|_{\sigma+2}$.

\emph{Step 4: the functional setting.} Let $d\mu:=|y|^{-n}dy$, which in the variable $t=\log r$ is $dt\,d\omega$, and for $s\in\mathbb{R}$ let
\[
L^2_s:=\Big\{u \text{ radial}:\ \|u\|_{L^2_s}^2:=\int_{\mathbb{R}^n}\big(|y|^{s}|u|\big)^2d\mu<\infty\Big\}.
\]
Under the Lebesgue pairing $\langle u,v\rangle=\int uv\,dy=\int(|y|^su)(|y|^{n-s}v)\,d\mu$ one has $(L^2_s)^*=L^2_{n-s}$, and $\mathcal M$ is symmetric for this pairing. We regard $\mathcal M$ as an unbounded operator
\[
\mathcal M:\ \mathcal D:=\{u\in L^2_\sigma:\ \mathcal Mu\in L^2_{\sigma+2}\}\ \longrightarrow\ L^2_{\sigma+2}.
\]
Steps 1 to 3, carried out in $L^2$ as indicated, say that for $u$ supported in $\{|y|\ge2R_1\}$ one has $\|u\|_{L^2_\sigma}\le C\|\mathcal Mu\|_{L^2_{\sigma+2}}$.

\emph{Step 5: kernel, a priori estimate and closed range.} Let $\mathcal K:=\{u\in\mathcal D:\mathcal Mu=0\}$. In the radial class the solutions of $\mathcal Mu=0$ that are regular at the origin form a one-dimensional space, so $\dim\mathcal K\le1$. Let $\mathcal Z$ be the $L^2(B_{R_1})$ orthogonal complement of $\mathcal K$ in $\mathcal D$; the pairing $\langle u,k\rangle_{L^2(B_{R_1})}$ is nondegenerate on $\mathcal K$ by unique continuation. We claim
\begin{equation}\label{c:eq_apriori}
\|u\|_{L^2_\sigma}\le C\|\mathcal Mu\|_{L^2_{\sigma+2}}\qquad\text{for }u\in\mathcal Z.
\end{equation}
Suppose not, and take $u_j\in\mathcal Z$ with $\|u_j\|_{L^2_\sigma}=1$ and $\|\mathcal Mu_j\|_{L^2_{\sigma+2}}\to0$. With $\eta$ a cut-off vanishing on $B_{R_1}$ and equal to $1$ off $B_{2R_1}$, Step 3 applied to $\eta u_j$ and the identity $\mathcal M(\eta u_j)=\eta\mathcal Mu_j-2\nabla\eta\cdot\nabla u_j-u_j\Delta\eta$ give
$1\le C(\|\mathcal Mu_j\|_{L^2_{\sigma+2}}+\|u_j\|_{H^1(B_{2R_1})})$, so $\|u_j\|_{H^1(B_{2R_1})}\ge c>0$. By Step 3 and interior elliptic estimates the sequence is bounded in $H^2_{\rm loc}$, so a subsequence converges weakly in $H^2_{\rm loc}$, strongly in $H^1(B_{2R_1})$, to some $u_\infty$ with $\mathcal Mu_\infty=0$, $\|u_\infty\|_{L^2_\sigma}\le1$ and $u_\infty\ne0$, that is $u_\infty\in\mathcal K\setminus\{0\}$; but the orthogonality conditions pass to the limit and force $u_\infty=0$. This proves \eqref{c:eq_apriori}, and with it that $\mathcal M(\mathcal D)=\mathcal M(\mathcal Z)$ is closed in $L^2_{\sigma+2}$.

\emph{Step 6: the cokernel is trivial.} Since $\mathcal M$ is symmetric and $(L^2_{\sigma+2})^*=L^2_{n-\sigma-2}$, the orthogonal complement of the range of $\mathcal M$ is
\[
\mathcal K^*=\{v\in L^2_{\sigma^*}:\ \mathcal Mv=0\ \text{ in }\mathcal D'(\mathbb{R}^n)\},\qquad \sigma^*:=n-2-\sigma.
\]
Because $\sigma<\beta=\frac{n-2}{2}$ we have $\sigma^*>\beta$. Let $v\in\mathcal K^*$ and set $V(t)=v(e^t)$. For the constant coefficient operator $\mathcal A_0$ of Lemma \ref{c:lem_factor} the homogeneous solutions are $e^{-\gamma^{(0)}_\pm t}$, and $|e^{-\gamma^{(0)}_\pm t}|=e^{-\beta t}$ because the two roots are complex conjugate with real part $\beta$ when $7\le n\le15$, while for $n\ge16$ they are real and $\gamma^{(0)}_-\ge\beta-\frac{\sqrt{D_0}}{2}$; and in that case $\sigma^*=n-2-\sigma>n-5>\gamma^{(0)}_+$, so that the weight exceeds \emph{both} indicial roots. In either case, therefore, setting $\gamma_\sharp:=\max\{\mathrm{Re}\,\gamma^{(0)}_\pm\}<\sigma^*$, every nonzero solution of $\mathcal A_0W=0$ satisfies $\limsup_{t\to\infty}e^{\gamma_\sharp t}|W|>0$. Since $v\in L^2_{\sigma^*}$, the transformed function satisfies $e^{\sigma^*t}V\in L^2(t_0,\infty)$. Standard local estimates for the second order ODE satisfied by $V$ (whose coefficients are bounded for large $t$) give
\[
|V(t)|+|V'(t)|\le C\|V\|_{L^2(t-1,t+1)}\le C e^{-\sigma^*t}\|V\|_{L^2_{\sigma^*}},
\qquad t\ge t_0+1.
\]
Thus the bootstrap below has the required seed $|V(t)|\le Me^{-\sigma^*t}$. In the $t$ variable the equation reads $\mathcal A_0V=bV$ with $b(t):=e^{2t}q_1(e^t)$, so $|b(t)|\le C_be^{-\epsilon t}$ by \eqref{c:eq_qmodel}. More generally, suppose $|V|\le Me^{-\varsigma t}$ on $[t_0,\infty)$ for some $\varsigma\ge\sigma^*$. Then $|bV|\le C_bMe^{-(\varsigma+\epsilon)t}$, and Lemma \ref{c:lem_model}, applied with this source and the weight $\varsigma+\epsilon$, produces a particular solution $V_p$ with $|V_p|\le KC_bMe^{-(\varsigma+\epsilon)t}$, where, in the radial class, $K_\varsigma=(\varsigma-\beta)^{-2}$ for $\varsigma>\beta$ when $7\le n\le15$, and $K_\varsigma=|\varsigma-\gamma^{(0)}_+|^{-1}|\varsigma-\gamma^{(0)}_-|^{-1}$ when $n\ge16$, so that $K:=\sup_{\varsigma\ge\sigma^*}K_\varsigma=K_{\sigma^*}<\infty$. The difference $V-V_p$ solves $\mathcal A_0(V-V_p)=0$, hence is a combination of $e^{-\gamma^{(0)}_\pm t}$; both $V$ and $V_p$ are $o(e^{-\gamma_\sharp t})$, so that combination vanishes and $|V|\le KC_bMe^{-(\varsigma+\epsilon)t}$ on $[t_0,\infty)$. Iterating from $\varsigma=\sigma^*$,
\[
|V(t)|\le M\,(KC_b)^{k}\,e^{-(\sigma^*+k\epsilon)t},\qquad k\in\mathbb{N},\ t\ge t_0.
\]
Choosing $t_0$ so large that $KC_be^{-\epsilon t_0}\le\frac12$ and letting $k\to\infty$ gives $V\equiv0$ on $[t_0,\infty)$, hence $v\equiv0$ by unique continuation. So $\mathcal K^*=\{0\}$ and, the range being closed, $\mathcal M$ maps $\mathcal D$ onto $L^2_{\sigma+2}$. Together with \eqref{c:eq_apriori} this gives a bounded right inverse on $L^2$, and Step 2 upgrades it to the weighted H\"older spaces. Undoing the rescaling \eqref{c:eq_scaling} inserts the factor $\varepsilon^{-2}$.

This is the place, and the only one, where the upper constraint $\sigma<\frac{n-2}{2}$ of Lemma \ref{c:lem_window} is used: it is exactly the statement that the dual weight $\sigma^*$ lies on the other side of the indicial root $\beta$, which is what makes the cokernel vanish. The window of Lemma \ref{c:lem_window} is nonempty precisely when $n\ge7$, that is $N\ge8$.
\end{proof}

\begin{remark}
No comparison argument can replace Proposition \ref{c:prop_rightinverse} in the range $7\le n\le15$. Indeed
$\mathcal M_0(|y|^{-\sigma})=-[\sigma^2-(n-2)\sigma+4(n-4)]|y|^{-\sigma-2}$ and the bracket is positive for every $\sigma$ when $D_0<0$, so every power is a strict subsolution; more generally $-\Delta-c|y|^{-2}$ has a positive supersolution near infinity if and only if $c\le\frac{(n-2)^2}{4}$, which for $c=4(n-4)$ means $n\ge16$.
\end{remark}

\section{Fixed-point construction}\label{c:sec_fixed}

Look for a solution of \eqref{eq_phiL} in the form
\begin{equation}\label{c:eq_final}
\phi=\zeta_{\lambda_\alpha+\psi}(x_N)+\chi,\qquad \int_0^\infty\chi(x',t)Z(t)\,dt=0 ,
\end{equation}
with unknowns $\psi:\mathbb{R}^n\to\mathbb{R}$ and $\chi$. Projecting \eqref{eq_phiL} on $Z$ and using $\int\zeta_aZ=\nu a$ gives the pair
\begin{align}
-\Delta(\lambda_\alpha+\psi)&=\mathfrak g(\lambda_\alpha+\psi)+\frac1\nu\int_0^\infty\big[R(\cdot,\zeta+\chi)-R(\cdot,\zeta)\big]Z\,dt ,\label{c:eq_sys1}\\
L\chi&=\Pi^\perp\Big[R(\cdot,\zeta+\chi)-R(\cdot,\zeta)-\mathcal N[\lambda_\alpha+\psi]\Big],\label{c:eq_sys2}
\end{align}
where $\zeta=\zeta_{\lambda_\alpha+\psi}$. The pair \eqref{c:eq_sys1}, \eqref{c:eq_sys2} is exactly equivalent to \eqref{eq_phiL} for $\phi=\zeta+\chi$ with $\int\chi Z\,dt=0$: the $Z$ component of the equation is \eqref{c:eq_sys1} by Lemma \ref{c:lem_orth}, and the orthogonal component is \eqref{c:eq_sys2}. Subtracting the reduced equation \eqref{c:eq_red} from \eqref{c:eq_sys1} turns the first line into
\begin{equation}\label{c:eq_sys1b}
\mathcal M\psi=\big[\mathfrak g(\lambda_\alpha+\psi)-\mathfrak g(\lambda_\alpha)-\mathfrak g'(\lambda_\alpha)\psi\big]
+\frac1\nu\int_0^\infty\big[R(\cdot,\zeta+\chi)-R(\cdot,\zeta)\big]Z\,dt.
\end{equation}
Note that the error $\mathcal N[\lambda_\alpha+\psi]$ does not appear in \eqref{c:eq_sys1b}: by \eqref{c:eq_proj} its $Z$ component is precisely $-\nu(\Delta_{x'}(\lambda_\alpha+\psi)+\mathfrak g(\lambda_\alpha+\psi))$, which is already on the left hand side of \eqref{c:eq_sys1}. This is the point of the normalization \eqref{c:eq_norm}: no scalar solvability condition survives.

\begin{lemma}\label{c:lem_weighttrans}
Let $\sigma>0$. There is $\mu_0=\mu_0(p)\in(0,1)$ such that, for every $\mu\in(0,\mu_0)$, there are $C>0$ and $\varepsilon_1>0$ with the following property: for $\varepsilon\in(0,\varepsilon_1)$ and every $\chi$ with $\int_0^\infty\chi Z\,dx_N=0$, $\chi=0$ on $\partial\mathbb{R}^N_+$ and $e^{\mu x_N}\chi\in L^2(\mathbb{R}^N_+)$,
\[
\|\chi\|_*\le C\,\|L\chi\|_{**},
\qquad
\|\chi\|_{C^{2,\varrho}_*}\le C\,\|L\chi\|_{C^{0,\varrho}_{**}}.
\]
\end{lemma}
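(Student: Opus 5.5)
I would prove the estimate by a blow-up (compactness) argument, combined with a maximum-principle barrier in the region where $x_N$ is large. Since the definitions of $\|\cdot\|_*$ and $\|\cdot\|_{**}$ come after the statement, I will take them to be weighted sup norms of the form $\sup(1+\varepsilon|x'|)^{\sigma}e^{\mu x_N}|\chi|$ for $\|\cdot\|_*$, and the analogous norm, possibly with an extra polynomial factor in $x_N$ as in \eqref{c:eq_Esize_doublestar}, for $\|\cdot\|_{**}$; the argument does not depend on the precise factor. The plan has three steps: a barrier controlling the tail $\{x_N\ge T_0\}$ by the slab $\{x_N\le T_0\}$, a contradiction argument on the slab reduced to a Liouville statement, and a Schauder upgrade to the H\"older norms.

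\emph{Step 1: the tail barrier.} Fix $T_0$ with $1-pw_0^{p-1}\ge\frac34$ on $[T_0,\infty)$. Set
\[
\Psi(x)=(1+\varepsilon^2|x'|^2)^{-\sigma/2}e^{-\mu x_N}.
\]
A direct computation gives $L\Psi\ge(\frac34-\mu^2-C_\sigma\varepsilon^2)\Psi$ on $\{x_N>T_0\}$. Choosing $\mu_0$ with $\mu_0^2<\frac14$, and then $\varepsilon_1$ small, this is at least $\frac14\Psi$, so $\Psi$ is a positive supersolution there. Compare $\pm\chi$ with
\[
\big(\|\chi\|_{L^\infty_\Psi(\{x_N=T_0\})}+4\|L\chi\|_{**}\big)\Psi
\]
on $\{x_N>T_0\}$. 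The maximum principle on this unbounded domain is justified by the $\cosh$ barriers of Lemma \ref{lem_reg}(iii). The hypothesis $e^{\mu x_N}\chi\in L^2$, together with local elliptic estimates, gives the decay of $\chi$ needed at infinity. This yields
\[
\|\chi\|_*\le C\big(\sup_{x_N\le T_0}\Psi^{-1}|\chi|+\|L\chi\|_{**}\big),
\]
so it remains to control $\chi$ on the slab $\{x_N\le T_0\}$.

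\emph{Step 2: contradiction on the slab.} Suppose there are $\varepsilon_k\in(0,\varepsilon_1)$ and admissible $\chi_k$ with $\|\chi_k\|_*=1$ and $\|L\chi_k\|_{**}\to0$. By Step 1, there are points $x^k$ with $x^k_N\le T_0$ and $\Psi_k(x^k)^{-1}|\chi_k(x^k)|\ge c>0$. Set
\[
\tilde\chi_k(x)=(1+\varepsilon_k|x^{k\prime}|)^{\sigma}\chi_k(x'+x^{k\prime},x_N).
\]
The ratio $(1+\varepsilon_k|x^{k\prime}|)^{\sigma}/(1+\varepsilon_k|x'+x^{k\prime}|)^{\sigma}$ stays bounded on compact sets, and grows at most polynomially in $|x'|$ when $\varepsilon_k$ does not tend to $0$. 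Hence $|\tilde\chi_k|\le C(1+|x'|)^{\sigma}e^{-\mu x_N}$. The functions $\tilde\chi_k$ vanish on $\partial\mathbb R^N_+$ and $L\tilde\chi_k\to0$ locally uniformly. Boundary Schauder estimates, uniform in $k$, give a subsequence converging in $C^2_{\rm loc}$ to a limit $\chi_\infty$ with the following properties: $L\chi_\infty=0$, $\chi_\infty=0$ on the boundary, $|\chi_\infty|\le C(1+|x'|)^{\sigma}e^{-\mu x_N}$, and $\chi_\infty$ is nonzero at a point of $\{x_N\le T_0\}$. Because of the exponential decay in $x_N$, the orthogonality $\int_0^\infty\chi Z\,dt=0$ passes to the limit by dominated convergence.

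\emph{Step 3: the Liouville statement.} The key claim is that such a $\chi_\infty$ must vanish. Since $\chi_\infty$ grows polynomially in $x'$, it is a tempered distribution in $x'$ with values in $L^2(0,\infty)$, so I would take the Fourier transform in $x'$ as in Lemma \ref{c:lem_transverse}. Fiberwise the equation reads $(\mathcal L+|\xi|^2)\hat\chi_\infty(\xi,\cdot)=0$ on $Z^\perp$. By Lemma \ref{c:lem_gap}, $\mathcal L+|\xi|^2\ge\gamma$ on $Z^\perp$, so $\hat\chi_\infty=0$ and therefore $\chi_\infty=0$. If the distributional version becomes delicate, I would instead test the equation against $\varphi(x')\omega(t)$ with $\omega\in Z^\perp$ and use the resolvent $(\mathcal L+\tau)^{-1}$ from Lemma \ref{c:lem_Linv}, which reduces the question to the statement that a tempered solution of $-\Delta_{x'}u+\gamma' u=0$ in $\mathbb R^n$ with $\gamma'>0$ vanishes. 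Either way this contradicts the nonvanishing at a slab point, proving $\|\chi\|_*\le C\|L\chi\|_{**}$. The H\"older estimate then follows by applying local Schauder estimates on unit half-balls, using that the weight is comparable to a constant on each such ball.

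I expect the main obstacle to be the uniformity in $\varepsilon$ of the compactness in Step 2: the weight must be nearly constant on unit balls, which holds when $\varepsilon$ is small, and the polynomial growth of the limit must be compatible with the Liouville argument of Step 3. The threshold $\mu_0$ will come from the supersolution condition in Step 1, together with $\mu<1$ so that $Z$ and the right-hand sides lie in the weighted class.
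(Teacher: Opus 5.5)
Your route differs from the paper's and its structure is sound, but there is one genuine gap. In Step 2 you normalize $\|\chi_k\|_*=1$, and the right-hand side of your Step 1 contains $\sup_{x_N\le T_0}\Psi^{-1}|\chi|$. Both presuppose $\|\chi\|_*<\infty$, which is not a hypothesis of the lemma. You are only given $e^{\mu x_N}\chi\in L^2(\mathbb{R}^N_+)$. Local estimates then make $e^{\mu x_N}\chi$ bounded, but they give no polynomial decay in $x'$. Finiteness of $\|\chi\|_*$ is part of the conclusion. It is exactly what Corollary \ref{c:cor_weighttrans} extracts from the lemma for the solutions $\chi_R$ of Lemma \ref{c:lem_transverse}, about which nothing beyond weighted $L^2$ is known.

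The standard repair works here. Run your argument with the truncated weight
\[
\Psi_R(x):=\Big[(1+\varepsilon^2|x'|^2)^{-\sigma/2}+(1+\varepsilon^2R^2)^{-\sigma/2}\Big]e^{-\mu x_N}.
\]
For this weight:
\begin{itemize}
\item $\sup|\chi|/\Psi_R<\infty$, because $e^{\mu x_N}\chi$ is bounded;
\item the forcing norm can only decrease;
\item $\Psi_R$ is still a supersolution in the tail, being a sum of two supersolutions;
\item $\Psi_R(x'+y',x_N)\le C(1+|x'|)^\sigma\Psi_R(y',x_N)$ uniformly in $R$.
\end{itemize}
Steps 1--3 then hold with constants independent of $R$, and you let $R\to\infty$.

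For Step 3, a "fiberwise" Fourier argument on a tempered distribution needs a duality formulation. A cleaner Liouville proof is to test $L\chi_\infty=0$ with $e^{-2\delta\langle x'\rangle}\chi_\infty$, where $\langle x'\rangle=(1+|x'|^2)^{1/2}$. Using the bound $\int_0^\infty(\mathcal Lu)u\,dt\ge\gamma\|u\|_{L^2}^2$ on $Z^\perp$, this gives $(\gamma-\delta^2)\int_{\mathbb{R}^N_+}e^{-2\delta\langle x'\rangle}\chi_\infty^2\le0$.

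The paper avoids compactness altogether. Its steps are:
\begin{itemize}
\item It conjugates by $e^{\mu x_N}$ and proves fiberwise coercivity of $\mathcal L_\mu$. The conjugation destroys orthogonality to $Z$, with an $O(\mu)$ defect, and this is where $\mu<\mu_0(p)$ enters.
\item It uses Kato's inequality to get $-\Delta_{x'}\Phi+\frac{\gamma}{2}\Phi\le C\|F\|_{**}(1+\varepsilon|x'|)^{-\sigma-2}$ for $\Phi(x')=\|e^{\mu x_N}\chi(x',\cdot)\|_{L^2(0,\infty)}$.
\item It compares $\Phi$ with a Green-kernel convolution of the barrier, testing with $W_+\le\Phi\in L^2$. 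Hence no decay of $\Phi$ is ever assumed, which handles the finiteness issue for free.
\end{itemize}
That argument is quantitative. It uses the $L^2$ hypothesis essentially, and this is why it needs the factor $(1+x_N)^2$ in $\|\cdot\|_{**}$: it makes $e^{\mu x_N}F$ square integrable in $x_N$.

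Your argument, once repaired, buys a little more. Your barrier only has to work in the tail, where $1-pw_0^{p-1}>0$ and orthogonality plays no role. So you need only $\mu<1$ (taking $T_0=T_0(\mu)$) instead of $\mu$ small, and you do not use the $(1+x_N)^2$ factor. The price is a non-explicit constant and the truncation step above.
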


\begin{proof}
Write $H:=\{u\in L^2(0,\infty):\int_0^\infty uZ\,dt=0\}$ and, for a function $\chi$ on $\mathbb{R}^N_+$,
\[
u(x',\cdot):=e^{\mu x_N}\chi(x',\cdot),\qquad \Phi(x'):=\|u(x',\cdot)\|_{L^2(0,\infty)}.
\]
The hypothesis $e^{\mu x_N}\chi\in L^2(\mathbb{R}^N_+)$ is what makes $\Phi$ finite for almost every $x'$ and square integrable on $\mathbb{R}^n$; it is \emph{not} a consequence of $\|\chi\|_*<\infty$, which bounds $e^{\mu x_N}|\chi|$ but does not make it square integrable in $x_N$. In Corollary \ref{c:cor_weighttrans} the hypothesis is verified directly, by an Agmon estimate, for the solutions to which the lemma is applied. Apart from that hypothesis the argument is fiberwise in $x_N$ and pointwise in $x'$.

\emph{Step 1: fiberwise coercivity.} The conjugated fiber operator is $\mathcal L_\mu:=e^{\mu t}\mathcal Le^{-\mu t}=\mathcal L+2\mu\partial_t-\mu^2$, so that for $u$ vanishing at $t=0$
\[
\int_0^\infty(\mathcal L_\mu u)u\,dt=\int_0^\infty(\mathcal Lu)u\,dt-\mu^2\!\int_0^\infty\! u^2dt-\mu\,u(0)^2
\ \ge\ \int_0^\infty(\mathcal Lu)u\,dt-\mu^2\!\int_0^\infty\! u^2dt.
\]
The conjugation destroys the orthogonality: $\int\chi Z\,dt=0$ does not give $\int uZ\,dt=0$. Decompose $u=\hat u+a Z$ with $\hat u\in H$. From $\int_0^\infty e^{-\mu t}uZ\,dt=0$ we get $a\nu=\int_0^\infty(1-e^{-\mu t})uZ\,dt$, and $|1-e^{-\mu t}|\le\mu t$ with $tZ\in L^2(0,\infty)$ give $|a|\,\nu\le C\mu\|u\|_{L^2}$. Hence, by Lemma \ref{c:lem_transverse} applied on $H$,
\[
\int_0^\infty(\mathcal L_\mu u)u\,dt\ \ge\ \big(\gamma-C\mu^2\big)\|u\|_{L^2}^2\ \ge\ \tfrac{\gamma}{2}\,\|u\|_{L^2}^2 ,
\]
this is where $\mu<\mu_0$ is used, $\mu_0$ being chosen so that $C\mu_0^2\le\gamma/2$ with $C$ the constant just produced, which depends only on $p$.

\emph{Step 2: a scalar inequality in $x'$.} Multiplying $-\Delta_{x'}u+\mathcal L_\mu u=e^{\mu x_N}F$ by $u$, integrating in $x_N$ only, and using Kato's inequality $\Phi\,\Delta_{x'}\Phi\ge\int_0^\infty(\Delta_{x'}u)u\,dx_N$, we obtain
\[
-\Delta_{x'}\Phi+\tfrac{\gamma}{2}\,\Phi\ \le\ \big\|e^{\mu x_N}F(x',\cdot)\big\|_{L^2(0,\infty)}\ \le\ C\|F\|_{**}\,(1+\varepsilon|x'|)^{-\sigma-2}
\]
in the sense of distributions on $\mathbb{R}^n$. The fiber norm on the right is finite precisely because of the factor $(1+x_N)^{2}$ in $\|\cdot\|_{**}$: it gives $e^{\mu x_N}|F|\le\|F\|_{**}(1+\varepsilon|x'|)^{-\sigma-2}(1+x_N)^{-2}$, which is square integrable in $x_N$, whereas $\|\cdot\|_*$ alone would only bound $e^{\mu x_N}|F|$ by a constant in $x_N$. This is also why no global integration over $\mathbb{R}^N_+$ appears anywhere in the proof.

\emph{Step 3: a barrier.} Let $\Psi(x'):=(1+\varepsilon|x'|)^{-\sigma-2}$. A direct computation gives $|\Delta_{x'}\Psi|\le C\varepsilon^2\Psi$, so
$-\Delta_{x'}\Psi+\frac{\gamma}{2}\Psi\ge\frac{\gamma}{4}\Psi$ for $\varepsilon\le\varepsilon_1$. Let $G$ be the Green kernel of $-\Delta_{x'}+\frac{\gamma}{2}$ on $\mathbb{R}^n$, which is positive and decays exponentially. Set $W:=\Phi-C\|F\|_{**}\,G*\Psi$, so that $(-\Delta_{x'}+\frac{\gamma}{2})W\le0$. We do not claim $W\in L^2(\mathbb{R}^n)$, since $\Psi$ need not be square integrable in high dimensions; but $W_+\le\Phi\in L^2(\mathbb{R}^n)$ because $G*\Psi\ge0$, so $W_+$ is an admissible test function, and testing with it gives $\int|\nabla W_+|^2+\frac{\gamma}{2}\int W_+^2\le0$, whence $W\le0$. Since $G$ decays exponentially and $\Psi$ varies on the scale $1/\varepsilon$, one has $G*\Psi\le C\Psi$, whence
\[
\Phi(x')\le C\,\|F\|_{**}\,\Psi(x')\qquad\text{on }\mathbb{R}^{n}.
\]
No decay of $\Phi$ at infinity is assumed.

\emph{Step 4: back to pointwise bounds.} On each unit cube $Q$ of a lattice in $\overline{\mathbb{R}^N_+}$, on which the weights vary by a bounded factor, interior and boundary Schauder estimates for $L$ give $\|\chi\|_{C^{2,\varrho}(Q)}\le C(\|F\|_{C^{0,\varrho}(Q)}+\|\chi\|_{L^2(Q)})$. Combining with Step 3 yields
\[
\|\chi\|_*\le C\|F\|_{**}
\qquad\text{and}\qquad
\|\chi\|_{C^{2,\varrho}_*}\le C\|F\|_{C^{0,\varrho}_{**}} ,
\]
where the H\"older norms carry the same weights.
\end{proof}

The estimate of Lemma \ref{c:lem_weighttrans} is a priori: Lemma \ref{c:lem_transverse} produces a solution only for $F\in L^2(\mathbb{R}^N_+)$, whereas a forcing with $\|F\|_*<\infty$ need not be square integrable when $2(\sigma+2)\le n$. Existence in the weighted class follows by exhaustion.

\begin{corollary}\label{c:cor_weighttrans}
Under the hypotheses of Lemma \ref{c:lem_weighttrans}, there is a bounded linear operator $\mathcal K$ which to every $F$ with $\int_0^\infty FZ\,dx_N=0$ and $\|F\|_{C^{0,\varrho}_{**}}<\infty$ associates a solution $\chi=\mathcal KF$ of $L\chi=F$ with $\int_0^\infty\chi Z\,dx_N=0$ and
\[
\|\mathcal KF\|_{C^{2,\varrho}_*}\le C\|F\|_{C^{0,\varrho}_{**}}.
\]
The solution is unique in the stated weighted $C^{2,\varrho}$ class.
\end{corollary}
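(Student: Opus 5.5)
The plan is to construct $\mathcal K$ by exhaustion in $x'$, using Lemma \ref{c:lem_transverse} for existence on truncated data and Lemma \ref{c:lem_weighttrans} for bounds uniform in the truncation.

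\emph{Truncation.} Fix $F$ with $\int_0^\infty FZ\,dx_N=0$ and $\|F\|_{C^{0,\varrho}_{**}}<\infty$. Let $\eta_R(x')=\eta(x'/R)$ be a standard cut-off in $x'$ only, and set $F_R:=\eta_RF$. Because the cut-off does not depend on $x_N$, the orthogonality $\int_0^\infty F_RZ\,dx_N=0$ is preserved. The function $F_R$ is compactly supported in $x'$, and $e^{\mu x_N}|F_R|\le C(1+x_N)^{-2}$, so $e^{\mu x_N}F_R\in L^2(\mathbb R^N_+)$. Lemma \ref{c:lem_transverse} then gives a unique $\chi_R\in H^1_0$ with $\int\chi_RZ=0$ and $L\chi_R=F_R$. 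Moreover $\|F_R\|_{C^{0,\varrho}_{**}}\le C\|F\|_{C^{0,\varrho}_{**}}$ uniformly in $R\ge1$, because derivatives of $\eta_R$ are $O(R^{-1})$.

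\emph{Agmon step (main obstacle).} To apply Lemma \ref{c:lem_weighttrans} to $\chi_R$ we must verify its hypothesis $e^{\mu x_N}\chi_R\in L^2$. I would do this with an energy argument using a bounded weight $\rho_k(x_N)=e^{\mu\min\{x_N,k\}}$. Test $L\chi_R=F_R$ against $\rho_k^2\chi_R$ and decompose fiberwise as in Step 1 of Lemma \ref{c:lem_weighttrans}. The gap $\gamma$ on $Z^\perp$ controls the quadratic form, up to an error $C\mu^2\|\rho_k\chi_R\|^2$ and up to the $Z$-component produced by the weight, which is $O(\mu)$. For $x_N$ large one can instead use $1-pw_0^{p-1}\ge\frac12$. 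This yields
\[
\|\rho_k\chi_R\|_{L^2}\le C\|\rho_kF_R\|_{L^2},
\]
with $C$ independent of $k$, and letting $k\to\infty$ gives the hypothesis. The delicacy is that the truncated weight is not exactly exponential: its derivative is discontinuous at $x_N=k$. I would handle this by noting that $\rho_k'/\rho_k\le\mu$ everywhere, so the same coercivity computation goes through.

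\emph{Passage to the limit.} Lemma \ref{c:lem_weighttrans} now gives
\[
\|\chi_R\|_{C^{2,\varrho}_*}\le C\|F\|_{C^{0,\varrho}_{**}}
\]
uniformly in $R$. Arzel\`a--Ascoli on compacts extracts a subsequence converging in $C^2_{\rm loc}(\overline{\mathbb R^N_+})$ to some $\chi$. The limit satisfies $L\chi=F$ and $\chi=0$ on the boundary. The orthogonality $\int\chi Z=0$ passes to the limit by dominated convergence, since the weighted bound dominates $|\chi_R|$ by $Ce^{-\mu x_N}$. The weighted $C^{2,\varrho}$ bound is lower semicontinuous and therefore persists. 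The map $F\mapsto\chi$ is linear once uniqueness is known, since the full sequence then converges, and this defines $\mathcal K$.

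\emph{Uniqueness.} Let $\chi$ lie in the weighted class with $L\chi=0$ and $\int\chi Z=0$. I would rerun Steps 1--3 of Lemma \ref{c:lem_weighttrans}, but with $\mu$ replaced by a slightly smaller $\mu'<\mu$. Then $e^{\mu'x_N}\chi$ is square integrable in $x_N$ for each fixed $x'$ with bounded fiber norm, so $\Phi$ is bounded. The fiber inequality $-\Delta_{x'}\Phi+\frac\gamma2\Phi\le0$ holds with $\Phi$ bounded, and a maximum principle on $\mathbb R^n$ with the $\cosh$ barriers of Lemma \ref{lem_reg} gives $\Phi\equiv0$. This avoids needing global $L^2$ integrability.
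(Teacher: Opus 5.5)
Your existence argument is the paper's: the same truncation $F_R=\eta(|x'|/R)F$, the same Agmon estimate with the capped phase $\mu\min\{x_N,k\}$ to verify the hypothesis $e^{\mu x_N}\chi_R\in L^2(\mathbb R^N_+)$ of Lemma \ref{c:lem_weighttrans}, and the same $C^2_{\rm loc}$ compactness, with linearity deduced from uniqueness.

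Uniqueness is where you genuinely differ. The paper keeps Lemma \ref{c:lem_weighttrans} as a black box. It applies the lemma, with an exponent $\mu'<\mu$, to the cut-off $u_R=\eta_R\chi$, which lies in the global weighted $L^2$ space because it is compactly supported in $x'$. It then shows $\|Lu_R\|_{**,\mu'}\le C(R^{-1}+R^{-2})\|\chi\|_{C^{2,\varrho}_*}\to0$ from the commutator $Lu_R=-2\nabla\eta_R\cdot\nabla\chi-(\Delta\eta_R)\chi$, the surplus $e^{-(\mu-\mu')x_N}$ absorbing the factor $(1+x_N)^2$.

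You instead reopen the lemma. Steps 1 and 2 with $\mu'$ apply to $\chi$ itself, because the fiber norms are finite. You then replace the energy test of Step 3, which is the only place global $L^2$ integrability is used, by a maximum principle for the bounded subsolution $\Phi$ of $-\Delta_{x'}\Phi+\frac\gamma2\Phi\le0$. This is correct. To make the distributional inequality usable, note that $\Phi^2$ is $C^2$ in $x'$, since the weighted $C^2$ bounds justify differentiating under the fiber integral. Hence $\Phi_\delta:=\sqrt{\Phi^2+\delta^2}$ satisfies $\Delta_{x'}\Phi_\delta\ge\frac\gamma2(\Phi_\delta-\delta)$ classically. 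The $\cosh$ barrier argument then gives $\Phi_\delta\le\delta$ for every $\delta>0$, hence $\Phi\equiv0$.

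The trade-off is as follows. The paper's route is shorter given the lemma, but it relies on the tangential decay weight carried by $\nabla\chi$ to make the commutator small. Yours uses no decay in $x'$, only boundedness in $x'$ of the weighted fiber norms, so it proves uniqueness in a larger class. The price is redoing the Kato and maximum principle step by hand.
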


\begin{proof}
Let $\eta\in C^\infty_c(\mathbb{R})$ satisfy $\eta=1$ on $[0,1]$ and $\eta=0$ on $[2,\infty)$, and set
\[
F_R(x',x_N):=\eta(|x'|/R)F(x',x_N).
\]
Then $\|F_R\|_{C^{0,\varrho}_{**}}\le C\|F\|_{C^{0,\varrho}_{**}}$ uniformly in $R$; moreover $F_R\in L^2(\mathbb{R}^N_+)$ and remains orthogonal to $Z$. Let $\chi_R$ be the solution given by Lemma \ref{c:lem_transverse}.

The hypothesis of Lemma \ref{c:lem_weighttrans} holds for $\chi_R$. To see this rigorously, use the Lipschitz phase $\varphi_T(t):=\mu\min\{t,T\}$, or a smooth approximation with $0\le\varphi_T'\le\mu$. Testing $L\chi_R=F_R$ with $e^{2\varphi_T}\chi_R$ and integrating by parts gives the same conjugated fiber form as in Step 1, with the loss bounded by $C\|\varphi_T'\|_\infty^2\le C\mu^2$. The orthogonality correction is also unchanged, since $|1-e^{-\varphi_T(t)}|\le\varphi_T(t)\le\mu t$. Together with $\int|\nabla_{x'}\chi_R|^2e^{2\varphi_T}\ge0$ this yields
\[
\big(\gamma-C\mu^2\big)\int_{\mathbb{R}^N_+}e^{2\varphi_T}\chi_R^2
\le \int_{\mathbb{R}^N_+}e^{2\varphi_T}|F_R\chi_R|.
\]
By Cauchy's inequality the right hand side is absorbed into half of the left hand side plus $C\int e^{2\varphi_T}F_R^2$. The latter is bounded independently of $T$, because $F_R$ has compact support in $x'$ and $e^{\mu x_N}|F_R|\le C\|F\|_{**}(1+x_N)^{-2}$. Hence $\int e^{2\varphi_T}\chi_R^2\le C_R\|F\|_{**}^2$ uniformly in $T$, and letting $T\to\infty$ gives $e^{\mu x_N}\chi_R\in L^2(\mathbb{R}^N_+)$. By Lemma \ref{c:lem_weighttrans}, $\|\chi_R\|_{C^{2,\varrho}_*}\le C\|F_R\|_{C^{0,\varrho}_{**}}\le C\|F\|_{C^{0,\varrho}_{**}}$ with $C$ independent of $R$, the lemma being applicable because its proof uses only fiberwise integrals. Interior and boundary Schauder estimates give bounds in $C^{2,\varrho}_{\rm loc}$, so a subsequence converges in $C^2_{\rm loc}$ to a solution $\chi$ of $L\chi=F$ with $\|\chi\|_{C^{2,\varrho}_*}\le C\|F\|_{C^{0,\varrho}_{**}}$ and $\int\chi Z=0$.

It remains to prove uniqueness in this weighted class; this also makes the exhaustion limit canonical and hence linear in $F$. Let $L\chi=0$, $\int_0^\infty\chi Z\,dt=0$ and $\|\chi\|_{C^{2,\varrho}_*}<\infty$. Choose $0<\mu'<\mu$ and a cutoff $\eta_R(x')=\eta(|x'|/R)$, and put $u_R:=\eta_R\chi$. Then $e^{\mu' x_N}u_R\in L^2$, $u_R=0$ on the boundary and $\int u_RZ=0$, so Lemma \ref{c:lem_weighttrans}, with exponent $\mu'$, gives
\[
\|u_R\|_{*,\mu'}\le C\|Lu_R\|_{**,\mu'}.
\]
Here and below the subscript $\mu'$ means that the same norm is used with the exponential weight $e^{\mu' x_N}$ in place of $e^{\mu x_N}$. Moreover,
\[
Lu_R=-2\nabla\eta_R\cdot\nabla\chi-(\Delta\eta_R)\chi,
\]
and the surplus $e^{-(\mu-\mu')x_N}$ absorbs the factor $(1+x_N)^2$ in the double-star norm. Therefore
\[
\|Lu_R\|_{**,\mu'}\le C\Big(R^{-1}+R^{-2}\Big)\|\chi\|_{C^{2,\varrho}_{*,\mu}}\longrightarrow0.
\]
For each fixed point $x$, $u_R(x)=\chi(x)$ for all sufficiently large $R$, hence $\chi\equiv0$. Thus the weighted solution is unique. It follows that every convergent subsequence of the exhaustion has the same limit, and the unique solution defines a bounded linear right inverse $\mathcal K$: linearity follows by applying uniqueness to sums and scalar multiples.
\end{proof}

\begin{proof}[Proof of Theorem \ref{thm_construction}]
Fix $\sigma$ as in Lemma \ref{c:lem_window} and $\mu\in(0,\mu_0)$ with $\mu_0$ as in Lemma \ref{c:lem_weighttrans}, write $\lambda:=\lambda_\alpha+\psi$, $\zeta:=\zeta_{\lambda}$, and set
\[
\|\chi\|_*:=\sup_{x\in\mathbb{R}^N_+}(1+\varepsilon|x'|)^{\sigma+2}e^{\mu x_N}|\chi(x)| ,
\qquad
\|F\|_{**}:=\sup_{x\in\mathbb{R}^N_+}(1+\varepsilon|x'|)^{\sigma+2}(1+x_N)^{2}e^{\mu x_N}|F(x)|.
\]
Set $0<\varrho<\min\{1,p-1\}$ and
\[
\mathcal B:=\Big\{(\psi,\chi):\ \|\psi\|_{\sigma,\varepsilon}\le K_2\alpha^3,\quad
\big\|\Delta_{x'}\psi\big\|_{C^{0,\varrho}_{\sigma+2,\varepsilon}}\le K_3\alpha^4,\quad
\|\chi\|_{C^{2,\varrho}_*}\le K_1\alpha^3\Big\}.
\]
the second constraint being taken in the weighted H\"older norm of Proposition \ref{c:prop_rightinverse}, so that the right hand side of \eqref{c:eq_sys1b} lies in the space in which that proposition provides a right inverse. Let $C_0,\dots,C_6$ denote constants depending only on $N$, $p$ and $\sigma$, and never on $K_1,K_2,K_3$ or $\alpha$. Lemma \ref{c:lem_Esize}, Proposition \ref{c:prop_cell} and \eqref{c:eq_twosided} give, for $(\psi,\chi)\in\mathcal B$ and $\alpha$ small, the estimates
\begin{align}
\big\|\Pi^\perp\mathcal N[\lambda]\big\|_{C^{0,\varrho}_{**}}&\le C_1\alpha^3+C_0\big\|\Delta_{x'}\psi\big\|_{C^{0,\varrho}_{\sigma+2,\varepsilon}} ,\label{c:eq_e1}\\
\big\|\Pi^\perp\big[R(\cdot,\zeta+\chi)-R(\cdot,\zeta)\big]\big\|_{C^{0,\varrho}_{**}}&\le C_2\big(\alpha\|\chi\|_*+\|\chi\|_*^{1+\varpi_0}\big),\quad \varpi_0:=\min\{\varpi,p-1\}>0 ,\label{c:eq_e2}\\
\Big\|\tfrac1\nu\!\int_0^\infty\!\big[R(\cdot,\zeta+\chi)-R(\cdot,\zeta)\big]Z\,dt\Big\|_{\sigma+2,\varepsilon}&\le C_3\,\alpha\|\chi\|_* ,\label{c:eq_e3}\\
\big\|\mathfrak g(\lambda_\alpha+\psi)-\mathfrak g(\lambda_\alpha)-\mathfrak g'(\lambda_\alpha)\psi\big\|_{\sigma+2,\varepsilon}&\le C_4\|\psi\|_{\sigma,\varepsilon}^2 ,\label{c:eq_e4}
\end{align}
where \eqref{c:eq_e1} uses $\sigma+2\le6$ and \eqref{c:eq_e4} uses $2\sigma\ge\sigma+2$. The factor $(1+x_N)^{2}$ present in $\|\cdot\|_{**}$ but not in $\|\cdot\|_*$ is available in both places. In \eqref{c:eq_e1} we use \eqref{c:eq_Esize_doublestar} with an exponent $\widehat\mu\in(\mu,1)$; the surplus $e^{-(\widehat\mu-\mu)t}$ is precisely what absorbs the factor $(1+t)^2$ in the double-star norm. This avoids any need for a uniform pointwise expansion with the sharp rate $e^{-t}$. In \eqref{c:eq_e2} it is supplied by the factor $e^{-(p-1)x_N}$ of the near region below, together with the absorption
\[
(1+T_*)^2\,d^{\varpi}\le C\,d^{\,p-1}=C\,d^{\varpi_0},\qquad d:=\|\chi\|_* ,
\]
at the transition point $T_*\le C\log\frac1d$ defined below, which holds because $\varpi=\frac{p-1}{1-\mu}>p-1=\varpi_0$; the H\"older seminorms are estimated in the same way, all the functions involved being smooth in $x_N$ and $\zeta$, $h$, $\lambda_\alpha$ smooth in $x'$, while $\chi$ is controlled in $C^{2,\varrho}$ by the definition of $\mathcal B$. Estimates \eqref{c:eq_e2} and \eqref{c:eq_e3} require a word, because $\chi$ is controlled only in the weight $e^{-\mu x_N}$ with $\mu<1$, so that $|\chi|$ exceeds $w_0+\zeta\asymp e^{-x_N}$ in a far region, where $R$ is not quadratic. Split $(0,\infty)$ at the level where $|\chi|=\frac14w_0$, noting that $\zeta\le\frac14w_0$ for $\alpha$ small by \eqref{c:eq_Rquad}. On the near region the first bound in \eqref{c:eq_Rtwo} and the mean value theorem give
\[
\big|R(\cdot,\zeta+\chi)-R(\cdot,\zeta)\big|\le Cw_0^{p-2}\big(\zeta+|\chi|\big)|\chi| ,
\]
where $w_0^{p-2}\zeta\le C\alpha e^{-(p-1)x_N}\le C\alpha$, and where
\[
w_0^{p-2}|\chi|\le C\min\Big\{\|\chi\|_*e^{(2-p-\mu)x_N},\ w_0^{p-1}\Big\}\le C\|\chi\|_*^{\varpi},
\qquad \varpi:=\frac{p-1}{1-\mu}>0 ,
\]
the two expressions in the minimum being equal at $x_N=T_*:=\frac{1}{1-\mu}\log\frac{1}{C\|\chi\|_*}$, where their common value is $Ce^{-(p-1)T_*}=C\|\chi\|_*^{\varpi}$. Carrying the factor $(1+x_N)^2$ of $\|\cdot\|_{**}$ through this bound costs $(1+T_*)^2\le C\log^2\frac1{\|\chi\|_*}$ at the transition, absorbed by $\varpi>\varpi_0$ as recorded above. This gives \eqref{c:eq_e2} on the near region, and \eqref{c:eq_e3} follows by integrating against $Z$, the integral $\int w_0^{p-2}\chi^2Z\,dx_N\le C\|\chi\|_*^2$ converging because $2\mu>1-p$. On the far region one has $w_0\le 4|\chi|$, hence $x_N\ge T_*$, and the second bound in \eqref{c:eq_Rtwo} gives
\[
\big\|R(\cdot,\zeta+\chi)-R(\cdot,\zeta)\big\|_{**}\le C\|\chi\|_*^{p},
\qquad
\int_{T_*}^\infty|\chi|^pZ\,dx_N\le C\|\chi\|_*^{\,p+\frac{p\mu+1}{1-\mu}}\le C\|\chi\|_*^{p+1},
\]
the first because $(1-p)(\sigma+2)\le0$ and $(1-p)\mu\le0$. Since $p>1$, both are of higher order than the terms retained in \eqref{c:eq_e2} and \eqref{c:eq_e3}. The term $C_0\|\Delta_{x'}\psi\|_{\sigma+2,\varepsilon}$ in \eqref{c:eq_e1} is the contribution of $\partial_a\zeta_\lambda\,\Delta_{x'}\lambda$ in Lemma \ref{c:lem_error}, since $\Delta_{x'}\lambda=-\mathfrak g(\lambda_\alpha)+\Delta_{x'}\psi$ and $\partial_a\zeta_\lambda$ is bounded in $\mathcal X_\mu$. The two linear bounds are $\|\chi\|_{C^{2,\varrho}_*}\le C_5\|L\chi\|_{C^{0,\varrho}_{**}}$, from Corollary \ref{c:cor_weighttrans}, and $\|\psi\|_{\sigma,\varepsilon}\le C_6\varepsilon^{-2}\|\mathcal M\psi\|_{C^{0,\varrho}_{\sigma+2,\varepsilon}}$ from Proposition \ref{c:prop_rightinverse}; the right hand sides of \eqref{c:eq_e3} and \eqref{c:eq_e4} are estimated in that H\"older norm as well, which is legitimate because $\zeta_a$, $h$ and $\lambda_\alpha$ are smooth in $x'$, because $\chi$ enters through $\Pi^\perp$ and therefore through an $x_N$ integral of a $C^{2,\varrho}_{\rm loc}$ function, and because the third constraint defining $\mathcal B$ controls $\Delta_{x'}\psi$ in that norm.

Choose the three radii in this order, each in terms of the previous ones and of $C_0,\dots,C_7$ only:
\begin{equation}\label{c:eq_radii}
K_1:=2C_5C_1,\qquad K_2:=\frac{2C_6C_3}{C_p}K_1,\qquad K_3:=2\big(C_3K_1+C_7K_2\big),
\end{equation}
where $C_7$ is the constant in $\|\mathfrak g'(\lambda_\alpha)\psi\|_{\sigma+2,\varepsilon}\le C_7\alpha\|\psi\|_{\sigma,\varepsilon}$, which follows from $\mathfrak g'(\lambda_\alpha)\le C\alpha(1+\varepsilon|x'|)^{-2}$ and \eqref{c:eq_twosided}. Then let $\alpha\in(0,\alpha_1)$ be small enough that
\begin{equation}\label{c:eq_smallness}
C_5C_0K_3\,\alpha\le\tfrac12 K_1,\qquad
C_5C_2\big(\alpha+(K_1\alpha^3)^{\varpi_0}\big)\le\tfrac12,\qquad
\frac{C_6C_4}{C_p}K_2^2\,\alpha^{2}\le\tfrac12 K_2,\qquad
C_4K_2^2\alpha^{2}\le C_3K_1.
\end{equation}

Note that \eqref{c:eq_radii} is a legitimate order of choice: no $K_i$ occurs in the constants defining it. Let $(\psi,\chi)\in\mathcal B$. By \eqref{c:eq_e1}, \eqref{c:eq_e2} and the first two conditions in \eqref{c:eq_smallness}, the solution $\chi'$ of \eqref{c:eq_sys2} satisfies
\[
\|\chi'\|_{C^{2,\varrho}_*}\le C_5\Big(C_1\alpha^3+C_0K_3\alpha^4\Big)+C_5C_2\big(\alpha+(K_1\alpha^3)^{\varpi_0}\big)\|\chi\|_*
\le \tfrac12K_1\alpha^3+\tfrac12K_1\alpha^3=K_1\alpha^3.
\]
By \eqref{c:eq_e3}, \eqref{c:eq_e4}, $\varepsilon^2=C_p\alpha$ and the third condition, the solution $\psi'$ of \eqref{c:eq_sys1b} satisfies
\[
\|\psi'\|_{\sigma,\varepsilon}\le\frac{C_6}{C_p\alpha}\Big(C_4K_2^2\alpha^6+C_3K_1\alpha^4\Big)
=\frac{C_6C_4}{C_p}K_2^2\alpha^5+\frac{C_6C_3K_1}{C_p}\alpha^3\le\tfrac12K_2\alpha^3+\tfrac12K_2\alpha^3=K_2\alpha^3.
\]
Finally $\Delta_{x'}\psi'=-\mathcal M\psi'-\mathfrak g'(\lambda_\alpha)\psi'$ by the definition of $\mathcal M$, so by the fourth condition
\[
\big\|\Delta_{x'}\psi'\big\|_{C^{0,\varrho}_{\sigma+2,\varepsilon}}\le\Big(C_4K_2^2\alpha^6+C_3K_1\alpha^4\Big)+C_7\alpha\,K_2\alpha^3
\le\big(2C_3K_1+C_7K_2\big)\alpha^4\le K_3\alpha^4.
\]
All the data being radially symmetric in $x'$, so are $\psi'$ and $\chi'$: for $\chi'$ because the right inverse $\mathcal K$ of Corollary \ref{c:cor_weighttrans} is constructed by radial exhaustion from the solutions of Lemma \ref{c:lem_transverse}, which are unique in $L^2$ and hence radial, and for $\psi'$ because Proposition \ref{c:prop_rightinverse} is stated in the radial class. Hence $(\psi,\chi)\mapsto(\psi',\chi')$ maps $\mathcal B$ into itself. For the contraction we use the scaled product metric
\begin{equation}\label{c:eq_metric}
\big\|(\psi,\chi)\big\|_{\mathcal X}:=A_1\alpha^{-3}\|\psi\|_{\sigma,\varepsilon}+A_2\alpha^{-4}\big\|\Delta_{x'}\psi\big\|_{C^{0,\varrho}_{\sigma+2,\varepsilon}}+A_3\alpha^{-3}\|\chi\|_{C^{2,\varrho}_*} ,
\end{equation}
in which each component is normalized by its own radius; the couplings are not symmetric, so a naive sum of the three norms would not do. Writing $\mathcal T$ for the map $(\psi,\chi)\mapsto(\psi',\chi')$ and applying the four estimates to differences gives
\[
\delta\chi'\le o(1)\,\delta\chi+C\alpha^{2}\,\delta\psi+C\,\delta\Delta\psi ,
\qquad
\delta\psi'\le C\,\delta\chi+C\alpha^{2}\,\delta\psi ,
\qquad
\delta\Delta\psi'\le C\alpha\,\delta\chi+C\alpha^{3}\,\delta\psi ,
\]
where $o(1)$ is as $\alpha\to0$, the third follows from $\Delta_{x'}\psi'=-\mathcal M\psi'-\mathfrak g'(\lambda_\alpha)\psi'$ and the second, and the constants do not depend on the $A_j$. Normalizing as in \eqref{c:eq_metric}, the coefficients become
\[
\begin{pmatrix}o(1)&C\alpha^{2}A_3/A_1&C\alpha A_3/A_2\\[1mm] CA_1/A_3&C\alpha^{2}&0\\[1mm] CA_2/A_3&C\alpha^{2}A_2/A_1&0\end{pmatrix} ,
\]
acting on $(\|\delta\chi\|,\|\delta\psi\|,\|\delta\Delta\psi\|)$ in the normalized variables. Fix $A_1=A_2=1$, choose $A_3$ so large that $CA_1/A_3$ and $CA_2/A_3$ are at most $\frac18$, and then $\alpha$ so small that every entry carrying a power of $\alpha$ is at most $\frac18$; the operator norm is then smaller than $\frac12$ and $\mathcal T$ is a contraction, and its fixed point solves \eqref{c:eq_sys1b}, \eqref{c:eq_sys2}, hence \eqref{eq_phiL}.

It remains to check that $v_\alpha:=w_0+\phi$ with $\phi$ as in \eqref{c:eq_final} is a bounded positive solution of \eqref{eq_Pc}. The boundary condition holds because $\zeta_a(0)=0$ and $\chi(\cdot,0)=0$. The decay condition holds because $\zeta_a\le Cae^{-t}$ uniformly and $|\chi|\le K_1\alpha^3e^{-\mu x_N}$, both tending to $0$ as $x_N\to\infty$ uniformly in $x'$. The amplitude is admissible, that is $\lambda_\alpha+\psi\in(0,a_0)$: indeed $|\psi|\le K_2\alpha^3(1+\varepsilon|x'|)^{-\sigma}$ while $\lambda_\alpha\ge c\alpha(1+\varepsilon|x'|)^{-2}$ by \eqref{c:eq_twosided}, and $\sigma>2$, so $|\psi|\le CK_2\alpha^2\lambda_\alpha\le\frac12\lambda_\alpha$ for $\alpha$ small.

Positivity of $v_\alpha$ does not follow from the perturbative estimates, because $\chi$ is controlled only in the weight $e^{-\mu x_N}$ with $\mu<1$, whereas $w_0+\zeta$ decays like $e^{-x_N}$; we obtain it instead from the maximum principle. Set $m:=\inf_{\mathbb{R}^N_+}v_\alpha$ and suppose $m<0$. Since $v_\alpha\to0$ as $x_N\to\infty$ uniformly in $x'$, any sequence $x^j$ with $v_\alpha(x^j)\to m$ has $x^j_N$ bounded, so translating in $x'$ and using the uniform $C^{2,\gamma}$ bounds of Lemma \ref{lem_reg}(ii), which do not require positivity, we obtain along a subsequence a solution $V$ of $\Delta V=V-|V|^{p-1}V$ in $\mathbb{R}^N_+$ with $V=c_p$ on $\partial\mathbb{R}^N_+$, $V\ge m$ and $V(\bar x)=m$ for some $\bar x$. As $m<0<c_p$, the point $\bar x$ is interior, so $\Delta V(\bar x)\ge0$. On the other hand $v_\alpha=w_0+\phi\ge\phi$ because $w_0>0$, so $|m|\le\sup|\phi|\le C\alpha<1$ for $\alpha$ small, and therefore
\[
\Delta V(\bar x)=m\big(1-|m|^{p-1}\big)<0 ,
\]
a contradiction. Hence $v_\alpha\ge0$, and $v_\alpha>0$ in $\mathbb{R}^N_+$ by the strong maximum principle applied to $\Delta v_\alpha-v_\alpha=-v_\alpha^p\le0$ as in Lemma \ref{lem_reg}(0). Finally $v_\alpha$ is not a function of $x_N$ alone, since $\frac1\nu\int_0^\infty(v_\alpha-w_0)Z\,dt=\lambda_\alpha+\psi$ is nonconstant in $x'$.
\end{proof}

\section{The critical dimension \texorpdfstring{$N=7$}{N=7}}\label{c:sec_critical_full}

In this section $n=6$, $p>1$ and $\kappa>0$; the standing assumption $n\ge7$ of Sections \ref{c:sec_prelim} to \ref{c:sec_fixed} is dropped. The results of those sections used below are Lemma \ref{c:lem_gap}, Proposition \ref{c:prop_cell}, Lemmas \ref{c:lem_error} and \ref{c:lem_orth}, Lemma \ref{c:lem_transverse}, Lemma \ref{c:lem_weighttrans} and Corollary \ref{c:cor_weighttrans}, together with the pointwise estimates behind \eqref{c:eq_e1}--\eqref{c:eq_e2} and the positivity argument at the end of the proof of Theorem \ref{thm_construction}. None of their proofs uses $n\ge7$: the first four concern the one dimensional operator $\mathcal L$ and the cell problem, which do not involve $n$, and the last three use only the spectral gap of $\mathcal L$, the barrier $(1+\varepsilon|x'|)^{-\sigma-2}$ and the Green kernel of $-\Delta_{x'}+\gamma/2$, which decays exponentially in every dimension; the estimates behind \eqref{c:eq_e1}--\eqref{c:eq_e2} and the positivity argument are pointwise in $x'$.

The results that do require $n\ge7$, namely Lemma \ref{c:lem_window} and Proposition \ref{c:prop_rightinverse}, are not used. The purpose is to prove Theorem \ref{thm_construction_critical}. The resonance of the reduced linearized operator is avoided altogether: we keep the projected equation nonlinear, solve it as a radial forced ODE for each transverse correction, and use the right inverse $\mathcal K$ of Corollary \ref{c:cor_weighttrans} only in the direction orthogonal to $Z$.

Fix
\[
2<\sigma<4,\qquad 0<\varrho<\min\{1,p-1\},\qquad 0<\mu<\mu_0(p),
\]
and put $\varepsilon^2=C_p\alpha$.  In this section the norms $\|\cdot\|_*$ and $\|\cdot\|_{**}$ are the same as in Section \ref{c:sec_fixed}, with this value of $\sigma$.

\subsection{A forced critical reduced equation}

We first record the robust form of Proposition \ref{c:prop_critical_reduced} which will replace the critical reduced inverse.

\begin{lemma}\label{c:lem_forced_critical}
Let $p>1$, assume $\kappa(p)>0$, let $2<\sigma<4$ and $A>0$.  There are constants $C_0>1$, $\alpha_0>0$ and $C>0$, depending only on $p,\sigma,A$, with the following property.  For $0<\alpha<\alpha_0$ set $\varepsilon^2=C_p\alpha$.  Suppose that $H=H(r,a)$ is continuous, locally Lipschitz in $a$, and, for $0\le a\le C_0\alpha$, satisfies
\begin{equation}\label{c:eq_forcing_bound}
|H(r,a)|\le A\alpha^4(1+\varepsilon r)^{-\sigma-2}.
\end{equation}
Then the radial initial value problem
\begin{equation}\label{c:eq_forced_reduced}
-a''-\frac5r a'=\mathfrak g(a)+H(r,a),\qquad a(0)=\alpha,\qquad a'(0)=0,
\end{equation}
has a positive entire solution.  It satisfies
\begin{equation}\label{c:eq_forced_bounds}
0<a(r)\le C\alpha(1+\varepsilon r)^{-2},\qquad
|a'(r)|\le C\alpha\varepsilon(1+\varepsilon r)^{-3},\qquad
|D^2a(x')|\le C\alpha\varepsilon^2(1+\varepsilon|x'|)^{-4}.
\end{equation}
The constants are uniform for $H$ in bounded sets satisfying \eqref{c:eq_forcing_bound}.
\end{lemma}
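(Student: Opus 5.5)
We work in the rescaled variables of Proposition \ref{c:prop_lambda}: put $s=\varepsilon r$ and $a(r)=\alpha\Lambda(s)$. Then \eqref{c:eq_forced_reduced} becomes
\[
-\Lambda''-\tfrac5s\Lambda'=\mathfrak g_\alpha(\Lambda)+h(s,\Lambda),\qquad \Lambda(0)=1,\ \Lambda'(0)=0,\qquad h:=\frac{H}{C_p\alpha^2},
\]
with $\mathfrak g_\alpha(\Lambda)=\Lambda^2(1+O(\alpha\Lambda))$. By \eqref{c:eq_forcing_bound}, $|h|\le A'\alpha^2(1+s)^{-\sigma-2}$ as long as $0\le\Lambda\le C_0$. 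Local existence holds by the Lipschitz assumption. Let $[0,S)$ be the maximal interval on which $0<\Lambda<C_0$. The claims \eqref{c:eq_forced_bounds} are, after undoing the scaling, exactly the bounds $\Lambda\le C(1+s)^{-2}$, $|\Lambda'|\le C(1+s)^{-3}$ and $|D^2\Lambda|\le C(1+s)^{-4}$. The plan is to run the Pohozaev argument of Proposition \ref{c:prop_critical_reduced} with the forcing treated as an $O(\alpha^2)$ perturbation of a Pohozaev defect of order $\alpha$.

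\emph{Step 1: monotonicity and the upper bound $\Lambda\le1$.} From $s^5\Lambda'(s)=-\int_0^s t^5(\mathfrak g_\alpha(\Lambda)+h)\,dt$ we show $\Lambda'<0$ on $(0,S)$ by a continuity argument. Near $s=0$ we have $\mathfrak g_\alpha(\Lambda)\approx1$, which dominates $h=O(\alpha^2)$. Further out we use the two-sided bound $\Lambda\ge c(1+s)^{-2}$, to be proved in Step 3, together with $\sigma+2>4$, so that $\mathfrak g_\alpha(\Lambda)\gtrsim(1+s)^{-4}\gg\alpha^2(1+s)^{-\sigma-2}$. If this comparison is delicate at large $s$, it suffices to show that the integral $\int_0^s t^5\mathfrak g_\alpha\,dt$, which grows at least like $\min\{s^6,\log s\}$ since $\Lambda\gtrsim(1+s)^{-2}$, beats $\int_0^st^5|h|\,dt=O(\alpha^2)$. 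In particular $\Lambda\le1<C_0$, so the only way the solution can leave the admissible range is through a zero of $\Lambda$.

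\emph{Step 2: the Pohozaev function with forcing.} Let $\mathcal P(s)=s^5[2\Lambda\Lambda'+s(\frac12\Lambda'^2+\mathfrak G_\alpha(\Lambda))]$. Then
\[
\mathcal P'=s^5Q_{6,\alpha}(\Lambda)-s^5h\,(2\Lambda+s\Lambda').
\]
Since $Q_6'(a)=-2a^3\mathfrak q'(a)$ and $\mathfrak q'(0)=\kappa>0$, the rescaled defect satisfies $Q_{6,\alpha}(\Lambda)\le-c\kappa\alpha\Lambda^4$. Using the a priori bounds $\Lambda\le C(1+s)^{-2}$ and $|\Lambda'|\le C(1+s)^{-3}$ (bootstrapped in Step 3), the forcing contribution is bounded by $C\alpha^2\int_0^\infty s^5(1+s)^{-\sigma-4}\,ds=O(\alpha^2)$; this integral converges because $\sigma>2$. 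The negative term, in contrast, contributes $-c\alpha\int_0^{s}t^5\Lambda^4\,dt\le-c'\alpha\min\{s^6,1\}$. Hence $\mathcal P(s)<0$ for all $s\in(0,S)$ once $\alpha$ is small. For tiny $s$ both terms are $O(s^6)$, with coefficients of order $\alpha$ and $\alpha^2$ respectively, so the sign persists there as well. If $S<\infty$, then $\Lambda(S)=0$ and $\mathcal P(S)=S^6\Lambda'(S)^2/2\ge0$, a contradiction. Hence $S=\infty$ and $a$ is a positive entire solution.

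\emph{Step 3: decay bounds (main obstacle).} The difficulty is that Steps 1 and 2 use the bounds they are meant to prove, so the argument has to be closed by a continuity bootstrap on $[0,s_1]$ with doubled constants. Exactly as in the proof of Proposition \ref{c:prop_lambda}, the inequality $\mathcal P\le0$ (or $\mathcal P\le C\alpha^2$ before the sign is known) yields $|\Lambda'|\le4\Lambda/s+C\alpha s^{-3}$. It also yields $\mathfrak G_\alpha(\Lambda)\le C\Lambda^2s^{-2}+C\alpha^2s^{-6}$, and since $\mathfrak G_\alpha(\Lambda)\ge\Lambda^3/4$, this gives $\Lambda\le C(1+s)^{-2}$ with a constant independent of the bootstrap constant. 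This closes the bootstrap. The lower bound $\Lambda\ge c(1+s)^{-2}$ follows from the Fowler analysis: in the variables $M(t)=e^{2t}\Lambda(e^t)$, a negative Pohozaev level $\mathcal P\to E<0$ confines $M$ to a periodic orbit staying away from $0$. Next, $|\Lambda'|\le C(1+s)^{-3}$ follows from the integral identity, using $\int_0^st^5(\Lambda^2+|h|)\,dt\le C\min\{s^6,s^2\}$. Finally, the Hessian of a radial function is controlled by $|\Lambda''|+|\Lambda'|/s$, and both are $O((1+s)^{-4})$ by the equation. The constants depend only on $p,\sigma,A$ through $\kappa$, $C_p$ and the bound on $h$, so they are uniform over forcings in bounded sets.
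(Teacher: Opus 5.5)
Your argument is essentially the paper's. Both use the forced Pohozaev identity and a negative defect $Q_{6,\alpha}(\Lambda)\le-c\kappa\alpha\Lambda^4$ of order $\alpha$ coming from $\kappa>0$. Both control the forcing contribution at order $\alpha^2$ through the decay that $\mathcal P<0$ itself implies: your $|\Lambda'|\le4\Lambda/s$, $\Lambda\le Cs^{-2}$ are the paper's $|\dot M|\le2M$, $M<C_*$. Both then close with a first-crossing argument and the positivity of $\mathcal P$ at a first zero.

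The one structural difference is the core. The paper proves $A_\alpha\to(1+\rho^2/24)^{-2}$ in $C^1([0,R_0])$ uniformly in $H$, and starts the bootstrap at $r_0=R_0/\varepsilon$ with the explicit level $\mathcal P(r_0)\le-c_0\alpha$. You start at $s=0$ and use only $\Lambda\ge\frac12$ near the origin, together with the $-c\alpha s^6$ versus $C\alpha^2s^6$ comparison for small $s$. Your route is more elementary and suffices for the lemma; the bubble limit is what the paper later reuses to compute $E_\alpha$ in Proposition \ref{c:prop_forced_fowler}.

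You should, however, drop Step 1 as written.
\begin{enumerate}[label=(\roman*)]
\item The uniform lower bound $\Lambda\ge c(1+s)^{-2}$ is false. The Pohozaev level is only of order $-\alpha$, which gives at best $s^2\Lambda\gtrsim\alpha^{1/2}$. This is sharp, since the Fowler neck $M_-(E_\alpha)$ of Proposition \ref{c:prop_forced_fowler} is of order $\alpha^{1/2}$.
\item The available bound for $\int_0^st^5|h|\,dt$ is $C\alpha^2s^{4-\sigma}$, not $O(\alpha^2)$.
\item Appealing to the limit $\mathcal P\to E$ presupposes the global existence you are proving.
\end{enumerate}

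None of this is needed. Wherever $\Lambda>0$ and $\mathcal P(s)<0$, the nonnegativity of $\tfrac12\Lambda'^2+\mathfrak G_\alpha(\Lambda)$ forces $\Lambda\Lambda'<0$. So monotonicity, and hence $\Lambda\le1<C_0$, come for free inside the same bootstrap on the sign of $\mathcal P$. The bounds extracted from $\mathcal P<0$ also have absolute constants, so no doubling of constants is needed.
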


\begin{proof}
Fix $R_0>0$.  Since $\kappa>0$, there are $a_*>0$ and $c_G>0$ such that $\mathfrak q'(a)>0$ and $\mathfrak G(a)\ge c_Ga^3$ for $0<a\le a_*$.  Set $C_*:=2/c_G$.  Choose $C_0>1$ so large that
\[
C_0>2\sup_{0\le\rho\le R_0}A_0(\rho)+1,
\qquad
C_0>\frac{C_*C_p}{R_0^2}+1,
\]
where $A_0(\rho)=(1+\rho^2/24)^{-2}$, and then decrease $\alpha_0$ so that $C_0\alpha_0<a_*$.  Recall
\[
Q_6(a):=6\mathfrak G(a)-2a\mathfrak g(a),\qquad \mathfrak G(a)=\int_0^a\mathfrak g(s)\,ds.
\]
As in Proposition \ref{c:prop_critical_reduced},
\begin{equation}\label{c:eq_Q6negative}
Q_6'(a)=-2a^3\mathfrak q'(a)<0,
\qquad
Q_6(a)=-\frac{\kappa}{2}a^4+O(a^5).
\end{equation}
For a solution of \eqref{c:eq_forced_reduced} define the radial Pohozaev quantity
\begin{equation}
\mathcal P(r):=r^5\left[2aa'+r\left(\frac{a'^2}{2}+\mathfrak G(a)\right)\right].
\end{equation}
A direct differentiation, using \eqref{c:eq_forced_reduced}, gives the exact identity
\begin{equation}\label{c:eq_forced_poho_der}
\mathcal P'(r)=r^5Q_6(a(r))-r^5H(r,a(r))\big(2a(r)+ra'(r)\big).
\end{equation}

We first work on the critical core.  Put $r=\rho/\varepsilon$ and $a(r)=\alpha A_\alpha(\rho)$.  On every fixed interval $0\le\rho\le R$, equation \eqref{c:eq_forced_reduced} becomes
\[
-A_\alpha''-\frac5\rho A_\alpha'
=A_\alpha^2\frac{\mathfrak q(\alpha A_\alpha)}{C_p}+\widetilde H_\alpha,
\qquad |\widetilde H_\alpha|\le C\alpha^2,
\]
as long as $0\le A_\alpha\le C_0$.  Standard continuous dependence for this regular radial initial value problem (equivalently, for $(\rho^5A_\alpha')'=-\rho^5[\cdots]$) and the choice of $C_0$ show that the solution cannot leave this strip before $R_0$ for small $\alpha$.  Uniformly for all $H$ satisfying \eqref{c:eq_forcing_bound},
\begin{equation}\label{c:eq_core_bubble_conv}
A_\alpha\longrightarrow A_0(\rho)=\left(1+\frac{\rho^2}{24}\right)^{-2}
\quad\text{in }C^1([0,R_0]).
\end{equation}
Integrating \eqref{c:eq_forced_poho_der} up to $r_0:=R_0/\varepsilon$, using \eqref{c:eq_Q6negative}, \eqref{c:eq_core_bubble_conv} and \eqref{c:eq_forcing_bound}, yields
\begin{align}
\mathcal P(r_0)
&=-\frac{\kappa\alpha}{2C_p^3}
\int_0^{R_0}\rho^5A_0(\rho)^4\,d\rho+o(\alpha)+O(\alpha^2) \\
&\le -c_0\alpha                                                   \label{c:eq_core_negative_energy}
\end{align}
for some $c_0>0$ and all sufficiently small $\alpha$.  Indeed the forcing contribution is $O(\alpha^2)$, because on the fixed scaled interval $2a+ra'=O(\alpha)$ and $r^5dr=\varepsilon^{-6}\rho^5d\rho$.

We now continue the solution from $r_0$.  As long as $a>0$, $a<C_0\alpha$ and $\mathcal P<0$, put
\[
M(t):=r^2a(r),\qquad t=\log r.
\]
Then
\begin{equation}
\mathcal P(r)=\frac12\dot M^2-2M^2+r^6\mathfrak G(r^{-2}M).
\end{equation}
By construction $\mathfrak G(a)\ge c_Ga^3$, and therefore $\mathcal P<0$ implies
\begin{equation}\label{c:eq_M_uniform_forced}
0<M<C_*:=\frac{2}{c_G},
\qquad
|\dot M|\le2M\le2C_*.
\end{equation}
In particular
\[
|2a+ra'|=r^{-2}|\dot M|\le Cr^{-2}.
\]
Using \eqref{c:eq_forcing_bound}, for every $R>r_0$ we consequently have
\begin{align}
\int_{r_0}^{R}s^5|H(s,a(s))|\,|2a(s)+sa'(s)|\,ds
&\le C\alpha^4\int_{r_0}^{\infty}s^3(1+\varepsilon s)^{-\sigma-2}\,ds \\
&\le C\alpha^2,                                                   \label{c:eq_forcing_poho_small}
\end{align}
because $\sigma>2$ and $\varepsilon^4\asymp\alpha^2$.  On the other hand $Q_6(a)\le0$ by \eqref{c:eq_Q6negative}.  A first-crossing argument now closes the bootstrap.  If $r_1>r_0$ were the first point at which either $\mathcal P(r_1)=-\frac{c_0}{2}\alpha$, $a(r_1)=0$, or $a(r_1)=C_0\alpha$, then on $[r_0,r_1]$ one has $\mathcal P<0$ and therefore \eqref{c:eq_M_uniform_forced}; hence \eqref{c:eq_forcing_poho_small} is valid there.  Since the $Q_6$ contribution is nonpositive, \eqref{c:eq_core_negative_energy} and \eqref{c:eq_forcing_poho_small} exclude the first alternative for small $\alpha$.  The bound $M<C_*$ gives $a(r)\le C_*r^{-2}\le C_*C_p\alpha/R_0^2<C_0\alpha$ for $r\ge r_0$, so the third alternative is excluded by the choice of $C_0$.  The second alternative is also impossible, because at a first zero $R$ one would have
$\mathcal P(R)=R^6a'(R)^2/2\ge0$, a contradiction.  Thus none of the three alternatives occurs; the a priori bounds prevent finite-radius breakdown of the ODE, and the solution is positive and entire.

The pointwise estimates now follow.  On $0\le r\le r_0$ they follow from \eqref{c:eq_core_bubble_conv} and the scaled equation; for $r\ge r_0$, \eqref{c:eq_M_uniform_forced} gives
$a(r)\le Cr^{-2}$ and
$|a'(r)|=r^{-3}|\dot M-2M|\le Cr^{-3}$.  The equation then gives $|a''(r)|\le C r^{-4}$ for $r\ge r_0$, while on the core $|a''|\le C\alpha\varepsilon^2$.  For a radial function the tangential Hessian has eigenvalues $a''$ and $a'/r$, so, since $\varepsilon^2=C_p\alpha$, these estimates are exactly \eqref{c:eq_forced_bounds}, after changing $C$.
\end{proof}

\begin{proposition}[The Fowler energy at infinity]\label{c:prop_forced_fowler}
Under the hypotheses of Lemma \ref{c:lem_forced_critical}, let $a$ be the positive entire solution of \eqref{c:eq_forced_reduced} and put
\[
M(t):=e^{2t}a(e^t),\qquad
\mathcal H(M,V):=\frac{V^2}{2}-2M^2+\frac{C_p}{3}M^3.
\]
Then the limit
\[
E_\alpha:=\lim_{t\to\infty}\mathcal H(M(t),\dot M(t))
\]
exists and, uniformly for $H$ in bounded sets satisfying \eqref{c:eq_forcing_bound},
\begin{equation}\label{c:eq_fowler_energy_asymptotic}
E_\alpha=-\frac{1152}{35}\,\frac{\kappa}{C_p^3}\,\alpha+o(\alpha)
\qquad\text{as }\alpha\to0.
\end{equation}
In particular, for $\alpha$ small,
\[
-\frac{32}{3C_p^2}<E_\alpha<0.
\]
Let $M_{E_\alpha}$ be any positive periodic solution of
\[
\ddot M-4M+C_pM^2=0,
\qquad
\mathcal H(M_{E_\alpha},\dot M_{E_\alpha})=E_\alpha.
\]
Then there is a phase $\vartheta_\alpha\in\mathbb R$ such that, with
$\delta:=\min\{2,\sigma-2\}>0$,
\begin{equation}\label{c:eq_fowler_phase}
M(t)-M_{E_\alpha}(t+\vartheta_\alpha)=O_\alpha(e^{-\delta t}),
\qquad
\dot M(t)-\dot M_{E_\alpha}(t+\vartheta_\alpha)=O_\alpha(e^{-\delta t}).
\end{equation}
Equivalently,
\[
a(r)=r^{-2}\Big(M_{E_\alpha}(\log r+\vartheta_\alpha)+O_\alpha(r^{-\delta})\Big)
\qquad\text{as }r\to\infty.
\]
If $M_-(E_\alpha)$ and $M_+(E_\alpha)$ denote the two positive turning points of the orbit and $\mathcal T_\alpha^{\rm F}$ its minimal period, then
\begin{equation}\label{c:eq_fowler_neck_bulge}
M_-(E_\alpha)=\frac{24}{\sqrt{35}}\frac{\sqrt\kappa}{C_p^{3/2}}\,\alpha^{1/2}(1+o(1)),
\qquad
M_+(E_\alpha)=\frac6{C_p}+O(\alpha),
\qquad
\mathcal T_\alpha^{\rm F}=\frac12|\log\alpha|+O(1).
\end{equation}
\end{proposition}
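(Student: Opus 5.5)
The plan is to identify $E_\alpha$ as the limit of the Pohozaev function $\mathcal P$ of Lemma \ref{c:lem_forced_critical}, to evaluate that limit with the bubble $A_0$, and then to read off the Fowler asymptotics from the phase plane of the autonomous Hamiltonian.

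\emph{Step 1 (existence of the limit).} In the variables $M(t)=r^2a(r)$, $t=\log r$, we have
\[
\mathcal P(e^t)=\tfrac12\dot M^2-2M^2+r^6\mathfrak G(r^{-2}M).
\]
Since $\mathfrak G(s)=\frac{C_p}{3}s^3+O(s^4)$ and $M$ is bounded by \eqref{c:eq_M_uniform_forced}, this gives $\mathcal P(e^t)=\mathcal H(M,\dot M)+O(e^{-2t})$. Hence it suffices to show that $\mathcal P$ has a limit. By \eqref{c:eq_forced_poho_der}, $\mathcal P'=r^5Q_6(a)-r^5H(2a+ra')$. The first term is nonpositive by \eqref{c:eq_Q6negative}. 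The second is integrable on $[0,\infty)$ by \eqref{c:eq_forcing_poho_small} on $[r_0,\infty)$ and by the $O(\alpha^2)$ core estimate on $[0,r_0]$. Both pieces therefore converge, so $E_\alpha=\lim\mathcal P$ exists and
\[
E_\alpha=\int_0^\infty r^5Q_6(a)\,dr+O(\alpha^2).
\]

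\emph{Step 2 (leading order of $E_\alpha$).} Write $Q_6(a)=-\frac{\kappa}{2}a^4+O(a^5)$ and rescale $r=\rho/\varepsilon$, $a=\alpha A_\alpha(\rho)$. Since $\alpha^4\varepsilon^{-6}=\alpha/C_p^3$,
\[
\int_0^\infty r^5a^4\,dr=\frac{\alpha}{C_p^3}\int_0^\infty\rho^5A_\alpha^4\,d\rho .
\]
The bound $A_\alpha\le C(1+\rho)^{-2}$ from \eqref{c:eq_forced_bounds} gives the integrable majorant $\rho^5(1+\rho)^{-8}$. The local convergence $A_\alpha\to A_0$ holds on every fixed interval: this is \eqref{c:eq_core_bubble_conv} with $R_0$ arbitrary, since $R_0$ was fixed but arbitrary in that lemma. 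Dominated convergence then yields
\[
\int_0^\infty\rho^5A_\alpha^4\,d\rho\to\int_0^\infty\rho^5\Big(1+\tfrac{\rho^2}{24}\Big)^{-8}d\rho .
\]
With $s=\rho^2/24$ one has $\rho^5d\rho=6912\,s^2ds$ and $\int_0^\infty s^2(1+s)^{-8}ds=B(3,5)=1/105$. Hence
\[
E_\alpha=-\frac{\kappa\alpha}{2C_p^3}\cdot\frac{6912}{105}+o(\alpha)=-\frac{1152}{35}\frac{\kappa}{C_p^3}\alpha+o(\alpha).
\]
The $O(a^5)$ remainder contributes $O(\alpha^2)$ by the same majorant. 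Uniformity in $H$ follows because every bound used depends only on $A$. The window $-\frac{32}{3C_p^2}<E_\alpha<0$ holds because $V(M)=-2M^2+\frac{C_p}{3}M^3$ has its minimum $-\frac{32}{3C_p^2}$ at $M=4/C_p$. Consequently the level set $\{\mathcal H=E_\alpha,\ M>0\}$ is a single nondegenerate closed orbit around the center.

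\emph{Step 3 (phase convergence).} Substituting into the equation gives
\[
\ddot M-4M+C_pM^2=\mathcal E(t),\qquad |\mathcal E|\le C e^{-2t}+C_\alpha e^{-(\sigma-2)t}.
\]
The second term is $r^4H$, bounded by $\alpha^4\varepsilon^{-\sigma-2}r^{2-\sigma}$ for large $r$. Hence
\[
\tfrac{d}{dt}\mathcal H(M,\dot M)=\dot M\,\mathcal E=O(e^{-\delta t}),
\qquad\text{so}\qquad \mathcal H(t)-E_\alpha=O(e^{-\delta t}).
\]
I will introduce action--angle coordinates $(E,\theta)$ near the periodic orbit; they are smooth there because the orbit stays a fixed distance from the saddle at $0$ for fixed $\alpha$. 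In these coordinates the angle satisfies
\[
\dot\theta=\omega(E(t))+O(e^{-\delta t})=\omega(E_\alpha)+O(e^{-\delta t}).
\]
Thus $\theta(t)-\omega(E_\alpha)t$ converges at rate $e^{-\delta t}$, which defines $\vartheta_\alpha$. Comparing with $M_{E_\alpha}(t+\vartheta_\alpha)$, using that the flow map is Lipschitz in $(E,\theta)$, gives \eqref{c:eq_fowler_phase}. This step is the main technical point, but it is standard once the forcing is known to decay exponentially.

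\emph{Step 4 (neck, bulge and period).} The turning points solve $V(M)=E_\alpha$. The small root satisfies $2M_-^2\simeq -E_\alpha$, so
\[
M_-=\sqrt{-E_\alpha/2}\,(1+o(1))=\frac{24}{\sqrt{35}}\frac{\sqrt\kappa}{C_p^{3/2}}\alpha^{1/2}(1+o(1)).
\]
The large root is a perturbation of the nonzero root $6/C_p$ of $V$ and moves by $O(E_\alpha)=O(\alpha)$. For the period, the time spent away from a fixed neighbourhood of $0$ is $O(1)$, by comparison with the homoclinic orbit. Near $0$ the linearized flow has rates $\pm2$, so the passage from $M=\delta$ down to $M_-$ and back takes $2\cdot\frac12\log(\delta/M_-)+O(1)=\log(1/M_-)+O(1)$. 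Since $\log(1/M_-)=\tfrac12|\log\alpha|+O(1)$, this gives $\mathcal T_\alpha^{\rm F}=\tfrac12|\log\alpha|+O(1)$.
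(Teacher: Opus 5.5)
Your proposal is correct and follows the paper's proof essentially step by step. The common steps are: identifying $E_\alpha$ with $\lim\mathcal P$, dominated convergence against the bubble $A_0=(1+\rho^2/24)^{-2}$ with the beta integral $2304/35$, energy--phase coordinates around $\Gamma_{E_\alpha}$ driven by the $O(e^{-\delta t})$ forcing, and the turning-point analysis. The differences are minor: the paper obtains the period from the explicit quadrature $\mathcal T_\alpha^{\rm F}=2\int_{M_-}^{M_+}dM/\sqrt{2(E_\alpha+2M^2-C_pM^3/3)}$, which makes your saddle-passage heuristic rigorous in one line, and it states explicitly, by compactness using $M>0$, boundedness and $\mathcal H\to E_\alpha<0$, that the trajectory eventually enters the annulus where those coordinates are defined, a point you should also record.
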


\begin{proof}
By \eqref{c:eq_forced_poho_der}, \eqref{c:eq_forced_bounds} and $Q_6(a)=O(a^4)$, both terms in $\mathcal P'$ are integrable on $(0,\infty)$; hence $\mathcal P(r)$ has a finite limit.  Moreover
\[
r^6\mathfrak G(r^{-2}M)=\frac{C_p}{3}M^3+O(r^{-2}M^4),
\]
and $M$ is bounded by \eqref{c:eq_M_uniform_forced}, so this limit is precisely the limit of $\mathcal H(M,\dot M)$.

To compute it, integrate \eqref{c:eq_forced_poho_der} from $0$ to infinity and set $\rho=\varepsilon r$, $a(r)=\alpha A_\alpha(\rho)$.  Lemma \ref{c:lem_forced_critical} gives
\[
0<A_\alpha(\rho)\le C(1+\rho)^{-2},
\]
while the core argument in its proof gives $A_\alpha\to A_0=(1+\rho^2/24)^{-2}$ locally uniformly.  Hence dominated convergence and \eqref{c:eq_Q6negative} give
\begin{align*}
\int_0^\infty r^5Q_6(a(r))\,dr
&=-\frac{\kappa}{2}\,\alpha^4\varepsilon^{-6}
  \int_0^\infty\rho^5A_\alpha(\rho)^4\,d\rho+O(\alpha^2)\\
&=-\frac{\kappa\alpha}{2C_p^3}
  \int_0^\infty\rho^5\left(1+\frac{\rho^2}{24}\right)^{-8}d\rho+o(\alpha).
\end{align*}
The beta integral is
\[
\int_0^\infty\rho^5\left(1+\frac{\rho^2}{24}\right)^{-8}d\rho
=\frac{2304}{35}.
\]
On the other hand, using \eqref{c:eq_forcing_bound} and \eqref{c:eq_forced_bounds},
\[
\int_0^\infty r^5|H(r,a)|\,|2a+ra'|\,dr
\le C\alpha^5\varepsilon^{-6}
\int_0^\infty \rho^5(1+\rho)^{-\sigma-4}\,d\rho
=O(\alpha^2),
\]
because $\sigma>2$.  This proves \eqref{c:eq_fowler_energy_asymptotic}.

The Hamiltonian $\mathcal H$ has its positive equilibrium at $M_*=4/C_p$, with energy
$\mathcal H(M_*,0)=-32/(3C_p^2)$.  Therefore, for every energy strictly between this value and $0$, the positive energy level is a smooth closed periodic orbit of the autonomous Fowler equation
$\ddot M-4M+C_pM^2=0$.  By \eqref{c:eq_fowler_energy_asymptotic}, $E_\alpha$ lies in this interval for small $\alpha$.  Since $M$ and $\dot M$ are bounded and $\mathcal H(M(t),\dot M(t))\to E_\alpha$, while a negative energy level stays a positive distance away from $M=0$, compactness first gives
\[
\operatorname{dist}\big((M(t),\dot M(t)),\Gamma_{E_\alpha}\big)\longrightarrow0.
\]

It remains to identify the asymptotic phase.  From the expansion
$\mathfrak g(a)=C_pa^2+\kappa a^3+O(a^4)$ and the forced equation we obtain
\begin{equation}\label{c:eq_fowler_perturbed}
\ddot M-4M+C_pM^2=\mathcal E_\alpha(t),
\end{equation}
where, for all sufficiently large $t$,
\[
|\mathcal E_\alpha(t)|\le C_\alpha e^{-\delta t},
\qquad \delta=\min\{2,\sigma-2\}>0.
\]
Indeed the correction in $\mathfrak g$ is $O(e^{-2t}M^3)$, while
$e^{4t}|H(e^t,e^{-2t}M)|\le C_\alpha e^{-(\sigma-2)t}$ once $\varepsilon e^t\ge1$.  In particular
\[
\frac{d}{dt}\mathcal H(M,\dot M)=\dot M\,\mathcal E_\alpha(t),
\qquad
\mathcal H(M(t),\dot M(t))-E_\alpha=O_\alpha(e^{-\delta t}).
\]
Choose a small annular neighborhood of $\Gamma_{E_\alpha}$ containing the trajectory for all large $t$.  Since the autonomous vector field does not vanish on this periodic orbit, this annulus admits smooth energy--phase coordinates $(E,\theta)$, with $\dot\theta=1$ on $\Gamma_{E_\alpha}$ when the orbit is parameterized by its physical time.  Equation \eqref{c:eq_fowler_perturbed} and the preceding energy estimate give
\[
E(t)-E_\alpha=O_\alpha(e^{-\delta t}),
\qquad
\dot\theta(t)-1=O_\alpha(e^{-\delta t}).
\]
Thus $\theta(t)-t$ has a finite limit $\vartheta_\alpha$, and smoothness of the energy--phase coordinates gives \eqref{c:eq_fowler_phase}.

At a turning point $V=0$, so
\[
-2M^2+\frac{C_p}{3}M^3=E_\alpha.
\]
For the smaller positive root, $M\to0$ as $E_\alpha\uparrow0$, and hence
$M_-(E_\alpha)=\sqrt{-E_\alpha/2}\,(1+o(1))$.  Using
\eqref{c:eq_fowler_energy_asymptotic} gives the first formula in
\eqref{c:eq_fowler_neck_bulge}.  The larger root tends to the nonzero root $6/C_p$ of
$-2M^2+C_pM^3/3=0$; since the derivative of this polynomial at $6/C_p$ is $12/C_p\ne0$,
the implicit function theorem gives $M_+(E_\alpha)=6/C_p+O(E_\alpha)=6/C_p+O(\alpha)$.

It remains to estimate the period.  Write $\delta_\alpha:=-E_\alpha>0$ and
$m_\alpha:=M_-(E_\alpha)$.  The Hamiltonian identity gives
\[
\mathcal T_\alpha^{\rm F}
=2\int_{m_\alpha}^{M_+(E_\alpha)}
\frac{dM}{\sqrt{2\{E_\alpha+2M^2-C_pM^3/3\}}}.
\]
Fix a small $\eta>0$, independently of $\alpha$.  The part over
$[\eta,M_+(E_\alpha)]$ is $O(1)$ uniformly as $\alpha\to0$.  On
$[m_\alpha,\eta]$, using
$E_\alpha=-2m_\alpha^2+C_pm_\alpha^3/3$, we have
\[
2\{E_\alpha+2M^2-C_pM^3/3\}
=4(M^2-m_\alpha^2)
\left[1-\frac{C_p}{6}
\frac{M^3-m_\alpha^3}{M^2-m_\alpha^2}\right].
\]
Since
$(M^3-m^3)/(M^2-m^2)=(M^2+Mm+m^2)/(M+m)=O(M+m)$,
the difference between the corresponding integrand and
$[2\sqrt{M^2-m_\alpha^2}]^{-1}$ has an integral bounded uniformly in
$\alpha$.  Consequently
\[
\mathcal T_\alpha^{\rm F}
=\int_{m_\alpha}^{\eta}\frac{dM}{\sqrt{M^2-m_\alpha^2}}+O(1)
=\log\frac1{m_\alpha}+O(1).
\]
Since $m_\alpha\asymp\alpha^{1/2}$ by the first formula in
\eqref{c:eq_fowler_neck_bulge}, this gives
$\mathcal T_\alpha^{\rm F}=\frac12|\log\alpha|+O(1)$.
\end{proof}

\subsection{The critical fixed point}

Extend the cell family continuously by $\zeta_0:=0$.  For a radial transverse function $\chi$ and a scalar $a\ge0$ set
\begin{equation}\label{c:eq_projected_force_N7}
\mathcal R_\chi(r,a):=\frac1\nu\int_0^\infty
\Big(R(t,\zeta_a(t)+\chi(r,t))-R(t,\zeta_a(t))\Big)Z(t)\,dt.
\end{equation}
The dependence on the scalar parameter is better than the regularity of $f'$ at the origin might suggest.  Differentiating with respect to $a$ gives
\[
\partial_a\Big(R(t,\zeta_a+\chi)-R(t,\zeta_a)\Big)
=\Big[f'(w_0+\zeta_a+\chi)-f'(w_0+\zeta_a)\Big]\,\partial_a\zeta_a.
\]
For $1<p<2$ we use the $(p-1)$--H\"older continuity of $f'$, and for $p\ge2$ its local Lipschitz continuity.  Together with Proposition \ref{c:prop_cell}, applied with a fixed exponent $\widehat\mu\in(\mu,1)$, this shows that the integral in \eqref{c:eq_projected_force_N7} is locally Lipschitz in $a$ for every fixed $r$.  Notice that no Lipschitz regularity of $f'$ is required when $1<p<2$.

We record the quantitative estimate that will be used below.  Put $d:=\|\chi\|_{C^{2,\varrho}_*}$ and $W_r:=1+\varepsilon r$.  Uniformly for $0\le a\le C\alpha$,
\begin{equation}\label{c:eq_projected_force_pointwise_N7}
|\mathcal R_\chi(r,a)|
\le C\big(\alpha d+d^2+d^{p+1}\big)W_r^{-\sigma-2}.
\end{equation}
Indeed, split the $x_N$--integral into the near region $|\chi|\le w_0/4$ and the far region $|\chi|>w_0/4$.  On the near region, $\zeta_a\le C\alpha w_0$ for small $\alpha$, and Taylor's formula gives
\[
|R(t,\zeta_a+\chi)-R(t,\zeta_a)|
\le Cw_0^{p-2}\big(\zeta_a|\chi|+|\chi|^2\big).
\]
After multiplication by $Z$ and integration this contributes at most
$C(\alpha d_r+d_r^2)$, where $d_r:=dW_r^{-\sigma-2}$.  On the far region, $w_0\le4|\chi|$ and the second estimate in \eqref{c:eq_Rtwo} gives a bound by $C|\chi|^p$.  Since $|\chi(r,t)|\le d_r e^{-\mu t}$, the far region can occur only for
\[
t\ge T_r:=\frac1{1-\mu}\log\frac{c}{d_r},
\]
with a harmless change of the constant $c>0$.  Hence
\[
\int_{T_r}^\infty |\chi|^pZ\,dt
\le C d_r^p e^{-(p\mu+1)T_r}
= C d_r^{(p+1)/(1-\mu)}
\le C d_r^{p+1},
\]
which proves \eqref{c:eq_projected_force_pointwise_N7}.  The same splitting also gives the required Lipschitz bound in the amplitude, including when $1<p<2$.
Fix once and for all $\widehat\mu\in(\mu,1)$.  The $C^1$ dependence in Proposition \ref{c:prop_cell},
viewed in $\mathcal X_{\widehat\mu}$, gives
$|\partial_a\zeta_a(t)|\le C e^{-\widehat\mu t}$ for $0\le a\le C\alpha$.
On the near region the mean value theorem gives
\[
|f'(w_0+\zeta_a+\chi)-f'(w_0+\zeta_a)|
\le Cw_0^{p-2}|\chi|,
\]
and therefore its contribution to $|\partial_a\mathcal R_\chi|$ is at most $Cd_r$, since
$w_0^{p-2}e^{-(\mu+\widehat\mu)t}Z(t)$ is integrable.  On the far region the
$(p-1)$--H\"older continuity of $f'$ gives
\[
|f'(w_0+\zeta_a+\chi)-f'(w_0+\zeta_a)|\le C|\chi|^{p-1},
\]
so, using $Z(t)\le Ce^{-t}$ and the definition of $T_r$,
\[
\int_{T_r}^\infty |\chi|^{p-1}|\partial_a\zeta_a|Z\,dt
\le C d_r^{p-1}e^{-[\mu(p-1)+\widehat\mu+1]T_r}
\le C d_r^{\frac{p+\widehat\mu}{1-\mu}}
\le Cd_r.
\]
Here $(p+\widehat\mu)/(1-\mu)>1$ and $d_r<1$ for $\alpha$ small.  Thus
\begin{equation}\label{c:eq_projected_force_da_N7}
|\partial_a\mathcal R_\chi(r,a)|\le C d\,W_r^{-\sigma-2},
\qquad 0\le a\le C\alpha.
\end{equation}

The unit--scale H\"older estimates needed by the transverse inverse can be obtained without differentiating $f'$.  If $a=a(r)$ satisfies the first two bounds in \eqref{c:eq_forced_bounds}, differentiation in $r$ gives
\begin{align*}
\frac{d}{dr}\mathcal R_\chi(r,a(r))
={}&\frac1\nu\int_0^\infty
\big[f'(w_0+\zeta_a+\chi)-f'(w_0)\big]\,\partial_r\chi\,Z\,dt\\
&\quad+\partial_a\mathcal R_\chi(r,a(r))\,a'(r).
\end{align*}
The same near--far decomposition as above, now using the pointwise derivative bound for $\partial_r\chi$ contained in $\|\chi\|_{C^{2,\varrho}_*}$, gives
\begin{equation}
\left|\frac{d}{dr}\mathcal R_\chi(r,a(r))\right|
\le C\big(\alpha d+d^2+d^{p+1}\big)W_r^{-\sigma-2}.
\end{equation}
Indeed, on the near region
$|f'(w_0+\zeta_a+\chi)-f'(w_0)|
\le Cw_0^{p-2}(\zeta_a+|\chi|)$,
and on the far region the integral begins at $T_r$ and is bounded exactly as in the proof of \eqref{c:eq_projected_force_pointwise_N7}; the second line is controlled by \eqref{c:eq_projected_force_da_N7} and the bound for $a'$.  Consequently, for every $0<\varrho<1$,
\begin{equation}\label{c:eq_projected_force_holder_N7}
\sup_{r_0\ge0}W_{r_0}^{\sigma+2}
\Big(
|\mathcal R_\chi(r_0,a(r_0))|
+[\mathcal R_\chi(\cdot,a(\cdot))]_{C^{0,\varrho}(I_{r_0})}
\Big)
\le C\big(\alpha d+d^2+d^{p+1}\big),
\end{equation}
where $I_{r_0}:=(r_0-1,r_0+1)\cap[0,\infty)$.

On the ball \eqref{c:eq_B7}, $d\le K_1\alpha^3$, so \eqref{c:eq_projected_force_pointwise_N7} and \eqref{c:eq_projected_force_holder_N7} are $O(\alpha^4)$.  In particular the pointwise bound required by Lemma \ref{c:lem_forced_critical} holds for every $p>1$.

Let $K_1>0$ be fixed later and let $\mathcal B_7$ be the set of functions $\chi$ which are radial in $x'$, vanish on $\partial\mathbb R^7_+$, satisfy $\int_0^\infty\chi(x',t)Z(t)\,dt=0$, and
\begin{equation}\label{c:eq_B7}
\|\chi\|_{C^{2,\varrho}_*}\le K_1\alpha^3.
\end{equation}
We regard $\mathcal B_7$ as a subset of the locally convex space $C^2_{\rm loc}(\overline{\mathbb R^7_+})$, with its usual Fr\'echet topology.  It is convex and compact in that topology.  Indeed, the weighted estimate gives uniform $C^{2,\varrho}$ bounds on every compact set, so diagonal Arzel\`a--Ascoli gives subsequential convergence in $C^2_{\rm loc}$.  The full weighted $C^{2,\varrho}$ bound is closed under this convergence.  To see this, fix any unit interior or boundary ball $Q$.  Uniform convergence of all derivatives through order two gives the pointwise weighted derivative bounds in the limit, while for $x\ne y$ in $Q$ the H\"older quotient of $D^2\chi$ is the limit of the corresponding quotients for the approximating sequence; taking the supremum first over $x,y\in Q$ and then over all such balls gives
$\|\chi\|_{C^{2,\varrho}_*}\le K_1\alpha^3$.  Radial symmetry and the boundary condition are closed under local convergence, while the orthogonality is also closed: for every fixed $x'$, local convergence handles the integral on $(0,T)$ and the uniform bound $|\chi(x',t)Z(t)|\le C e^{-(1+\mu)t}$ makes the tail $(T,\infty)$ uniformly small.  Thus every such limit still belongs to $\mathcal B_7$.

For $\chi\in\mathcal B_7$, solve
\begin{equation}\label{c:eq_a_chi}
-\Delta_{x'}a_\chi=\mathfrak g(a_\chi)+\mathcal R_\chi(|x'|,a_\chi),
\qquad a_\chi(0)=\alpha,
\end{equation}
in the radial class.  By \eqref{c:eq_projected_force_pointwise_N7} and \eqref{c:eq_B7}, the forcing is bounded by $C\alpha^4(1+\varepsilon r)^{-\sigma-2}$, so Lemma \ref{c:lem_forced_critical} gives a positive entire solution and
\begin{equation}\label{c:eq_a_chi_bounds}
0<a_\chi(r)\le C\alpha(1+\varepsilon r)^{-2},\qquad
|a_\chi'(r)|\le C\alpha\varepsilon(1+\varepsilon r)^{-3},\qquad
|D^2a_\chi(x')|\le C\alpha\varepsilon^2(1+\varepsilon|x'|)^{-4}.
\end{equation}
In particular $a_\chi<a_0$ for $\alpha$ small, so the cell profile $\zeta_{a_\chi}$ is well defined.

Set $\zeta=\zeta_{a_\chi}$ and let $\mathcal N[a_\chi]$ be defined by Lemma \ref{c:lem_error}.  Since \eqref{c:eq_a_chi} gives
$\Delta a_\chi=-\mathfrak g(a_\chi)-\mathcal R_\chi$, formula \eqref{c:eq_E} becomes
\begin{equation}
\mathcal N[a_\chi]
=T(a_\chi)\big(a_\chi\partial_a\zeta_a-\zeta_a\big)_{a=a_\chi}
+\mathcal R_\chi\,\partial_a\zeta_a|_{a=a_\chi}
-|\nabla a_\chi|^2\partial_a^2\zeta_a|_{a=a_\chi}.
\end{equation}
Proposition \ref{c:prop_cell}, \eqref{c:eq_a_chi_bounds} and the same stronger exponential weight used in the proof of \eqref{c:eq_e1} give
\begin{equation}\label{c:eq_N_forced_est}
\|\mathcal N[a_\chi]\|_{C^{0,\varrho}_{**}}\le C\alpha^3.
\end{equation}
Indeed, choose once and for all $\widehat\mu\in(\mu,1)$.  Proposition \ref{c:prop_cell} in $\mathcal X_{\widehat\mu}$ gives
\[
\big|T(a)(a\partial_a\zeta_a-\zeta_a)\big|
+|\nabla a|^2|\partial_a^2\zeta_a|
\le C\big(a^3+|\nabla a|^2\big)e^{-\widehat\mu x_N}.
\]
By \eqref{c:eq_a_chi_bounds} the coefficient has $x'$ decay at least $(1+\varepsilon r)^{-6}$, stronger than the required $(1+\varepsilon r)^{-\sigma-2}$ because $\sigma<4$.  The corresponding unit--scale H\"older seminorms have the same bound: for the first term use $|\nabla a_\chi|$ from \eqref{c:eq_a_chi_bounds}, and for the third use
\[
[|\nabla a_\chi|^2]_{C^{0,\varrho}(B_1(x'))}
\le C\sup_{B_1(x')}|\nabla a_\chi|\,\sup_{B_1(x')}|D^2a_\chi|
\le C\alpha^3(1+\varepsilon|x'|)^{-6},
\]
after decreasing $\alpha$ and using comparability of the weights on unit balls.  The middle term is $O(\alpha^4)$, with the required unit--scale H\"older control, by \eqref{c:eq_projected_force_holder_N7} and the $C^1$ dependence of $\partial_a\zeta_a$ on $a$.  Finally $(1+x_N)^2e^{-(\widehat\mu-\mu)x_N}$ is bounded, so the stronger exponential absorbs the polynomial factor in the double-star norm.  This proves \eqref{c:eq_N_forced_est}.

Define
\begin{equation}\label{c:eq_T7}
\mathcal T_7(\chi):=\mathcal K\Big(
R(\cdot,\zeta+\chi)-R(\cdot,\zeta)-\mathcal N[a_\chi]
\Big).
\end{equation}
The near--far argument of \eqref{c:eq_e2}, with $a_\chi$ in place of $\lambda_\alpha+\psi$ and using the local regularity in \eqref{c:eq_a_chi_bounds}, together with \eqref{c:eq_N_forced_est}, gives
\begin{equation}\label{c:eq_T7_est}
\|\mathcal T_7(\chi)\|_{C^{2,\varrho}_*}
\le C\Big[\alpha^3+\alpha\|\chi\|_*+\|\chi\|_*^{1+\varpi_0}\Big],
\qquad \varpi_0>0.
\end{equation}
The source in \eqref{c:eq_T7} is exactly orthogonal to $Z$.  Indeed, by \eqref{c:eq_projected_force_N7},
\[
\int_0^\infty\big(R(\cdot,\zeta+\chi)-R(\cdot,\zeta)\big)Z\,dt
=\nu\mathcal R_\chi,
\]
whereas Lemma \ref{c:lem_orth} and \eqref{c:eq_a_chi} give
\[
\int_0^\infty\mathcal N[a_\chi]Z\,dt
=-\nu(\Delta a_\chi+\mathfrak g(a_\chi))
=\nu\mathcal R_\chi.
\]
Thus Corollary \ref{c:cor_weighttrans} applies.  The source is radial in $x'$, and uniqueness in Corollary \ref{c:cor_weighttrans} shows that $\mathcal K$ preserves radial symmetry; it also preserves the boundary condition and the $Z$-orthogonality by construction.  Choosing $K_1$ larger than twice the constant multiplying $\alpha^3$ in \eqref{c:eq_T7_est}, and then choosing $\alpha$ small, shows that
$\mathcal T_7(\mathcal B_7)\subset\mathcal B_7$.

We finally check continuity in the $C^2_{\rm loc}$ topology.  If $\chi_j\to\chi$ locally in $C^2$ with $\chi_j\in\mathcal B_7$, then the uniform exponential weight in $x_N$ makes the tails of the integral in \eqref{c:eq_projected_force_N7} uniformly small, while on bounded $x_N$ intervals ordinary dominated convergence applies.  Hence $\mathcal R_{\chi_j}\to\mathcal R_\chi$ locally uniformly in $(r,a)$.  By the local Lipschitz estimate in \eqref{c:eq_projected_force_da_N7}, uniqueness and continuous dependence for the radial ODE \eqref{c:eq_a_chi} yield
$a_{\chi_j}\to a_\chi$ in $C^2_{\rm loc}(\mathbb R^6)$.  To make the regular singular point $r=0$ explicit, this is equivalently the Volterra equation
\begin{equation}
a(r)=\alpha-\int_0^r s^{-5}\int_0^s\tau^5
\big[\mathfrak g(a(\tau))+\mathcal R_\chi(\tau,a(\tau))\big] \,d\tau\,ds,
\end{equation}
for which the usual contraction argument on each bounded interval gives uniqueness and continuous dependence; continuation uses the uniform bounds \eqref{c:eq_a_chi_bounds}.  Hence the sources in \eqref{c:eq_T7} converge locally uniformly and remain bounded in $C^{0,\varrho}_{**}$.  The weighted estimate for $\mathcal K$ makes $\mathcal T_7(\chi_j)$ uniformly bounded in $C^{2,\varrho}_*$; every locally convergent subsequence therefore tends to a weighted solution of the limit transverse equation.  The uniqueness statement in Corollary \ref{c:cor_weighttrans} identifies that limit with $\mathcal T_7(\chi)$.  Thus $\mathcal T_7$ is continuous on the compact convex set $\mathcal B_7$.  The Schauder--Tychonoff fixed point theorem gives a fixed point $\chi_\alpha$.

At the fixed point put $a=a_{\chi_\alpha}$ and
\[
\phi(x',x_N):=\zeta_{a(x')}(x_N)+\chi_\alpha(x',x_N),
\qquad v_\alpha:=w_0+\phi.
\]
Equations \eqref{c:eq_a_chi} and \eqref{c:eq_T7}, together with the orthogonality just checked, are exactly the $Z$ and $Z^\perp$ components of \eqref{eq_phiL}; hence $v_\alpha$ solves the equation and has boundary value $c_p$.  The decay in $x_N$ is uniform because $\zeta_a\le Cae^{-x_N}$ and $\chi_\alpha=O(e^{-\mu x_N})$ uniformly in $x'$.  Boundedness is immediate.

Positivity is proved exactly as at the end of the proof of Theorem \ref{thm_construction}.  If $m:=\inf v_\alpha<0$, tangential translations and compactness produce an interior minimum of a bounded solution $V$ with value $m$.  Since $\|\phi\|_\infty\le C\alpha$, one has $|m|<1$ for $\alpha$ small, while
\[
\Delta V=m(1-|m|^{p-1})<0
\]
at that minimum, a contradiction.  Hence $v_\alpha>0$ by the strong maximum principle.

Finally, by the normalization of the cell family and the orthogonality of $\chi_\alpha$,
\[
\frac1\nu\int_0^\infty(v_\alpha-w_0)Z\,dt=a(x').
\]
Here $a(0)=\alpha$ and $a(x')\to0$ by \eqref{c:eq_a_chi_bounds}, so $v_\alpha$ is not one-dimensional.  Proposition \ref{c:prop_cell} and \eqref{c:eq_B7} give
$v_\alpha=w_0+aZ+a^2h+O(\alpha^3)$ in $L^\infty$, and \eqref{eq_critical_amp_bounds} is precisely \eqref{c:eq_a_chi_bounds}.  Finally the forcing $H=\mathcal R_{\chi_\alpha}$ satisfies the hypotheses of Proposition \ref{c:prop_forced_fowler} by \eqref{c:eq_projected_force_pointwise_N7}, so \eqref{eq_critical_fowler_main} follows from \eqref{c:eq_fowler_energy_asymptotic} and \eqref{c:eq_fowler_phase}.  This proves Theorem \ref{thm_construction_critical}.

\appendix
\section{The cubic coefficient in dimension seven}\label{app_kappa}

The coefficient $\kappa$ controls the first breaking of scale invariance in boundary dimension six. We collect here the one-dimensional identities and sign information used in Theorem \ref{thm_construction_critical}.

There is also a useful ground-state representation of the positive term.  Put
\[
f_p(t):=\frac{p(p-1)}2w_0(t)^{p-2}Z(t)^2-C_pZ(t)=\mathcal Lh(t),
\qquad
F_p(t):=\int_t^\infty Z(s)f_p(s)\,ds.
\]
The definition of $C_p$ gives $F_p(0)=0$.  Writing $h=Zu$ and using $\mathcal LZ=0$ gives the ground-state factorization
\[
\mathcal L(Zu)=-\frac1Z\big(Z^2u'\big)',
\qquad
Z^2u'=F_p,
\]
and therefore
\begin{equation}\label{c:eq_kappa_groundstate}
\int_0^\infty h\mathcal Lh\,dt
=\int_0^\infty Z^2(u')^2\,dt
=\int_0^\infty\frac{F_p(t)^2}{Z(t)^2}\,dt.
\end{equation}
The same representation also gives two facts that will be useful in the critical dimension.  First, $p\mapsto\kappa(p)$ is continuous on $(1,\infty)$.  Indeed, on every compact $p$-interval contained in $(1,\infty)$ the explicit formula \eqref{eq_w0} gives locally uniform dependence of $w_0$, $Z$, $C_p$, $f_p$ and $F_p$ on $p$.  Moreover, uniformly on such an interval,
\[
\frac{F_p(t)^2}{Z(t)^2}=O(t^4)\quad(t\downarrow0),
\qquad
\frac{F_p(t)^2}{Z(t)^2}=O(e^{-2t})\quad(t\to\infty),
\]
and $w_0^{p-3}Z^4=O(e^{-(p+1)t})$ at infinity, so dominated convergence applies in \eqref{c:eq_kappapos}--\eqref{c:eq_kappa_groundstate}.

Second, the value at the quadratic exponent can be evaluated exactly.  When $p=2$, put $y=\tanh(t/2)$. Then
\[
\nu=\frac35,\qquad C_2=\frac{15}{32},
\]
and a direct integration gives
\[
F_2(t)=-\frac9{64}\,y^3(y-1)^2\big(6y^3+12y^2+2y-5\big),
\qquad
\int_0^\infty\frac{F_2(t)^2}{Z(t)^2}\,dt=\frac9{2048}.
\]
Since the second term in \eqref{c:eq_kappapos} vanishes at $p=2$, it follows that
\begin{equation}
\kappa(2)=\frac{15}{1024}>0.
\end{equation}
Consequently there exists $p_+\in(1,2)$ such that $\kappa(p)>0$ for every $p>p_+$: use continuity near $2$ and the positivity already proved above for all $p\ge2$.

At the opposite end of the exponent range the sign is rigorously negative.  Put $m=p-1$ and $s=\sqrt m\,t$.  A direct beta-function computation gives
\[
\nu=\frac{c_p^2}{m}B\left(\frac32,\frac2m\right),
\qquad
C_p=\frac{m^3}{8B\left(\frac32,\frac2m\right)}.
\]
As $m\downarrow0$, the explicit profile \eqref{eq_w0}, the asymptotic $B(a,x)\sim\Gamma(a)x^{-a}$, and the change of variables above give, locally uniformly in $s$ and with an integrable domination uniform for small $m$,
\[
w_0(s/\sqrt m)\longrightarrow W(s):=\sqrt e\,e^{-s^2/4},
\qquad
m^{-1/2}Z(s/\sqrt m)\longrightarrow Z_0(s):=\frac{s}{2}W(s),
\]
\[
m^{-3/2}C_p\longrightarrow\frac1{\sqrt{2\pi}},
\qquad
m^{-2}f_p(s/\sqrt m)\longrightarrow
f_0(s):=W(s)\left(\frac{s^2}{8}-\frac{s}{2\sqrt{2\pi}}\right).
\]
Consequently
\begin{equation}
\frac{\nu}{\sqrt m}\longrightarrow
\frac{e}{4}\sqrt{\frac\pi2},
\qquad
\frac{\|f_p\|_{L^2(0,\infty)}^2}{m^{7/2}}
\longrightarrow
\frac{e\sqrt2}{128\sqrt\pi}(3\pi-8),
\end{equation}
and hence
\begin{equation}\label{c:eq_smallp_ratio}
\frac{\|f_p\|_2^2}{m^3\nu}\longrightarrow\frac{3\pi-8}{16\pi}.
\end{equation}
Since $f_p\perp Z$, the spectral theorem and \eqref{c:eq_exact_gap} imply
\[
\int_0^\infty h\mathcal Lh\,dt
=\langle f_p,\mathcal L^{-1}f_p\rangle
\le\frac{\|f_p\|_2^2}{m(2-m)}.
\]
The remaining integral in \eqref{c:eq_kappapos} is also explicit:
\begin{equation}\label{c:eq_I4ratio}
\frac1\nu\int_0^\infty w_0^{p-3}Z^4\,dt
=\frac{6m(m+2)}{(3m+4)(5m+4)}.
\end{equation}
Dividing \eqref{c:eq_kappapos} by $m^2$, and using \eqref{c:eq_smallp_ratio}--\eqref{c:eq_I4ratio}, yields
\[
\limsup_{p\downarrow1}\frac{\kappa(p)}{(p-1)^2}
\le
\frac{3\pi-8}{16\pi}-\frac18
=\frac{\pi-8}{16\pi}<0.
\]
Thus there is $p_-\in(1,2)$ such that $\kappa(p)<0$ for $1<p<p_-$. In particular $\kappa$ has at least one zero in $(1,2)$.

\section*{Declarations}

\smallskip
\noindent\textbf{Conflict of interest.} The author declares that he has no conflict of interest.

\smallskip
\noindent\textbf{Data availability.} Data sharing is not applicable to this article, as no datasets were generated or analyzed during the current study.

\smallskip
\noindent\textbf{Use of AI tools.} AI-assisted tools were used during the preparation of the manuscript. The author reviewed and edited all AI-assisted output and takes full responsibility for the content of the work.

\bibliographystyle{abbrv}
\bibliography{ref}

\begin{thebibliography}{1}

\bibitem{CGS}
L.~Caffarelli, N.~Garofalo, and F.~Seg\`ala.
\newblock A gradient bound for entire solutions of quasi-linear equations and
  its consequences.
\newblock {\em Comm. Pure Appl. Math.}, 47:1457--1473, 1994.

\bibitem{FV1}
A.~Farina and E.~Valdinoci.
\newblock Flattening results for elliptic {PDE}s in unbounded domains with
  applications to overdetermined problems.
\newblock {\em Arch. Ration. Mech. Anal.}, 195:1025--1058, 2010.

\bibitem{FV2}
A.~Farina and E.~Valdinoci.
\newblock A pointwise gradient estimate in possibly unbounded domains with
  nonnegative mean curvature.
\newblock {\em Adv. Math.}, 225:2808--2827, 2010.

\bibitem{FW}
A.~J. Fern\'andez and T.~Weth.
\newblock The nonlinear {S}chr\"odinger equation in the half-space.
\newblock {\em Math. Ann.}, 383:361--397, 2022.

\bibitem{GS}
B.~Gidas and J.~Spruck.
\newblock Global and local behavior of positive solutions of nonlinear elliptic
  equations.
\newblock {\em Comm. Pure Appl. Math.}, 34:525--598, 1981.

\bibitem{GT}
D.~Gilbarg and N.~S. Trudinger.
\newblock {\em Elliptic Partial Differential Equations of Second Order}.
\newblock Springer-Verlag, Berlin, second edition, 1983.

\bibitem{Mo}
L.~Modica.
\newblock A gradient bound and a {L}iouville theorem for nonlinear {P}oisson
  equations.
\newblock {\em Comm. Pure Appl. Math.}, 38:679--684, 1985.

\bibitem{So1}
C.~Sourdis.
\newblock One-dimensional symmetry of positive bounded solutions to the
  nonlinear {S}chr\"odinger equation in the half-space in two and three
  dimensions.
\newblock {\em Partial Differ. Equ. Appl.}, 2:Paper No. 79, 2021.

\bibitem{So2}
C.~Sourdis.
\newblock One-dimensional symmetry of positive bounded solutions to the
  subcubic and cubic nonlinear {S}chr\"odinger equation in the half-space in
  dimensions {$N=4,5$}.
\newblock {\em Calc. Var. Partial Differential Equations}, 61:Paper No. 176,
  2022.

\end{thebibliography}

\end{document}